\documentclass[10pt]{article}
\usepackage{amssymb}
\usepackage{graphicx}
\usepackage{xcolor} 
\usepackage{tensor}
\usepackage{fullpage} 
\usepackage{amsmath}
\usepackage{amsthm}
\usepackage{mathtools}
\usepackage{verbatim}
\usepackage{enumitem}
\usepackage{hyperref}
\setlist[enumerate]{leftmargin=1.5em}
\setlist[itemize]{leftmargin=1.5em}

\definecolor{green}{rgb}{0,0.8,0} 

\newtheorem{theorem}{Theorem}[section]

\newtheorem{lemma}[theorem]{Lemma}

\newtheorem{innercustomthm}{Theorem}
\newenvironment{customthm}[1]
{\renewcommand\theinnercustomthm{#1}\innercustomthm}
{\endinnercustomthm}
\theoremstyle{definition}
\newtheorem{definition}[theorem]{Definition}
\newtheorem{example}[theorem]{Example}
\theoremstyle{remark}
\newtheorem{remark}[theorem]{Remark}

\numberwithin{equation}{section}

\providecommand{\MR}{\relax\ifhmode\unskip\space\fi MR }

\providecommand{\href}[2]{#2}
\newcommand{\nrm}[1]{\Vert#1\Vert}
\newcommand{\abs}[1]{\vert#1\vert}

\newcommand{\tld}[1]{\widetilde{#1}}

\newcommand{\nnrm}[1]{{\vert\kern-0.25ex\vert\kern-0.25ex\vert #1 
    \vert\kern-0.25ex\vert\kern-0.25ex\vert}}

\newcommand{\supp}{{\mathrm{supp}}\,}

\renewcommand{\Re}{\mathrm{Re}}
\renewcommand{\Im}{\mathrm{Im}}

\newcommand{\rd}{\partial}
\newcommand{\nb}{\nabla}

\newcommand{\alp}{\alpha}
\newcommand{\bt}{\beta}

\newcommand{\Gmm}{\Gamma}
\newcommand{\dlt}{\delta}
\newcommand{\Dlt}{\Delta}
\newcommand{\eps}{\epsilon}

\newcommand{\lmb}{\lambda}

\newcommand{\tht}{\theta}

\newcommand{\omg}{\omega}
\newcommand{\Omg}{\Omega}

\newcommand{\zt}{\zeta}

\begin{document}
\title{Well-posedness Threshold of the 2D Euler Equations in H\"older Spaces on  Infinite Sectors}
\author{Jaewan Choi\thanks{Department of Mathematical Sciences, Seoul National University. E-mail: streetcroxx@snu.ac.kr} }
\date{\today}
\maketitle
\begin{abstract}
	This study aims to obtain existence and uniqueness results for the incompressible Euler equations on two-dimensional infinite sectors of acute angle $\beta$. Our main interest is the evolution of vorticity in H\"older-type spaces. The unboundedness and the corner of the domain create two difficulties in establishing well-posedness in $C^{k,\alpha}$. First, without a decay assumption at infinity, we cannot expect H\"older regularity for every $\beta$. Second, the corner causes the Schauder-type estimate to fail at sufficiently high regularity and limits the regularity of the velocity.

    To address these difficulties, we introduce weighted H\"older spaces, which serve as an alternative to the standard $C^{k,\alpha}$. We establish well-posedness in these weighted spaces for every acute angle $\bt$ and every H\"older exponent, without imposing any decay assumption at infinity. These spaces include scale-invariant functions, making the result applicable to the study of their regularity and dynamics. We present an application that describes the long-time dynamics of nondecaying scale-invariant data.

    We then apply the weighted Hölder well-posedness theory to the $C^{k,\alpha}$ spaces. Unlike weighted H\"older regularity, the condition $k+\alp = \pi/\bt - 1$ acts as the $C^{k,\alp}$ well-posedness threshold. Above this threshold, there exist initial data that exhibit instantaneous loss of regularity, and to the best of our knowledge, this case has not been previously studied. We further show that the problem is well-posed at the regularity threshold if and only if $\pi/\beta$ is not an integer. To establish well-posedness in $C^{k,\alp}$, we assume that the initial vorticity belongs to a weighted Hölder space, rather than imposing a decay condition at infinity. This class includes compactly supported initial vorticities.
\end{abstract}

\renewcommand{\thefootnote}{\fnsymbol{footnote}}
	\footnotetext{\emph{2020 AMS Mathematics Subject Classification:}  35J08,  35Q31, 76B03}
	\footnotetext{\emph{Key words: Euler equations, non-decaying vorticity,  H\"older spaces, well-posedness threshold, corner singularity}}
\renewcommand{\thefootnote}{\arabic{footnote}}

\tableofcontents

\section{Introduction}

We consider the incompressible Euler equations on an infinite sector domain $\Omg_{\bt} := \{(r\cos\tht, r\sin\tht)\in\mathbb{R}^2: r>0,\ 0<\tht<\bt\}$:
\begin{equation}\label{eq:Euler}
    \left\{
        \begin{aligned}
          \partial_t u+ u \cdot \nabla u + \nabla p &= 0,\\
            \nabla \cdot u &= 0,
        \end{aligned}
    \right.
\end{equation}
where $u(t,x)$ is the velocity field, $p(t,x)$ is the pressure function, and $\bt\in (0,2\pi)$ is the given angle. 

We impose the initial and boundary conditions
\begin{equation*}
    u(0,\cdot) = u_0(\cdot), \quad u\cdot n|_{\rd\Omg_{\bt}}=0,
\end{equation*}
where $n$ is the outer unit normal on $\rd\Omg_{\bt}$. It is often useful to consider the vorticity form of the Euler equations. Let $\omg := \nabla^{\perp}\cdot u=\rd_1 u_2 - \rd_2 u_1 $ be the vorticity. In two-dimensional domains, taking the curl of \eqref{eq:Euler} implies
\begin{equation}\label{eq:Euler_vorticity}
    \partial_t \omg + u\cdot \nb \omg = 0.
\end{equation}
In other words, the vorticity $\omg$ is transported by $u$ in $\Omg_{\bt}$.

\subsection{Vorticity without decay at infinity}\label{Bounded Vorticity}

In an unbounded domain such as $\mathbb{R}^2$, classical results often assume that the vorticity decays at infinity. One of the celebrated results is the well-posedness of the $L^1\cap L^{\infty}$ vorticity in $\mathbb{R}^2$ \cite{Y1}. If the vorticity does not have decay at infinity, then the integral in the Biot-Savart law
\begin{equation}\label{eq:BS_2D}
    u(x) = \frac{1}{2\pi}\int_{\mathbb{R}^2} \frac{(x-y)^{\perp}}{|x-y|^2} \omg(y) dA(y)
\end{equation}
need not be well-defined.

The spatial scaling symmetry of the Euler equations implies that if $\omg(t,x)$ is a solution of the Euler equations in $\mathbb{R}^2$, then for every $\lmb>0$, the function $\omg_{\lmb}$ defined by
\begin{equation*}
    \omg_{\lmb}(t,x) := \omg(t,\lmb x), \qquad t>0,\ x\in\mathbb{R}^2
\end{equation*} 
is also a solution of the Euler equations. Since the Euler equations are preserved by scaling in space, it is natural to consider the vorticity data with scale invariance.
\begin{definition}
    Suppose that a function $f$ defined on $\Omg_{\bt}$ satisfies $f(rx) = f(x)$ for all $r>0$ and $x\in\Omg_{\bt}$. Then the function $f$ is said to be \textit{scale-invariant} in $\Omg_{\bt}$.
\end{definition}

A scale-invariant vorticity function is defined on an unbounded domain, but it does not decay at infinity. These two features of scale-invariant data prevent us from applying classical well-posedness results, such as the Yudovich theory \cite{Y1}.

One way to overcome this far-field issue is to restrict $\beta$ to a smaller range $0<\bt<\bt_0$ for a suitable $\bt_0$. The Green's function on $\Omg_{\bt}$, introduced in Section \ref{Preliminary}, has improved far-field decay as $\bt$ decreases; see Lemma \ref{lem:ker_bound} for the details. Before addressing H\"older regularity, we first show that, when $\bt<\pi/2$, no decay assumption at infinity is required for the existence and uniqueness of an Euler flow with $L^{\infty}$ vorticity.
\begin{customthm}{A}[$L^{\infty}$ well-posedness]\label{thm:weak-wellposed}
    Suppose that $0<\bt<\pi/2$, and $\omg_0\in L^{\infty}(\Omg_{\bt})$. Then there exists a unique weak solution $\omg\in L^{\infty}([0,\infty); L^{\infty}(\Omg_{\bt}))$ to the Euler equations with initial vorticity $\omg_0$.
\end{customthm}
\begin{remark}
    In Section \ref{weak-wellposed}, we adopt the definition of weak solutions to the Euler equations given in \cite{MP}. This is motivated by the transport property in \eqref{eq:Euler_vorticity}.
\end{remark}
We briefly outline the proof. For the uniqueness proof, we fix $\dlt \in (0,\pi/\bt-2)$, and let $\Phi$ and $\tld{\Phi}$ be the flow maps associated with two solutions. The key identity is
\begin{equation}\label{eq:flow_uniqueness}
    \int_{\Omg_{\bt}}  \frac{|\Phi(t,x) - \tld{\Phi}(t,x)|}{|x|} \frac{|\omg_0(x)|}{1+|x|^{2+\dlt}} dx = 0.
\end{equation}
The weight $1/(1+|x|^{2+\dlt})$ is chosen because it is integrable in $\mathbb{R}^2$, and when the time interval is fixed,
\begin{equation*}
\frac{|\Phi(t,x) - \tld{\Phi}(t,x)|}{|x|}
\end{equation*}
is uniformly bounded in $x$. For the existence proof, we replace $|\Phi(t,x) - \tld{\Phi}(t,x)|$ with $|\Phi^{(n)}(t,x) - \Phi^{(n-1)}(t,x)|$ in \eqref{eq:flow_uniqueness}, where $\{\Phi^{(n)}\}$ is the sequence of approximate flow maps. This is necessary for the convergence of the sequence of approximate velocities.

Throughout this paper, we assume that $\bt<\pi/2$ so that no decay assumptions of vorticity at infinity are required. If $\bt\ge \pi/2$, the condition $\omg\in L^{\infty}$ does not guarantee that the velocity is well-defined by the Biot-Savart law \eqref{eq:Biot-Savart}.

\subsubsection*{Remark on the scale-invariance}

As noted above, Theorem \ref{thm:weak-wellposed} motivates the study of scale-invariant data. Furthermore, one can think of discretely scale-invariant vorticity data. A function $f$ defined on $\Omg_{\bt}$ is said to be \textit{discretely scale-invariant} if there exists $\lambda>1$ such that
\begin{equation*}
    f(x) = f(\lambda x),\qquad x\in\Omg_{\bt}.
\end{equation*}
Such vorticity data do not decay at infinity in general. In the following subsections, we discuss some H\"older regularity results that can be applicable to these types of data; see Theorems \ref{thm:GWPHolder} and \ref{thm:LWPHolder2} for details.

The dynamics of such data can also be studied. For example, we investigate the following long-time behavior of nonnegative scale-invariant vorticity in Section \ref{Long-time}.
\begin{theorem}[Long-time behavior of scale-invariant data]\label{thm:Long-Scale}
    Suppose that $0<\bt<\pi/2$, $\omg_0\in L^{\infty}(\Omg_{\bt})$ is scale-invariant, nonnegative, and the limit
    \begin{equation*}
        \lim_{x\rightarrow (\cos\bt,\sin\bt)} \omg_0(x) =: \omg_{0,\bt}
    \end{equation*}
     exists. Then, in $I_{\bt} := \{(\cos\tht, \sin\tht)\in \mathbb{R}^2: 0<\tht<\bt\}$, the solution $\omg$ defined by Theorem \ref{thm:weak-wellposed} satisfies 
     \begin{equation*}
        \lim_{t\rightarrow\infty} \|\omg(t,\cdot) - \omg_{0,\bt} \|_{L^1(I_{\bt})}=0.
     \end{equation*}
     In particular, $\omg(t,\cdot)$ converges to a constant function a.e. as $t\rightarrow\infty$.
\end{theorem}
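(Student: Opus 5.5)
The plan is to reduce the problem to a one-dimensional transport equation in the angular variable $\tht$ and then study the backward angular characteristics. First I would show that the solution stays scale-invariant. The scaling symmetry $\omg(t,x)\mapsto\omg(t,\lmb x)$ preserves $\Omg_{\bt}$ and maps weak solutions in the sense of Theorem \ref{thm:weak-wellposed} to weak solutions, and $\omg_0(\lmb\,\cdot)=\omg_0$. The uniqueness part of Theorem \ref{thm:weak-wellposed} then gives $\omg(t,\lmb x)=\omg(t,x)$, so $\omg(t,x)=g(t,\tht)$ with $g(0,\cdot)=g_0$, $0\le g\le \nrm{g_0}_{L^\infty}$.

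Next I would compute the velocity explicitly. I look for a stream function of the form $\psi=r^2 h(t,\tht)$, which turns $\Dlt\psi=\omg$ with $\psi=0$ on $\rd\Omg_{\bt}$ into the ODE
\begin{equation*}
h''+4h=g,\qquad h(0)=h(\bt)=0 .
\end{equation*}
Since $\bt<\pi/2$, the first Dirichlet eigenvalue $(\pi/\bt)^2$ of $-\rd_\tht^2$ on $(0,\bt)$ exceeds $4$, so this problem is uniquely solvable. Its Green's function $G$ (with $h=-\int G g$) is positive and satisfies $G(\tht,\varphi)\ge c\,d(\tht)d(\varphi)$ with $d(\tht)=\min(\tht,\bt-\tht)$. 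I would check that this $\psi$ coincides with the Biot--Savart velocity used in Theorem \ref{thm:weak-wellposed}, which should follow from the uniqueness of the far-field behaviour established via Lemma \ref{lem:ker_bound}.

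The angular velocity $u_\tht/r$ is then a constant multiple of $h(t,\tht)$. It has a fixed sign, vanishes only at $\tht=0,\bt$, and satisfies $|u_\tht/r|\ge c\,d(\tht)\int_0^{\bt} g(t,\varphi)d(\varphi)\,d\varphi$. With the paper's sign conventions I expect particles to rotate toward $\tht=0$; this has to be checked, and it is consistent with the claimed limit $\omg_{0,\bt}$. The radial motion does not affect $g$, so $g(t,\tht)=g_0(X(t,\tht))$, where $X(t,\tht)$ is the backward angular characteristic. The map $X(t,\tht)$ is nondecreasing in $t$ and bounded by $\bt$, hence converges to some $\tht^*(\tht)\le\bt$.

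The main step, and the expected obstacle, is to show $\tht^*(\tht)=\bt$ for every $\tht\in(0,\bt)$. I would argue by contradiction. Suppose $\tht^*<\bt$ for some $\tht$. Then the total angular displacement is finite, which forces $\int_0^\infty d(X)\int_0^{\bt} g(t,\varphi)d(\varphi)\,d\varphi\,dt<\infty$ and hence $\int_0^\infty\int_0^{\bt} g\,d\,d\varphi\,dt<\infty$. If $\omg_{0,\bt}>0$, I choose $\eps$ with $g_0\ge\omg_{0,\bt}/2$ on $(\bt-\eps,\bt)$ and $\bt-\eps>\tht^*$. By monotonicity of the characteristics, $g(t,\cdot)\ge\omg_{0,\bt}/2$ on an interval containing $(\tht^*,\bt)$ for all $t$. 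This gives a uniform positive lower bound on $\int g\,d$, which contradicts time-integrability.

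The degenerate case $\omg_{0,\bt}=0$ is where I am least sure of the details. There I would show directly that $\int_0^{\bt} g(t,\cdot)\,d\varphi\to0$. Either the weighted mass $\int g\,d$ is eventually small, in which case $g$ is small in $L^1$ away from the endpoints (and the endpoints are controlled by $L^\infty$), or it is not small, in which case the same lower bound on the angular speed transports every point's preimage into $(\bt-\eps,\bt)$, where $g_0$ is small.

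Once $X(t,\tht)\to\bt$ pointwise, the existence of the limit of $g_0$ at $\bt$ gives $g(t,\tht)\to\omg_{0,\bt}$ for every $\tht\in(0,\bt)$. Dominated convergence with the bound $\nrm{g_0}_{L^\infty}$ on the finite arc $I_{\bt}$ then yields $\nrm{\omg(t,\cdot)-\omg_{0,\bt}}_{L^1(I_{\bt})}\to0$.
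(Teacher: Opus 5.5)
Your reduction to the angular problem and your argument for $\omg_{0,\bt}>0$ are correct, and they follow a different route from the paper. The paper splits on whether every forward angular trajectory $\chi(t,\tht)$ tends to $0$. If one does not, it uses the convexity $G''=g-4G\ge0$ of the angular stream function (your $h$) to get $G(t,\cdot)\to0$ uniformly, then $\|g(t,\cdot)\|_{L^1}\to0$, which forces $\omg_{0,\bt}=0$. Your bound $|h(t,\tht)|\ge c\,d(\tht)\,m(t)$, with $m(t):=\int_0^{\bt}g(t,\varphi)d(\varphi)\,d\varphi$, gives a more direct, quantitative proof that all backward characteristics reach $\bt$ when $\omg_{0,\bt}>0$. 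Two small fixes are needed there. First, $X(t,\tht)$ is not itself a trajectory, so run the estimate along the backward trajectory ending at $\tht$ at time $t$; it stays in $[\tht,\tht^*]$, where $d\ge\min(\tht,\bt-\tht^*)$. Second, the persistent bound $g(t,\cdot)\ge\omg_{0,\bt}/2$ holds on $(\bt-\eps,\bt)$, not on an interval containing $(\tht^*,\bt)$; that is all you need.

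The gap is the case $\omg_{0,\bt}=0$, and it is more than missing details. There $\tht^*\equiv\bt$ can genuinely fail. Take $g_0\equiv0$, or $g_0=\mathbf{1}_{(0,\eps)}$ with $\eps$ small: the edge $a(t)$ of the support satisfies $|\dot a|\sim a^2$, so $\int_0^\infty m\,dt<\infty$ and points away from $0$ move only $O(\eps)$. So a separate argument is needed, and your dichotomy does not supply one. If $m$ is not eventually small, you only know $m(t_n)\ge\eta$ along some sequence $t_n\to\infty$, so the speed lower bound holds only near those times. But pushing every preimage into $(\bt-\eps,\bt)$ requires $\int_0^\infty m\,dt=\infty$, which is exactly what your contradiction argument uses. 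Nothing in the proposal rules out sparse spikes of $m$ with finite time integral.

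The missing ingredient is time regularity of $m$. Let $\chi_t=X(t,\cdot)^{-1}$ be the forward angular flow and write $m(t)=\int_0^{\bt}g_0(\tht_0)\,d(\chi_t(\tht_0))\,\rd_\tht\chi_t(\tht_0)\,d\tht_0$. Differentiating in $t$ gives $|m'(t)|\lesssim\|g_0\|_{L^\infty}\|h(t,\cdot)\|_{W^{1,\infty}}\lesssim\|g_0\|_{L^\infty}^2$. Then split on $\int_0^\infty m\,dt$:
\begin{itemize}
\item If it is infinite, your argument gives $X(t,\tht)\to\bt$ for every $\tht$, and dominated convergence finishes. This case subsumes $\omg_{0,\bt}>0$.
\item If it is finite, the Lipschitz bound gives $m(t)\to0$. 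Hence $\|g(t,\cdot)\|_{L^1}\le m(t)/\dlt+2\dlt\|g_0\|_{L^\infty}\to0$ for every small $\dlt>0$, and $\omg_{0,\bt}=0$ by your nondegenerate argument.
\end{itemize}
The paper's convexity route tacitly needs a similar time-regularity bound, when it passes from convergence of $\chi(t,\tht_0)$ to $G(t,\chi(t,\tht_0))\to0$.
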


\subsection{Well-posedness in weighted H\"older spaces}

In this paper, we consider vorticity data with H\"older regularity defined on $\Omg_{\bt}$. We briefly recall the definition of the $C^{k,\alp}(\overline{\Omg_{\bt}})$ spaces; see Chapter 4 of \cite{GT} for the details.
\begin{definition}
    Let $k\in\mathbb{N}_0$, and $0<\alp\le 1$. A function $f$ is said to be \textit{uniformly H\"older continuous with exponent $\alp$ in $\Omg_{\bt}$} if the quantity
    \begin{equation*}
        [f]_{C^{0,\alp}(\overline{\Omg_{\bt}})} := \sup_{x,y\in \Omg_{\bt} (x\neq y)} \frac{|f(x) - f(y)|}{|x-y|^{\alp}}
    \end{equation*}
    is finite. The \textit{H\"older space} $C^{k,\alp}(\overline{\Omg_{\bt}})$ is the collection of functions $f\in C^k(\overline{\Omg_{\bt}})$ whose $k$-th partial derivatives are uniformly H\"older continuous with exponent $\alp$ in $\Omg_{\bt}$. These are Banach spaces with the norms
    \begin{equation*}
        \|f\|_{C^{k,\alp}(\overline{\Omg_{\bt}})} := \|f\|_{C^k(\overline{\Omg_{\bt}})} + [f]_{C^{k,\alp}(\overline{\Omg_{\bt}})} := \sup_{0\le|m|\le k} \sup_{x\in \Omg_{\bt}}  \left|\nb^m f(x)\right| + \sup_{|m|=k}[\nb^m f]_{C^{0,\alp}(\overline{\Omg_{\bt}})}.
    \end{equation*}
    We define $C^{k,0} := C^k$, and $C^{k,\alp}(\Omg_{\bt})$ be the collection of $f$ such that $f\in C^{k,\alp}(K)$ for all compact $K\subset \Omg_{\bt}$. 
\end{definition}

The classical well-posedness theory for $C^{k,\alp}$ solutions on $\mathbb{R}^2$ is well known. Let $k\in\mathbb{N}_0$ and $0<\alp\le 1$. Suppose that $\omg_0 = \nb^{\perp}\cdot u_0$ belongs to $C^{k,\alp}(\mathbb{R}^2)$ and has compact support. Then for every $T>0$, there exists a unique solution
\begin{equation*}
    \omg\in L^{\infty}([0,T]; C^{k,\alp}(\mathbb{R}^2))
\end{equation*}
to the Euler equations \cite{MB}. Thus, compactly supported initial vorticity with Hölder regularity gives rise to a unique global solution that preserves its Hölder regularity.

In contrast, the Hölder regularity is not preserved in the infinite sector $\Omg_{\bt}$. Let $R>0$ satisfy $\supp \omg \subset B_R(0)$. The existence of a corner at the origin causes the failure of the Schauder-type estimate
\begin{equation}\label{eq:Schauder}
    [\nb u]_{C^{k,\alpha}(\overline{\Omg_{\bt}}\cap B_R(0))} \lesssim_{k,\alp,\bt} \|\omg\|_{C^{k,\alp}(\overline{\Omg_{\bt}}\cap B_R(0))}
\end{equation}
when $k+\alpha$ is sufficiently large. The range of $k$ and $\alp$ is given in Section \ref{Holder-wellposed2}. The failure of \eqref{eq:Schauder} prevents us from expecting analogous higher H\"older regularity in $\Omega_\beta$. For example, there exists a singular harmonic function near the corner; see Example \ref{ex:singular_harmonic}.

As noted in the previous subsection, throughout this paper, we assume that $\bt<\pi/2$ so that no decay assumptions of vorticity at infinity are required. However, this assumption is not enough for the H\"older regularity in $\Omg_{\bt}$. In the whole plane $\mathbb{R}^2$, vorticity $\omg$ with compact support and H\"older continuity satisfies the type of inequality
\begin{equation}\label{eq:velocity_Lip}
    \|\nabla u\|_{L^{\infty}(\mathbb{R}^2)} \lesssim \|\omg\|_{L^{\infty}(\mathbb{R}^2)} \log \frac{10\|\omg\|_{C^{0,\alp}(\mathbb{R}^2)}}{\|\omg\|_{L^{\infty}(\mathbb{R}^2)}}
\end{equation}
so that the velocity is globally Lipschitz (\cite{KS}, \cite{MB}). The logarithmic type of inequality \eqref{eq:velocity_Lip} for $\|\nb u\|_{L^{\infty}}$ enables us to establish the global well-posedness \cite{BKM}; see \cite{MB} for the details. Without a decay assumption at infinity, \eqref{eq:velocity_Lip} does not hold for all $0<\alp\le 1$ and $0<\bt<\pi/2$. This motivates us to find function spaces that are related to H\"older continuity and yield an analogue of \eqref{eq:velocity_Lip}.

Moreover, we want these function spaces to include some sufficiently smooth scale-invariant data. However, it is trivial that if a scale-invariant function is not a constant, then it cannot belong to any H\"older spaces. Instead, we can make an alternative question:
\begin{center}
\fbox{\begin{minipage}{0.9\textwidth}
Suppose that $0<\alp<1$, $0<\bt<\pi/2$, and the initial vorticity $\omg_0$ defined on $\Omg_\bt$ is scale-invariant. \\
If $\omg_0\in C^{0,\alp}$ in $\overline{\Omg_\bt}\cap\{|x|=1\}$, then is it true that $\omg(t,\cdot)\in C^{0,\alp}$ in $\overline{\Omg_\bt}\cap\{|x|=1\}$ for all $t>0$?
\end{minipage}}
\end{center}
Based on the preceding discussion, we expect the function spaces on $\Omg_{\bt}$ to satisfy the following properties:
\begin{itemize}
    \item They are subspaces of $L^{\infty}(\Omg_{\bt})$.
    \item Unlike H\"older spaces, they induce an inequality which is an analogue of \eqref{eq:velocity_Lip}.
    \item No decay assumptions at infinity are required.
    \item They contain scale-invariant functions that are H\"older continuous on $\Omg_{\bt}\cap \{|x|=1\}$. Consequently, well-posedness in these function spaces also solves the above one-dimensional problem. 
\end{itemize}
We focus on the function spaces $\mathring{C}^{k,\alp}(\overline{\Omg_{\bt}})$ (\cite{EJB}, \cite{EJS}).
\begin{definition}
    Let $\mathring{C}^{0,\alp}(\overline{\Omg_{\bt}})$ be the collection of functions $f$ defined on $\Omg_{\bt}$ with
    \begin{align*}
        \|f\|_{\mathring{C}^{0,\alp}(\overline{\Omg_{\bt}})} := \|f\|_{L^{\infty}(\Omg_{\bt})} + [f]_{\mathring{C}^{0,\alp}(\overline{\Omg_{\bt}})}:= \|f\|_{L^{\infty}(\Omg_{\bt})} + \sup_{x,y\in\Omg_{\bt}}\frac{\left||x|^{\alp} f(x) - |y|^{\alp} f(y)\right|}{\left|x-y\right|^{\alp}} < \infty,\qquad \alp\in (0,1].
    \end{align*}
    In general, we define the norms of the function spaces $\mathring{C}^{k,\alp}(\overline{\Omg_{\bt}})$ for $k\in \mathbb{N}$ and $0<\alp\le 1$ by 
    \begin{equation*}
        \|f\|_{\mathring{C}^{k,\alp}(\overline{\Omg_{\bt}})} := \|f\|_{\mathring{C}^k(\overline{\Omg_{\bt}})} + \sup_{x,y\in\Omg_{\bt}}\frac{\left||x|^{k+\alp} \nb^k f(x) - |y|^{k+\alp} \nb^k f(y)\right|}{\left|x-y\right|^{\alp}},
    \end{equation*}
    where
    \begin{equation*}
        \|f\|_{\mathring{C}^k(\overline{\Omg_{\bt}})} := \sup_{0\le |m|\le k} \sup_{x\in \Omg_{\bt}} \left(|x|^m \left|\nb^m f(x)\right|\right).
    \end{equation*}
    We define $\mathring{C}^{k,0} := \mathring{C}^k$, and $\mathring{C}^{k,\alp}(\Omg_{\bt})$ be the collection of $f$ such that $f\in \mathring{C}^{k,\alp}(K)$ for all compact $K\subset \Omg_{\bt}$. 
\end{definition}

\begin{customthm}{B}[$\mathring{C}^{k,\alp}$ well-posedness]\label{thm:GWPHolder}
    Suppose that $0<\bt<\pi/2$. Let $k\in\mathbb{N}_0$, $0<\alp\le 1$, and $\omg_0\in \mathring{C}^{k,\alp}(\overline{\Omg_{\bt}})$. Then, for any $T>0$, the unique weak solution to the Euler equations in Theorem \ref{thm:weak-wellposed} satisfies
    \begin{align*}
        \omg \in L^{\infty}([0,T); \mathring{C}^{k,\alp}(\overline{\Omg_{\bt}}))
    \end{align*}
    with initial data $\omg_0$. The norm $\|\omg(t,\cdot)\|_{\mathring{C}^{k,\alp}}$ grows at most double exponentially in time.
\end{customthm}

\begin{remark}
    Let $k\in\mathbb{N}_0$ and $\alp\in (0,1]$. We note some properties of $\mathring{C}^{k,\alp}$ that are similar to the standard H\"older spaces.
    \begin{itemize}
        \item If $f\in \mathring{C}^{k+1}$, then $\|f\|_{\mathring{C}^{k,1}} \lesssim \|f\|_{\mathring{C}^{k+1}}$. 
        \item Let $m\in\mathbb{N}_0$ with $m\le k$. If $g\in \mathring{C}^{k,\alp}$, then $\|g\|_{\mathring{C}^{m,\alp}} \lesssim \|g\|_{\mathring{C}^{k,\alp}}$.
    \end{itemize}
\end{remark}

We note that in Theorem \ref{thm:GWPHolder}, no decay assumption of $\omg_0$ at infinity is needed. Suppose that $0<\alp<1$, $0<\bt<\pi/2$, and $\omg \in \mathring{C}^{0,\alp}(\overline{\Omg_{\bt}})$. Lemma \ref{lem:gradu_infty} states that an analogue of \eqref{eq:velocity_Lip}
\begin{equation}\label{eq:gradu_infty2}
    \|\nb u\|_{L^{\infty}(\Omg_{\bt})} \lesssim_{\alp,\bt} \|\omg\|_{L^{\infty}(\Omg_{\bt})}\log\frac{c\|\omg\|_{\mathring{C}^{0,\alp}(\overline{\Omg_{\bt}})}}{\|\omg\|_{L^{\infty}(\Omg_{\bt})}}
\end{equation}
holds for some constant $c>3$. In other words, the condition $\omg \in \mathring{C}^{0,\alp}$ yields a Lipschitz velocity, and we use \eqref{eq:gradu_infty2} to prove our well-posedness results.

Since the H\"older spaces $C^{k,\alp}$ do not satisfy \eqref{eq:Schauder}
\begin{equation*}
    [\nb u]_{C^{k,\alpha}(\overline{\Omg_{\bt}}\cap B_R(0))} \lesssim_{k,\alp,\bt} \|\omg\|_{C^{k,\alp}(\overline{\Omg_{\bt}}\cap B_R(0))}
\end{equation*}
when $k+\alp$ is large enough, we cannot guarantee sufficient H\"older regularity of $\nb u$. However, when working in the spaces $\mathring{C}^{k,\alp}$, we do not give restrictions on the range of $k$ and $\alp$. Suppose that $k\in\mathbb{N}_0$, $0<\alp<1$, $0<\bt<\pi/2$, and $\omg\in \mathring{C}^{k,\alp}(\overline{\Omg_{\bt}})$. Lemma \ref{lem:gradu_bound} states that $\nb u\in \mathring{C}^{k,\alp}$ and
\begin{equation}\label{eq:mathring_C_k_alp}
    [\nb u]_{\mathring{C}^{k,\alp}(\overline{\Omg_{\bt}})} \lesssim_{k,\alp,\bt} \|\omg\|_{\mathring{C}^{k,\alp}(\overline{\Omg_{\bt}})}
\end{equation}
without additional restriction on $k$ and $\alp$.

The main difficulty in the proof of \eqref{eq:mathring_C_k_alp} is that we cannot expect the mean zero property of the gradient of the Biot-Savart kernel. However, we can obtain an alternative cancellation property of the kernel. This enables us to deal with the singular integrals that appear in the differentiation of the Biot-Savart law \eqref{eq:Biot-Savart}; see Lemmas \ref{lem:HalfGradKer} and \ref{lem:gradu_pv2} for the details.

\begin{remark}
    Theorem \ref{thm:GWPHolder} motivates several directions for further research. For example, in the study of various hydrodynamic models, e.g., Euler, Boussinesq, and SQG equations, the function space $\mathring{C}^{k,\alp}(\overline{\Omg_{\bt}})$ may serve as an alternative to the H\"older space $C^{k,\alp}$.
\end{remark}

\subsection{Applications to H\"older regularity}

As we noted above, without a decay assumption at infinity, we cannot expect in general that the estimate \eqref{eq:velocity_Lip} holds. However, if we assume that the initial vorticity belongs to $\mathring{C}^{0,\tld{\alp}}(\overline{\Omg_{\bt}})$ for some $0<\tld{\alp}<1$, then the inequality \eqref{eq:gradu_infty2} holds. Consequently, we ensure the Lipschitz property of the velocity. We investigate H\"older regularity under the $\mathring{C}^{0,\tld{\alp}}$ assumption.

\subsubsection*{Well-posedness result}

The following main result shows that the $\mathring{C}^{0,\tld{\alp}}$ condition is sufficient to overcome the far-field difficulties in establishing well-posedness in $C^{k,\alp}$. However, since the estimate \eqref{eq:Schauder} fails when $k+\alp$ is sufficiently large, we restrict our well-posedness result for lower H\"older regularity.
\begin{customthm}{C}[$C^{k,\alp}$ well-posedness under $\mathring{C}^{0,\tilde{\alp}}$]\label{thm:LWPHolder2}
    Suppose that $0<\bt<\pi/2$, $k\in\mathbb{N}_0$, $0 < \alp \le 1$ with
    \begin{equation}\label{eq:condition_k,a}
    \left\{
        \begin{aligned}
            k+\alp &\le \pi/\bt - 1, \qquad \pi/\bt\notin\mathbb{N},\\
            k+\alp &< \pi/\bt - 1, \qquad \pi/\bt\in\mathbb{N},
        \end{aligned}
    \right.
    \end{equation}
    $0<\tld{\alp}<1$, and $\omg_0\in C^{k,\alp}\cap \mathring{C}^{0,\tld{\alp}}(\overline{\Omg_{\bt}})$. Then, for any $T>0$, there exists a unique solution 
    \begin{align*}
        \omg \in L^{\infty}([0,T); C^{k,\alp}(\overline{\Omg_{\bt}}))
    \end{align*}
    to the Euler equations with initial data $\omg_0$. The norm $\|\omg(t,\cdot)\|_{C^{k,\alp}}$ grows at most double exponentially in time.
\end{customthm} 

The details about the bound for $\|\omg(t,\cdot)\|_{C^{k,\alp}}$ are given in Section \ref{Holder-wellposed2}. To prove Theorem \ref{thm:LWPHolder2}, we first bound $[\nb u]_{C^{0,\alp}}$ under an additional condition $0<\alp<\pi/\bt - 2$. This condition comes from the singular property of the gradient of the Biot-Savart kernel near $0$; see Lemma \ref{lem:gradu_bound2} for the details. This method can be extended to the higher regularity case with the aid of Lemma \ref{lem:HalfGradKer}. The critical case $k+\alp = \pi/\bt - 1$ is handled by an interpolation argument. Although the operator $\omg \mapsto \nb u$ is not bounded on $C^{k-1,\alp}$, the stronger assumption $\omg\in C^{k,\alp}$ helps us to prove by interpolation that $\|\nb u\|_{C^{k-1,\alp}}$ is double-exponentially bounded in time.

\begin{remark}
    Let $\alp$, $\tld{\alp}\in (0,1]$, and $k\in\mathbb{N}_0$. We note some remarks about the relation between $C^{k,\alp}$ and $\mathring{C}^{k,\tld{\alp}}$ spaces.
    \begin{itemize}
        \item The spaces $C^{k,\alp}(\overline{\Omg_{\bt}})$ and $\mathring{C}^{k,\tld{\alp}}(\overline{\Omg_{\bt}})$ satisfy $C^{k,\alp}(\overline{\Omg_{\bt}})\nsubseteq \mathring{C}^{k,\tld{\alp}}(\overline{\Omg_{\bt}})$ and $C^{k,\alp}(\overline{\Omg_{\bt}})\nsupseteq \mathring{C}^{k,\tld{\alp}}(\overline{\Omg_{\bt}})$.
        \item If a $C^{k,\tld{\alp}}(\overline{\Omg_{\bt}})$ initial vorticity has compact support, then it belongs to $\mathring{C}^{0,\tld{\alp}}(\overline{\Omg_{\bt}})$. Therefore, assuming that the vorticity belongs to $\mathring{C}^{0,\tld{\alp}}$ is weaker than assuming compact support.
    \end{itemize}
\end{remark}

The condition \eqref{eq:condition_k,a} is essential for our well-posedness result. As we show in Section \ref{Holder-wellposed2}, the estimate \eqref{eq:Schauder} holds if and only if $k+\alp < \pi/\bt - 2$ and $\alp\neq 0$. The upper bound of $k+\alp$ is essential, and for example, there exists a harmonic function that is not infinitely differentiable at the corner.
\begin{example}
    Let $\bt = 2\pi/7$, and $\chi\in C_c^{\infty}(\overline{\Omg_{\bt}})$ be a radial cut-off function with $\chi\equiv 1$ in $\Omg_{\bt}\cap B_1(0)$. We define a function $\psi$ on $\Omg_{\bt}$ such that
    \begin{equation*}
        \psi(r\cos\tht, r\sin\tht) = r^{7/2} \sin\left(\frac{7\tht}{2}\right)\chi(r).
    \end{equation*}
    Then, $\omg := \Dlt \psi \equiv 0$ in a neighborhood of the origin. On the other hand, $u = \nb^{\perp} \psi$ is homogeneous of degree $5/2$ near the origin. Hence $\nabla u \notin C^{1,\alp}$ for every $\alp>1/2$. For the general theory of singular harmonic functions, we refer to \cite{G1}.
\end{example}
If $k+\alp < \pi/\bt - 1$ and the vorticity satisfies $\omg\in C^{k-1,\alp}\cap \mathring{C}^{0,\tld{\alp}}$, then Lemma \ref{lem:gradu_bound2} implies that $\nb u\in C^{k-1,\alp}$ in a neighborhood of the origin. Unlike \eqref{eq:mathring_C_k_alp}, since the function space $C^{k,\alp}$ is not scale-invariant, we cannot use Lemma \ref{lem:gradu_pv2} directly for the proof. Instead, we use the property that if the vorticity is a polynomial of order $n$ with $n + 2 < \pi/\bt$, then the velocity is also represented by a smooth polynomial (Lemma \ref{lem:Gbeta_nm}). This acts as a cancellation property of the singular integrals that appear in the proof of Lemma \ref{lem:gradu_bound2}.

The critical case $k+\alp = \pi/\bt - 1\notin\mathbb{N}$ is explained by an interpolation argument. For example, suppose that $\pi/3 < \bt < \pi/2$ so that $k=1$ and $\alp = \pi/\bt - 2$. Suppose further that $0<\tld{\alp}<1$, and $\omg\in C^{0,1}\cap \mathring{C}^{0,\tld{\alp}}(\overline{\Omg_{\bt}})$. Lemma \ref{lem:borderline} says that $\nb u\in C^{0,\pi/\bt - 2}$ and 
\begin{equation*}
    [\nb u]_{C^{0,\pi/\bt-2}} \lesssim_{\bt} \|\omg\|_{C^{0,\pi/\bt - 2}} + \|\omg\|_{C^{0,1}}^{\pi/\bt-2} \|\omg\|_{L^{\infty}}^{3-\pi/\bt}. 
\end{equation*}
The well-posedness in $C^{0,\pi/\bt-2}$ and $C^{0,1}$ implies the double-exponential bound of $[\nb u]_{C^{0,\pi/\bt-2}}$ in time, and this bound is used for the well-posedness in $C^{1,\pi/\bt - 2}$. This result follows from the decomposition of the Biot-Savart kernel into the most singular term at $0$ and the smooth remainder:
\begin{itemize}
    \item The most singular term prevents us from obtaining an estimate in terms of $\|\omg\|_{C^{0,\pi/\bt - 2}}$, because the relevant integral contains the divergent logarithmic factor $\int_2^{\infty} \rho^{-1} d\rho$. To avoid this logarithmic divergence, we need to impose a stronger regularity assumption, such as $\omg\in C^{0,1}$, to improve the decay in $\rho$.  
    \item Meanwhile, the smooth remainder yields a corresponding factor $\int_2^{\infty} \rho^{-m} d\rho$ ($m>1$) when we obtain an estimate in terms of $\|\omg\|_{C^{0,\pi/\bt - 2}}$.
\end{itemize}

\subsubsection*{Ill-posedness result}

It is natural to ask what happens when the condition \eqref{eq:condition_k,a} fails, and our final main result concerns this case. The presence of the corner at the origin can prevent $\nb u$ from having sufficient Hölder regularity, which in turn causes the solution to lose the desired regularity.
\begin{customthm}{D}[Instantaneous loss of $C^{k,\alp}$ regularity]\label{thm:ill-posed}
    Suppose that $0<\bt<\pi/2$, $k\in\mathbb{N}$, and $0 < \alp \le 1$ with
    \begin{equation}\label{eq:condition_k,a_2}
    \left\{
        \begin{aligned}
            k+\alp &> \pi/\bt - 1, \qquad \pi/\bt\notin\mathbb{N},\\
            k+\alp &\ge \pi/\bt - 1, \qquad \pi/\bt\in\mathbb{N}.
        \end{aligned}
    \right.
    \end{equation}
    There exists an initial vorticity $\omg_0\in C^{\infty}_c(\overline{\Omg_{\bt}})$ and $T>0$, $\dlt>0$ such that the solution $\omg$ to the Euler equations satisfies
    \begin{equation*}
        \|\omg(t,\cdot)\|_{C^{k,\alp}(\overline{\Omg_{\bt}}\cap B_{\dlt}(0))} =\infty
    \end{equation*}
    for all $0<t<T$.
\end{customthm}

The key to the proof is to decompose $\nb u$ into the most singular term at the origin and the smooth remainder. Since the formula of the Green's function on $\Omg_{\bt}$ is given, we calculate the singular term in Section \ref{Holder-wellposed2} explicitly.

We next decompose the flow map $\Phi$ and its inverse. For example, suppose that $\pi/\bt\notin\mathbb{N}$. Iterating the bootstrap argument yields
\begin{equation*}
    \Phi_t(x) := \Phi(t,x) := L_t(x) + H_t(x) + R_t'(x),
\end{equation*}
a decomposition analogous to that of $u$, where $L_t$ is smooth, $H_t$ is homogeneous of degree $\pi/\bt - 1$, and $|R_t'(x)| \lesssim |x|^{\pi/\bt-\alp}$. By iterating the bootstrap argument once more, we obtain a corresponding decomposition of the inverse flow map by
\begin{equation*}
    \Phi_t^{-1}(x) := L_t^{-1}(x) - M_t H_t(M_t x) + \tld{R}_t''(x),
\end{equation*}
where $M_t$ denotes the linear part of the map $L_t^{-1}$, and $|\tld{R}_t''(x)|\lesssim |x|^{\pi/\bt-\alp}$. If $ M_t H_t(M_t (\cdot)) \not\equiv 0$, this term gives the leading singular behavior near the origin. The identity $\omg(t,\cdot) = \omg_0\circ \Phi_t^{-1}$ explains how the regularity of $\omg(t,\cdot)$ is determined.

When $\pi/\bt$ is not an integer, the coefficient of the singular term is determined by the values of the vorticity throughout $\Omg_{\bt}$. In contrast, if $\pi/\bt$ is an integer, the coefficient is determined by $\nb^{\pi/\bt-2}\omg(0)$. Motivated by these properties, we conclude this section with examples illustrating Theorem \ref{thm:ill-posed}.

\begin{example}\label{ex:singular_harmonic}
    We note the examples of $\omg_0$ that prove Theorem \ref{thm:ill-posed} in Section \ref{Holder-wellposed2}.
    \begin{itemize}
        \item Suppose that $\bt = 2\pi/7$, and $\chi\in C_c^{\infty}(\overline{\Omg_{\bt}})$ is a radial cut-off function with $\chi(x)\equiv 1$ if $|x|<1$ and $\chi(x)\equiv 0$ if $|x|>2$. Then there exist $T>0$, a constant $A\in\mathbb{R}$, and a cut-off function $\eta$ satisfying $\eta\equiv 0$ in $B_1(0)$ such that
        \begin{equation*}
            \omg_0(x_1, x_2) = \chi(x_1, x_2) \left(x_1\cos\frac{\pi}{7} + x_2\sin\frac{\pi}{7}\right)+ A\eta(x_1,x_2)
        \end{equation*}
        satisfies $\omg_0\in C_c^{\infty}$, but $\|\omg(t,\cdot)\|_{C^{2,\alp}}=\infty$ for all $\alp>1/2$ and $0<t<T$.
        \item Suppose that $\bt = \pi/4$. Then there exists $T>0$ such that the initial data
        \begin{equation*}
            \omg_0(x_1,x_2) = \chi(x_1,x_2) \left((x_1 + x_1^2 - x_2^2)\cos\frac{\pi}{8} + (x_2 + 2x_1x_2) \sin\frac{\pi}{8}\right)
        \end{equation*}
        satisfies $\omg_0\in C_c^{\infty}$, but $\|\omg(t,\cdot)\|_{C^{2,1}}=\infty$ for all $0<t<T$.
    \end{itemize}
\end{example}

\subsection{Previous results}

\subsubsection*{2D Euler equations in domain with corners}

The $2D$ Euler equations in a corner domain are closely related to elliptic theory on domains with corner boundaries. Since $\nabla \cdot u=0$, there exists a stream function $\psi$ such that $u=\nabla^{\perp} \psi$. Since $\Dlt \psi = \omg$ and $\nabla u = \nabla\nabla^{\perp} \psi$, the relationship between $\omg$ and $\nb u$ can be studied through an elliptic problem with corner geometry.

The elliptic problem $\Dlt \psi = \omg$ on domains with corners has been studied extensively; see \cite{G1}, \cite{BK1}, \cite{KMR}. When the corner angle satisfies $\bt>\pi/2$, the velocity gradient $\nb u$ fails to belong to $L^p$ for some $p<\infty$. Consequently, the standard proof strategy for Yudovich's theorem cannot be applied directly. Uniform $L^p$ estimates for $\nb u$ for $p\in [1,\infty)$ play an essential role in the proof of Yudovich's theorem in $\mathbb{R}^d$ \cite{MB}. The works \cite{BPT}, \cite{co2} study $L^p$ estimates for $\nb u$ near corners. For $\pi/2<\bt<\pi$, only a few uniqueness results are known under additional restriction on $\omg_0$ \cite{AN2}.

As noted in the previous subsection, Schauder estimates for domains with corners are available only under certain restrictions. Due to this limitation, many previous studies of Euler equations in corner domains have focused primarily on uniqueness. Several papers address uniqueness for the $2D$ Euler equations in corner domains (\cite{co1}, \cite{co3}), and some also obtain results for domains with more severe singularities (\cite{HZ1}, \cite{HZ2}, \cite{AN1}).

Elgindi and Jeong \cite{EJB}, \cite{EJS} suggested the space $\mathring{C}^{0,\alp}$ to remove the restriction on $k$ and $\alp$. For the sector domain $\Omg_{\bt}$, Lemmas 3.5 and 3.6 in \cite{EJB} prove the $\mathring{C}^{0,\alp}$-estimate and $C^{0,\alp}$-estimate of $\nabla u$, respectively. Note that $|\nb u(x)| \lesssim \log(1+|x|) \|f\|_{L^{\infty}} + \|f\|_{C^{\alp}}$ in \cite{EJB} does not imply $\|\nb u\|_{L^{\infty}}<\infty$. However, the $\mathring{C}^{0,\alp}$ estimate avoids this limitation, making this function space a suitable framework for studying regularity without further restrictions on $\alp$. We exclude the case $\alp=0$ in view of the ill-posedness result of \cite{BL2}.

Itoh, Miura, and Yoneda (\cite{IMY1}, \cite{IMY2}) studied Lipschitz vorticity in domains with acute corners and in a square under an additional odd-symmetry assumption. Despite the presence of corners, the Lipschitz norm of the vorticity grows at most exponentially in time. This growth rate is sharp in light of the mechanism of Kiselev and \v{S}ver\'ak \cite{KS}.

\subsubsection*{Expansion of Yudovich theory}

Since we need the condition $\omg\in L^1(\mathbb{R}^2)$ for \eqref{eq:BS_2D} to define a bounded velocity, Yudovich theory cannot be applied directly to nondecaying vorticity data. However, several results relax the $L^1$ condition in Yudovich-type theories. Serfati \cite{S1} proved that, if $u_0$, $\omg_0\in L^{\infty}$, then there exists a unique global-in-time solution to the $2D$ Euler equations. Benedetto, Marchioro, and Pulvirenti \cite{BMP} established global well-posedness under the assumptions $u/(1+|\cdot|^\alp) \in L^{\infty}$ and $\omg\in L^p\cap L^{\infty}$, where $0<\alp<1$ and $p<2/\alp$. The paper \cite{CK} also studies the $2D$ Euler equations with velocities that grow at infinity.

There are also  $L^{\infty}$ well-posedness results that apply to scale-invariant initial vorticity data. Elgindi and Jeong \cite{EJS} proved that every $m$-fold symmetric initial vorticity  $\omg_0\in L^{\infty}(\mathbb{R}^2)$, with $m\ge 3$, generates a unique global solution $\omg \in L^{\infty} ([0,\infty); L^{\infty}(\mathbb{R}^2))$. In particular, their result applies when the initial vorticity is both scale invariant and $m$-fold symmetric. Under the appropriate reflection symmetry, when  $\bt = \pi/n$ for an integer $n\ge 3$, Theorem \ref{thm:weak-wellposed} recovers Theorem A of \cite{EJS}. Our result extends this argument to every $0<\bt<\pi/2$.

\subsubsection*{Scale-invariant data}

Since a nontrivial scale-invariant $\omg_0\in L^{\infty}$ does not decay at infinity and is naturally defined on an unbounded domain, additional assumptions are needed to extend the classical Yudovich theory to this setting. The work \cite{EJS} provides one such extension. It also investigates the long-time behavior of positive, scale-invariant, $m$-fold symmetric vorticities, where $m\ge 3$. In particular, it shows that the only stationary solutions in this class are constant.

The long-time behavior of scale-invariant data was also studied in \cite{EMS}. Although the scale-invariant vorticity is sign-changing, the solution converges to a piecewise constant function when the initial data have bounded variation. Their result also assumes $m$-fold symmetry. These assumptions motivate the search for analogous conditions in other domains. One possible direction is the study of discretely scale-invariant data, which we introduce in Subsection \ref{Bounded Vorticity} but, to the best of our knowledge, has not been investigated in the existing literature.

\subsection{Organization}

The organization of this work is as follows:
\begin{itemize}
    \item In Section \ref{Preliminary}, we introduce the Green's function $G_{\bt}$ on $\Omg_{\bt}$, which is used throughout this paper. We also derive several singular-integral formulas involving associated kernels. These formulas are used in Sections \ref{Holder-wellposed} and \ref{Holder-wellposed2}.
    \item In Section \ref{weak-wellposed}, we prove Theorem \ref{thm:weak-wellposed}.
    \item In Section \ref{Holder-wellposed}, we prove Theorem \ref{thm:GWPHolder}. The well-posedness result in $\mathring{C}^{0,\alp}$ serves as the foundation for the analysis in Subsections \ref{LWPHolder} and \ref{borderline}.
    \item In Subsection \ref{LWPHolder}, we first prove Theorem \ref{thm:LWPHolder2} under the condition $k+\alp < \pi/\bt - 1$.
    \item In Subsection \ref{borderline}, we next prove Theorem \ref{thm:LWPHolder2} under the conditions $k+\alp = \pi/\bt - 1$ and $\pi/\bt\notin \mathbb{N}$.
    \item In Subsection \ref{ill-posed2}, we prove Theorem \ref{thm:ill-posed}.
    \item In Section \ref{Long-time}, we prove Theorem \ref{thm:Long-Scale}.
\end{itemize}

\subsection{Notation}

We introduce the notation used throughout this paper.
\begin{itemize}
    \item We define $\Omg_{\bt} := \{(r\cos\tht, r\sin\tht)\in\mathbb{R}^2: r>0,\ 0<\tht<\bt\}$, and equivalently write $\Omg_{\bt} = \{ r e^{i\tht}\in\mathbb{C}: r>0, 0<\tht<\bt\}$. Thus, it is an infinite sector with opening angle $\bt$. For convenience, we set $p:=\pi/\bt$.
    \item The set $B_R(a)$ is an open ball of radius $R$ centered at $a$. We abbreviate $B_R := B_R(0)$ and $\mathbb{N}_0 := \mathbb{N}\cup \{0\}$.
    \item The Green's function on $\Omg_{\bt}$ and the Biot-Savart kernel are denoted by $G_{\bt}$ and $K_{\bt}$, respectively. We define $\Gmm$ on $\mathbb{C}$ by $2\pi \Gmm(z) := \log|z|$.
    \item In $\Omg_{\bt}$, the complex number $z^s$ is defined by $z^s := r^s e^{is\tht}$ for each $z = re^{i\tht}$ ($0<\tht<\bt$) and $s>0$. The logarithm is defined by the principal branch.
    \item We denote $\omg_t := \omg(t,\cdot)$, $u_t := u(t,\cdot)$, and $\Phi_t := \Phi(t,\cdot)$. We use $\rd_t \omg$ for the partial derivative of $\omg$.
\end{itemize}

\subsection*{Acknowledgements}
The author would like to thank Prof. In-Jee Jeong for his invaluable guidance and support throughout this work. The author has been supported by NRF grant No. 2022R1C1C1011051 and RS-2024-00406821.

\section{Green's function on an infinite sector}\label{Preliminary}

\subsection{Green's function}
Throughout this paper, the angle of the corner is denoted by $\bt$, and we define $p$ by $p:=\pi/\bt$. The \textit{Dirichlet Green's function} $G_{\bt}$ on the domain $\Omg_{\bt}$ is the real-valued solution of
\begin{equation}\label{eq:Green}
    \left\{
        \begin{aligned}
            \Dlt_x G_{\bt}(x,y) & = \dlt_0(x-y), \quad\ x,y\in\Omg_{\bt},\\ G_{\bt}(x,y) & = 0, \quad\quad\quad\quad\quad x\in \partial\Omg_{\bt},\ y\in \Omg_{\bt}, \\ \lim_{|x|\rightarrow \infty} G_{\bt}(x,y) & = 0, \quad\quad\quad\quad\quad  x,y\in\Omg_{\bt},
        \end{aligned}
    \right.
\end{equation}
where $\dlt_0$ is the Dirac delta. 
\begin{remark}
    If a solution of \eqref{eq:Green} exists, then the weak maximum principle implies the uniqueness of \eqref{eq:Green}.
\end{remark}

From now on, we identify $(x,y)\in \mathbb{R}^2$ with $x+iy \in \mathbb{C}$, and $\Omg_{\bt}\subset \mathbb{R}^2$ with $\{ r e^{i\tht}\in\mathbb{C}: r>0, 0<\tht<\bt\}$, a subset of $\mathbb{C}$. Lemma \ref{thm:G_beta} implies that the solution $G_{\bt}$ of \eqref{eq:Green} exists and its formula is
\begin{align*}
    G_{\bt}(z,w) = \frac{1}{2\pi}\log \frac{\left|z^{\pi/\bt} - w^{\pi/\bt}\right|}{\left|z^{\pi/\bt} - \bar{w}^{\pi/\bt}\right|} = \frac{1}{2\pi}\log \frac{\left|z^p - w^p\right|}{\left|z^p - \bar{w}^p\right|},
\end{align*}
where $z$, $w\in \Omg_{\bt}$ are complex numbers. Recall that in $\mathbb{R}^2$, the Biot-Savart kernel is defined by the perpendicular gradient of the Green's function \cite{MB}. 
Since $2\partial_{\bar{z}} = \partial_x + i\partial_y$, the \textit{Biot-Savart kernel} $K_{\bt}$ is naturally defined by
\begin{align*}
    K_{\bt}(z,w):= 2i\partial_{\bar{z}} G_{\bt}(z,w) = \frac{ip}{2\pi} \frac{\bar{z}^{p-1} (\bar{w}^{p}- w^p)}{(\bar{z}^p - \bar{w}^p)(\bar{z}^p - w^p)}.
\end{align*}

The Biot-Savart kernel $K_{\bt}$ is a complex-valued function with two variables, and it satisfies $\Re(K_{\bt}) = -i(\partial_{\bar{z}}G_{\bt} - \partial_z G_{\bt})$ and $\Im(K_{\bt}) = \partial_{\bar{z}}G_{\bt} + \partial_z G_{\bt}$. In Section \ref{Holder-wellposed}, the gradient $\nb u$ is determined in complex notation by the formulas of $\rd_{\bar{z}} \Re K_{\bt}$ and $\rd_{\bar{z}} \Im K_{\bt}$. By direct calculation,
\begin{equation}\label{eq:G_deri_deri}
    \begin{split}
        \rd_{\bar{z}} \Im (K_{\bt}) (z,w)= \rd_{\bar{z}}^2 G_{\bt}(z,w) & = -\frac{p\bar{z}^{p-2}}{4\pi}\left( \frac{\bar{z}^p + (p-1)\bar{w}^p}{(\bar{z}^p - \bar{w}^p)^2} - \frac{\bar{z}^p + (p-1)w^p}{(\bar{z}^p - w^p)^2}\right),
    \end{split}
\end{equation}
and similarly $\rd_{\bar{z}} \Re (K_{\bt}) = -i\rd_{\bar{z}}^2 G_{\bt}$. Although the formula of $\rd_{\bar{z}}^k G_{\bt} (z,w)$ is quite complicated for large $k\in\mathbb{N}$, Lemma \ref{lem:ker_bound} states that if $\bt\in (0,\pi)$,  $k\in\mathbb{N}$, and $z$, $w\in\Omg_{\bt}$, then
\begin{equation*}
    \left| \rd_{\bar{z}}^k G_{\bt}(z,w) \right| \lesssim_{k,\bt} \frac{1}{|z-w|^k},\qquad |w|\le 2|z|,
\end{equation*}
and
\begin{equation*}
    \left| \rd_{\bar{z}}^k G_{\bt}(z,w) \right| \lesssim_{k,\bt} \frac{|z|^{\pi/\bt - k}}{|w|^{\pi/\bt}}, \qquad |w|\ge 2|z|
\end{equation*}
hold. These two bounds of $\rd_{\bar{z}}^k G_{\bt}(z,w)$ are used throughout this paper.

\subsection{Singular integral formulas involving Green's function}

In this subsection, we prove some properties of $G_{\bt}$ involving singular integrals. We first prove some lemmas that can be applied for all $0<\bt<\pi/2$. These lemmas are useful in Section \ref{Holder-wellposed}.

We define a function $\Gmm$ on $\mathbb{C}$ by
\begin{equation*}
    \Gmm(z) := \frac{1}{2\pi}\log|z|,\qquad z\in\mathbb{C}.
\end{equation*}
The Green's function on $\mathbb{C}$ is $\Gmm(z-w)$. Let $H_{\bt}$ be defined on $(\Omg_{\bt})^2$ by
\begin{equation*}
    H_{\bt} (z,w) := G_{\bt}(z,w) - G_{\pi}(z,w) =  G_{\bt}(z,w) - \Gmm(z-w) + \Gmm(z-\bar{w}), \qquad z, w\in\Omg_{\bt}.
\end{equation*}
The kernel $H_{\bt}$ extends continuously at $z=w$, and it is harmonic in $\Omg_{\bt}$.

We briefly recall the definition of the Taylor polynomial. Let $k\in\mathbb{N}_0$, $a\in\overline{\Omg_{\bt}}$ and $f\in C^k(\overline{\Omg_{\bt}})$. Then, the $k$-\textit{th order Taylor polynomial of} $f$ \textit{at} $a$ is a polynomial defined by
\begin{equation*}
    T_{k,a} f(z) := \sum_{j+l\le k} \frac{\rd_z^j \rd_{\bar{z}}^l f(a)}{j!l!}(z-a)^j (\bar{z}-\bar{a})^l.
\end{equation*}
If $a=0$, it is natural to abbreviate $T_k f(z) := T_{k,0} f(z)$.

\begin{lemma}\label{lem:gradu_pv}
    Let $k\in\mathbb{N}_0$, and $z\in\Omg_{\bt}$. Suppose that $0<\bt<\pi/2$, $0<\alp\le 1$, and $\omg\in C^{k,\alp}(\Omg_{\bt})$. Then
    \begin{equation}\label{eq:psi_Cauchy}
        \rd_{\bar{z}}^{k+2} \psi(z) = p.v.\int_{\Omg_{\bt}} \rd_{\bar{z}}^{k+2} G_{\bt}(z,w) \omg(w) dA(w).
    \end{equation}
    In particular, the Cauchy principal value is well-defined.
\end{lemma}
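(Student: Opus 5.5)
We will prove the formula by splitting the Green's function into the whole-plane part and a smooth harmonic correction, $G_{\bt}(z,w)=\Gmm(z-w)-\Gmm(z-\bar w)+H_{\bt}(z,w)$. Here $\psi(z)=\int_{\Omg_{\bt}}G_{\bt}(z,w)\omg(w)\,dA(w)$; this integral converges for $0<\bt<\pi/2$ by the far-field bound $|G_{\bt}(z,w)|\lesssim |z|^{p}/|w|^{p}$ with $p>2$.

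Fix $z\in\Omg_{\bt}$ and a small radius $\eps<\tfrac12\operatorname{dist}(z,\rd\Omg_{\bt})$. Take a cutoff $\chi\in C_c^\infty(B_{2\eps}(z))$ with $\chi\equiv1$ on $B_\eps(z)$, and write $\omg=\chi\omg+(1-\chi)\omg$. This gives $\psi=\psi_1+\psi_2$.

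\textbf{The far part $\psi_2$.} The density $(1-\chi)\omg$ vanishes near $z$. For $z'$ near $z$ we may therefore differentiate under the integral $k+2$ times. Domination comes from the two bounds of Lemma~\ref{lem:ker_bound}:
\begin{itemize}
\item the bound $|w-z'|^{-(k+2)}\lesssim \eps^{-(k+2)}$ on the bounded region $|w|\le 2|z'|$;
\item the bound $|z'|^{p-k-2}|w|^{-p}$ for $|w|\ge2|z'|$, which is integrable at infinity since $p>2$ and $\omg\in L^\infty$.
\end{itemize}
So $\rd_{\bar z}^{k+2}\psi_2(z)=\int \rd_{\bar z}^{k+2}G_{\bt}(z,w)(1-\chi(w))\omg(w)\,dA(w)$, and this is an absolutely convergent integral.

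\textbf{The near part $\psi_1$.} Since $\chi\omg$ is compactly supported away from the boundary, we have $\psi_1=\Gmm*(\chi\omg)+h$, where $h(z')=\int(-\Gmm(z'-\bar w)+H_{\bt}(z',w))\chi\omg(w)\,dA(w)$. The kernel of $h$ is smooth in $z'$ near $z$, uniformly for $w\in B_{2\eps}(z)$, so $h$ can be differentiated under the integral sign. For the Newtonian part we use $\rd_{\bar z}\Gmm(z-w)=\frac{1}{4\pi}\frac{1}{\bar z-\bar w}$, so $\rd_{\bar z}(\Gmm*f)=\frac{1}{4\pi}\frac{1}{\bar z}*f$ with $f=\chi\omg\in C_c^{k,\alp}$. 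Moving $k+1$ derivatives onto $f$ gives $\rd_{\bar z}^{k+1}(\Gmm*(\chi\omg))=\frac{1}{4\pi}\frac1{\bar z}*\rd_{\bar z}^{k}f$. One further derivative falls on a $C^{0,\alp}$ function, and the classical Calderón–Zygmund/Beurling identity gives
\[
\rd_{\bar z}\Big(\tfrac1{4\pi}\tfrac1{\bar z}*g\Big)(z)=p.v.\!\int -\tfrac{1}{4\pi}\tfrac{1}{(\bar z-\bar w)^2}g(w)\,dA(w),
\]
valid for $g\in C_c^{0,\alp}$. There is no delta term, because $\rd_{\bar z}\frac{1}{\bar z}$ has vanishing circular mean and $\rd_z\frac1{\bar z}$ would produce the delta but is not present. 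We then move the $k$ derivatives back from $g=\rd^k_{\bar w}f$ onto the kernel by integrating by parts on $\{|w-z|>\dlt\}$. The boundary terms on the circle $|w-z|=\dlt$ have the form $\int_{|w-z|=\dlt}(\bar z-\bar w)^{-j}\rd^{m}f\,d\bar w$. After subtracting the Taylor polynomial of $f$ at $z$, each of these is either zero (an angular cancellation $\int e^{-ij\tht}e^{-i\tht}\,d\tht$ against monomials) or $O(\dlt^\alp)$. This identifies the result with $p.v.\int\rd_{\bar z}^{k+2}\Gmm(z-w)f(w)\,dA(w)$ as $\dlt\to0$.

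\textbf{Assembly.} We add the near and far pieces. Because $\chi\equiv1$ near $z$, the principal value over small discs around $z$ of $\rd_{\bar z}^{k+2}G_{\bt}(z,w)\omg(w)$ equals the sum of the three pieces. The principal value exists: the singular part of $\rd_{\bar z}^{k+2}G_{\bt}$ near $w=z$ is $c_k(\bar z-\bar w)^{-(k+2)}$, whose integral over annuli against $\omg(w)-T_{k,z}\omg(w)$ converges absolutely, since that difference is $O(|w-z|^{k+\alp})$. Against the polynomial $T_{k,z}\omg$, the integral over annuli vanishes by angular cancellation, because $(\bar z-\bar w)^{-(k+2)}(w-z)^j(\bar w-\bar z)^l$ has nonzero angular frequency whenever $j+l\le k$.

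\textbf{Main obstacle.} The delicate step is the boundary-term bookkeeping in the integration by parts for the near part. If it becomes messy, the fallback is to prove the formula directly by induction on $k$: differentiate the principal-value identity for $k$ once more, using the harmonicity of $w\mapsto$ kernel differences together with the same Taylor-subtraction argument.
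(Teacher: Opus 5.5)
Your proof is correct, and its existence part is the paper's argument. The paper also splits $G_\beta=\Gamma(z-w)-\Gamma(z-\bar w)+H_\beta$, observes that only $\Gamma(z-w)$ is singular at $w=z$, and combines the cancellation $\int_{B_R\setminus B_r}\partial_{\bar z}^{k}\Gamma\, z^m\bar z^n\,dA=0$ for $k\ge m+n+2$ (your angular-frequency count) with the Taylor subtraction $\omega-T_{k,z}\omega=O(|w-z|^{k+\alpha})$.

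The difference is that the paper stops there. It says the identity itself follows ``by the definition of $\psi$'' once the principal value exists. It never justifies commuting $\partial_{\bar z}^{k+2}$ with the singular integral, nor does it rule out a local term. Your cutoff splitting supplies exactly that justification:
\begin{itemize}
\item dominated differentiation for the far part;
\item smoothness of $-\Gamma(z-\bar w)+H_\beta$ near $z$;
\item the Beurling-type identity for $\partial_{\bar z}^2$ of the Newtonian potential, with no $\delta$-term;
\item the integration by parts whose circle terms either vanish or are $O(\delta^\alpha)$.
\end{itemize}
So your argument is the complete one; the paper's version buys only brevity.

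There are two small slips to fix.
\begin{itemize}
\item In $\partial_{\bar z}^{k+1}(\Gamma*f)=\frac{1}{4\pi}\frac{1}{\bar z}*\partial_{\bar z}^{k}f$ you move $k$ derivatives onto $f$, not $k+1$. The displayed formula itself is right.
\item The boundary terms from Stokes' formula for $\partial_{\bar w}$ carry $dw$, not $d\bar w$. This is not cosmetic. With $dw$, the term $(\bar z-\bar w)^{-j}$ gives angular frequency of modulus $j+1$, while the Taylor monomials of $\partial_{\bar w}^{k+1-j}f$ (degree at most $j-1$) have frequencies of modulus at most $j-1$, so they all integrate to zero. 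With $d\bar w$, the monomial $(\bar w-\bar z)^{j-1}$ would leave an $O(1)$ term. Your stated frequency $e^{-i(j+1)\theta}$ is the $dw$ one, so only the symbol is wrong.
\end{itemize}

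Finally, like the paper, you use $\omega\in L^\infty$ to make $\psi$ and the far-field integrals converge. The lemma as stated assumes only local H\"older regularity, so this standing boundedness assumption should be stated explicitly.
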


\begin{proof}
    It is enough to show that the Cauchy principal value in \eqref{eq:psi_Cauchy} exists by the definition of $\psi$. Since $\bt < \pi/2$, the singularity occurs only at $w=z$ by Lemma \ref{lem:ker_bound}. The function $\Gmm$ satisfies
    \begin{equation*}
        \int_{B_R\setminus B_r} \rd_{\bar{z}}^k \Gmm(z) z^m\bar{z}^n dA(z)=0
    \end{equation*}
    for all $R>r>0$ and $k\in\mathbb{N}$, $m$, $n\in\mathbb{N}_0$ with $k\ge m+n+2$. Choose $\dlt>0$ so that $\overline{B_{\dlt}(z)}\subset \Omg_{\bt}$. Since $\omg\in C^{k,\alp}(\overline{B_{\dlt}(z)})$, Taylor's theorem implies that $|\omg(w) - T_{k,z} \omg(w)|\lesssim \|\omg\|_{C^{k,\alp}}|z-w|^{k+\alp}$ for all $w\in B_{\dlt}(z)$. Therefore, the Cauchy principal value
    \begin{equation*}
        p.v.\int_{B_{\dlt}(z)} \rd_{\bar{z}}^{k+2}\Gmm(z-w)\omg(w) dA(w) = \int_{B_{\dlt}(z)\setminus B_{\eps}(z)} \rd_{\bar{z}}^{k+2}\Gmm(z-w)\{\omg(w) - T_{k,z} \omg(w)\}dA(w)
    \end{equation*}
    is well-defined for all $0<\eps<\dlt$. Since $\rd_{\bar{z}}^k H_{\bt}(z,w)$ is bounded near $z$ for all $k\in\mathbb{N}_0$, the Lebesgue integrals
    \begin{equation*}
        \int_{B_{\dlt}(z)} \rd_{\bar{z}}^{k+2}\Gmm(z-\bar{w})\omg(w) dA(w),\quad \int_{B_{\dlt}(z)} \rd_{\bar{z}}^{k+2}H_{\bt}(z,w)\omg(w) dA(w)
    \end{equation*}
    are finite. Therefore, the Cauchy principal value
    \begin{equation*}
        \begin{split}
            & p.v.\int_{B_{\dlt}(z)} \rd_{\bar{z}}^{k+2}G_{\bt}(z,w)\omg(w) dA(w) \\ & = p.v.\int_{B_{\dlt}(z)} \rd_{\bar{z}}^{k+2}\Gmm(z-w)\omg(w) dA(w) + \int_{B_{\dlt}(z)} \left(\rd_{\bar{z}}^{k+2}\Gmm(z-\bar{w}) + \rd_{\bar{z}}^{k+2}H_{\bt}(z,w)\right)\omg(w) dA(w)
        \end{split}
    \end{equation*}
    and the right-hand side of \eqref{eq:psi_Cauchy} are also well-defined.
\end{proof}

\begin{lemma}\label{lem:G_pi_poly}
    Let $k\ge 0$ and $m\ge 2$ be integers. Suppose that $\Im(z)>0$. If $m\ge 3$, then
    \begin{equation}\label{eq:G_pi_poly}
        p.v.\int_{\{\Im(w)>0\}} \rd_{\bar{z}}^{k+m} G_{\pi}(z,w)(z-w)^k dA(w) = 0.
    \end{equation}
    If $m=2$, then there exists a constant $C=C(k)$ such that 
    \begin{equation}\label{eq:G_pi_poly2}
        p.v.\int_{\{\Im(w)>0\}} \rd_{\bar{z}}^{k+2} G_{\pi}(z,w)(z-w)^k dA(w) = C.
    \end{equation}
\end{lemma}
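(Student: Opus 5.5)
The plan is to reduce everything to the case $m=2$ by symmetry, and then obtain $m\ge 3$ by differentiating in $\bar z$. Write $G_\pi(z,w)=\Gmm(z-w)-\Gmm(z-\bar w)$. Then, with $c_n:=(-1)^{n-1}(n-1)!/(4\pi)$ and $n=k+m$,
\begin{equation*}
\rd_{\bar z}^{n} G_\pi(z,w)(z-w)^k = c_n\left(\frac{(z-w)^k}{(\bar z-\bar w)^{n}}-\frac{(z-w)^k}{(\bar z- w)^{n}}\right).
\end{equation*}
Only the first term is singular, and only at $w=z$. Near $w=z$, with $w=z+\eps e^{i\tht}$, the first term equals $\eps^{-m}e^{i(2k+m)\tht}$ up to a constant. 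Its integral over circles vanishes, so the principal value at $z$ exists, exactly as in Lemma \ref{lem:gradu_pv}. At infinity each term is $O(|w|^{-m})$. This is absolutely integrable for $m\ge3$. For $m=2$ I will interpret the integral as $\lim_{R\to\infty}$ over $B_R(z)\cap\{\Im w>0\}$, and check that this limit exists. On the annulus $B_R(z)\setminus B_{R_0}(z)$, the first term again integrates to zero over the full circles except for a region of width $O(\Im z)$ near the real axis, which contributes $O(1/R)$. The second term is handled the same way by expanding around $\bar z$. I would also verify that this convention for the principal value matches the one the paper uses later.

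Let $I_{k,m}(z)$ denote the left-hand side. The case $m=2$ follows from invariance. The integrand is invariant under simultaneous real translations $z\mapsto z+a$, $w\mapsto w+a$ with $a\in\mathbb{R}$, which preserve the upper half-plane. Under dilations $z\mapsto\lmb z$, $w\mapsto \lmb w$ with $\lmb>0$, the kernel has degree $-(k+m)$, the factor $(z-w)^k$ has degree $k$, and $dA$ has degree $2$, so $I_{k,m}(\lmb z)=\lmb^{2-m}I_{k,m}(z)$. The excised balls and truncation balls transform consistently, so these symmetries pass to the principal value. Real translations and positive dilations act transitively on $\{\Im z>0\}$. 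Hence $I_{k,2}(z)=I_{k,2}(i)=:C(k)$, which gives \eqref{eq:G_pi_poly2}.

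For $m\ge 3$ I would show $I_{k,m}=\rd_{\bar z}I_{k,m-1}$. The key observation is that $(z-w)^k$ is holomorphic in $z$. Therefore $\rd_{\bar z}\bigl[\rd_{\bar z}^{k+m-1}G_\pi(z,w)(z-w)^k\bigr]=\rd_{\bar z}^{k+m}G_\pi(z,w)(z-w)^k$. Granting that the derivative can be moved inside the principal value, $I_{k,3}=\rd_{\bar z}C(k)=0$, and induction on $m$ gives $I_{k,m}=0$ for all $m\ge 3$, which is \eqref{eq:G_pi_poly}.

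The main obstacle is justifying differentiation under the principal-value integral, both at $w=z$ and, for $m-1=2$, at infinity. To handle this I would substitute $\zeta=w-z$, so the singularity sits at $\zeta=0$ and the excised ball $B_\eps(0)$ no longer depends on $z$. The domain becomes $\{\Im\zeta>-\Im z\}$, whose dependence on $z$ contributes a boundary term along the line $\Im\zeta=-\Im z$; this line is where $w$ lies on the real axis. On that line $\bar w=w$, so the two terms of $\rd_{\bar z}^{k+m-1}G_\pi$ cancel exactly ($G_\pi$ vanishes there together with its tangential derivatives), and the boundary term is zero. After the substitution the integrand is smooth in $z$ away from $\zeta=0$. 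I would then differentiate the truncated integrals over $\eps<|\zeta|<R$ and pass to the limits $\eps\to0$ and $R\to\infty$, using the uniform convergence supplied by the cancellation bounds of the first paragraph. As an alternative, if this justification becomes delicate, I would compute $I_{k,m}(i)$ directly. Writing the integrand as a $\rd_{\bar w}$-derivative and applying Green's theorem reduces the integral to contour integrals on the real line, a small circle, and a large arc. The small circle gives zero by the frequency count above, and the large arc is $O(R^{2-m})$, which vanishes for $m\ge3$. Combined with homogeneity, this again gives $I_{k,m}\equiv0$.
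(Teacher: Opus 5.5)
Your $m=2$ argument is sound: the invariance under dilations and real translations is exactly what justifies the paper's reduction to $z=i$. One correction there. For $r\gg\Im z$, roughly half of the circle $|w-z|=r$ lies below the real axis, so the cancellation you need at infinity is over \emph{half}-circles. It holds because the leading frequencies $2k+2$ (about $z$) and $-2$ (about $\bar z$) are even, and the discrepancy consists of two arcs of angle $O(\Im z/r)$.

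The genuine gap is the identity $I_{k,m}=\rd_{\bar z}I_{k,m-1}$ for $m\ge3$. Put $n=k+m$ and $F(z,w):=\rd_{\bar z}^{n-1}G_\pi(z,w)(z-w)^k$. After your substitution $\zeta=w-z$, the singular part of $F$ becomes $c_{n-1}(-\zeta)^k(-\bar\zeta)^{1-n}$, which does not depend on $z$ at all. Differentiating the truncated integrals at fixed $\zeta$ therefore kills it. Holomorphy of $(z-w)^k$ in $z$ concerns the derivative at fixed $w$, and the two derivatives differ by $\rd_{\bar w}F=-c_n(z-w)^k(\bar z-\bar w)^{-n}$. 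Carried out correctly, your computation gives only
\begin{equation*}
\rd_{\bar z}I_{k,m-1}=-c_n\int_{\mathbb{H}}\frac{(z-w)^k}{(\bar z-w)^{n}}\,dA(w)=I_{k,m}-c_n\,\mathrm{p.v.}\int_{\mathbb{H}}\frac{(z-w)^k}{(\bar z-\bar w)^{n}}\,dA(w).
\end{equation*}
The last principal value is exactly the integral the paper evaluates in \eqref{eq:orthogonal}, via $u=\phi(r)$ and orthogonality on $[0,\pi/2]$. It is the real content of the lemma, since the reflected part vanishes easily on horizontal lines. So your induction would only show that the reflected part of $I_{k,3}$ vanishes; it never touches the singular part.

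The missing step comes from Green's theorem $\int_D\rd_{\bar w}f\,dA=\frac{1}{2i}\oint_{\rd D}f\,dw$ on $D=\mathbb{H}\cap B_R(z)\setminus B_\eps(z)$. This is really your fallback route, and it is the one to use. Take $\Psi:=-F-c_n(\bar w-w)(z-w)^k(\bar z-w)^{-n}$. Then $\rd_{\bar w}\Psi=\rd_{\bar z}^{n}G_\pi(z,w)(z-w)^k$ and $\Psi\equiv0$ on $\mathbb{R}$. This choice matters: your fallback as written ignores the real-line boundary term, which vanishes only for a suitable antiderivative, or after closing contours upward as the paper does. The remaining boundary terms are handled as follows.
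On $|w-z|=\eps$, the singular part of $\Psi$ is a constant times $\eps^{1-m}e^{i(2k+m-1)\tht}$, which integrates to zero against $dw=i\eps e^{i\tht}d\tht$; the rest contributes $O(\eps)$.
On the arc $|w-z|=R$, $|\Psi|\lesssim R^{1-m}$, so the contribution is $O(R^{2-m})\to0$ for $m\ge3$.

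This proves \eqref{eq:G_pi_poly} directly, with no induction and no differentiation in $z$. It replaces both of the paper's explicit computations, the horizontal-line integrals and the trigonometric orthogonality. It also shows that $m\ge3$ is needed only for the large arc, whose limit is precisely the constant $C(k)$ when $m=2$.
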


Throughout this paper, we denote $\mathbb{H} := \{z\in\mathbb{C}: \Im(z)>0\}$.

\begin{proof}
    To prove \eqref{eq:G_pi_poly} and \eqref{eq:G_pi_poly2}, without loss of generality, we assume that $z=i$. Let us denote $w=:x+iy$, and $\xi := w-z = x+i(y-1)$.

    We first suppose that $m\ge 3$. Since the function $\Gmm$ satisfies
    \begin{equation}\label{eq:Gmm_poly}
        \int_{B_{\dlt}(z)\setminus B_{\eps}(z)} \rd_{\bar{z}}^{k+m} \Gmm(z-w)(z-w)^k dA(w) = 0
    \end{equation}
    for all $\dlt>\eps>0$, the Cauchy principal value \eqref{eq:G_pi_poly} is well-defined. Our goal is to show that the integral
    \begin{equation*}
        \int_{\mathbb{R}} \frac{(z-w)^k}{(z-\bar{w})^{k+m}} dx = \int_{\mathbb{R}} \frac{((-x+i(y+1)) - 2iy )^k}{(-x+i(y+1))^{k+m}} dx
    \end{equation*}
    is equal to zero for all $y>0$. Since
    \begin{equation*}
        (-x+i(y+1))^{n-(k+m)} = \frac{d}{dx} \frac{(-x+i(y+1))^{1+n-(k+m)}}{-(1+n-(k+m))}, \quad \lim_{|x|\rightarrow\infty} (-x+i(y+1))^{1+n-(k+m)}=0
    \end{equation*}
    for all $0\le n\le k$, the binomial theorem yields that
    \begin{equation*}
        \int_{\mathbb{R}} \frac{(z-w)^k}{(z-\bar{w})^{k+m}} dx = 0.
    \end{equation*}
    Since $|\rd_{\bar{z}}^{k+m} G_{\pi}(z,w)| \lesssim |z-w|^{-(k+m)}$, Fubini's theorem implies that
    \begin{equation*}
        \int_{\mathbb{H}} \frac{(z-w)^k}{(z-\bar{w})^{k+m}} dA(w) = 0.
    \end{equation*}
    Let $\xi = re^{i\tht}$, and $\phi(r) := \sin^{-1}(1/r)$. We use the cancellation \eqref{eq:Gmm_poly} to obtain
    \begin{equation}\label{eq:orthogonal}
        \begin{split}
            p.v.\int_{\mathbb{H}} \frac{(z-w)^k}{(\bar{z}-\bar{w})^{k+m}} dA(w) & = \int_1^{\infty} r^{1-m} \int_{-\phi(r)}^{\pi+\phi(r)} e^{i(2k+m)\tht} d\tht dr \\ & = \int_0^{\pi/2} \sin^{m-3} u \cos u \int_{-u}^{\pi+u} e^{i(2k+m)\tht} d\tht du, \qquad u:= \phi(r).
        \end{split}
    \end{equation}
    The integration with respect to $\tht$ is
    \begin{equation*}
        \int_{-u}^{\pi+u} e^{i(2k+m)\tht} d\tht = \left\{
        \begin{aligned}
            \frac{2\sin((2k+m) u) }{(2k+m)},\qquad m\ \textrm{even},\\
            \frac{2i\cos((2k+m) u) }{(2k+m)},\qquad m\ \textrm{odd}.
        \end{aligned}
    \right.
    \end{equation*}
    The right-hand side of \eqref{eq:orthogonal} becomes zero by the orthogonality of the trigonometric system on $[0,\pi/2]$. Therefore, if $m\ge 3$, then the equation \eqref{eq:G_pi_poly} holds.

    We now assume that $m = 2$. Since $z=i$ and $w=i+\xi$,
    \begin{equation*}
        \rd_{\bar{z}}^{k+2} G_{\pi}(z,w) (z-w)^k = c_k \left(  \frac{\xi^k}{\bar{\xi}^{k+2}}- \frac{\xi^k}{(\xi + 2i)^{k+2}} \right)
    \end{equation*}
    for some constant $c_k$. Let $\xi = re^{i\tht}$, and $\phi(r) :=  \sin^{-1}(1/r)$. Since the averages of $e^{i(2k+2)\tht}$ and $e^{-2i\tht}$ on each half-circle centered at the origin are zero, the integration of $\rd_{\bar{z}}^{k+2} G_{\pi}(z,w) (z-w)^k$ on $\mathbb{H}\cap \{|w-i| = r\}$ satisfies
    \begin{equation*}
        \left|\int_{-\phi(r)}^{\pi+\phi(r)}\left( \frac{e^{i(2k+2)\tht}}{r^2} - \frac{e^{-2i\tht}}{r^2} + \frac{(\xi+2i)^{k+2} - \xi^{k+2}}{\xi^2 (\xi+2i)^{k+2}}\right) d\tht\right| \lesssim \frac{C_k}{r^3},\qquad r>2
    \end{equation*}
    for some constant $C_k>0$. Therefore, the Cauchy principal value of \eqref{eq:G_pi_poly2} exists.
\end{proof}

\begin{lemma}\label{lem:H_beta}
    Suppose that $0<\bt<\pi/2$. Let $k\in\mathbb{N}_0$, and $z\in\Omg_{\bt}$ with $\arg(z)\le \bt/2$. Then $H_{\bt}$ satisfies
    \begin{equation*}
        |\rd_{\bar{z}}^k H_{\bt}(z,w)| \le C_{k,\bt}|z|^{-k}
    \end{equation*}
    for all $w\in\Omg_{\bt}$ such that $|z-w| \le 2|z|$. 
\end{lemma}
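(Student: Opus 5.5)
The plan is to exploit scaling and harmonicity. First observe the homogeneity of the kernels. Since $G_{\bt}(\lmb z,\lmb w) = G_{\bt}(z,w)$ (the factor $\lmb^p$ cancels in the ratio) and $\Gmm(\lmb z) = \Gmm(z) + \frac{1}{2\pi}\log\lmb$, the combination $-\Gmm(z-w)+\Gmm(z-\bar w)$ is also invariant. Hence $H_{\bt}(\lmb z,\lmb w) = H_{\bt}(z,w)$ for all $\lmb>0$. Differentiating $k$ times in $\bar z$ gives
\begin{equation*}
\rd_{\bar z}^k H_{\bt}(z,w) = \lmb^{k}(\rd_{\bar z}^k H_{\bt})(\lmb z,\lmb w).
\end{equation*}
Taking $\lmb = 1/|z|$, it therefore suffices to prove a uniform bound $|\rd_{\bar z}^k H_{\bt}(z,w)|\le C_{k,\bt}$ on the normalized set
\begin{equation*}
S := \{(z,w): |z|=1,\ 0<\arg z\le \bt/2,\ w\in\Omg_{\bt},\ |z-w|\le 2\}.
\end{equation*}

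To bound derivatives at $z$, I will use interior estimates for harmonic functions in the $z$ variable. However, $z$ may approach the edge $\tht=0$, so a ball around $z$ need not stay inside $\Omg_{\bt}$. The fix is to extend by reflection. For fixed $w$, the function $z\mapsto G_{\bt}(z,w)$ vanishes on the ray $\tht=0$, and so does $z\mapsto G_{\pi}(z,w)=\Gmm(z-w)-\Gmm(z-\bar w)$. Both are therefore odd under $z\mapsto\bar z$ when continued across the real axis. Concretely, $G_{\bt}$ continues via the formula with $z^p$ to the doubled sector $\{-\bt<\arg z<\bt\}$. Thus $H_{\bt}(\cdot,w)$ is harmonic in $\{-\bt<\arg z<\bt\}$ minus the possible singular points $w$ and $\bar w$. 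But $H_{\bt}$ is regular at $z=w$ (stated in the paper), and by oddness it is also regular at $z=\bar w$. So $H_{\bt}(\cdot,w)$ is harmonic in the whole doubled sector. Since $\arg z\le \bt/2$ and $|z|=1$, the ball $B_{\rho}(z)$ with $\rho=\tfrac12\sin(\bt/2)$ lies inside the doubled sector, at distance bounded below from its boundary rays $\arg=\pm\bt$. The mean value / Cauchy estimates then give $|\rd_{\bar z}^k H_{\bt}(z,w)|\le C_k\rho^{-k}\sup_{B_{\rho}(z)}|H_{\bt}(\cdot,w)|$.

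The remaining and main step is a uniform sup bound for $H_{\bt}(\zeta,w)$ over $\zeta\in B_\rho(z)$ and $w$ with $|w|\le 3$. Here I would invoke the maximum principle on a region avoiding the singular behaviour. On the boundary of the doubled sector, $G_{\bt}$ is bounded and even vanishes. However, $\Gmm(\zeta-w)-\Gmm(\zeta-\bar w)$ is there only bounded away from $w$, and $w$ can approach the boundary (including the corner). The cleaner route is to write
\begin{equation*}
H_{\bt}=\frac{1}{2\pi}\left(\log\frac{|\zeta^p-w^p|}{|\zeta-w|}-\log\frac{|\zeta^p-\bar w^p|}{|\zeta-\bar w|}\right)
\end{equation*}
and bound each quotient directly. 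Since $\zeta\mapsto\zeta^p$ is conformal with derivative comparable to $1$ for $|\zeta|\sim 1$, $|\zeta^p-w^p|/|\zeta-w|$ is bounded above and below by $\bt$-dependent constants whenever $|w|\le 3$ and $\zeta$ lies in the compact set $\{|\zeta|\in[1/2,2],\ |\arg\zeta|\le 3\bt/4\}$. I would check this by writing $\zeta^p-w^p=\int_w^\zeta p\xi^{p-1}d\xi$ along a path in the sector, and handling separately the case $|w|\ll1$, where both sides are $\sim 1$. The same argument applies with $\bar w$ in place of $w$. This two-sided bound is the only delicate point, and it yields $\sup|H_{\bt}|\le C_\bt$, completing the proof.
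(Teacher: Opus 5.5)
Your proof is correct, with one step that needs a different justification (second paragraph below).

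\textbf{Comparison with the paper.} Your skeleton is the paper's: normalize $|z|=1$ using $H_{\beta}(\lambda z,\lambda w)=H_{\beta}(z,w)$, then write $H_{\beta}$ as $\frac{1}{2\pi}$ times the difference of $\log\frac{|z^p-w^p|}{|z-w|}$ and $\log\frac{|z^p-\bar w^p|}{|z-\bar w|}$. Where you differ is in how the derivatives are handled.

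The paper simply asserts that, for $|z|=1$ and $|z-w|\le 2$, these two functions and their $z$-derivatives are smooth and bounded. This is a tacit continuity-on-a-compact-set statement, and it also tacitly requires continuing $z^p$ past $\arg z=0$, since $\bar w$ can approach $z$ as $\arg z\to 0$.

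You make this continuation explicit. You note that $H_{\beta}(\cdot,w)$ is harmonic on the doubled sector $\{|\arg\zeta|<\beta\}$ and apply interior estimates on $B_{\rho}(z)$. Then only a uniform $C^0$ bound is needed, and all $k$ are handled at once with the explicit factor $\rho^{-k}$. Both routes rest on the same nondegeneracy: the difference quotients must be bounded above and below.

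\textbf{The lower bound.} Your proposed verification of the two-sided bound covers only the upper half. Integrating $p\xi^{p-1}$ along the segment, which stays in the convex doubled sector, gives $|\zeta^p-v^p|\le p\,3^{p-1}|\zeta-v|$ for $v\in\{w,\bar w\}$. It does not give the lower bound. For small $\beta$, the arguments of $\xi^{p-1}$ along the segment can spread over more than $\pi$, for instance when $\arg\zeta\approx 3\beta/4$ and $v=\bar w$ with $\arg\bar w\approx -\beta$. Then the convex-hull argument of Lemma \ref{lem:exp_p} breaks down, even though $|\xi|$ stays bounded below.

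Use injectivity instead. Let $K=\{1/2\le|\zeta|\le 2,\ |\arg\zeta|\le 3\beta/4\}$, and define $Q(\zeta,v):=(\zeta^p-v^p)/(\zeta-v)$ with $Q(v,v)=pv^{p-1}$. Then $Q$ is continuous on $K\times\{v:|v|\le 3,\ |\arg v|\le\beta\}$. It never vanishes there, for two reasons.
\begin{itemize}
\item If $\zeta^p=v^p$ with $v\ne 0$, then $|\zeta|=|v|$. Also $p\arg\zeta\in[-3\pi/4,3\pi/4]$ and $p\arg v\in[-\pi,\pi]$ differ by less than $2\pi$, so $\arg\zeta=\arg v$ and hence $\zeta=v$, where $Q=pv^{p-1}\neq 0$.
\item At $v=0$ one has $Q(\zeta,0)=\zeta^{p-1}\neq 0$.
\end{itemize}
Compactness then gives $|Q|\ge c_{\beta}>0$. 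This completes your sup bound, and it also makes your separate treatment of the case $|w|\ll 1$ unnecessary.
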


\begin{proof}
    Without loss of generality, suppose that $|z|=1$. The functions
    \begin{equation*}
        w\mapsto \log\frac{|z^p - w^p|}{|z-w|},\quad w\mapsto \log\frac{|z^p - \bar{w}^p|}{|z-\bar{w}|}
    \end{equation*}
    and their derivatives are smooth and bounded in $\{w\in\Omg_{\bt}: |z-w|\le 2\}$. This implies the boundedness of $|\rd_{\bar{z}}^k H_{\bt}(z,w)| \le C_{k,\bt}$ for all $k\in\mathbb{N}_0$. 
\end{proof}

\begin{lemma}\label{lem:HalfGradKer}
    Let $k\ge 0$, $m\ge 2$ be integers, and $z\in\Omg_{\bt}$. Suppose that $0<\bt<\pi/2$. There exists a constant $C_1=C_1(k,m,\bt)$ such that
    \begin{equation}\label{eq:halfGradKer}
        \left|p.v.\int_{\{w\in\Omg_{\bt}: |z-w|\le 2|z|\}} \rd_{\bar{z}}^{k+m} G_{\bt}(z,w) (z-w)^k dA(w)\right|\le \frac{C_1}{|z|^{m-2}}.
    \end{equation}
    If $z'\in\Omg_{\bt}$ satisfies $|z-z'|\le \bt|z|/10$, then there exists a constant $C_2=C_2(k,m,\bt)$ such that
    \begin{equation}\label{eq:halfGradKer2}
        \left|p.v.\int_{\{w\in\Omg_{\bt}: |z-w|\le 2|z|\}} (\rd_{\bar{z}}^{k+m} G_{\bt}(z,w) (z-w)^k - \rd_{\bar{z}}^{k+m} G_{\bt}(z',w) (z'-w)^k) dA(w)\right|\le \frac{C_2}{|z|^{m-1}}|z-z'|.
    \end{equation}
\end{lemma}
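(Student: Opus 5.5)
The plan is to reduce everything to the normalized case $|z|=1$ by scaling, and then split the kernel as $G_{\bt} = G_\pi + H_{\bt}$. Under $w\mapsto \lmb w$ we have $G_{\bt}(\lmb z,\lmb w) = G_{\bt}(z,w)$. Hence $\rd_{\bar z}^{k+m}G_{\bt}$ is homogeneous of degree $-(k+m)$. The factor $(z-w)^k$ contributes degree $k$ and $dA$ contributes degree $2$, so the integral in \eqref{eq:halfGradKer} is homogeneous of degree $2-m$ in $z$ (the region $\{|z-w|\le 2|z|\}$ scales correctly). It therefore suffices to prove a uniform bound for $|z|=1$, $0<\arg z<\bt$.

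\textbf{Proof of \eqref{eq:halfGradKer}.} Write $\rd_{\bar z}^{k+m}G_{\bt} = \rd_{\bar z}^{k+m}\Gmm(z-w) - \rd_{\bar z}^{k+m}\Gmm(z-\bar w) + \rd_{\bar z}^{k+m}H_{\bt}(z,w)$.

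The $H_{\bt}$ part is bounded on $\{|z-w|\le 2\}$. Lemma \ref{lem:H_beta} gives this when $\arg z\le \bt/2$, and the reflection $w\mapsto e^{i\bt}\bar w$ of the sector handles $\arg z\ge\bt/2$. Since the region has bounded area, this part contributes $O(1)$.

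The singular part is $\rd_{\bar z}^{k+m}\Gmm(z-w)(z-w)^k$. Take the p.v. over $\Omg_{\bt}\cap B_2(z)$, and subtract the p.v. over the largest disc $B_d(z)\subset\Omg_{\bt}$, $d=\mathrm{dist}(z,\rd\Omg_{\bt})$. That disc integral vanishes by the rotational cancellation \eqref{eq:Gmm_poly} (it equals a constant if $m=2$, by the same angular computation as in Lemma \ref{lem:G_pi_poly}). What remains is an absolutely convergent integral over an annular region. The difficulty is that $d$ can be small when $z$ is near a boundary ray, so a crude bound $\int_d^2 r^{1-m}dr$ blows up like $\log(1/d)$ or $d^{2-m}$.

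\textbf{Near-boundary case (main obstacle).} I would combine the $\Gmm(z-w)$ and $-\Gmm(z-\bar w)$ terms, i.e. use $G_\pi$ relative to the nearer boundary ray. Rotate so that ray is the real axis. Near the ray, $\Omg_{\bt}\cap B_2(z)$ agrees with $\mathbb{H}\cap B_2(z)$ except in a region at distance $\gtrsim 1$ from $z$, where the kernel is bounded.

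Lemma \ref{lem:G_pi_poly} shows that the p.v. over all of $\mathbb{H}$ is $0$ or $C(k)$. Its proof gives more: at $z=\Im z\cdot i$ after scaling, the half-circle averages decay like $r^{-3}$. So the p.v. of $\rd^{k+m}G_\pi(z,w)(z-w)^k$ over $\mathbb{H}\cap B_2(z)$ equals the full-$\mathbb H$ value minus the tail over $\mathbb{H}\setminus B_2(z)$. That tail is bounded uniformly in $\Im z\in(0,1]$, because its kernel is $O(|z-w|^{-m})$ with $m\ge2$. The case $m=2$ is the delicate one: here I would invoke the $r^{-3}$ cancellation on half-circles established in the proof of Lemma \ref{lem:G_pi_poly}, after rescaling by $\Im z$.

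The discrepancy between $B_2(z)\cap\mathbb H$ and $B_2(z)\cap\Omg_{\bt}$, together with the reflected term $\Gmm(z-\bar w)$ relative to the other ray, lives at distance $\gtrsim\sin\bt$ from $z$. It is therefore bounded by $C(k,m,\bt)$. This gives \eqref{eq:halfGradKer}.

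\textbf{Proof of \eqref{eq:halfGradKer2}.} Again normalize $|z|=1$. The condition $|z-z'|\le\bt/10$ keeps $z'$ comparable to $z$, with $|z'|\sim1$. I would differentiate the integrand in the $z$-variable along the segment $[z,z']$, which stays inside $\Omg_{\bt}$ since the sector is convex. Note that
\[
\rd_{\bar z}\bigl(\rd_{\bar z}^{k+m}G(z,w)(z-w)^k\bigr)=\rd_{\bar z}^{k+m+1}G(z,w)(z-w)^k,
\]
and $\rd_z$ of the same expression produces $k\,\rd_{\bar z}^{k+m}G(z,w)(z-w)^{k-1}$ plus an $\rd_z\rd_{\bar z}^{k+m}G$ term. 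The latter reduces to $\rd_{\bar z}^{k+m-1}\Dlt G$, which vanishes off the diagonal, and the diagonal delta contributions cancel in p.v.

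Both resulting expressions are of the type in \eqref{eq:halfGradKer}, with $(k,m+1)$ or $(k-1,m+1)$, so each is bounded by $C|z|^{-(m-1)}$. A technical point is that the domains $\{|z-w|\le2|z|\}$ and $\{|z'-w|\le2|z'|\}$ differ. The symmetric difference lies at distance $\sim1$ from both points, where the kernels are smooth, and its measure is $O(|z-z'|)$; this contributes $O(|z-z'|)$. Integrating the derivative bound over $[z,z']$ and undoing the scaling yields \eqref{eq:halfGradKer2}.
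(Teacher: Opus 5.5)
Your argument for \eqref{eq:halfGradKer} is essentially the paper's. You normalize $|z|=1$, reduce by symmetry to $\arg z\le\bt/2$, absorb $H_{\bt}$ with Lemma~\ref{lem:H_beta}, and write the $G_\pi$ part over $\mathbb{H}\cap B_2(z)$ as the full half-plane value from Lemma~\ref{lem:G_pi_poly} minus a tail. For $m=2$ the paper instead bounds the $\Gmm(z-w)$ and $\Gmm(\bar z-w)$ pieces separately, by half-circle cancellation and a direct binomial estimate. Your alternative also works: rescale $z\mapsto i$ and use the $O(r^{-3})$ arc bound from the proof of Lemma~\ref{lem:G_pi_poly}, which shows the tail beyond $B_2(z)$ is $O(\Im z)$.

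For \eqref{eq:halfGradKer2} your route genuinely differs. The paper uses that the half-plane p.v.\ takes the same value at $z$ and $z'$. So the difference over $\mathbb{H}\cap B_2(z)$ equals minus the difference of the tails over $\mathbb{H}\setminus B_2(z)$. There the kernels are smooth in the pole, and the mean value inequality gives $C_k|z-z'|\int_2^\infty r^{-m}\,dr$, which is absolutely convergent even for $m=2$; Lemma~\ref{lem:H_beta} then handles $H_{\bt}$. You instead differentiate in the pole along $[z,z']$ and feed kernels of types $(k,m+1)$ and $(k-1,m+1)$ back into \eqref{eq:halfGradKer}. This derives the Lipschitz bound from the size bound one order higher and needs no exact evaluation.

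It does, however, require differentiating a principal value with respect to its own singular point, and the phrase ``the diagonal delta contributions cancel in p.v.'' does not justify that. Here is one clean way. Let $D:=\{w\in\Omg_{\bt}:|z-w|\le 2|z|\}$ and fix a small $\eps$. Write the integral at $\zt$ as the integral of the full kernel over $D\setminus B_\eps(\zt)$ plus the integral of the non-free part over $B_\eps(\zt)$. This is legitimate because $(\zt-w)^k(\bar\zt-\bar w)^{-k-m}$ has zero circular means. Now differentiate in $\zt$. The surviving boundary term on $\rd B_\eps(\zt)$ contains only the angular modes $e^{i(2k+m\pm1)\tht}$, which integrate to zero since $2k+m-1\ge1$. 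The derivative kernels again have zero circular means, so you recover exactly the p.v.\ you want. The paper's route avoids this issue, at the price of relying on the $z$-independence coming from Lemma~\ref{lem:G_pi_poly}.

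Some minor corrections:
\begin{itemize}
\item The disc integral over $B_d(z)$ vanishes for $m=2$ as well, since the mode is $e^{i(2k+2)\tht}$; no constant arises there.
\item The literal $H_{\bt}=G_{\bt}-G_\pi$ is not bounded when $\arg z$ is close to $\bt$. The reflection $w\mapsto e^{i\bt}\bar w$ should therefore be a WLOG reduction of the whole integral, not a statement about $H_{\bt}$. It turns $\rd_{\bar z}$ into $\rd_z$, i.e.\ it conjugates the integral, since $G_{\bt}$ is real.
\item In \eqref{eq:halfGradKer2} both terms are integrated over the same set $D$. The domain mismatch you describe only appears when you apply \eqref{eq:halfGradKer} at intermediate points $\zt$. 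There your symmetric-difference estimate (after normalization, a bounded kernel on a region of bounded area) handles it.
\end{itemize}
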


\begin{proof}
    Without loss of generality, we assume that $|z|=1$ and $\arg(z)\le \bt/2$. If $m\ge 3$, Lemma \ref{lem:G_pi_poly} implies that
    \begin{equation}\label{eq:poly-k1}
        \left|p.v.\int_{\mathbb{H}\cap B_2(z)} \rd_{\bar{z}}^{k+m} G_{\pi}(z,w) (z-w)^k dA(w)\right| \le C_k \int_2^{\infty} \frac{dr}{r^{m-1}} \le C_{k,m}.
    \end{equation}
    If $m=2$, then the cancellation on half-circles yields that
    \begin{equation*}
        \left|p.v.\int_{\mathbb{H}\cap B_2(z)} \rd_{\bar{z}}^{k+2} \Gmm(z-w) (z-w)^k dA(w)\right| \lesssim \int_{\Im(z)}^2 \frac{1}{r} \sin^{-1}\frac{\Im(z)}{r} dr \le C_k.
    \end{equation*}
    Since $\mathbb{H}\cap B_2(z) \subset \mathbb{H} \cap B_{2 + 2\Im(z)}(\bar{z})$, the binomial theorem implies that
    \begin{equation*}
        \begin{split}
            \left|\int_{\mathbb{H}\cap B_2(z)} \rd_{\bar{z}}^{k+2} \Gmm(\bar{z}-w) (z-w)^k dA(w)\right| & \lesssim \sum_{n=0}^k (2\Im(z))^{k-n} \int_{\Im(z)}^{2\Im(z) + 2} \frac{1}{r^{k+1-n}} \sin^{-1}\frac{\Im(z)}{r} dr \\ & \lesssim \sum_{n=0}^k \Im(z)^{k-n} \frac{\Im(z)}{\Im(z)^{k+1-n}} \le C_{k}.
        \end{split}
    \end{equation*}
    This completes the proof of \eqref{eq:halfGradKer}.
    
    We now prove \eqref{eq:halfGradKer2}. Lemma \ref{lem:G_pi_poly} and the complex mean value inequality yield that for all $m\ge 2$,
    \begin{equation*}
        \left|p.v.\int_{\mathbb{H}\cap B_2(z)} (\rd_{\bar{z}}^{k+m} G_{\pi}(z,w) (z-w)^k - \rd_{\bar{z}}^{k+m} G_{\pi}(z',w) (z'-w)^k) dA(w)\right|\le C_k |z-z'| \int_2^{\infty} \frac{dr}{r^m},
    \end{equation*}
    which is bounded by a constant. Therefore, Lemma \ref{lem:H_beta} completes the proof of \eqref{eq:halfGradKer2}.
\end{proof}

A simple modification yields that the polynomial $(z-w)^k$ in Lemma \ref{lem:HalfGradKer} can be replaced by
\begin{equation*}
    (z-w)^m (\overline{z-w})^{k-m}
\end{equation*} 
for all $0\le m\le k$ in Lemmas \ref{lem:G_pi_poly} and \ref{lem:HalfGradKer}.

Lemmas \ref{lem:gradu_pv} and \ref{lem:HalfGradKer} with the complex mean value theorem yield the following lemma.

\begin{lemma}\label{lem:gradu_pv2}
    Let $k\in\mathbb{N}_0$, and fix $z\in\Omg_{\bt}$ with $|z|=1$. Suppose that $0<\bt<\pi/2$, $0<\alp\le 1$, and $\omg\in C^{k,\alp}(\Omg_{\bt})$. Let $A := \{w\in\Omg_{\bt}: |z-w|\le 2\}$, and $z'\in\Omg_{\bt}$ satisfy $|z-z'|\le \bt/10$. Then, for every $m\in\mathbb{N}_0$ with $m\le k$,
    \begin{equation*}
        R_{m}[\omg](z):= p.v.\int_{A} \rd_{\bar{z}}^{k+2} G_{\bt}(z,w) T_{m,z}\omg(w) dA(w) 
    \end{equation*}
    satisfies
    \begin{equation*}
        \left|R_m[\omg](z)\right| \lesssim C_{k,\bt} \|\omg\|_{C^m(\overline{A})}, \quad \left|R_m[\omg](z) - R_m[\omg](z')\right| \lesssim C_{k,\alp,\bt} \|\omg\|_{C^{m,\alp}(\overline{A})}|z-z'|^{\alp}.
    \end{equation*}
\end{lemma}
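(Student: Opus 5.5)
We expand the Taylor polynomial $T_{m,z}\omg$ in powers of $(w-z)$ and $(\bar w-\bar z)$ and reduce everything to Lemma \ref{lem:HalfGradKer}, in the modified form noted after its proof. Writing
\begin{equation*}
    T_{m,z}\omg(w) = \sum_{j+l\le m} \frac{\rd_z^j\rd_{\bar z}^l\omg(z)}{j!\,l!}(w-z)^j(\bar w-\bar z)^l,
\end{equation*}
we obtain
\begin{equation*}
    R_m[\omg](z) = \sum_{j+l\le m} \frac{(-1)^{j+l}\rd_z^j\rd_{\bar z}^l\omg(z)}{j!\,l!}\, I_{j,l}(z),\qquad I_{j,l}(z):=p.v.\int_A \rd_{\bar z}^{k+2}G_{\bt}(z,w)(z-w)^j(\overline{z-w})^l\,dA(w).
\end{equation*}
For each term, set $n:=j+l\le m\le k$ and regard $\rd_{\bar z}^{k+2}=\rd_{\bar z}^{n+(k+2-n)}$ with $k+2-n\ge 2$. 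The modified Lemma \ref{lem:HalfGradKer}, with $k$ replaced by $n$ and $m$ by $k+2-n$, then gives $|I_{j,l}(z)|\le C_1$, since $|z|=1$. The coefficients are bounded by $\|\omg\|_{C^m(\overline A)}$, and summing the finitely many terms yields the first estimate. The principal value exists because each $I_{j,l}$ exists by that lemma; alternatively, one can argue as in Lemma \ref{lem:gradu_pv}.

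For the Hölder estimate we split
\begin{equation*}
    R_m[\omg](z)-R_m[\omg](z') = \sum_{j+l\le m} c_{j,l}\Bigl[(\rd^j\bar\rd^l\omg(z)-\rd^j\bar\rd^l\omg(z'))I_{j,l}(z) + \rd^j\bar\rd^l\omg(z')\bigl(I_{j,l}(z)-I'_{j,l}(z')\bigr)\Bigr],
\end{equation*}
where $I'_{j,l}(z')$ is the integral with $z'$ in place of $z$ in both the kernel and the polynomial, but still taken over the same region $A$ centered at $z$. This is exactly the configuration in \eqref{eq:halfGradKer2}.

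The first group is controlled as follows. For $j+l<m$, the complex mean value theorem gives $|\nb^{j+l}\omg(z)-\nb^{j+l}\omg(z')|\le\|\omg\|_{C^m}|z-z'|$, and $|z-z'|\le|z-z'|^{\alp}$ because $|z-z'|\le\bt/10<1$. For $j+l=m$, the difference is bounded by $[\nb^m\omg]_{C^{0,\alp}}|z-z'|^\alp$. In both cases the factor $I_{j,l}(z)$ is bounded by the first part. The second group is bounded by \eqref{eq:halfGradKer2}, which gives $|I_{j,l}(z)-I'_{j,l}(z')|\le C_2|z-z'|\le C_2|z-z'|^\alp$, multiplied by $\|\omg\|_{C^m}$.

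The main subtlety is that $R_m[\omg](z')$ must be interpreted correctly. Its Taylor polynomial is centered at $z'$, while the domain of integration $A$ is defined relative to $z$, and only then does \eqref{eq:halfGradKer2} apply directly. To secure this, we would need to check the following:
\begin{itemize}
    \item The modified form of \eqref{eq:halfGradKer2}, with mixed monomials $(z-w)^j(\overline{z-w})^l$, holds with the same proof. This rests on Lemma \ref{lem:G_pi_poly} applied to mixed monomials together with Lemma \ref{lem:H_beta}.
    \item The principal value at $z'$ over a region centered at $z$ is well-defined. This holds because $z'$ lies well inside $A$, since $|z-z'|\le\bt/10$, and Lemma \ref{lem:gradu_pv} applies locally there.
\end{itemize}
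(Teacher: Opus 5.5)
Your proposal is correct and is essentially the argument the paper intends; the paper's only justification is ``Lemmas \ref{lem:gradu_pv} and \ref{lem:HalfGradKer} with the complex mean value theorem,'' and your write-up fills it in. You expand $T_{m,z}\omg$ into the monomials $(z-w)^j(\overline{z-w})^l$, bound each monomial integral by the mixed-monomial version of \eqref{eq:halfGradKer}, and bound its increment by \eqref{eq:halfGradKer2} with the region kept centered at $z$, which is also how $R_m[\omg](z')$ is used later in Lemmas \ref{lem:vel_Lip} and \ref{lem:gradu_bound}. The coefficient differences are then controlled by the mean value theorem when $j+l<m$ and by the $C^{0,\alp}$ seminorm of $\nb^m\omg$ when $j+l=m$.
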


\medskip

Before concluding this subsection, we formulate several lemmas that are valid only for sufficiently small $\bt$. These lemmas will be used in Section \ref{Holder-wellposed2}.

\begin{lemma}\label{lem:Gbeta_nm}
    Let $n$, $m\in \mathbb{N}_0$. Suppose that $0<\bt<\pi/(n+m+2)$. Then, for all $z\in\Omg_{\bt}$,
    \begin{equation}\label{eq:Gbeta_nm}
        \int_{\Omg_{\bt}} G_{\bt} (z,w) w^m \bar{w}^n dA(w) = C_1 z^{m+1} \bar{z}^{n+1} + C_2 z^{m+n+2} + C_3 \bar{z}^{m+n+2},
    \end{equation}
    where $C_1 = C_1(m,n)$, $C_2=C_2(\bt,m,n)$, and $C_3 = C_3(\bt,m,n)$ are constants.
\end{lemma}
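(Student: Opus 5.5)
The plan is to solve $\Delta\psi = w^m\bar w^n$ explicitly on $\Omg_{\bt}$ by a particular solution plus a harmonic correction, and then to show that this explicit function coincides with the Green's integral. Since $\Dlt = 4\rd_z\rd_{\bar z}$, the function
\[
P(z) := \frac{z^{m+1}\bar z^{n+1}}{4(m+1)(n+1)}
\]
satisfies $\Dlt P = z^m\bar z^n$, so we take $C_1 = 1/(4(m+1)(n+1))$. The correction will be a harmonic function $h = C_2 z^{m+n+2} + C_3\bar z^{m+n+2}$ chosen to make $P+h$ vanish on both boundary rays $\tht=0$ and $\tht=\bt$. On the ray $\tht=0$ we have $P = C_1 r^{m+n+2}$, and on $\tht=\bt$ we have $P = C_1 r^{m+n+2}e^{i(m-n)\bt}$. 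Writing $N := m+n+2$, the boundary conditions become the linear system
\begin{align*}
C_2 + C_3 &= -C_1,\\
C_2 e^{iN\bt} + C_3 e^{-iN\bt} &= -C_1 e^{i(m-n)\bt}.
\end{align*}
Its determinant is $-2i\sin(N\bt)$. This is nonzero because $0 < N\bt < \pi$, which is exactly the hypothesis $\bt < \pi/(n+m+2)$. So $C_2$ and $C_3$ exist and depend on $(\bt,m,n)$. The resulting $\Psi := P + h$ is homogeneous of degree $N$, satisfies $\Dlt\Psi = w^m\bar w^n$ in $\Omg_{\bt}$, and vanishes on $\rd\Omg_{\bt}$.

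The remaining task is to show that the integral $I(z) := \int_{\Omg_{\bt}} G_{\bt}(z,w)\,w^m\bar w^n\,dA(w)$ converges and equals $\Psi(z)$. For convergence, we use the far-field bound for $G_{\bt}$ itself, the $k=0$ analogue of Lemma \ref{lem:ker_bound}. Expanding the formula $\log|1-(w/z)^{-p}\cdots|$ gives $|G_{\bt}(z,w)|\lesssim |z|^p|w|^{-p}$ for $|w|\ge 2|z|$, so the integrand is bounded by $|w|^{N-2-p}$ at infinity. This is integrable exactly when $N < p$, i.e. when $\bt<\pi/N$. Near $w=z$ the singularity is logarithmic, and near $w=0$ everything is bounded, so $I$ is well-defined.

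To identify $I$ with $\Psi$, I would consider $D := I - \Psi$, which is the step I expect to be the main obstacle. The function $I$ satisfies $\Dlt I = w^m\bar w^n$ in the distributional sense and vanishes on $\rd\Omg_{\bt}$; the latter holds because $G_{\bt}(\cdot,w)=0$ there, justified by dominated convergence using the bounds above. Hence $D$ is harmonic in $\Omg_{\bt}$, continuous up to the boundary, and zero on $\rd\Omg_{\bt}$. Next, the scaling $w\mapsto \lmb w$ together with $G_{\bt}(\lmb z,\lmb w)=G_{\bt}(z,w)$ shows that $I$ is homogeneous of degree $N$. Therefore $D$ is homogeneous of degree $N$ as well, say $D = r^N g(\tht)$, with $g'' + N^2 g = 0$ and $g(0)=g(\bt)=0$. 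Since $N\bt<\pi$, the only such $g$ is $g\equiv 0$. Alternatively, the Phragmén–Lindelöf principle gives the same conclusion, because harmonic functions vanishing on the sector boundary with growth $o(r^{p})$ must vanish, and $N<p$.

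It remains to check the homogeneity claim carefully, including that the change of variables is legitimate given the absolute convergence established above. With that in place, $D\equiv 0$, so $I=\Psi$, which gives \eqref{eq:Gbeta_nm}.
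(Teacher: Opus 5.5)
Your proposal is correct and takes essentially the same route as the paper. Both arguments use $G_{\bt}(\lmb z,\lmb w)=G_{\bt}(z,w)$ to show the integral is homogeneous of degree $N=m+n+2$, and then pin down its angular profile by uniqueness for $g''+N^2g=0$ with Dirichlet conditions on $[0,\bt]$, which holds because $\sin(N\bt)\neq 0$. The only difference is that the paper solves the inhomogeneous ODE for the profile of the integral directly, while you first build the explicit solution $P+h$ and apply the homogeneous ODE to the difference; this also makes your justification of convergence and of the boundary values somewhat more explicit than the paper's.
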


The condition $\bt < \pi/(m+n+2)$ is necessary for the well-definedness of the integral in \eqref{eq:Gbeta_nm}.

\begin{proof}
    Let $z\in\Omg_{\bt}$, and $\psi$ be the function defined on $\Omg_{\bt}$ by
    \begin{equation}\label{eq:psi_nm}
        \psi(z) := \int_{\Omg_{\bt}} G_{\bt}(z,w) w^m \bar{w}^n dA(w).
    \end{equation}
    By Lemma \ref{lem:ker_bound}, the function $\psi$ is well-defined and $\Dlt_z \psi (z)= \int_{\Omg_{\bt}} \Dlt_z G_{\bt}(z,w) w^m \bar{w}^n dA(w)$. Moreover, it is a solution of the Dirichlet problem
    \begin{equation*}
        \left\{
        \begin{aligned}
            \Dlt \psi(z) &= z^m\bar{z}^n,\qquad z\in\Omg_{\bt},\\
            \psi(z) &= 0,\qquad\qquad z\in\rd \Omg_{\bt}.
        \end{aligned}
    \right.
    \end{equation*} 
    Let us denote $z = re^{i\tht}$ in polar coordinates with $r>0$ and $0<\tht<\bt$. Then
    \begin{equation*}
        \psi(re^{i\tht}) = r^{m+n+2} \int_{\Omg_{\bt}} G_{\bt} (e^{i\tht}, w) w^m\bar{w}^n dA(w) =: r^{m+n+2} A(\tht).
    \end{equation*}
    The function $A$ defined on $[0,\bt]$ satisfies $(m+n+2)^2 A(\tht) + A''(\tht) = e^{i(m-n)\tht}$ for all $0<\tht<\bt$, and $A(0)=A(\bt)=0$. The unique solution of this ODE is given by
    \begin{equation*}
        A(\tht) = \frac{1}{4(m+1)(n+1)}\left( e^{i(m-n)\tht} - \cos(m+n+2)\tht + \frac{\cos(m+n+2)\bt - e^{i(m-n)\bt}}{\sin(m+n+2)\bt} \sin(m+n+2)\tht\right).
    \end{equation*}
    This implies that $\psi(re^{i\tht})$ has the form of the right-hand side of \eqref{eq:Gbeta_nm}.
\end{proof}

\begin{lemma}\label{lem:IntGradKer}
    Let $k\in\mathbb{N}_0$. Suppose that $0<\bt<\pi/(k+2)$. For any $z\in\Omg_{\bt}$, the Green's function $G_{\bt}$ satisfies
    \begin{align*}
        p.v.\int_{\Omg_{\bt}} \partial_{\bar{z}}^{k+3} G_{\bt}(z,w) P_k(w)dA(w) = 0, \quad p.v.\int_{\Omg_{\bt}} \partial_{\bar{z}}^{k+2} G_{\bt}(z,w) w^n\bar{w}^m dA(w)= C(\bt,n,m)
    \end{align*}
    for all polynomials $P_k$ of order at most $k$, and $m$, $n\in\mathbb{N}_0$ with $m+n=k$.
\end{lemma}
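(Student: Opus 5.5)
The plan is to reduce both identities to Lemma \ref{lem:Gbeta_nm}: the principal value integrals are derivatives of the explicit stream function generated by polynomial vorticity. Fix $z\in\Omg_{\bt}$. By linearity it suffices to take monomials $P_k(w)=w^{m}\bar w^{n}$ with $m+n=j\le k$. Since $\bt<\pi/(k+2)\le\pi/(j+2)$, Lemma \ref{lem:Gbeta_nm} applies and gives
\begin{equation*}
\psi_{m,n}(z):=\int_{\Omg_{\bt}}G_{\bt}(z,w)w^m\bar w^n\,dA(w)=C_1z^{m+1}\bar z^{n+1}+C_2z^{m+n+2}+C_3\bar z^{m+n+2}.
\end{equation*}
This is a homogeneous polynomial of degree $j+2$ in $(z,\bar z)$. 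Hence $\rd_{\bar z}^{k+3}\psi_{m,n}\equiv 0$, because $k+3>j+2$. When $j=k$, the derivative $\rd_{\bar z}^{k+2}\psi_{m,n}=(k+2)!\,C_3$ if $m=0$, and it is $0$ otherwise (the term $C_1$ only has $\bar z$-degree $n+1\le k+1$). In either case it is a constant $C(\bt,n,m)$.

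The remaining step is to justify differentiating under the integral, i.e.\ that
\begin{equation*}
\rd_{\bar z}^{k+2}\psi_{m,n}(z)=p.v.\int_{\Omg_{\bt}}\rd_{\bar z}^{k+2}G_{\bt}(z,w)w^m\bar w^n\,dA(w),
\end{equation*}
and the analogous formula with $k+3$ derivatives. This is the step I expect to be the main obstacle, because the monomial is not bounded, so Lemma \ref{lem:gradu_pv} (stated for $C^{k,\alp}$ data, implicitly with enough decay) does not apply directly.

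To handle it, I split the domain with a cutoff. Take $\chi$ equal to $1$ on $B_{R}$ with $R\gg|z|$. The far part $(1-\chi)w^m\bar w^n$ has no singularity near $z$. By Lemma \ref{lem:ker_bound}, for $|w|\ge2|z|$ we have
\begin{equation*}
|\rd_{\bar z}^{l}G_{\bt}(z,w)|\lesssim |z|^{p-l}|w|^{-p},
\end{equation*}
and $p=\pi/\bt>k+2\ge j+2$. So $|w|^{j-p}$ is integrable at infinity, uniformly for $z$ in a neighborhood, and differentiation under the integral sign is legitimate by dominated convergence.

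The near part $\chi w^m\bar w^n$ is a compactly supported smooth function. For it, Lemma \ref{lem:gradu_pv} gives the principal value representation of $\rd_{\bar z}^{k+2}$ and $\rd_{\bar z}^{k+3}$ of its potential. That lemma applies with $k+1$ in place of $k$, and the singularity occurs only at $w=z$ since $\bt<\pi/2$.

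Adding the near and far parts gives the identities for all $z$. The cutoff contributions cancel because the two pieces sum to the monomial, and the full integral converges by the far-field bound.
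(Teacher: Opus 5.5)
Your proof is correct and follows the paper's own argument: the paper's proof just cites Lemmas \ref{lem:gradu_pv} and \ref{lem:Gbeta_nm}, and your cutoff splitting supplies a justification the paper leaves implicit, namely that the principal-value representation of $\rd_{\bar z}^{k+2}\psi$ and $\rd_{\bar z}^{k+3}\psi$ still holds for an unbounded monomial, using the far-field bound of Lemma \ref{lem:ker_bound} together with $p>k+2$. One slip that does not affect the conclusion: for $m+n=k$ you get $\rd_{\bar z}^{k+2}\psi_{m,n}=(k+2)!\,C_3$ for every such pair, not only when $m=0$, since the $C_1$ term has $\bar z$-degree $n+1\le k+1$ whatever $m$ is; moreover the formula for $A(\tht)$ shows $C_3$ is a nonzero multiple of $e^{i(k+2)\bt}-e^{i(m-n)\bt}\neq 0$, so your claim that the constant is $0$ for $m\neq 0$ is wrong, although the lemma only needs it to be a constant.
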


\begin{proof}
    The desired equality is a direct consequence of Lemmas \ref{lem:gradu_pv} and \ref{lem:Gbeta_nm}.
\end{proof}

\section{Well-posedness for bounded vorticity}\label{weak-wellposed}

In this section, we prove Theorem \ref{thm:weak-wellposed}. Before the proof, we recall the weak formulation of the Euler equations on $\Omg_{\bt}$. Since \eqref{eq:Euler_vorticity} implies that the vorticity $\omg$ is transported by $u$, if $\Phi$ is the flow map associated with $u$, then
\begin{equation}\label{eq:Flow}
   \left\{
       \begin{aligned}
            \frac{d}{dt} \Phi_t(z) & = u_t(\Phi_t(z)), \qquad \Phi_0(z) = z, \\
            \omg_t(\Phi_t(z)) & = \omg_0(z),
       \end{aligned}
   \right.
\end{equation}
where $\omg_0$ is the initial vorticity. The Biot-Savart law in $\Omg_{\bt}$ takes the form 
\begin{equation}\label{eq:Biot-Savart}
    u_t(z) = \int_{\Omg_{\bt}} K_{\bt}(z,w) \omg_t(w)dA(w),
\end{equation}
where $K_{\bt}$ is the Biot-Savart kernel defined in Section \ref{Preliminary}. If $(\omg, u, \Phi)$ satisfies both \eqref{eq:Flow} and \eqref{eq:Biot-Savart}, then we call $(\omg, u, \Phi)$ a \textit{solution triple} on $\Omg_{\bt}$. If there is no ambiguity, we sometimes use $(\omg_t, u_t, \Phi_t)$ in place of $(\omg, u, \Phi)$. The classical derivatives of $u$ need not exist. Thus, this notion of a solution triple provides a weak formulation of the Euler equations. Theorem \ref{thm:weak-wellposed} is equivalent to the following statement:
\begin{theorem} \label{thm:weak-wp}
    Suppose that $0<\bt<\pi/2$ and $\omg_0 \in L^{\infty}(\Omg_{\bt})$. Then there exists a unique global solution triple $(\omg, u, \Phi)$ to the Euler equations such that $\omg\in L^{\infty}([0,\infty); L^{\infty}(\Omg_{\bt}))$.
\end{theorem}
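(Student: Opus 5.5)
The plan is a Yudovich-type argument run on flow maps, with the weighted quantity of \eqref{eq:flow_uniqueness} replacing the usual $L^1$ distance between flows. Fix $\dlt\in(0,p-2)$; this interval is nonempty because $\bt<\pi/2$. The first task is to collect the kernel estimates we need from Lemma \ref{lem:ker_bound}.

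\textbf{Step 1: kernel and velocity bounds.} Using Lemma \ref{lem:ker_bound}, we have $|K_{\bt}(z,w)|\lesssim |z-w|^{-1}$ for $|w|\le 2|z|$, and $|K_{\bt}(z,w)|\lesssim |z|^{p-1}|w|^{-p}$ for $|w|\ge 2|z|$. Since $p>2$, this gives $|u(z)|\lesssim |z|\,\|\omg\|_{L^\infty}$ for $\omg\in L^\infty$. Moreover, $u$ is log-Lipschitz after scaling:
\begin{equation*}
|u(z)-u(z')|\lesssim \|\omg\|_{L^\infty}\,|z-z'|\,\bigl(1+\log^+(|z|/|z-z'|)\bigr)\qquad\text{for } |z-z'|\le |z|/2.
\end{equation*}
The proof is the standard one: estimate the near region directly, and in the far region use $|\nabla_z K_{\bt}|\lesssim |z|^{p-2}|w|^{-p}$. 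Scaling then reduces everything to $|z|=1$. Because $u$ is tangent to the boundary and of linear size, flows exist globally, and we get $e^{-Ct}|x|\le|\Phi_t(x)|\le e^{Ct}|x|$. In particular, $\Phi_t$ preserves $\Omg_{\bt}$ and never reaches the origin.

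\textbf{Step 2: uniqueness.} Let $\Phi$ and $\tld\Phi$ come from two solution triples with the same $\omg_0$, and set
\begin{equation*}
D(t):=\int \frac{|\Phi_t(x)-\tld\Phi_t(x)|}{|x|}\,\mu(dx),\qquad \mu(dx):=\frac{dx}{1+|x|^{2+\dlt}}.
\end{equation*}
By Step 1, $D$ is finite and the integrand is bounded. We write $u-\tld u$ using incompressibility, so that
\begin{equation*}
u(\Phi_t x)-\tld u(\Phi_t x)=\int \bigl[K_{\bt}(\Phi_t x,\Phi_t y)-K_{\bt}(\Phi_t x,\tld\Phi_t y)\bigr]\omg_0(y)\,dy,
\end{equation*}
plus the difference $u(\Phi_t x)-u(\tld\Phi_t x)$, which Step 1 controls. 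The crux is to bound
\begin{equation*}
\int\frac{|K_{\bt}(a,b)-K_{\bt}(a,b')|}{|x|}\,\mu(dx)
\end{equation*}
by $\frac{|b-b'|}{|y|}$, weighted so that integrating in $y$ against $|\omg_0|$ reproduces the weight $1/(1+|y|^{2+\dlt})$, up to a log factor. I would split into three regions. In the region $|x|\sim|y|$ we use the local singular kernel. In the region $|x|\ll|y|$ we use $|\nabla_w K|\lesssim|z|^{p-1}|w|^{-p-1}$. In the region $|x|\gg|y|$ we use the symmetric bound coming from $G_{\bt}(z,w)=G_{\bt}(w,z)$. The choice $\dlt<p-2$ is exactly what makes the far-field $x$-integrals $\int |x|^{p-2}(1+|x|)^{-2-\dlt}$ over $|x|\le|y|$ produce at most $|y|^{p-2-\dlt}$, which is then absorbed by $|y|^{-p}$. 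The result is an Osgood inequality
\begin{equation*}
D'(t)\lesssim \|\omg_0\|_{L^\infty}\,D(t)\bigl(1+\log(1/D(t))\bigr),
\end{equation*}
valid for small $D$. Since $D(0)=0$, this forces $D\equiv0$.

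\textbf{Step 3: existence.} Mollify and truncate $\omg_0$ to obtain compactly supported smooth data $\omg_0^{(n)}$, which are uniformly bounded in $L^\infty$. Alternatively, one can use the Picard iteration $\Phi^{(n)}$ from the introduction, defining $u^{(n)}$ from $\omg_0\circ(\Phi^{(n-1)})^{-1}$. The same computation as in Step 2, now with $D_n(t)$ built from $|\Phi^{(n)}-\Phi^{(n-1)}|/|x|$, gives an Osgood-type recursive estimate. Hence $\Phi^{(n)}$ is Cauchy in the weighted $L^1$ norm, locally uniformly in time. The uniform log-Lipschitz bound then upgrades this to locally uniform convergence, with a Hölder-in-space modulus. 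The limit $\Phi$ is measure preserving, and $\omg_t:=\omg_0\circ\Phi_t^{-1}$ lies in $L^\infty$. Passing to the limit in the Biot–Savart law by dominated convergence, with domination supplied by the kernel bounds of Step 1 (integrable because $p>2$), shows that $(\omg,u,\Phi)$ is a solution triple. The main obstacle is the far-field kernel-difference estimate in Step 2: it has to be done carefully in the three regions so that no decay of $\omg_0$ is ever needed.
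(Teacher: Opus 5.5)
Your proposal is correct and follows essentially the paper's route: the same weighted flow-map functional with $\delta\in(0,p-2)$, and a Fubini swap with a splitting of the kernel difference into regions (your $|x|\ll|y|$, $|x|\gg|y|$, $|x|\sim|y|$ are the paper's $A$, $B$, $C\cup D$). Uniqueness then follows from log-Lipschitz plus Jensen and the Osgood lemma, and existence from Picard iteration on flow maps. The one real variation is that you drop the factor $|\omega_0(x)|$ from the weight. Weighted convergence plus the uniform log-Lipschitz modulus then gives locally uniform convergence of the whole sequence on all of $\Omega_\beta$, which avoids the paper's Arzel\`a--Ascoli/subsequence identification step. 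Note, however, that consecutive differences tending to zero do not by themselves make the sequence Cauchy: you must run the same recursion on pairwise differences $\Phi^{(n)}-\Phi^{(m)}$, with $\sigma_n:=\sup_{m,m'\ge n}$.
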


\subsection{Proof of uniqueness}

We first consider the uniqueness problem.

\begin{proof}[Proof of uniqueness in Theorem \ref{thm:weak-wellposed}]
Let $(\omg, u, \Phi)$, $(\tilde{\omg}, \tilde{u}, \tilde{\Phi})$ be two solution triples. Lemma \ref{1.1} implies that there exists a constant $T>0$, depending only on $\|\omg_0\|_{L^{\infty}}$, such that
\begin{equation*}
    \frac{|\Phi_t(z) - z|}{|z|}, \ \frac{|\tld{\Phi}_t(z) - z|}{|z|} < \frac{1}{100}, \qquad 0<t<T.
\end{equation*}
Since $p > 2$, there exists $\dlt>0$ such that $p>2+\dlt$. Let $f:\mathbb{R}_+ \rightarrow \mathbb{R}$ be a function defined by
\begin{align*}
    f(t):= \int_{\Omg_{\bt}} \frac{|\Phi_t(z) - \tilde{\Phi}_t(z)|}{|z|} \frac{|\omg_0(z)|}{1+|z|^{2+\dlt}}dA(z).
\end{align*}
Our goal is to establish the following inequality:
\begin{equation}\label{eq:f_ineq}
    f(t) \lesssim_{\bt,\|\omg_0\|_{L^{\infty}}} \int_0^t f(s)\log\left( 1+ \frac{1}{f(s)}\right) ds.
\end{equation}
Once the inequality \eqref{eq:f_ineq} is proved, we may apply Lemma \ref{lem:MP} to obtain the desired conclusion.
\begin{lemma} [{\cite[Lemma 2.3.2]{MP}}] \label{lem:MP}
    Suppose that $T>0$, and $g: [0,T) \rightarrow\mathbb{R}_+ \cup\{0\}$ is continuous with $g(0)=0$. If there exists a constant $C>0$ such that
    \begin{equation*}
        g(t) \le C\int_0^t \varphi\left(g(s)\right) ds, \quad \varphi(r) := \left\{
        \begin{aligned}
            r\log(1+1/r),\quad r>0,\\
            0,\quad r=0
        \end{aligned}
    \right.
    \end{equation*}
    for all $t\in (0,T)$, then $g\equiv 0$.
\end{lemma}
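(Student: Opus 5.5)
This is the Osgood-type uniqueness lemma of Marchioro--Pulvirenti. I would prove it by comparing $g$ with the solution of the associated ODE, using that $\int_0 dr/\varphi(r) = \infty$.

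\textbf{Step 1 (properties of $\varphi$).} The function $\varphi$ is continuous on $[0,\infty)$ with $\varphi(0)=0$, positive on $(0,\infty)$ and nondecreasing. Indeed,
\begin{equation*}
\varphi'(r)=\log(1+1/r)-\frac{1}{1+r}>0,
\end{equation*}
because $\log(1+x)> x/(1+x)$ for $x=1/r>0$. Moreover, for $r\le 1/2$ we have $\log(1+1/r)\le 2\log(1/r)$, so $\varphi(r)\le 2r\log(1/r)$. Hence
\begin{equation*}
\int_0^{1/2}\frac{dr}{\varphi(r)}\ \ge\ \frac12\int_0^{1/2}\frac{dr}{r\log(1/r)}=\infty .
\end{equation*}

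\textbf{Step 2 (reduce to an integral inequality for a monotone majorant).} Fix $t_0\in(0,T)$ and set $h(t):=C\int_0^t\varphi(g(s))\,ds$ for $t\in[0,t_0]$. Because $g$ is continuous, $h$ is $C^1$ and nondecreasing, $h(0)=0$, and $g\le h$. Monotonicity of $\varphi$ then gives
\begin{equation*}
h'(t)=C\varphi(g(t))\le C\varphi(h(t)).
\end{equation*}

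\textbf{Step 3 (argue by contradiction).} Suppose $h(t_0)>0$ and let $t_1:=\sup\{t\le t_0: h(t)=0\}$. Then $t_1<t_0$, $h(t_1)=0$, and $h>0$ on $(t_1,t_0]$. Define $\Psi(x):=\int_x^{a}dr/\varphi(r)$ for a fixed $a>0$. On $(t_1,t_0]$ we have $\frac{d}{dt}\Psi(h(t))=-h'/\varphi(h)\ge -C$. Integrating from $t_1+\eps$ to $t_0$ gives
\begin{equation*}
\Psi(h(t_1+\eps))\le \Psi(h(t_0))+C(t_0-t_1).
\end{equation*}
As $\eps\to0$ we have $h(t_1+\eps)\to0$, so the left side tends to $+\infty$ by Step 1, while the right side is finite. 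This contradiction gives $h(t_0)=0$, hence $g(t_0)=0$. Since $t_0$ was arbitrary, $g\equiv0$.

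\textbf{Main obstacle.} The only delicate points are the monotonicity of $\varphi$, which is needed to pass from $g$ to $h$, and the divergence $\int_0 dr/\varphi=\infty$. Both are elementary calculus facts settled in Step 1. Alternatively, one could simply cite \cite[Lemma 2.3.2]{MP} directly.
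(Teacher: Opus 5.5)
Your proof is correct and complete. You majorize $g$ by $h(t)=C\int_0^t\varphi(g(s))\,ds$, use that $\varphi$ is nondecreasing to get $h'\le C\varphi(h)$, and integrate $1/\varphi$, whose divergence at $0$ you verify; this is the standard Osgood-type argument behind the cited lemma. The paper gives no proof of its own and simply refers to \cite{MP}, so your Step 1 usefully supplies a self-contained check that the specific $\varphi(r)=r\log(1+1/r)$ used here is monotone and satisfies $\int_0 dr/\varphi(r)=\infty$.
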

We now turn to the proof of \eqref{eq:f_ineq}. Let $t\in (0,T)$. Since $\Phi$, $\tld{\Phi}$ are the flow maps associated with $u$ and $\tld{u}$, respectively, we write
\begin{align*}
    \Phi_t(z) - \tilde{\Phi}_t(z) & = \int_0^t \int_{\Omg_{\bt}} \left[ K_{\bt} \left( \Phi_s(z), \Phi_s(w) \right) - K_{\bt} \left( \Phi_s(z), \tilde{\Phi}_s(w) \right) \right]\omg_0(w) dA(w) ds \\ &\quad\quad + \int_0^t  \left[\tilde{u}_s\left(\Phi_s(z)\right) - \tilde{u}_s (\tilde{\Phi}_s(z) ) \right] ds =: I_1 + I_2.
\end{align*}
It follows directly from the definition of $f$ that
\begin{equation*}
    f(t) \le \int_{\Omg_{\bt}} \frac{|I_1|}{|z|} \frac{|\omg_0(z)|}{1+|z|^{2+\dlt}}dA(z) + \int_{\Omg_{\bt}} \frac{|I_2|}{|z|} \frac{|\omg_0(z)|}{1+|z|^{2+\dlt}}dA(z).
\end{equation*}

We first estimate the contribution of $I_2$ to $f(t)$.
\begin{align*}
    \int_{\Omg_{\bt}} \frac{|I_2|}{|z|} \frac{|\omg_0(z)|}{1+|z|^{2+\dlt}} dA(z) & \lesssim_{\bt,\|\omg_0\|_{L^{\infty}}} \int_0^t \int_{\Omg_{\bt}} |\Phi_s(z) - \tilde{\Phi}_s(z)| \log\left( \frac{|\Phi_s(z)|}{|\Phi_s(z) - \tilde{\Phi}_s(z)|} \right) \frac{1}{|z|}\frac{|\omg_0(z)|}{1+|z|^{2+\dlt}}dA(z)ds \\ & \lesssim \int_0^t \int_{\Omg_{\bt}} \varphi\left(\frac{|\Phi_s(z) - \tilde{\Phi}_s(z)|}{|z|}\right) \frac{|\omg_0(z)|}{1+|z|^{2+\dlt}} dA(z)ds \\ &  \lesssim_{\|\omg_0\|_{L^{\infty}}} \int_0^t \varphi\left( \int_{\Omg_{\bt}}  \frac{|\Phi_s(z) - \tilde{\Phi}_s(z)|}{|z|} \frac{|\omg_0(z)|}{1+|z|^{2+\dlt}} dA(z) \right)  ds = \int_0^t \varphi\left(f(s) \right) ds.
\end{align*}
The first inequality follows from Lemma \ref{lem:u_diff}, while the last follows from Jensen's inequality. We next estimate the contribution of $I_1$ to $f(t)$.
\begin{align*}
    & \int_{\Omg_{\bt}} \frac{|I_1|}{|z|} \frac{|\omg_0(z)|}{1+|z|^{2+\dlt}} dA(z) \\ &\quad \le \int_0^t \int_{\Omg_{\bt}} \int_{\Omg_{\bt}} \left| K_{\bt} \left( \Phi_s(z), \Phi_s(w) \right) - K_{\bt} \left( \Phi_s(z), \tilde{\Phi}_s(w) \right) \right||\omg_0(w)| \frac{1}{|z|}\frac{|\omg_0(z)|}{1+|z|^{2+\dlt}}dA(z) dA(w)ds \\ &\quad \lesssim_{\bt,\|\omg_0\|_{L^{\infty}}} \int_0^t \int_{\Omg_{\bt}} |\Phi_s(w) - \tilde{\Phi}_s(w)|\log\left(\frac{|\Phi_s(w)|}{|\Phi_s(w)-\tilde{\Phi}_s(w)|}\right) \frac{1}{|w|}\frac{|\omg_0(w)|}{1+|w|^{2+\dlt}} dA(w)ds \\ &\quad\lesssim_{\|\omg_0\|_{L^{\infty}}} \int_0^t \varphi(f(s)) ds.
\end{align*}
The second inequality follows from Lemmas \ref{lem:K_beta1} and \ref{lem:K_beta2}. Combining the estimates for $I_1$ and $I_2$, we obtain
\begin{align*}
    f(t) & \le \int_{\Omg_{\bt}} \frac{|I_1| + |I_2|}{|z|} \frac{|\omg_0(z)|}{1+|z|^{2+\dlt}}dA(z)  \lesssim_{\bt,\|\omg_0\|_{L^{\infty}}} \int_0^t \varphi(f(s)) ds.
\end{align*}
By Lemma \ref{lem:MP}, $\Phi_s \equiv \tilde{\Phi}_s$ for $0\le s<T(\|\omg_0\|_{L^{\infty}})$. This proves local-in-time uniqueness.

Since the $L^{\infty}$ norm of the vorticity is preserved, the same argument can be iterated on the intervals $[T,2T]$, $[2T,3T]$ and so on. Hence, uniqueness extends globally in time, which proves the uniqueness assertion in Theorem \ref{thm:weak-wellposed}.
\end{proof}

We now prove Lemmas \ref{lem:K_beta1} and \ref{lem:K_beta2} to provide the details of the estimate for $I_1$. Fix $w\in\Omg_{\bt}$, and decompose $\Omg_{\bt}$ into the following four pieces:
\begin{align*}
    & A:=\{z\in\Omg_{\bt}: |z|\le \frac{|w|}{10}\}, \quad B:=\{z\in\Omg_{\bt}: |z|>10|w|\}, \\ & C:=\{ z\in\Omg_{\bt}: |\Phi_s(z)-\Phi_s(w)|\le 3|\Phi_s (w) - \tilde{\Phi}_s (w)|\}, \quad D:=\Omg_{\bt}\backslash(A\cup B\cup C).
\end{align*}

\begin{lemma} \label{lem:K_beta1}
    Suppose that $p>2$, $0<\dlt<p-2$, and $0<s<T$. In the region $A\cup B$,
    \begin{align*}
    \int_{A\cup B} \left|K_{\bt} (\Phi_s(z), \Phi_s(w)) - K_{\bt} (\Phi_s(z), \tilde{\Phi}_s(w))\right| \frac{|w|}{|z|}\frac{1+|w|^{2+\dlt}}{1+|z|^{2+\dlt}} dA(z) \lesssim_p |\Phi_s(w) - \tilde{\Phi}_s(w)|.
    \end{align*} 
\end{lemma}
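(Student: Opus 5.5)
The plan is to reduce everything to pointwise kernel bounds on $K_{\bt}(\zeta,\eta)$ in the two far regimes $|\eta|\ge 2|\zeta|$ and $|\zeta|\ge 2|\eta|$, and then integrate. Set $a:=\Phi_s(w)$, $b:=\tld{\Phi}_s(w)$ and $\zeta:=\Phi_s(z)$. For $s<T$ the near-identity bounds on the flows give $|\Phi_s(x)|\simeq|x|$ and $|\tld{\Phi}_s(x)|\simeq|x|$, with ratio in $[99/100,101/100]$. Hence $|a|\simeq|b|\simeq|w|$, and $|a-b|\le |w|/50$.

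In region $A$ we have $|\zeta|\le |a|/8$. In region $B$ we have $|\zeta|\ge 8|a|$. The same holds with $b$ in place of $a$, and with the whole segment $[a,b]$ in place of $a$. That segment lies in $\overline{\Omg_{\bt}}$ by convexity, since $\bt<\pi$.

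The first step is a mean value inequality in the second variable: $|K_{\bt}(\zeta,a)-K_{\bt}(\zeta,b)|\le |a-b|\sup_{\eta\in[a,b]}|\nabla_\eta K_{\bt}(\zeta,\eta)|$. Next I bound $\nabla_\eta K_{\bt}$ from the explicit formula $K_{\bt}=\frac{ip}{2\pi}\frac{\bar\zeta^{p-1}(\bar\eta^p-\eta^p)}{(\bar\zeta^p-\bar\eta^p)(\bar\zeta^p-\eta^p)}$, in the same spirit as Lemma \ref{lem:ker_bound}. There are two cases.

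\textbf{Region $A$} ($|\eta|\ge 8|\zeta|$). The denominator is $\simeq|\eta|^{2p}$, and differentiating in $\eta$ costs a factor $|\eta|^{-1}$. This gives $|\nabla_\eta K_{\bt}|\lesssim_p |\zeta|^{p-1}|\eta|^{-p-1}$.

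\textbf{Region $B$} ($|\zeta|\ge 8|\eta|$). The denominator is $\simeq|\zeta|^{2p}$. The numerator is $\bar\eta^p-\eta^p=-2i|\eta|^p\sin(p\arg\eta)$, and its $\eta$-gradient is $\lesssim|\eta|^{p-1}$. This gives $|\nabla_\eta K_{\bt}|\lesssim_p |\eta|^{p-1}|\zeta|^{-p-1}$.

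Using $|\eta|\simeq|w|$ and $|\zeta|\simeq|z|$, the quantity to be bounded is $|a-b|$ times
\begin{equation*}
\int_{A}\frac{|z|^{p-1}}{|w|^{p+1}}\frac{|w|}{|z|}\frac{1+|w|^{2+\dlt}}{1+|z|^{2+\dlt}}dA(z)+\int_{B}\frac{|w|^{p-1}}{|z|^{p+1}}\frac{|w|}{|z|}\frac{1+|w|^{2+\dlt}}{1+|z|^{2+\dlt}}dA(z).
\end{equation*}

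For $A$, drop $1+|z|^{2+\dlt}\ge1$. Then $\int_{|z|\le|w|/10}|z|^{p-2}dA\simeq|w|^{p}$, since $p>0$. So the $A$ term is $\lesssim |w|^{-p}\cdot|w|^p\cdot(1+|w|^{2+\dlt})/1$. This is bounded only if the weight ratio is controlled; the factor $(1+|w|^{2+\dlt})$ is a problem for large $|w|$. I will redo this more carefully. On $A$ we have $1+|z|^{2+\dlt}\ge1$, and the integral is $\simeq |w|^{-p}\int_0^{|w|/10}r^{p-1}(1+r^{2+\dlt})^{-1}dr\,(1+|w|^{2+\dlt})$. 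When $|w|\le1$ this is $O(1)$. When $|w|\ge1$, the radial integral is $\lesssim |w|^{p-2-\dlt}$ because $p-2-\dlt>0$, so the product is $\lesssim |w|^{-2-\dlt}|w|^{2+\dlt}=O(1)$. This is exactly where the hypothesis $\dlt<p-2$ enters.

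For $B$, the integrand is $|w|^p|z|^{-p-2}(1+|w|^{2+\dlt})(1+|z|^{2+\dlt})^{-1}$. Since $|z|\ge10|w|$, we have $(1+|w|^{2+\dlt})/(1+|z|^{2+\dlt})\le1$. Then $\int_{|z|>10|w|}|z|^{-p-2}dA\simeq|w|^{-p}$, using $p>0$. So the $B$ term is $O(1)$.

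Summing the two regions gives the claimed bound $\lesssim_p|\Phi_s(w)-\tld{\Phi}_s(w)|$. The main obstacle I expect is the careful derivation of the gradient bounds uniformly along the segment $[a,b]$, and in particular the vanishing of the numerator near the boundary in region $B$, which is what gives the $|\eta|^{p-1}$ factor. The weighted integration in $A$ is the step that genuinely needs $\dlt<p-2$.
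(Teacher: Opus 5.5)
Your proposal is correct and follows essentially the same route as the paper. It uses a mean value inequality in the second variable with the far-field bound $|\nabla_\eta K_{\bt}(\zeta,\eta)|\lesssim_p |\zeta|^{p-1}|\eta|^{p-1}/|\zeta^p-\eta^p|^2$, followed by the two weighted integrals over $A$ and $B$, with $\dlt<p-2$ entering only in region $A$; the one difference is that you integrate radially with a split at $|w|=1$ where the paper rescales by $|w|$.

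A minor remark on your closing concern: the $|\eta|^{p-1}$ factor in region $B$ does not rely on any boundary vanishing of the numerator. It comes directly from the explicit Wirtinger derivatives $\partial_\eta K_{\bt}=-\frac{ip^2\bar\zeta^{p-1}\eta^{p-1}}{2\pi(\bar\zeta^p-\eta^p)^2}$ and $\partial_{\bar\eta}K_{\bt}=\frac{ip^2\bar\zeta^{p-1}\bar\eta^{p-1}}{2\pi(\bar\zeta^p-\bar\eta^p)^2}$, so that step is not an obstacle.
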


\begin{proof}
    The kernel $K_{\bt}$ has the Wirtinger derivatives
    \begin{align*}
        \partial_w K_{\bt}(z,w) = -\frac{ip^2 \bar{z}^{p-1} w^{p-1}}{2\pi (\bar{z}^p - w^p)^2}, \qquad \partial_{\bar{w}} K_{\bt}(z,w) = \frac{ip^2 \bar{z}^{p-1} \bar{w}^{p-1}}{2\pi (\bar{z}^p - \bar{w}^p)^2}.
    \end{align*}
    Since $|\Phi_s(z)^p - \Phi_s(w)^p| \gtrsim |z^p - w^p|$ in $ A\cup B$, the complex mean value inequality implies that
    \begin{align*}
        |K_{\bt} (\Phi_s(z), \Phi_s(w)) - K_{\bt} (\Phi_s(z), \tilde{\Phi}_s(w))| \le C\frac{p^2|z|^{p-1}|w|^{p-1}}{|z^p - w^p|^2} |\Phi_s(w) - \tilde{\Phi}_s(w)|,
    \end{align*}
    where $C>0$ is a constant. Therefore, it is enough to show that
    \begin{align*}
        \int_{A\cup B} \frac{|z|^{p-1} |w|^{p-1}}{|z^p - w^p|^2} \frac{|w|}{|z|}\frac{1+|w|^{2+\dlt}}{1+|z|^{2+\dlt}} dA(z) \le C_p
    \end{align*}
    for some constant $C_p$.
    
    In the region $B$,
    \begin{align*}
        \int_{B} \frac{|z|^{p-1} |w|^{p-1}}{|z^p - w^p|^2} \frac{|w|}{|z|}\frac{1+|w|^{2+\dlt}}{1+|z|^{2+\dlt}} dA(z) & \lesssim \int_{B_{10}^c\cap\Omg_{\bt}} \frac{1}{|\zt|^{p+2}} \frac{1+|w|^{2+\dlt}}{1+|w|^{2+\dlt}|\zt|^{2+\dlt}} dA(\zt)\quad\quad (\zt:=z/|w|) \\ & \lesssim_p \int_{10}^{\infty} \frac{1}{r^{p+1}}\frac{1+|w|^{2+\dlt}}{1+|w|^{2+\dlt} r^{2+\dlt}} dr \le C_p.
    \end{align*}
    The last inequality follows from the dominated convergence theorem. Indeed, since the integral has finite limits as $|w|\rightarrow{}0$ and as $|w|\rightarrow{}\infty$, it is uniformly bounded in $|w|$.
    
    In the region $A$,
    \begin{align*}
        \int_{A} \frac{|z|^{p-1} |w|^{p-1}}{|z^p - w^p|^2} \frac{|w|}{|z|}\frac{1+|w|^{2+\dlt}}{1+|z|^{2+\dlt}} dA(z) & \lesssim \int_{B_{1/10}\cap\Omg_{\bt}} |\zt|^{p-2} \frac{1+|w|^{2+\dlt}}{1+|w|^{2+\dlt}|\zt|^{2+\dlt}} dA(\zt) \\ & \lesssim_p \int_0^{1/10} r^{p-1} \frac{1+|w|^{2+\dlt}}{1+|w|^{2+\dlt}r^{2+\dlt}}dr \le C_p.
    \end{align*}
    Similarly, since $p-3-\dlt > -1$, the above integral is uniformly bounded in $|w|$.
\end{proof}

\begin{lemma} \label{lem:K_beta2}
    Suppose that $p>1$, $\dlt>0$, and $0<s<T$. In the region $C\cup D$,
    \begin{align*}
        \int_{C\cup D} |K_{\bt} (\Phi_s(z), \Phi_s(w)) - K_{\bt} (\Phi_s(z), \tilde{\Phi}_s(w))| \frac{|w|}{|z|}\frac{1+|w|^{2+\dlt}}{1+|z|^{2+\dlt}} dA(z) \lesssim_p |\Phi_s(w) - \tilde{\Phi}_s(w)|\log\frac{|\Phi_s(w)|}{|\Phi_s(w)-\tilde{\Phi}_s(w)|}.
    \end{align*}
\end{lemma}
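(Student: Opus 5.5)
In the region $C\cup D$ we have $|z|\sim|w|$, since $A$ and $B$ have been removed and $C\subset A^c\cap B^c$ as well; we may assume this up to the harmless inclusion of the part of $C$ lying in $A\cup B$, which is already handled by Lemma \ref{lem:K_beta1}. Hence the weights satisfy
\[
\frac{|w|}{|z|}\frac{1+|w|^{2+\dlt}}{1+|z|^{2+\dlt}}\lesssim 1 ,
\]
and the problem reduces to a local estimate of the kernel difference in the image variables. We write $a:=\Phi_s(w)$, $b:=\tld{\Phi}_s(w)$, $\eta:=|a-b|$, and change variables $\zeta=\Phi_s(z)$. The flow map is measure preserving, and by Lemma \ref{1.1} it is a near-identity dilation on $0<s<T$, so $|\zeta|\sim|z|\sim|a|$ and $|b|\sim|a|$ as well. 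We use the local kernel bound of Lemma \ref{lem:ker_bound} with $k=1$, namely $|K_{\bt}(\zeta,a)|\lesssim|\zeta-a|^{-1}$ for $|a|\le 2|\zeta|$; this applies in our range, and likewise with $b$ in place of $a$.

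\textbf{Region $C$.} Here $|\zeta-a|\le 3\eta$, so $|\zeta-b|\le 4\eta$. We bound the two kernels separately:
\[
\int_{|\zeta-a|\le 3\eta}\frac{dA(\zeta)}{|\zeta-a|}+\int_{|\zeta-b|\le 4\eta}\frac{dA(\zeta)}{|\zeta-b|}\lesssim\eta .
\]
This is better than the claimed bound, since $\log(|a|/\eta)\gtrsim 1$ once $\eta\le |a|/100$, which holds by the choice of $T$.

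\textbf{Region $D$.} Here $|\zeta-a|>3\eta$, so every point on the segment $[a,b]$ lies at distance $\ge\frac23|\zeta-a|$ from $\zeta$ and at comparable modulus. We apply the complex mean value inequality in the second variable, using the $\partial_w K_{\bt},\partial_{\bar w}K_{\bt}$ formulas from the proof of Lemma \ref{lem:K_beta1}. Together with $|\zeta^p-\xi^p|\gtrsim|\zeta|^{p-1}|\zeta-\xi|$ for $|\zeta|\sim|\xi|$ in the sector, and the analogous bound with $\bar\xi$, this gives
\[
|K_{\bt}(\zeta,a)-K_{\bt}(\zeta,b)|\lesssim\frac{\eta}{|\zeta-a|^2}.
\]
Integrating over $3\eta<|\zeta-a|\lesssim|a|$ (the upper limit comes from $|\zeta|\le 10|w|\sim|a|$) gives
\[
\eta\int_{3\eta}^{C|a|}\frac{dr}{r}\lesssim\eta\log\frac{|a|}{\eta},
\]
which is the claim.

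\textbf{Main obstacle.} The main point needing care is the lower bound $|\zeta^p-\xi^p|\gtrsim|\zeta|^{p-1}|\zeta-\xi|$ (and its version with $\bar\xi$) uniformly on $\{|\zeta|\sim|\xi|\}\cap\overline{\Omg_{\bt}}$. The plan for proving it is to scale to $|\xi|=1$, note that $z\mapsto z^p$ is injective on $\overline{\Omg_{\bt}}$ with nonvanishing derivative away from $0$, and use compactness of the annulus. For the conjugate term, $z^p$ maps the sector into the closed upper half-plane while $\bar\xi^p$ lies in the lower half-plane, which also gives $|\zeta^p-\bar\xi^p|\gtrsim|\zeta^p-\xi^p|$. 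Finally, the possible failure of the segment $[a,b]$ to stay inside the sector is handled by noting that $K_{\bt}$ extends smoothly to a slightly larger sector at the comparable scale, or by replacing the segment with an arc of comparable length.
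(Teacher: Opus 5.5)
Your proposal is correct and follows essentially the same route as the paper. On $C$ you bound each kernel by the reciprocal distance and integrate over a disc of radius $O(\eta)$; on $D$ you use the mean value bound $|K_{\bt}(\zeta,a)-K_{\bt}(\zeta,b)|\lesssim \eta/|\zeta-a|^2$ and integrate out to scale $|a|$ to get the logarithm. Your measure-preserving change of variables $\zeta=\Phi_s(z)$ on $D$ is slightly cleaner than the paper's pointwise comparison $|\Phi_s(z)-\Phi_s(w)|\ge |z-w|/2$, and your worry about the segment $[a,b]$ leaving the sector is unnecessary since $\Omg_{\bt}$ is convex.
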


\begin{proof}
    Note that $|z|$, $|w|$, $|\Phi_s(z)|$, $|\Phi_s(w)|$ and $|\tilde{\Phi}_s(w)|$ are comparable in $C\cup D$. In the region $C$,
    \begin{align*}
        \int_C |K_{\bt} (\Phi_s(z), \Phi_s(w)) - K_{\bt} (\Phi_s(z), \tilde{\Phi}_s(w))| dA(z) & \lesssim_p \int_C\left( \frac{1}{|\Phi_s(z)-\Phi_s(w)|} + \frac{1}{|\Phi_s(z) - \tilde{\Phi}_s(w)|}\right) dA(z)\\ &  \le \int_{\{z\in\Omg_{\bt}: |z-w|<3|\Phi_s(w)-\tilde{\Phi}_s(w)|\}} \frac{2dA(z)}{|z-w|} \lesssim  |\Phi_s(w) - \tilde{\Phi}_s(w)|.
    \end{align*}
    The first inequality holds by Lemma \ref{lem:exp_p}, and the second inequality follows from the incompressibility. We now consider the remainder. Since $|\Phi_s(z) - \Phi_s(w)| \ge |z-w|/2$ in $D$,
    \begin{align*}
        \int_D |K_{\bt} (\Phi_s(z), \Phi_s(w)) - K_{\bt} (\Phi_s(z), \tilde{\Phi}_s(w))| dA(z) & \lesssim_p \int_D \frac{|\Phi_s(w)-\tilde{\Phi}_s(w)| dA(z)}{|\Phi_s(z)-\Phi_s(w)| |\Phi_s(z) - \tilde{\Phi}_s(w)|} \\ & \lesssim |\Phi_s(w) - \tilde{\Phi}_s(w)|\int_D \frac{dA(z)}{|z-w|^2} \\ &  \lesssim |\Phi_s(w) - \tilde{\Phi}_s(w)|\log\left(\frac{|\Phi_s(w)|}{|\Phi_s(w)-\tilde{\Phi}_s(w)|}\right).
    \end{align*}
\end{proof}

\subsection{Proof of existence}

Now we prove the existence part in Theorem \ref{thm:weak-wellposed}.
\begin{proof}[Proof of existence in Theorem \ref{thm:weak-wellposed}]
We construct a sequence $\{\omg^{(n)}\}$ such that $\omg^{(n)}$ converges to a solution $\omg$ as $n\rightarrow\infty$.

\medskip
\noindent\textit{Step 1.}
Let $\omg^{(0)}_t(z) := \omg^{(0)}(t,z) := \omg_0(z)$, and recursively define the sequence of triples $\left(\omg_t^{(n)}, u_t^{(n)}, \Phi_t^{(n)}\right)$ by
\begin{equation}\label{eq:triple_infty}
   \begin{split}
        u_t^{(n)}(z) & := \int_{\Omg_{\bt}} K_{\bt}(z,w) \omg_t^{(n-1)}(w) dA(w), \\ \frac{d}{dt} \Phi_t^{(n)} (z)  & =  u_t^{(n)}\left(\Phi_t^{(n)}(z)\right),\quad \Phi_0^{(n)}(z) := z, \\ \omg_t^{(n)} \left(\Phi_t^{(n)}(z)\right) & := \omg_0 \left(z \right)
   \end{split}
\end{equation}
for each $n\in\mathbb{N}$, $t\in\mathbb{R}$, and $z\in \Omg_{\bt}$. Let $\Phi_t^{(0)}(z) := z$. Note that the velocity $u_t^{(n)}$, defined by the Biot-Savart law, satisfies the log-Lipschitz property by Lemma \ref{lem:u_diff}. Moreover, the constant in Lemma \ref{lem:u_diff} depends only on $p$ and $\|\omg_t^{(n-1)}\|_{L^{\infty}}$. Hence, by Osgood's lemma, the ODE $\frac{d}{dt} \Phi_t^{(n)} (z) = u_t^{(n)}(\Phi_t^{(n)}(z))$ has a unique solution. By backward uniqueness for this ODE, $\Phi_t^{(n)}$ is a bijection. Therefore, $\omg_t^{(n)}$ is well-defined on $\Omg_{\bt}$. 

Let $\dlt\in (0,p-2)$, and define $\rho^{(n)}:\mathbb{R}_+\rightarrow\mathbb{R}$ by
\begin{align*}
    \rho^{(n)} (t) := \int_{\Omg_{\bt}} \frac{|\Phi_t^{(n)}(z) -\Phi_t^{(n-1)}(z)|}{|z|}\frac{|\omg_0(z)|}{1+|z|^{2+\dlt}} dA(z)
\end{align*}
for each $n\in\mathbb{N}$. Since
\begin{align*}
    \Phi_t^{(n)}(z) - \Phi_t^{(n-1)}(z)   = \int_0^t &\int_{\Omg_{\bt}}  \left[ K_{\bt} \left( \Phi_s^{(n-1)}(z), \Phi_s^{(n-1)}(w) \right) - K_{\bt} \left( \Phi_s^{(n-1)}(z), \Phi_s^{(n-2)}(w) \right) \right]\omg_0(w) dA(w)ds \\ & + \int_0^t \left[u_s^{(n)}\left(\Phi^{(n)}_s(z)\right) - u_s^{(n)} \left(\Phi_s^{(n-1)}(z) \right)\right] ds,\qquad n\ge 2,
\end{align*}
arguing as in the proof of uniqueness, we obtain the bound
\begin{align*}
    \rho^{(n)}(t) \lesssim \int_0^t \varphi(\rho^{(n-1)}(s))ds + \int_0^t \varphi(\rho^{(n)}(s)) ds.
\end{align*}
Let $\sigma^{(n)}:\mathbb{R}_+\rightarrow\mathbb{R}$ be defined by $\sigma^{(n)}(t) := \sup_{m\ge n}\rho^{(m)}(t)$. Then,
\begin{align*}
    \sigma^{(n)} (t) \lesssim \int_0^t \varphi\left( \sigma^{(n-1)} (s)\right) ds.
\end{align*}
The proof of Lemma 3.2 in \cite{MP} implies that $\lim_{n\rightarrow\infty} \sigma^{(n)}=0$ uniformly in some short time interval $[0,T']$, where $T'$ depends only on $p$ and $\|\omg_0\|_{L^{\infty}}$. We assume that $T'$ is less than $T$, which appears in the proof of uniqueness.

\medskip
\noindent\textit{Step 2.}
We now prove the existence of a solution triple $\left(\omg_t, u_t, \Phi_t \right)$. To this end, we establish the uniform boundedness and equicontinuity of $\{\Phi_t^{(n)}\}_{n\ge 0}$ and apply the Arzel\`a-Ascoli theorem. Let $K\subset\Omg_{\bt}$ be a compact subset, and $0<t<T'$. Since $K$ is bounded, there exists $R>0$ such that $K\subset B_R$. For $z\in K$ and $t\in [0,T']$, we have
\begin{equation*}
    |\Phi^{(n)}_t(z)| \le |\Phi^{(n)}(0,z)| + \int_0^t |u^{(n)}_s(\Phi^{(n)}_s(z))| ds \le R + C_p \|\omg_0\|_{L^{\infty}}\int_0^{T'} |\Phi^{(n)}_s(z)| ds.
\end{equation*}
Therefore, by Gr\"onwall's inequality,
\begin{equation*}
    |\Phi^{(n)}_t(z)| \le R\exp(C_p\|\omg_0\|_{L^{\infty}} t), \qquad z\in K,\ t\in[0,T'].
\end{equation*}
Hence, $\{\Phi^{(n)}\}$ is uniformly bounded in $K\times [0,T']$.

To obtain the equicontinuity of $\{\Phi_t^{(n)}\}$ on $K$, we need to bound $|\Phi^{(n)}_t(z) - \Phi^{(n)}_t(z')|$ for $z$, $z'\in K$ ($z\neq z'$). By Lemma \ref{lem:u_diff}, $u_t^{(n)}$ is log-Lipschitz with an estimate
\begin{equation*}
   |u^{(n)}_t(\Phi^{(n)}_t(z)) - u^{(n)}_t(\Phi^{(n)}_t(z'))| \lesssim_p \|\omg_0\|_{L^{\infty}} |\Phi^{(n)}_t(z) - \Phi^{(n)}_t(z')|\log\frac{6|\Phi^{(n)}_t(z)|}{|\Phi^{(n)}_t(z) - \Phi^{(n)}_t(z')|}.
\end{equation*}
Since $\Phi_t^{(n)}$ is a flow map associated with $u_t^{(n)}$,
\begin{equation*}
    \left|\frac{d}{dt}|\Phi_t^{(n)}(z) - \Phi_t^{(n)}(z')|\right| \le | u^{(n)}_t(\Phi^{(n)}_t(z)) - u^{(n)}_t(\Phi^{(n)}_t(z')) |.
\end{equation*}
Therefore, a direct calculation yields 
\begin{equation*}
        \left|\frac{d}{dt}\log\frac{6|\Phi^{(n)}_t(z)|}{|\Phi^{(n)}_t(z) - \Phi^{(n)}_t(z')|}\right| \le C_p\|\omg_0\|_{L^{\infty}} \left( 1+ \log\frac{6|\Phi^{(n)}_t(z)|}{|\Phi^{(n)}_t(z) - \Phi^{(n)}_t(z')|}\right).
\end{equation*}
By Gr\"onwall's inequality, there exists $C_p>0$ such that for all $n\in\mathbb{N}$,
\begin{equation}\label{eq:Phi_diff}
    e^{-C_p\|\omg_0\|_{L^{\infty}}t}\log \frac{6e|z|}{|z-z'|} \le \log\frac{6e|\Phi^{(n)}_t(z)|}{|\Phi^{(n)}_t(z) - \Phi^{(n)}_t(z')|}\le e^{C_p\|\omg_0\|_{L^{\infty}}t}\log \frac{6e|z|}{|z-z'|}.
\end{equation}
Let $K\subset \Omg_{\bt}$ be compact. Since $K$ is bounded, $\{\Phi^{(n)}\}$ is equicontinuous on $K\times [0,T']$ by the first inequality.

We now apply the Arzel\`a-Ascoli theorem. For each compact subset $K\subset \Omg_{\bt}$, there exists a subsequence of $\{\Phi^{(n)}\}$ that converges uniformly on $K\times [0,T']$. Since $\Omg_{\bt}$ is $\sigma$-compact, a diagonal argument implies that there exist a subsequence $\{\Phi^{(n_k)}\}$ and a map $\Phi$ such that
\begin{equation*}
    \Phi(t,z) := \Phi_t (z) := \lim_{k\rightarrow\infty} \Phi_t^{(n_k)}(z)
\end{equation*}
for every $z\in\Omg_{\bt}$ and $t\in [0,T']$. Since $\{\Phi_t^{(n_k)}\}$ is continuous and converges to $\Phi_t$ uniformly on compact subsets, $\Phi_t$ is also continuous on $\Omg_{\bt}$. For each $z$, $z'\in \Omg_{\bt}$ with $z\neq z'$, letting $k\rightarrow\infty$ in \eqref{eq:Phi_diff} yields
\begin{equation*}
    \frac{|\Phi_t(z) - \Phi_t(z')|}{ 6e|\Phi_t(z)|} > 0. 
\end{equation*}
Therefore, $\Phi_t$ is injective for each $t$. Since $\Phi_t^{(n)}$ are measure-preserving, $\Phi_t^{(n_k)}\rightarrow \Phi_t$ uniformly on every compact subset of $\Omg_{\bt}$, and $|\Phi_t^{(n)}(z) - z| \le |z|/100$, we conclude that the limit $\Phi_t$ is also measure-preserving.

For each $0\le t\le T'$, we define $\omg_t$ by 
\begin{equation*}
    \omg_t(\Phi_t(z)) := \omg_0(z),\qquad z\in\Omg_{\bt}
\end{equation*}
and set $\omg_t(z):=0$ for $z\notin \Phi_t(\Omg_{\bt})$. Since $\Phi_t$ is injective and $\omg_0\in L^{\infty}$, this defines a bounded vorticity $\omg_t\in L^{\infty}$. We then define $u_t$ through the Biot-Savart law \eqref{eq:Biot-Savart}:
\begin{equation*}
    u_t(z) := \int_{\Omg_{\bt}} K_{\bt}(z,w) \omg_t(w)dA(w), \qquad z\in\Omg_{\bt}.
\end{equation*}
This $u_t$ is log-Lipschitz by Lemma \ref{lem:u_diff}. As we have seen in the proof of uniqueness, the ODE
\begin{equation*}
    \frac{d}{dt}\phi_t (z) = u_t(\phi_t(z)), \quad \phi_0(z)=z,  \qquad z\in\Omg_{\bt}
\end{equation*}
has a unique solution $\phi_t$ $(0\le t\le T')$ by Lemma 3.2 of \cite{MP}.

\medskip
\noindent\textit{Step 3.}
Now it is enough to show that $\Phi_t = \phi_t$. By another application of Arzel\`a-Ascoli theorem to the subsequence $\{\Phi_t^{(n_k-1)}\}$, there exists a subsequence $\{\Phi_t^{(n_{k_l}-1)}\}$ such that $\lim_{l\rightarrow\infty} \Phi_t^{(n_{k_l}-1)} =: \Psi_t$ locally uniformly in $\Omg_{\bt}$. For simplicity, we relabel this subsequence as $\Phi_t^{(n_k-1)}$. Since $\lim_{n\rightarrow\infty} \sigma^{(n)}(t)=0$, Fatou's lemma implies
\begin{equation*}
    \int_{\Omg_{\bt}} \frac{|\Phi_t(z) -\Psi_t(z)|}{|z|}\frac{|\omg_0(z)|}{1+|z|^{2+\dlt}} dA(z)\le \liminf_{k\rightarrow\infty}\int_{\Omg_{\bt}} \frac{|\Phi_t^{(n_k)}(z) -\Phi_t^{(n_k-1)}(z)|}{|z|}\frac{|\omg_0(z)|}{1+|z|^{2+\dlt}} dA(z) =0.
\end{equation*}
Therefore, we obtain $\Phi_t = \Psi_t$ a.e. in $\supp(\omg_0)$.

Since $\Phi_t^{(n_k)} (z) = z+ \int_0^t u_s^{(n_k)} (\Phi_s^{(n_k)}(z))ds$, the pointwise limit yields
\begin{equation*}
    \Phi_t (z) = z + \lim_{k\rightarrow\infty}\int_0^t u_s^{(n_k)} (\Phi_s^{(n_k)}(z)) ds.
\end{equation*}
Our goal is to show that $u_s^{(n_k)} (\Phi_s^{(n_k)}(z))\rightarrow u_s (\Phi_s(z))$ as $k\rightarrow \infty$. Let $0<\eps\ll 1$, and $\chi\in C_c^{\infty}(\overline{\Omg_{\bt}})$ be a cut-off function such that $\chi=0$ in $B_{\eps}(z)$ and $\chi = 1$ in $B_{2\eps}(z)^c$. Then, the dominated convergence theorem implies that
\begin{equation*}
    \int_{\Omg_{\bt}} \chi(w) (K_{\bt}(z,\Phi_t(w)) - K_{\bt}(z,\Phi_t^{(n_k - 1)}(w))) \omg_0(w)dA(w) \rightarrow 0
\end{equation*}
as $k\rightarrow\infty$. When we replace $\chi$ with $1-\chi$, the integral is uniformly bounded by
\begin{equation*}
    \int_{\Omg_{\bt}\cap B_{2\eps}(z)} |K_{\bt}(z,\Phi_t^{(n_k - 1)}(w))| |\omg_0(w)|dA(w) \lesssim \eps \|\omg_0\|_{L^{\infty}}.
\end{equation*}
Therefore, we obtain $\lim_{k\rightarrow\infty}u^{(n_k)} = u$. Since $u^{(n_k)}$ is log-Lipschitz by Lemma \ref{lem:u_diff}, $\lim_{k\rightarrow\infty}u_s^{(n_k)} (\Phi_s^{(n_k)}(z)) = u_s(\Phi_s(z))$ for all $z\in\Omg_{\bt}$. Lemma \ref{1.1} and the dominated convergence theorem imply
\begin{equation*}
    \Phi_t (z) = z + \lim_{k\rightarrow\infty}\int_0^t u_s^{(n_k)} (\Phi_s^{(n_k)}(z)) ds = z + \int_0^t u_s (\Phi_s(z)) ds,
\end{equation*}
which is equivalent to $\Phi_t = \phi_t$.
\end{proof}

\section{Well-posedness in weighted H\"older spaces}\label{Holder-wellposed}

In this section, we prove Theorem \ref{thm:GWPHolder}. The estimates of $\|\nb u\|_{L^{\infty}}$ and $[\nb u]_{\mathring{C}^{k,\alp}}$ play important roles in the proof of the H\"older well-posedness results. Before the proof, we introduce the estimates for each norm and how they are used.
\begin{itemize}
    \item In Subsection \ref{GWPHolder}, if $0<\alp\le 1$ and $\omg\in \mathring{C}^{0,\alp}(\overline{\Omg_{\bt}})$, then the norm $\|\nb u\|_{L^{\infty}}$ is bounded by
    \begin{equation*}
        \|\nb u\|_{L^{\infty}} \lesssim_{\alp,\bt} \|\omg\|_{L^{\infty}} \log \frac{c\|\omg\|_{\mathring{C}^{0,\alp}}}{\|\omg\|_{L^{\infty}}}.
    \end{equation*}
    This guarantees the Lipschitz property of the velocity and serves as a basic ingredient for all of our H\"older well-posedness results. We cannot expect an analogous estimate of $\|\nb u\|_{L^{\infty}}$ when we assume $\omg\in C^{0,\alp}(\overline{\Omg_{\bt}})$ without decay at infinity. 
    \item In Subsection \ref{Holder_estimate}, the seminorm $[\nb u]_{\mathring{C}^{k,\alp}}$ is bounded by $[\nb u]_{\mathring{C}^{k,\alp}} \lesssim \|\omg\|_{\mathring{C}^{k,\alp}}$ if $k\in\mathbb{N}_0$, $0<\alp<1$, and $\omg\in \mathring{C}^{k,\alp}(\overline{\Omg_{\bt}})$. This helps us to prove the $\mathring{C}^{k+1,\alp}$ well-posedness result.
\end{itemize}

Since $u$ is a complex function, we first specify the convention for $\nb u$. Recall that, for $z=x+iy$, the partial derivatives satisfy $\rd_x u = \rd_z u + \rd_{\bar{z}} u$ and $\rd_y u = i(\rd_z u - \rd_{\bar{z}} u)$.
\begin{definition}
    Let $u$ be the complex-valued velocity function, and $K_{\bt}$ be the Biot-Savart kernel in the domain $\Omg_{\bt}$. Then $\nb u$ and $P_{\bt}$ are defined by
    \begin{align*}
        \nb u := \begin{pmatrix}
            \rd_z u + \rd_{\bar{z}} u  \\ i(\rd_z u - \rd_{\bar{z}} u)
        \end{pmatrix},\quad
        P_{\bt} :=\begin{pmatrix}
            \rd_z K_{\bt} + \rd_{\bar{z}} K_{\bt} \\ i(\rd_z K_{\bt} - \rd_{\bar{z}} K_{\bt})
        \end{pmatrix} = 2\rd_{\bar{z}}^2 G_{\bt}(z,w) \begin{pmatrix}
            i \\ 1
        \end{pmatrix}.
    \end{align*}
    We define their absolute values $|\nb u|$ and $|P_{\bt}|$ as the maximum of the absolute values of their entries. We call $\nb u$ and $P_{\bt}$ the \textit{gradient} of $u$ and $K_{\bt}$, respectively. The higher order derivatives $\nb^k u$ can be defined similarly using $\rd_x u = \rd_z u + \rd_{\bar{z}} u$ and $\rd_y u = i(\rd_z u - \rd_{\bar{z}} u)$.
\end{definition}
Lemma \ref{lem:gradu_pv} and $u = 2i\rd_{\bar{z}}\psi$ imply that
\begin{align}\label{eq:gradu_pv}
    \nb u(z) = p.v.\int_{\Omg_{\bt}} P_{\bt}(z,w) \omg(w) dA(w) +\frac{\omg(z)}{2}\begin{pmatrix}
            i \\ -1
        \end{pmatrix}.
\end{align}
Note that $P_{\bt}$ is a constant multiple of $\rd_{\bar{z}}^2 G_{\bt}$. Throughout this paper, we use $P_{\bt}$ and $\rd_{\bar{z}}^2 G_{\bt}$ interchangeably, suppressing the constant matrix factor.

\subsection{Estimate for the Lipschitz norm of the velocity}\label{GWPHolder}
We begin by proving an estimate of $\|\nb u\|_{L^{\infty}}$ motivated by \cite{MB}, \cite{EJB}. This estimate of $\|\nb u\|_{L^{\infty}}$ helps us to prove all of our main well-posedness results in Sections \ref{Holder-wellposed} and \ref{Holder-wellposed2}.

\begin{lemma}\label{lem:gradu_infty}
    Suppose that $0<\bt<\pi/2$. Let $0<\alp\le 1$, $\omg\in \mathring{C}^{0,\alp}(\overline{\Omg_{\bt}})$, and $u$ satisfies the Biot-Savart law \eqref{eq:Biot-Savart}. Then
    \begin{equation}\label{eq:gradu_infty}
        \|\nb u\|_{L^{\infty}} \lesssim_{\alp,\bt} \|\omg\|_{L^{\infty}}\log\frac{c\|\omg\|_{\mathring{C}^{0,\alp}}}{\|\omg\|_{L^{\infty}}}
    \end{equation}
    for some constant $c>3$.
\end{lemma}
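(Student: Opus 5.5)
We will bound $|\nb u(z)|$ pointwise, using the principal value representation \eqref{eq:gradu_pv}. The local term $\omg(z)/2$ is trivially bounded by $\|\omg\|_{L^{\infty}}$, so the work is in the singular integral. Both $\|\omg\|_{L^\infty}$ and $[\omg]_{\mathring{C}^{0,\alp}}$ are invariant under the dilation $\omg\mapsto\omg(\lmb\,\cdot)$, and $\rd_{\bar z}^2G_{\bt}$ is homogeneous of degree $-2$. Hence we may assume $|z|=1$. By the reflection $w\mapsto e^{i\bt}\bar w$, which preserves $\Omg_{\bt}$, we may also assume $\arg z\le \bt/2$.

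We split $\Omg_{\bt}$ into three regions and estimate each separately.

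\textbf{Far field $\{|w|\ge 2\}$.} Lemma \ref{lem:ker_bound} gives $|\rd_{\bar z}^2G_{\bt}(z,w)|\lesssim |w|^{-p}$. Since $p>2$, this region contributes $\lesssim_{\bt}\|\omg\|_{L^\infty}$. This is exactly where $\bt<\pi/2$ is used and no decay of $\omg$ is needed.

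\textbf{Intermediate region $\{|w|\le 2,\ |z-w|\ge \eps\}$.} Here $\eps\in(0,\bt/20)$ is a parameter chosen later. The bound $|\rd_{\bar z}^2G_{\bt}|\lesssim|z-w|^{-2}$ from Lemma \ref{lem:ker_bound} gives a contribution $\lesssim\|\omg\|_{L^\infty}\log(1/\eps)$. Near the origin, $|z-w|\sim1$, so there is no additional singularity.

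\textbf{Local region $B_\eps(z)\cap\Omg_{\bt}$.} We write $\omg(w)=\omg(z)+(\omg(w)-\omg(z))$.

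For the constant part, we need
$$\Bigl|p.v.\int_{B_\eps(z)\cap\Omg_{\bt}}\rd_{\bar z}^2G_{\bt}(z,w)\,dA(w)\Bigr|\lesssim_{\bt}1$$
uniformly in $\eps$. This is the cancellation statement \eqref{eq:halfGradKer} of Lemma \ref{lem:HalfGradKer} with $k=0$, $m=2$. The same argument as in its proof (half-circle cancellation for $\Gmm(z-w)$, the bound for the reflected term $\Gmm(z-\bar w)$, and Lemma \ref{lem:H_beta} for $H_{\bt}$) applies verbatim to $B_\eps(z)$ in place of $B_2(z)$. Alternatively, one takes the difference with the $B_2(z)$ version; the annulus $\eps\le|z-w|\le2$ contributes $\log(1/\eps)$, which is still acceptable.

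For the oscillation part, we convert the weighted seminorm to a standard Hölder bound. For $|w|\sim1$,
$$|\omg(w)-\omg(z)|\le\bigl||w|^\alp\omg(w)-\omg(z)\bigr|+\bigl||w|^\alp-1\bigr||\omg(w)|\lesssim\|\omg\|_{\mathring{C}^{0,\alp}}|z-w|^\alp.$$
Combined with $|\rd_{\bar z}^2G_{\bt}|\lesssim|z-w|^{-2}$, this gives a contribution $\lesssim_\alp\|\omg\|_{\mathring{C}^{0,\alp}}\eps^\alp$.

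Summing the three regions yields
$$|\nb u(z)|\lesssim\|\omg\|_{L^\infty}\bigl(1+\log(1/\eps)\bigr)+\|\omg\|_{\mathring{C}^{0,\alp}}\eps^\alp.$$
We choose $\eps^\alp=\min\{\|\omg\|_{L^\infty}/\|\omg\|_{\mathring{C}^{0,\alp}},\,(\bt/20)^\alp\}$. Since $\|\omg\|_{L^\infty}\le\|\omg\|_{\mathring{C}^{0,\alp}}$, this gives \eqref{eq:gradu_infty} with a suitable $c>3$.

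\textbf{Main obstacle.} The delicate step is the uniform-in-$\eps$ cancellation for the constant part when $z$ lies close to the boundary, where $\Im z\ll\eps$. In that regime the ball $B_\eps(z)$ is cut by $\rd\Omg_{\bt}$, so the kernel has no mean-zero property on the truncated region. I would handle this exactly as in Lemma \ref{lem:HalfGradKer}: the $\arcsin(\Im z/r)$ factor controls the direct term, and the reflected kernel gives a bounded contribution. The assumption $\arg z\le\bt/2$ ensures that only the edge $\tht=0$ is nearby, while the other edge and the corner are at distance $\gtrsim_{\bt}1$, where $H_{\bt}$ is smooth by Lemma \ref{lem:H_beta}.
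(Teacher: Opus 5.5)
Your argument is correct. It differs from the paper's only in where the cancellation of the singular kernel comes from.

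The paper invokes Lemma~\ref{lem:IntGradKer} with $k=0$. Constant vorticity has the explicit quadratic stream function of Lemma~\ref{lem:Gbeta_nm}, so $p.v.\int_{\Omg_{\bt}}\rd_{\bar z}^2G_{\bt}(z,w)\,dA(w)$ is a constant. This lets one subtract $\omg(z)$ on all of $\Omg_{\bt}$ at once. After that, every region is absolutely convergent: the H\"older gain is used on $\{|z-w|\le\dlt|z|\}$, and the crude bound $|\omg(z)-\omg(w)|\le 2\|\omg\|_{L^\infty}$ is used everywhere else. The boundary-truncated ball that you identify as the main obstacle never has to be examined.

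You instead subtract $\omg(z)$ only on $B_\eps(z)\cap\Omg_{\bt}$ and rely on the local cancellation \eqref{eq:halfGradKer} with $k=0$, $m=2$. You either re-run its half-circle/reflection/$H_{\bt}$ argument on $B_\eps(z)$, or, more simply, remove the annulus $\eps\le|z-w|\le 2$. The annulus costs only the $\log(1/\eps)$ you already pay in the intermediate region.

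The paper's route is shorter, but it depends on the exact solvability of $\Dlt\psi=1$ in the sector. Yours uses only the local geometry near $z$ plus the far-field decay $|w|^{-p}$. It is therefore the version that would survive in settings without an explicit polynomial solution, at the price of the near-boundary analysis, which Lemma~\ref{lem:HalfGradKer} already supplies.
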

\begin{proof}
    By Lemma \ref{lem:IntGradKer} and \eqref{eq:gradu_pv}, we only need to bound
    \begin{equation*}
        \left|\int_{\Omg_{\bt}} \rd_{\bar{z}}^2 G_{\bt}(z,w)\{\omg(z)-\omg(w)\} dA(w)\right|
    \end{equation*}
    instead of $|\nb u(z)|$.
    Let $0<\dlt<1/2$, and consider three regions: $A:=\{w\in\Omg_{\bt}: |z-w| \le \dlt|z|\}$, $B:=\{w\in\Omg_{\bt}:\dlt|z|\le |z-w| \le 2|z|\}$, and $C:=\{w\in\Omg_{\bt}: |z-w| \ge 2|z|\}$. Then, in the region $A$,
    \begin{equation*}
        \left|\int_A \rd_{\bar{z}}^2 G_{\bt} (z,w) \{\omg(z)-\omg(w)\}dA(w)\right| \lesssim_p \int_A \frac{1}{|z-w|^2} \frac{|z-w|^{\alp}}{|z|^{\alp}} \|\omg\|_{\mathring{C}^{0,\alp}} dA(w) \lesssim \frac{\|\omg\|_{\mathring{C}^{0,\alp}}}{\alp} \dlt^{\alp}.
    \end{equation*}
    In the regions $B$ and $C$,
    \begin{align*}
        & \left|\int_B \rd_{\bar{z}}^2 G_{\bt} (z,w) \{\omg(z)-\omg(w)\}dA(w)\right| \lesssim_p \int_B \frac{1}{|z-w|^2} \|\omg\|_{L^{\infty}} dA(w) \lesssim \|\omg\|_{L^{\infty}} \log\left(\frac{2}{\dlt}\right), \\ & \left|\int_C \rd_{\bar{z}}^2 G_{\bt} (z,w) \{\omg(z)-\omg(w)\} dA(w)\right| \lesssim \int_C \frac{|z|^{p-2}}{|w|^p} \|\omg\|_{L^{\infty}} dA(w) \lesssim \frac{\|\omg\|_{L^{\infty}}}{p-2}.
    \end{align*}
    Therefore, if we choose
    \begin{equation*}
        \dlt = \min\left(\frac{1}{2}, \left(\frac{\|\omg\|_{L^{\infty}}}{\|\omg\|_{\mathring{C}^{0,\alp}}}\right)^{\frac{1}{\alp}}\right),
    \end{equation*}
    then we obtain \eqref{eq:gradu_infty}.
\end{proof}

With the help of Lemma \ref{lem:gradu_infty}, we first prove the $\mathring{C}^{0,\alp}$ well-posedness in $\Omg_{\bt}$.

\begin{theorem}[Theorem \ref{thm:GWPHolder} in the case $k=0$]\label{thm:ring_C_0,a}
    Suppose that $0<\bt<\pi/2$. Let $0<\alp\le 1$, and $\omg_0\in \mathring{C}^{0,\alp}(\overline{\Omg_{\bt}})$. Then, for any $T>0$, the unique weak solution in Theorem \ref{thm:weak-wellposed} satisfies
    \begin{align*}
        \omg \in L^{\infty}([0,T); \mathring{C}^{0,\alp}(\overline{\Omg_{\bt}}))
    \end{align*}
    with initial data $\omg_0$. The norm $\|\omg(t,\cdot)\|_{\mathring{C}^{0,\alp}}$ is bounded by
        \begin{equation*}
            \frac{c\|\omg(t,\cdot)\|_{\mathring{C}^{0,\alp}}}{\|\omg_0\|_{L^{\infty}}} \le  \left(\frac{c\|\omg_0\|_{\mathring{C}^{0,\alp}}}{\|\omg_0\|_{L^{\infty}}}\right)^{  \exp\left(C \|\omg_0\|_{L^{\infty}} t\right)}
        \end{equation*}
    in $\Omg_{\bt}$, where $c>3$ and $C=C(\alp,\bt)$.
\end{theorem}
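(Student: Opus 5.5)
The plan is to propagate the weighted seminorm along the flow. We combine it with the log-Lipschitz estimate of Lemma \ref{lem:gradu_infty} to close a Gronwall inequality for $\log(c\|\omg_t\|_{\mathring{C}^{0,\alp}}/\|\omg_0\|_{L^\infty})$.

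\textbf{Step 1: the velocity is Lipschitz.} Let $(\omg,u,\Phi)$ be the solution triple from Theorem \ref{thm:weak-wellposed}. The $L^\infty$ norm is conserved, $\|\omg_t\|_{L^\infty}=\|\omg_0\|_{L^\infty}$. As long as $\omg_t\in\mathring{C}^{0,\alp}$, Lemma \ref{lem:gradu_infty} gives $\|\nb u_t\|_{L^\infty}\le C\|\omg_0\|_{L^\infty}\log(c\|\omg_t\|_{\mathring{C}^{0,\alp}}/\|\omg_0\|_{L^\infty})=:L(t)$. Hence $\Phi_t$ and $\Phi_t^{-1}$ are bi-Lipschitz with constant $\exp(\int_0^t L)$. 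To make this rigorous, I would argue a priori on smooth approximations, or by a continuation argument on the maximal time where the seminorm stays finite, using the uniqueness from Theorem \ref{thm:weak-wellposed}.

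\textbf{Step 2: comparability of $|\Phi_t(x)|$ and $|x|$.} The boundary is invariant, and in particular $u(0)=0$ (indeed $|u(z)|\lesssim\|\omg\|_{L^\infty}|z|$, as used in Lemma \ref{1.1}). Therefore $\frac{d}{dt}\log|\Phi_t(x)|\le |u(\Phi_t x)|/|\Phi_t x|\le \|\nb u_t\|_{L^\infty}$, which gives $e^{-\int_0^tL}|x|\le|\Phi_t(x)|\le e^{\int_0^tL}|x|$. The cruder bound via $\|\omg_0\|_{L^\infty}$ from Lemma \ref{1.1} also suffices.

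\textbf{Step 3: transporting the weighted seminorm.} Write $x=\Phi_t(a)$ and $y=\Phi_t(b)$, so that $\omg_t(x)=\omg_0(a)$. Decompose
\begin{equation*}
|x|^\alp\omg_t(x)-|y|^\alp\omg_t(y)=\big(|a|^\alp\omg_0(a)-|b|^\alp\omg_0(b)\big)\tfrac{|x|^\alp}{|a|^\alp}+|b|^\alp\omg_0(b)\Big(\tfrac{|x|^\alp}{|a|^\alp}-\tfrac{|y|^\alp}{|b|^\alp}\Big).
\end{equation*}
The first term is bounded by $e^{\alp\int L}[\omg_0]_{\mathring{C}^{0,\alp}}|a-b|^\alp\le e^{2\alp\int L}[\omg_0]|x-y|^\alp$. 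For the second term, the map $a\mapsto|\Phi_t(a)|/|a|$ is bounded and, by Step 2 together with the Lipschitz bound on $\Phi_t$, has gradient $\lesssim e^{C\int L}/|a|$. When $|a-b|\le|a|/2$, this gives a difference $\lesssim e^{C\int L}|a-b|/|a|\le e^{C\int L}(|a-b|/|a|)^\alp$. Multiplying by $|b|^\alp\|\omg_0\|_{L^\infty}$ yields a bound of $e^{C\int L}\|\omg_0\|_{L^\infty}|x-y|^\alp$. When $|a-b|>|a|/2$, we instead bound each piece trivially by $|x|^\alp+|y|^\alp\lesssim e^{C\int L}|a-b|^\alp$ times $\|\omg_0\|_{L^\infty}$. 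Altogether, $\|\omg_t\|_{\mathring{C}^{0,\alp}}\le \|\omg_0\|_{\mathring{C}^{0,\alp}}\exp(C'\int_0^tL(s)ds)$.

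\textbf{Step 4: closing the estimate.} Set $Y(t):=\log(c\|\omg_t\|_{\mathring{C}^{0,\alp}}/\|\omg_0\|_{L^\infty})\ge\log c>1$. Step 3 gives $Y(t)\le Y(0)+C\|\omg_0\|_{L^\infty}\int_0^tY$, so Gronwall yields $Y(t)\le Y(0)e^{C\|\omg_0\|_{L^\infty}t}$, which is the stated double-exponential bound. This bound is finite for all $t$, so the continuation argument of Step 1 closes and $\omg\in L^\infty([0,T);\mathring{C}^{0,\alp})$ for every $T$.

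\textbf{Main obstacle.} I expect the hardest part to be Step 3. The weight $|x|^\alp$ is not transported, so one must control the distortion $|\Phi_t(a)|/|a|$ and its oscillation at scale $|a|$ using only $\|\nb u\|_{L^\infty}$. The fact that the origin is a stagnation point, $u(0)=0$, is what makes this possible. A second technical point is justifying the a priori estimate for the weak solution. For this I would mollify $\omg_0$ in the scale-invariant sense, i.e. mollify in the variables $(\log r,\tht)$, which preserves $\mathring{C}^{0,\alp}$ bounds. Then I would pass to the limit via the stability in the proof of Theorem \ref{thm:weak-wellposed}, using lower semicontinuity of the seminorm under pointwise convergence.
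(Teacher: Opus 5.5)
Your argument is correct and follows the paper's architecture. Both use a bi-Lipschitz flow from $\|\nb u_t\|_{L^\infty}$, the comparability $|\Phi_t(x)|\sim|x|$, a Lagrangian splitting of $|x|^\alp\omg_t(x)-|y|^\alp\omg_t(y)$, and Gr\"onwall for $\log(c\|\omg_t\|_{\mathring{C}^{0,\alp}}/\|\omg_0\|_{L^\infty})$ via Lemma \ref{lem:gradu_infty}. The only difference is how the weight distortion is handled. Write $r(a):=|\Phi_t(a)|/|a|$. Your second term $|b|^\alp\omg_0(b)\bigl(r(a)^\alp-r(b)^\alp\bigr)$ is exactly the sum of the first and third terms in \eqref{eq:omg-Holder} (with $\omg_0(w')$ in the third term, as the identity requires). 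Your first term is its middle term. The paper never recombines them. It bounds each piece by subadditivity, $\bigl||z|^\alp-|z'|^\alp\bigr|\le|z-z'|^\alp$ and $\bigl||w|^\alp-|w'|^\alp\bigr|\le|w-w'|^\alp$. It then uses $|z|/|w|\le e^{\int_0^t\|\nb u_s\|_{L^\infty}ds}$ and $|w-w'|\le e^{\int_0^t\|\nb u_s\|_{L^\infty}ds}|z-z'|$. So what you call the main obstacle costs one line: no gradient of $\Phi_t$, no case split, and no use of the convexity of $\Omg_{\bt}$.

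Your route is longer, but if you sharpen it, it buys precision.

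\emph{The suppressed constant.} Both \eqref{eq:omg-Holder} and your Step 3 drop an absolute factor $K>1$. It comes from the $\|\omg_0\|_{L^\infty}$ terms, which in both bounds do not vanish as $t\to0$. Gr\"onwall then really gives $Y(t)\le(Y(0)+\log K)e^{C\|\omg_0\|_{L^\infty}t}$, not the stated $Y(0)e^{C\|\omg_0\|_{L^\infty}t}$.

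\emph{How to remove it.} Use $|\Phi_t(a)-a|\le(e^{\int_0^tL}-1)|a|$ and the fact that $\Phi_t-\mathrm{id}$ is $(e^{\int_0^tL}-1)$-Lipschitz. These give $|r(a)-r(b)|\lesssim(e^{\int_0^tL}-1)e^{\int_0^tL}|a-b|/|a|$ when $|a-b|\le|a|/2$. Otherwise, $|r(a)^\alp-r(b)^\alp|\le e^{\alp\int_0^tL}-e^{-\alp\int_0^tL}$. The distortion term then vanishes at $t=0$, so $K=1$ and the bound holds exactly as stated.

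\emph{Justification.} The paper's proof does not address the circularity: Lemma \ref{lem:gradu_infty} needs $\omg_t\in\mathring{C}^{0,\alp}$, which is what is being proved. Mollifying the data does not remove it, since the approximate solutions face the same issue. A continuation argument needs a starting point. The simplest fix is \eqref{eq:Phi_diff} together with its analogue for $\Phi_t^{-1}$. These give $\omg_t\in\mathring{C}^{0,\alp e^{-C\|\omg_0\|_{L^\infty}t}}$ with bounds locally uniform in $t$. Lemma \ref{lem:gradu_infty} holds for every positive exponent, so $\nb u\in L^\infty_{\mathrm{loc}}([0,\infty);L^\infty)$ unconditionally. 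Your Steps 1--4 then run without circularity.
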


\begin{proof}
    Let $(\omg_t, u_t, \Phi_t)$ be the unique solution triple defined in the proof of Theorem \ref{thm:weak-wellposed}. To use the estimate \eqref{eq:gradu_infty} of $\|\nb u\|_{L^{\infty}}$, we first note that the flow map $\Phi$ satisfies
    \begin{equation*}
        \left|\frac{d}{dt}(\Phi_t(z) - \Phi_t(z'))\right| = \left|u_t(\Phi_t(z)) - u_t(\Phi_t(z'))\right| \le \|\nb u_t\|_{L^{\infty}} |\Phi_t(z) - \Phi_t(z')|.
    \end{equation*}
    Then, Gr\"onwall's inequality implies that
    \begin{align}\label{eq:Phi_Lip}
        \left|\log\frac{|\Phi_t(z) - \Phi_t(z')|}{|z - z'|}\right|  \le \int_0^t \|\nb u_s\|_{L^{\infty}} ds.
    \end{align}
    Let $z$, $z'\in\Omg_{\bt}$, $t>0$, and define $w:= \Phi_t^{-1}(z)$, $w':= \Phi_t^{-1}(z')$. By Lemma \ref{1.1} and \eqref{eq:Phi_Lip},
    \begin{align}\label{eq:Phi_Lip_2}
        \left|\log\frac{|w - w'|}{|z-z'|}\right| \le \int_0^t \|\nb u_s\|_{L^{\infty}} ds, \quad \left|\log\frac{|w|}{|z|}\right| \le \int_0^t \|\nb u_s\|_{L^{\infty}} ds.
    \end{align}
    From these inequalities and $||z|^{\alp}-|z'|^{\alp}|\le |z-z'|^{\alp}$,
    \begin{equation}\label{eq:omg-Holder}
        \begin{split}
            & \left||z|^{\alp} \omg_t(z)-|z'|^{\alp}\omg_t(z')\right| \\ &\quad \le \left||z|^{\alp}-|z'|^{\alp}\right| \left|\omg_0(w')\right| + \frac{|z|^{\alp}}{|w|^{\alp}} \left||w|^{\alp} \omg_0(w) - |w'|^{\alp} \omg_0(w')\right| + \frac{|z|^{\alp}}{|w|^{\alp}}\left||w|^{\alp} - |w'|^{\alp}\right| \left| \omg_0(w) \right| \\ &\quad \le |z-z'|^{\alp}\left(\|\omg_0\|_{\mathring{C}^{0,\alp}}\exp\left(\alp\int_0^t \|\nb u_s\|_{L^{\infty}} ds \right)\right) \\ &\quad \le |z-z'|^{\alp}\left(\|\omg_0\|_{\mathring{C}^{0,\alp}}\exp\left( C\alp\int_0^t \|\omg_0\|_{L^{\infty}} \log\frac{c\|\omg_s\|_{\mathring{C}^{0,\alp}}}{\|\omg_0\|_{L^{\infty}}} ds \right)\right),
        \end{split}
    \end{equation}
    where $c$ is the constant in Lemma \ref{lem:gradu_infty}, and $C= C(\alp,\bt)$ depends only on $\alp$ and $\bt$. The estimate \eqref{eq:gradu_infty} of $\|\nb u\|_{L^{\infty}}$ is used in the last inequality. Since
    \begin{align*}
        \log\frac{c\|\omg_t\|_{\mathring{C}^{0,\alp}}}{\|\omg_0\|_{L^{\infty}}} \le \log\frac{c\|\omg_0\|_{\mathring{C}^{0,\alp}}}{\|\omg_0\|_{L^{\infty}}} + \int_0^t C\alp\|\omg_0\|_{L^{\infty}} \log\frac{c\|\omg_s\|_{\mathring{C}^{0,\alp}}}{\|\omg_0\|_{L^{\infty}}} ds,
    \end{align*}
    Gr\"onwall's lemma yields that
    \begin{align*}
        \log\frac{c\|\omg_t\|_{\mathring{C}^{0,\alp}}}{\|\omg_0\|_{L^{\infty}}} \le \log\frac{c\|\omg_0\|_{\mathring{C}^{0,\alp}}}{\|\omg_0\|_{L^{\infty}}} e^{C\alp\|\omg_0\|_{L^{\infty}}t}.
    \end{align*}
    This completes the proof of existence. Uniqueness follows from the uniqueness part of Theorem \ref{thm:weak-wellposed}.
\end{proof}

\subsection{Schauder-type estimates in weighted H\"older spaces}\label{Holder_estimate}

We now proceed to the cases $k\ge 1$ of Theorem \ref{thm:GWPHolder}. It is necessary to obtain the Schauder-type estimate of $\|\nb u\|_{\mathring{C}^{k,\alp}}$. We first estimate $\|\nb u\|_{\mathring{C}^k}$.

Henceforth, we assume that $u$ is the velocity that satisfies the Biot-Savart law \eqref{eq:Biot-Savart}. We will often suppress the variables on which the implicit constant in $\lesssim$ depends.

\begin{lemma}\label{lem:vel_Lip}
     Suppose that $0<\bt<\pi/2$. Let $k\in\mathbb{N}$, $0<\alp\le 1$, and $\omg\in \mathring{C}^{k,\alp}(\overline{\Omg_{\bt}})$. Then
     \begin{equation*}
        \||\cdot|^k \nb^{k+1} u\|_{L^{\infty}} \lesssim_{k,\alp} \|\omg\|_{\mathring{C}^{k,\alp}}.
     \end{equation*}
\end{lemma}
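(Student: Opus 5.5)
By scaling, it suffices to prove that $|\nb^{k+1}u(z)|\lesssim \|\omg\|_{\mathring{C}^{k,\alp}}$ at every point with $|z|=1$. Indeed, if $\omg_\lambda(w):=\omg(\lambda w)$, then $\|\omg_\lambda\|_{\mathring{C}^{k,\alp}}=\|\omg\|_{\mathring{C}^{k,\alp}}$, because every term in the norm is scale-invariant. The velocity scales as $u_\lambda(w)=\lambda^{-1}u(\lambda w)$, since the Green's function $G_{\bt}$ is invariant under dilations up to an additive constant, which disappears after differentiation. Hence $\nb^{k+1}u_\lambda(w)=\lambda^{k}\nb^{k+1}u(\lambda w)$. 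By rotating, or by reflecting through $\tht\mapsto\bt-\tht$, we may also assume $\arg z\le \bt/2$, so that Lemma \ref{lem:H_beta} is available.

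Since $u=2i\rd_{\bar z}\psi$, every entry of $\nb^{k+1}u$ is a combination of $\rd_z^a\rd_{\bar z}^b\psi$ with $a+b=k+2$. Any term containing both a $\rd_z$ and a $\rd_{\bar z}$ can be rewritten using $4\rd_z\rd_{\bar z}\psi=\omg$. Such terms therefore reduce to $\nb^{k}\omg(z)$, which is bounded by $|z|^{-k}\|\omg\|_{\mathring{C}^k}$. The pure derivative $\rd_z^{k+2}\psi$ is the complex conjugate of $\rd_{\bar z}^{k+2}\psi$, because $\psi$ is real. So the main task is to bound $\rd_{\bar z}^{k+2}\psi(z)$. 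By Lemma \ref{lem:gradu_pv}, this equals the principal value
\begin{equation*}
\rd_{\bar z}^{k+2}\psi(z)=p.v.\int_{\Omg_{\bt}}\rd_{\bar z}^{k+2}G_{\bt}(z,w)\omg(w)\,dA(w).
\end{equation*}

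We split this integral over the near region $A=\{|z-w|\le 2\}$ and the far region $A^c$, and write $\omg=T_{k,z}\omg+(\omg-T_{k,z}\omg)$ on $A$.

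\begin{itemize}
\item \emph{Far region.} Lemma \ref{lem:ker_bound} gives $|\rd_{\bar z}^{k+2}G_{\bt}(z,w)|\lesssim |w|^{-p}$. Since $p>2$, the integral over $A^c$ is bounded by $\|\omg\|_{L^\infty}$.
\item \emph{Taylor polynomial on $A$.} The contribution of $T_{k,z}\omg$ is $R_k[\omg](z)$, and Lemma \ref{lem:gradu_pv2} bounds it by $\|\omg\|_{C^k(\overline A)}$. On $\overline A$ this is controlled by the weighted norm provided $|w|$ stays bounded away from $0$.
\item \emph{Taylor remainder on $A$.} The remainder satisfies $|\omg(w)-T_{k,z}\omg(w)|\lesssim [\nb^k\omg]_{C^{0,\alp}}|z-w|^{k+\alp}$ against a kernel of size $|z-w|^{-k-2}$. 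This is integrable, again provided the unweighted Hölder seminorm is controlled by the weighted one.
\end{itemize}

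The main obstacle is that $A$ reaches the corner at $w=0$, where the weighted norm only gives $|\nb^m\omg(w)|\lesssim|w|^{-m}$. So the unweighted $C^{k,\alp}$ norm on $\overline A$ is not finite, and Lemma \ref{lem:gradu_pv2} cannot be applied directly to all of $A$. I would therefore refine the near region as $A'=\{|z-w|\le c\}$ with $c$ small (for instance $c=\bt/10$ times $\sin(\arg z)$, suitably adjusted), where $|w|\sim1$ and the unweighted and weighted norms are comparable. On $A'$ the Taylor argument above applies, using the cancellations in Lemma \ref{lem:HalfGradKer} in place of Lemma \ref{lem:gradu_pv2}. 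On the intermediate region $A\setminus A'$ we have $|z-w|\gtrsim c$, so the kernel is bounded by a constant depending on $c$ and $\bt$, and the integral of $\omg$ itself (not its Taylor expansion) is bounded by $\|\omg\|_{L^\infty}$.

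The catch is that the Taylor polynomial $T_{k,z}\omg$ does not vanish on $A\setminus A'$. Its integral there is harmless, however: $T_{k,z}\omg$ has coefficients bounded by $\|\omg\|_{\mathring{C}^k}$ at $|z|=1$, and the kernel is bounded on this region, so that contribution is bounded as well. If $z$ approaches the boundary rays of $\Omg_{\bt}$, the radius $c$ would degenerate. The normalization $\arg z\le\bt/2$ handles this near the ray $\tht=\bt$, and near the ray $\tht=0$ we use the half-plane decomposition $G_{\bt}=\Gmm(z-w)-\Gmm(z-\bar w)+H_{\bt}$. With this decomposition we can take $c$ comparable to $1$, exactly as in the proof of Lemma \ref{lem:HalfGradKer}, which controls the reflected term $\Gmm(z-\bar w)$ uniformly in $\Im z$. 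Collecting the three contributions gives $|\nb^{k+1}u(z)|\lesssim\|\omg\|_{\mathring{C}^{k,\alp}}$ at $|z|=1$, and scaling gives the stated bound.
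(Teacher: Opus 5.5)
Your proposal is correct and follows the paper's proof. Both reduce by scaling to $|z|=1$, use the principal-value formula of Lemma \ref{lem:gradu_pv}, and split near/far at $|z-w|=2|z|$. The Taylor polynomial is then handled by the cancellation in Lemmas \ref{lem:HalfGradKer} and \ref{lem:gradu_pv2}, the remainder by a Taylor estimate, and the far part by the decay in Lemma \ref{lem:ker_bound}.

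Your extra splitting into $A'$ and $A\setminus A'$ is just an inline proof of Lemma \ref{lem:Taylor}. That lemma already gives $|\omg(w)-T_{k,z}\omg(w)|\lesssim |z-w|^{k+\alp}|z|^{-k-\alp}\|\omg\|_{\mathring{C}^{k,\alp}}$ on all of $\{|z-w|\le 2|z|\}$. The worry about degeneration near $\rd\Omg_{\bt}$ is unnecessary, because $R_k[\omg](z)$ involves only $\nb^j\omg(z)$ for $j\le k$ and the kernel bounds hold up to the boundary, so a fixed radius such as $c=1/2$ works from the start.
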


\begin{proof}
    \noindent\textit{Step 1.} Let $z\in\Omg_{\bt}$ with $|z|=1$. We divide $\Omg_{\bt}$ into two regions:  $A:=\{w\in\Omg_{\bt}: |z-w| \le 2|z|\}$, $B:=\{w\in\Omg_{\bt}: |z-w|> 2|z|\}$. In the region $A$, since $|z|$ and $|w|$ are comparable,
    \begin{equation*}
        \left||z|^k\int_A \rd_{\bar{z}}^{k+2} G_{\bt}(z,w) \{\omg(w) - T_{k,z}\omg(w)\} dA(w)\right| \lesssim \int_A \frac{1}{|z-w|^{k+2}}\frac{|z-w|^{k+\alp}\|\omg\|_{\mathring{C}^{k,\alp}}}{|z|^{\alp}}dA(w) \lesssim \|\omg\|_{\mathring{C}^{k,\alp}}
    \end{equation*}
    by Lemma \ref{lem:Taylor}. In the region $B$, we obtain
    \begin{align*}
        \left||z|^k \int_B \rd_{\bar{z}}^{k+2} G_{\bt} (z,w) \omg(w) dA(w)\right| \lesssim |z|^k\int_B \frac{|z|^{p-(k+2)} \|\omg\|_{L^{\infty}}}{|w|^p}dA(w)  \lesssim \|\omg\|_{L^{\infty}}.
    \end{align*}
    Then, Lemma \ref{lem:gradu_pv2} implies 
    \begin{equation*}
        \left| |z|^k \rd_{\bar{z}}^{k+2} \psi(z)\right| \lesssim [\omg]_{\mathring{C}^{k,\alp}} + \|\omg\|_{L^{\infty}} + |R_k[\omg](z)| \lesssim \|\omg\|_{\mathring{C}^{k,\alp}}.
    \end{equation*}
    Using $4\rd_z\rd_{\bar{z}}\psi = \omg$, the estimates of the other partial derivatives of $\psi$ are similarly obtained.

    \medskip
    \noindent\textit{Step 2.} We now assume that $|z|\neq 1$. Note that $u(|z|\cdot)/|z|$ is the velocity corresponding to the vorticity $\omg(|z|\cdot)$. Let $\zt := z/|z|$. Since
    \begin{align*}
        |z|^k p.v.\int_{\Omg_{\bt}} \rd_{\bar{z}}^{k+2} G_{\bt}(z,w) \omg(w) dA(w)   =  p.v.\int_{\Omg_{\bt}} \rd_{\bar{z}}^{k+2} G_{\bt}\left(\zt,\xi\right) \omg(|z|\xi) dA(\xi), \qquad \xi :=w/|z|,
    \end{align*}
    the velocity $u$ satisfies $\nb^{k+1} [u(|z|\cdot)/|z|] (\zt)=|z|^k \nb^{k+1} u(z)$ by \eqref{eq:gradu_pv}. Therefore, we can use Step 1 with $|\zt|=1$ to obtain
    \begin{align*}
        ||z|^k \nb^{k+1} u(z)| = |\nb^{k+1} [u(|z|\cdot)/|z|](\zt)| \lesssim \| \omg(|z|\cdot)\|_{\mathring{C}^{k,\alp}}  = \| \omg\|_{\mathring{C}^{k,\alp}}.
    \end{align*}
    The last equality comes from the scale-invariance of $\mathring{C}^{k,\alp}$.
\end{proof}

\begin{remark}
    Applying the proof of Lemma \ref{lem:vel_Lip} in the case $k=0$, we obtain $\|\nb u\|_{L^{\infty}} \lesssim \|\omg\|_{\mathring{C}^{0,\alp}}$.   
\end{remark}

\begin{lemma}\label{lem:Taylor}
    Suppose that $0<\bt<\pi/2$. Let $k\in\mathbb{N}_0$, $0\le \alp\le 1$, and $f\in \mathring{C}^{k,\alp}(\overline{\Omg_{\bt}})$. 
    If $|z-w|\le 2|z|$, then
    \begin{equation*}
        |f(w) - T_{k,z} f(w)| \lesssim_{k,\bt,\alp} \frac{|z-w|^{k+\alp}}{|z|^{k+\alp}}\|f\|_{\mathring{C}^{k,\alp}}.
    \end{equation*}
\end{lemma}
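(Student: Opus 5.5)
Reduce by scaling to $|z|=1$, then use Taylor's theorem with integral remainder along the segment $[z,w]$, with the weighted Hölder seminorm controlling the top-order derivative.

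\emph{Scaling.} The norm $\|\cdot\|_{\mathring{C}^{k,\alp}}$ is invariant under $f\mapsto f(\lmb\,\cdot)$, and the Taylor polynomial commutes with dilation: $T_{k,z}f(\lmb\,\cdot)=T_{k,\lmb z}f$ evaluated at dilated points. So I would assume $|z|=1$; the claim becomes $|f(w)-T_{k,z}f(w)|\lesssim |z-w|^{k+\alp}\|f\|_{\mathring{C}^{k,\alp}}$ for $|z-w|\le 2$.

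\emph{Near regime, $|z-w|\le c$ with $c=c(\bt)$ small.} Take $c$ so small that the segment $[z,w]$ stays in $\overline{\Omg_{\bt}}$ and at distance comparable to $1$ from the origin. On this segment $|\zeta|\in[1/2,3/2]$, so the weighted quantities control the unweighted ones:
\begin{itemize}
\item $|\nb^k f(\zeta)|\lesssim\|f\|_{\mathring{C}^k}$;
\item $|\nb^k f(\zeta)-\nb^k f(z)|\lesssim |\zeta-z|^{\alp}\|f\|_{\mathring{C}^{k,\alp}}$.
\end{itemize}
The second bound follows by writing
\begin{equation*}
\nb^k f(\zeta)-\nb^k f(z)=|\zeta|^{-(k+\alp)}\bigl(|\zeta|^{k+\alp}\nb^kf(\zeta)-|z|^{k+\alp}\nb^kf(z)\bigr)+\bigl(|\zeta|^{-(k+\alp)}-1\bigr)|z|^{k+\alp}\nb^kf(z).
\end{equation*}
Since $||\zeta|^{-(k+\alp)}-1|\lesssim|\zeta-z|\le|\zeta-z|^{\alp}$, both terms are bounded as claimed.

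Now apply Taylor's formula with integral remainder of order $k$:
\begin{equation*}
f(w)-T_{k,z}f(w)=\sum_{|m|=k}\frac{k}{m!}(w-z)^m\int_0^1(1-s)^{k-1}\bigl(\nb^m f(z+s(w-z))-\nb^m f(z)\bigr)\,ds .
\end{equation*}
With the bound above this gives $|z-w|^{k+\alp}\|f\|_{\mathring{C}^{k,\alp}}$. For $k=0$ the estimate is just the Hölder bound itself.

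\emph{Far regime, $c\le|z-w|\le 2$.} Here no segment argument is needed, since $|z-w|^{k+\alp}\gtrsim_{\bt}1$. Note that $\|f\|_{L^\infty}\le\|f\|_{\mathring{C}^{k,\alp}}$. Each coefficient of $T_{k,z}f$ involves $\nb^j f(z)$ with $|z|=1$, so it is bounded by $\|f\|_{\mathring{C}^k}$. The monomials satisfy $|w-z|^j\le 2^j$. Hence $|f(w)-T_{k,z}f(w)|\lesssim\|f\|_{\mathring{C}^{k}}\lesssim |z-w|^{k+\alp}\|f\|_{\mathring{C}^{k,\alp}}$, with constant depending on $c(\bt)$. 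The case $\alp=0$ is covered by the same argument with the remainder bounded via $\|f\|_{\mathring{C}^k}$.

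\emph{Main obstacle.} The one delicate point is geometric: $w$ may lie anywhere in $\Omg_{\bt}\cap B_2(z)$, including near the origin where derivatives of $f$ are only controlled like $|\zeta|^{-j}$. The segment from $z$ to $w$ could pass near $0$, or, since the sector is convex, near the corner. This is exactly why the cut at $|z-w|=c(\bt)$ is needed, with $c<1/2$ ensuring $|\zeta|\ge 1/2$ along the segment. Convexity of $\Omg_{\bt}$ (as $\bt<\pi$) guarantees the segment stays in $\overline{\Omg_{\bt}}$.
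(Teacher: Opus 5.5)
Your proposal is correct and follows essentially the same route as the paper. The paper keeps general $|z|$ rather than normalizing to $|z|=1$, and cuts at $|z-w|=|z|/2$. Near the point it bounds $\nb^k f(y)-\nb^k f(z)$ through the same decomposition via $|y|^{k+\alp}\nb^k f(y)$ and applies Taylor's integral remainder along the segment; in the far region it bounds each term crudely by $\|f\|_{\mathring{C}^k}$. (Your cutoff can simply be $c=1/2$, independent of $\bt$, since convexity of $\Omg_{\bt}$ already keeps the segment in the closed sector.)
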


\begin{proof}
    Define a function $g$ on $\Omg_{\bt}$ by
    \begin{equation*}
        g(w) := |w|^{k+\alp} \nb^k f(w).
    \end{equation*}
    Let $Z\subset \Omg_{\bt}$ be defined by $Z := \{y\in\Omg_{\bt}: |z-y| \le |z|/2\}$, and $y\in Z$. Then
    \begin{equation*}
        \nb^k f(y) - \nb^k f(z) = |z|^{-(k+\alp)} (g(y) - g(z)) +g(y) (|y|^{-(k+\alp)} - |z|^{-(k+\alp)}).
    \end{equation*}
    Since $f\in\mathring{C}^{k,\alp}$ and $|y|<2|z|$, two inequalities $|g(y) - g(z)| \lesssim |y-z|^{\alp} \|f\|_{\mathring{C}^{k,\alp}}$ and
    \begin{equation*}
        \left| |y|^{-(k+\alp)} -|z|^{-(k+\alp)}\right| \lesssim |z|^{-(k+\alp)-1}|y-z| \lesssim |z|^{-(k+\alp)-\alp} |y-z|^{\alp}
    \end{equation*}
    hold by the complex mean value inequality. Therefore,
    \begin{equation*}
        |\nb^k f(y) - \nb^k f(z)| \lesssim \frac{|y-z|^{\alp}}{|z|^{k+\alp}}[f]_{\mathring{C}^{k,\alp}}.
    \end{equation*}
    Since $z+t(w-z)\in Z$ for all $0<t<1$, Taylor's theorem yields that
    \begin{equation*}
        |f(w) - T_{k,z}f(w)|\lesssim |z-w|^k \int_0^1 (1-t)^{k-1} |\nb^k f(z+t(w-z)) - \nb^k f(z)| dt \lesssim \frac{|z-w|^{k+\alp}}{|z|^{k+\alp}}[f]_{\mathring{C}^{k,\alp}}
    \end{equation*}
    for $k\ge 1$. The case $k=0$ follows from the definition of $\mathring{C}^{0,\alp}$.
    
    If $|z|/2 < |z-w| < 2|z|$, then $|f(w)|\lesssim \|f\|_{L^{\infty}}$ and $|(w-z)^m \rd_z^m f(z)| \lesssim |w-z|^m \|f\|_{\mathring{C}^m} /|z|^m\lesssim \|f\|_{\mathring{C}^m}$ for all $0\le m\le k$. Therefore, we obtain $|f(w) - T_{k,z} f(w)| \lesssim \|f\|_{\mathring{C}^{k,\alp}}$.
\end{proof}

\begin{lemma}\label{lem:gradu_bound}
     Suppose that $0<\bt<\pi/2$. Let $k\in\mathbb{N}_0$, $0<\alp< 1$, and $\omg\in \mathring{C}^{k,\alp}(\overline{\Omg_{\bt}})$. Then
     \begin{equation*}
        [\nb u]_{\mathring{C}^{k,\alp}} \lesssim_{k,\alp,\bt} \|\omg\|_{\mathring{C}^{k,\alp}}.
     \end{equation*}
\end{lemma}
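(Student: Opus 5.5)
We must bound, for $x,y\in\Omg_{\bt}$,
\begin{equation*}
\bigl||x|^{k+\alp}\nb^{k+1}u(x)-|y|^{k+\alp}\nb^{k+1}u(y)\bigr|\lesssim |x-y|^{\alp}\|\omg\|_{\mathring{C}^{k,\alp}}.
\end{equation*}
The plan is to use scale invariance, as in Step 2 of Lemma \ref{lem:vel_Lip}, to normalize $|x|=1$ with $|y|\ge|x|$. We then split into a far regime and a near regime.

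\textbf{Far regime.} Suppose $|x-y|\ge \bt|x|/10$. Then $|x-y|^{\alp}\gtrsim 1$. Lemma \ref{lem:vel_Lip} gives $|y|^{k}|\nb^{k+1}u(y)|\lesssim\|\omg\|_{\mathring{C}^{k,\alp}}$, and the weight satisfies $|y|^{\alp}\le (|x|+|x-y|)^{\alp}\lesssim |x-y|^{\alp}$. This case is therefore immediate. (For $k=0$ we use the remark after Lemma \ref{lem:vel_Lip}.)

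\textbf{Near regime.} Suppose $|x-y|\le\bt|x|/10$, so that $|y|\sim 1$. The difference of weights $\bigl||x|^{k+\alp}-|y|^{k+\alp}\bigr|\lesssim|x-y|\le|x-y|^{\alp}$ multiplies a bounded quantity, by Lemma \ref{lem:vel_Lip}. It therefore suffices to bound $|\rd_{\bar z}^{k+2}\psi(x)-\rd_{\bar z}^{k+2}\psi(y)|$; the mixed derivatives reduce to derivatives of $\omg$ through $4\rd_z\rd_{\bar z}\psi=\omg$, and these are controlled by $\omg\in\mathring C^{k,\alp}$. Set $d:=|x-y|$ and write, using Lemma \ref{lem:gradu_pv},
\begin{equation*}
\rd_{\bar z}^{k+2}\psi(x)=\int_{A}\rd_{\bar z}^{k+2}G_{\bt}(x,w)\{\omg(w)-T_{k,x}\omg(w)\}\,dA(w)+R_k[\omg](x)+\int_{B}\rd_{\bar z}^{k+2}G_{\bt}(x,w)\omg(w)\,dA(w),
\end{equation*}
with $A,B$ as in Lemma \ref{lem:vel_Lip}. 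We fix the regions by $x$ and treat $y$ with the same sets; the boundary mismatch costs $O(d)$, since the kernels are bounded there.

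The three terms are handled as follows.
\begin{enumerate}
\item The $B$-term: differentiating the kernel once more in $z$ and using the far-field bound $|z|^{p-k-3}/|w|^p$ from Lemma \ref{lem:ker_bound} gives a Lipschitz bound $\lesssim d\|\omg\|_{L^\infty}$.
\item The $R_k$ term: this is exactly the second estimate of Lemma \ref{lem:gradu_pv2}, giving $\lesssim d^{\alp}\|\omg\|_{C^{k,\alp}(\overline A)}$. On $\overline A$ we have $|w|\sim1$ away from the origin, so this norm is $\lesssim\|\omg\|_{\mathring C^{k,\alp}}$.
\item The main term is the classical Schauder/Korn splitting. Let $N:=\{w\in A:|w-x|\le 2d\}$. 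On $N$, bound each of the two integrals separately by Lemma \ref{lem:Taylor}:
\begin{equation*}
\int_{|w-x|\le 3d}|x-w|^{-k-2}|x-w|^{k+\alp}\,dA(w)\lesssim d^{\alp}.
\end{equation*}
For $y$, replace $T_{k,x}$ by $T_{k,y}$. On $A\setminus N$, write the difference as
\begin{equation*}
\bigl[\rd^{k+2}G(x,w)-\rd^{k+2}G(y,w)\bigr](\omg-T_{k,x}\omg)(w)+\rd^{k+2}G(y,w)\bigl(T_{k,y}\omg-T_{k,x}\omg\bigr)(w).
\end{equation*}
The first piece is $\lesssim\int_{|w-x|>2d}d\,|x-w|^{-k-3}|x-w|^{k+\alp}\,dA(w)\lesssim d^{\alp}$, using $\alp<1$. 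This is where $\alp<1$ is essential.
\end{enumerate}

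The main obstacle is the second piece of item 3, since $\rd^{k+2}G$ has no mean-zero property. I would expand $T_{k,y}\omg-T_{k,x}\omg$ as a polynomial in $(y-w)$, with coefficients $\nb^j\omg(y)-\nabla^j(T_{k,x}\omg)(y)=O(d^{k+\alp-j})$ by Lemma \ref{lem:Taylor} applied to $\nb^j$. The resulting integrals of $\rd_{\bar z}^{k+2}G_{\bt}(y,w)(y-w)^a(\overline{y-w})^b$ over $A\setminus N$ (for $a+b=j\le k$) are truncated principal values. The $B_{2d}$-hole is harmless by the exact cancellation \eqref{eq:Gmm_poly} of $\Gmm$ on annuli, together with the boundedness of the $\bar w$- and $H_{\bt}$-parts (Lemma \ref{lem:H_beta}). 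The outer part is bounded via Lemma \ref{lem:HalfGradKer} and its remark, applied with $m=k+2-j\ge2$. For $j<k$ this gives a bounded kernel integral, and the term is $O(d^{k+\alp-j})\le O(d^{\alp})$. For $j=k$ the annulus cancellation gives a log-free bound, so the term is also $O(d^{\alp})$. If the half-plane correction at $m=2$ produces $\log(1/d)$, I would instead use Lemma \ref{lem:G_pi_poly}\,\eqref{eq:G_pi_poly2} (the constant $C(k)$) to absorb it.

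Collecting these bounds, together with the mismatch terms and the weight difference, yields the claim.
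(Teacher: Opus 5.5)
Your route is the paper's: scale to $|x|=1$, dispose of $|x-y|\ge \bt/10$ with Lemma \ref{lem:vel_Lip}, excise a ball of radius $2|x-y|$, and split the rest of $\{|x-w|\le 2\}$ into two pieces. One is a kernel-difference term, where $\alp<1$ enters. The other is a Taylor-difference term, handled monomial by monomial with Lemma \ref{lem:HalfGradKer} at top order. You then use the far-field Lipschitz bound and invoke Lemma \ref{lem:gradu_pv2} for $R_k$. Your identity on $A\setminus N$ (kernel at $y$ in front of $T_{k,y}\omg-T_{k,x}\omg$, re-expanded at $y$) is in fact the algebraically correct version of the paper's $I_1+I_2$ split. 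Two of your justifications are wrong, though, and need replacing.

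\textbf{The bound on $R_k$.} The set $\overline{A}=\{w\in\overline{\Omg_{\bt}}:|x-w|\le 2\}$ contains the origin, so $|w|\sim 1$ fails there. In fact $\|\omg\|_{C^{k,\alp}(\overline{A})}$ can be infinite for $\omg\in\mathring{C}^{k,\alp}$; any nonconstant scale-invariant $\omg$ is an example. The step survives for a different reason. Write $R_k[\omg](x)=\sum_{j\le k}\nb^j\omg(x)\, I_j(x)$, where the $I_j$ are the kernel-against-monomial integrals that Lemma \ref{lem:HalfGradKer} shows are bounded and Lipschitz in the base point. Then $R_k[\omg](x)-R_k[\omg](y)$ involves $\nb^j\omg$ only on the segment $[x,y]$, where $|w|\sim 1$, and so it is $\lesssim d^{\alp}\|\omg\|_{\mathring{C}^{k,\alp}}$.

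\textbf{The lower-order monomials.} For $j<k$, the truncated integral of $\rd_{\bar z}^{k+2}G_{\bt}(y,w)(y-w)^a(\overline{y-w})^b$ over $A\setminus N$ is not bounded. The hole $N$ is centered at $x$ rather than at the singularity $y$, and the integral is generically of size $d^{j-k}$. You do not need cancellation there. The crude bound $\int_{|w-y|\ge d}|y-w|^{j-k-2}\,dA(w)\lesssim d^{j-k}$, times the coefficient $O(d^{k+\alp-j})$, already gives $O(d^{\alp})$; this is what the paper does.

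Cancellation is needed only at $j=k$. There \eqref{eq:halfGradKer} is already log-free, and the off-center hole costs $O(1)$. Indeed, $B_d(y)$ is treated as in \eqref{eq:halfGradKer}, and $N\setminus B_d(y)$ has area $O(d^2)$ with integrand $O(d^{-2})$. So your fallback to the constant in \eqref{eq:G_pi_poly2} is unnecessary, and a constant could not absorb a $\log(1/d)$ anyway.
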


\begin{proof}
    Let $z, z'\in\Omg_{\bt}$. We first consider the case $|z|=1$. Without loss of generality, assume that $|z'|\le |z|$.  If $|z-z'|>\bt/10$, then
    \begin{equation}\label{eq:z_diff_large}
        \frac{||z|^{k+\alp}\nb^{k+1} u(z) - |z'|^{k+\alp} \nb^{k+1} u(z')|}{|z-z'|^{\alp}} \lesssim \|\nb u\|_{\mathring{C}^k}.
    \end{equation}
    We now suppose that $|z-z'|<\bt/10$. Let $K := \{w\in\Omg_{\bt}: |z-w|\le 2|z-z'|\}$. Then
    \begin{equation*}
        \left| \int_K \rd_{\bar{z}}^{k+2} G_{\bt}(z,w) \left(\omg(w) - T_{k,z}\omg(w)\right) dA(w) \right| \lesssim \int_K \frac{\|\omg\|_{\mathring{C}^{k,\alp}} dA(w)}{|z-w|^{2-\alp}} \lesssim \|\omg\|_{\mathring{C}^{k,\alp}}|z-z'|^{\alp}.
    \end{equation*}
    We note that the condition $\alp>0$ is necessary.
    
    We now obtain the desired estimate in the region $\Omg_{\bt}\setminus K$. Without loss of generality, assume $0<\arg(z)\le \bt/2$. We divide $\Omg_{\bt}\setminus K$ into two regions:  $B:=\{w\in\Omg_{\bt}: 2|z-z'|\le |z-w| \le 2\}$, $C:=\{w\in\Omg_{\bt}: |z-w|> 2\}$. In the region $B$, let $I_1$ and $I_2$ be defined by
    \begin{equation*}
        \begin{split}
            &\int_B \left( \rd_{\bar{z}}^{k+2} G_{\bt} (z,w) (\omg (w) - T_{k,z}\omg(w)) - \rd_{\bar{z}}^{k+2} G_{\bt} (z',w) (\omg(w) - T_{k,z'}\omg(w))\right) dA(w) \\ &\ = \int_B \rd_{\bar{z}}^{k+2} G_{\bt} (z,w) (T_{k,z}\omg(w)-T_{k,z'}\omg(w)) dA(w)\\ &\qquad + \int_B (\rd_{\bar{z}}^{k+2} G_{\bt} (z,w) - \rd_{\bar{z}}^{k+2} G_{\bt} (z',w)) (\omg(w)-T_{k,z}\omg(w)) dA(w) =: I_1+I_2.
        \end{split}
    \end{equation*}

    First, we obtain an estimate of $|I_1|$. The Taylor expansion of $T_{k,z}\omg(w) - T_{k,z'}\omg(w)$ is
    \begin{equation}\label{eq:Taylor_diff}
        T_{k,z}\omg(w) - T_{k,z'}\omg(w) = \sum_{0\le m+n\le k} \frac{\rd_z^m \rd_{\bar{z}}^n \omg(z) - T_{k-(m+n),z'}(\rd_z^m \rd_{\bar{z}}^n \omg)(z)}{m!n!} (w-z)^m (\overline{w-z})^n,
    \end{equation} 
    and the Taylor coefficient satisfies
    \begin{equation*}
        |\rd_z^m \rd_{\bar{z}}^n \omg(z) - T_{k-(m+n),z'}(\rd_z^m \rd_{\bar{z}}^n \omg)(z)| \lesssim \|\omg\|_{\mathring{C}^{k,\alp}} |z-z'|^{k+\alp - (m+n)}.
    \end{equation*}
    Therefore,
    \begin{equation*}
        \begin{split}
            \int_B \left|\rd_{\bar{z}}^{k+2} G_{\bt} (z,w)\right| \left|T_{k-1,z}\omg(w)-T_{k-1,z'}\omg(w)\right| dA(w) & \lesssim \|\omg\|_{\mathring{C}^{k,\alp}} \sum_{m=0}^{k-1}\int_B \frac{|z-z'|^{(k+\alp) - m}}{|z-w|^{(k+2)-m}} dA(w) \\ & \lesssim \|\omg\|_{\mathring{C}^{k,\alp}} |z-z'|^{\alp}.
        \end{split}
    \end{equation*}
    Let $P_{k,z}\omg(w) := T_{k,z}\omg(w) - T_{k-1,z}\omg(w)$. By Lemma \ref{lem:HalfGradKer},
    \begin{equation}\label{eq:bound_I_1}
        \begin{split}
            \left|\int_B \rd_{\bar{z}}^{k+2} G_{\bt} (z,w) \left(P_{k,z}\omg(w) - P_{k,z'}\omg(w) \right) dA(w)\right|  \lesssim \|\omg\|_{\mathring{C}^{k,\alp}} |z-z'|^{\alp}.
        \end{split}
    \end{equation}
    By these two inequalities, we obtain $|I_1| \lesssim \|\omg\|_{\mathring{C}^{k,\alp}} |z-z'|^{\alp}$. We now obtain an estimate of $|I_2|$. By the complex mean value inequality,
    \begin{equation*}
        \begin{split}
            |I_2| & = \left|\int_B (\rd_{\bar{z}}^{k+2} G_{\bt} (z,w) - \rd_{\bar{z}}^{k+2} G_{\bt} (z',w)) (\omg(w)-T_{k,z}\omg(w)) dA(w)\right| \\ & \lesssim |z-z'|[\omg]_{\mathring{C}^{k,\alp}} \int_B \frac{1}{|z-w|^{k+3}} \frac{|z-w|^{k+\alp}}{|z|^{\alp}} dA(w) \lesssim \|\omg\|_{\mathring{C}^{k,\alp}}|z-z'|^{\alp}.
        \end{split}
    \end{equation*}
    We note that the condition $\alp<1$ is required in the above integral. In the region $C$,
    \begin{equation}\label{eq:bound_I_2}
        \left|\int_C (\rd_{\bar{z}}^{k+2} G_{\bt} (z,w) - \rd_{\bar{z}}^{k+2} G_{\bt} (z',w)) \omg(w) dA(w)\right| \lesssim |z-z'| \int_C \frac{\|\omg\|_{L^{\infty}}}{|w|^p} dA(w) \lesssim \|\omg\|_{\mathring{C}^{k,\alp}}|z-z'|^{\alp}.
    \end{equation}

    Therefore, by Lemma \ref{lem:gradu_pv2}, we obtain $|\rd_{\bar{z}}^{k+2}\psi(z) - \rd_{\bar{z}}^{k+2}\psi(z')| \lesssim \|\omg\|_{\mathring{C}^{k,\alp}}|z-z'|^{\alp}$ for all $z$, $z'\in\Omg_{\bt}$ with $|z'|\le |z|=1$ and $|z-z'|<\bt/10$. The case $|z|\neq 1$ follows from the proof of Step 2 in Lemma \ref{lem:vel_Lip}.
\end{proof}

\begin{remark}
    If $k\in\mathbb{N}$, then we cannot expect that $[\nb u]_{\mathring{C}^{k-1,1}} \lesssim \|\omg\|_{\mathring{C}^{k-1,1}}$ holds for all $\omg\in \mathring{C}^{k-1,1}(\overline{\Omg_{\bt}})$. However, Lemma \ref{lem:vel_Lip} implies that if $\alp\in (0,1]$ and $\omg\in \mathring{C}^{k,\alp}(\overline{\Omg_{\bt}})$, then we obtain
    \begin{equation*}
        [\nb u]_{\mathring{C}^{k-1,1}} \lesssim \| \nb u\|_{\mathring{C}^k} \lesssim \|\omg\|_{\mathring{C}^{k,\alp}}.
    \end{equation*}
\end{remark}

\begin{theorem}[Theorem \ref{thm:GWPHolder} in the case $k\ge 1$]\label{thm:ring_C_1,a}
    Suppose that $0<\bt<\pi/2$. Let $k\in\mathbb{N}$, $0<\alp\le 1$, and $\omg_0\in \mathring{C}^{k,\alp}(\overline{\Omg_{\bt}})$. Then, for any $T>0$, the unique weak solution in Theorem \ref{thm:weak-wellposed} satisfies
    \begin{align*}
        \omg \in L^{\infty}([0,T); \mathring{C}^{k,\alp}(\overline{\Omg_{\bt}}))
    \end{align*}
    with initial data $\omg_0$. The norm $\|\omg(t,\cdot)\|_{\mathring{C}^{k,\alp}}$ is bounded by
    \begin{equation*}
        \|\omg_t\|_{\mathring{C}^{k,\alp}} \le \|\omg_0\|_{\mathring{C}^{k,\alp}} \exp\left( C_1 \exp(C_2 \|\omg_0\|_{L^{\infty}}t) \right),
    \end{equation*}
    where $C_2 = C_2 (\alp, \bt)$, and
    \begin{equation*}
       C_1 = \left\{
            \begin{aligned}
                & C_1(\alp, \bt, \|\omg_0\|_{\mathring{C}^{k-1,\alp}}),\qquad\qquad\quad\ \alp\neq 1,\\
                & C_1(\bt, \|\omg_0\|_{\mathring{C}^{k-1,1}}, \|\omg_0\|_{\mathring{C}^{k,1/2}}), \qquad \alp=1.
            \end{aligned}
        \right.
    \end{equation*}
\end{theorem}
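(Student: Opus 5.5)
We argue by induction on $k$, with Theorem \ref{thm:ring_C_0,a} as the base case. Throughout, $\|\nb u_t\|_{L^{\infty}}$ is controlled by Lemma \ref{lem:gradu_infty}. Combined with the double-exponential bound of Theorem \ref{thm:ring_C_0,a} for some fixed exponent in $(0,1)$ (say $\alp$ when $\alp<1$, and $1/2$ when $\alp=1$), this gives
\begin{equation*}
    \int_0^t \|\nb u_s\|_{L^{\infty}} ds \lesssim \exp(C_2\|\omg_0\|_{L^{\infty}}t)\log\frac{c\|\omg_0\|_{\mathring{C}^{0,\alp}}}{\|\omg_0\|_{L^{\infty}}}.
\end{equation*}
Hence $\Phi_t^{\pm 1}$ are bi-Lipschitz with constants $\exp(C_1e^{C_2\|\omg_0\|_{L^\infty} t})$. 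They also preserve $|z|$ up to the same factor, by \eqref{eq:Phi_Lip_2} and Lemma \ref{1.1}.

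The main step is an a priori estimate for the top-order weighted quantity. We work formally with a smooth solution and justify it at the end by mollifying $\omg_0$ in the scale-invariant sense and using uniqueness from Theorem \ref{thm:weak-wellposed}. Differentiating $\rd_t\omg+u\cdot\nb\omg=0$ $k$ times gives
\begin{equation*}
    (\rd_t+u\cdot\nb)\nb^k\omg=-\sum_{j=1}^{k}\binom{k}{j}\nb^j u\ast\nb^{k+1-j}\omg.
\end{equation*}
We multiply by the weight $|z|^{k+\alp}$. The weight is not transported, so this produces an extra term $(u\cdot\nb|z|^{k+\alp})\nb^k\omg$. By $u\cdot n=0$ on $\rd\Omg_{\bt}$ and $u(0)=0$ we have $|u(z)|\le\|\nb u\|_{L^\infty}|z|$, so this extra term is bounded by $(k+\alp)\|\nb u\|_{L^\infty}|z|^{k}|\nb^k\omg|$. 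This is of the same type as the $j=1$ term. Let $g:=|z|^{k+\alp}\nb^k\omg$. Then along trajectories $g$ satisfies
\begin{equation*}
    \tfrac{d}{dt}\,g(t,\Phi_t(w)) = F(t,\Phi_t(w)),
\end{equation*}
with forcing
\begin{equation*}
    F=O(\|\nb u\|_{L^\infty})\,g + |z|^{k+\alp}\sum_{j\ge 2}\nb^j u\ast\nb^{k+1-j}\omg.
\end{equation*}

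We estimate $[g_t]_{\mathring C^{0,\alp}}$-type differences exactly as in \eqref{eq:omg-Holder}. We compose with $\Phi_t^{-1}$ and use Duhamel, which requires the $C^{0,\alp}$ seminorm of $F$ in the weighted sense. The term $g\nb u$ costs $\|\nb u\|_{L^\infty}[g]+\|g\|_{L^\infty}[\nb u]_{\mathring C^{0,\alp}}$. The top term $j=k+1$, namely $|z|^{k+\alp}\nb^{k+1}u\,\omg$, is controlled by Lemma \ref{lem:gradu_bound}, which gives $[\nb u]_{\mathring C^{k,\alp}}\lesssim\|\omg\|_{\mathring C^{k,\alp}}$ for $\alp<1$. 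The intermediate terms $2\le j\le k$ involve $|z|^{j-1}\nb^j u$ and $|z|^{k+1-j}\nb^{k+1-j}\omg$, and their weighted Hölder seminorms are bounded via Lemma \ref{lem:vel_Lip} and \ref{lem:gradu_bound} at level $k-1$ by $\|\omg\|_{\mathring C^{k-1,\alp}}$ times $\|\omg\|_{\mathring C^{k,\alp}}$. Hence the forcing is linear in the top norm:
\begin{equation*}
    \frac{d}{dt}\|\omg_t\|_{\mathring C^{k,\alp}}\lesssim \big(\|\nb u_t\|_{L^\infty}+\|\omg_t\|_{\mathring C^{k-1,\alp}}\big)\|\omg_t\|_{\mathring C^{k,\alp}}.
\end{equation*}
By the induction hypothesis, $\|\omg_t\|_{\mathring C^{k-1,\alp}}$ is double-exponentially bounded. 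Note that $\|\omg_0\|_{\mathring C^{k-1,\alp}}\lesssim\|\omg_0\|_{\mathring C^{k,\alp}}$. Gr\"onwall's inequality then yields the stated bound with $C_1$ depending on $\|\omg_0\|_{\mathring C^{k-1,\alp}}$. The lower norms $|z|^m\nb^m\omg$ for $m\le k$ are handled by the same transport argument without Hölder differences.

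The case $\alp=1$ is the main obstacle, because Lemma \ref{lem:gradu_bound} fails there. We replace it by the Remark after that lemma: $[\nb u]_{\mathring C^{k-1,1}}\lesssim\|\nb u\|_{\mathring C^{k}}\lesssim\|\omg\|_{\mathring C^{k,1/2}}$ via Lemma \ref{lem:vel_Lip}. For the top term we need $[|z|^{k+1}\nb^{k+1}u]_{\mathrm{Lip}}$, which cannot be bounded by $\|\omg\|_{\mathring C^{k,1}}$ alone. To handle it, we propagate $\mathring C^{k,1/2}$ first, as the $\alp=1/2$ case of the argument above. We then treat the Lipschitz difference of $\nb^{k+1}u$ with a logarithmic interpolation in the spirit of Lemma \ref{lem:gradu_infty}, splitting at scale $\dlt=\|\omg\|_{\mathring C^{k,1/2}}^2/\|\omg\|_{\mathring C^{k,1}}^2$. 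This gives a bound $\|\omg\|_{\mathring C^{k,1/2}}\log(\cdots\|\omg\|_{\mathring C^{k,1}})$. A log-Gr\"onwall then yields the claimed form with $C_1$ depending on $\|\omg_0\|_{\mathring C^{k-1,1}}$ and $\|\omg_0\|_{\mathring C^{k,1/2}}$. Checking that the cancellations of Lemma \ref{lem:HalfGradKer} survive this splitting near the corner is where we expect the real work to lie.
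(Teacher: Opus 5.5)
Your proposal has two genuine gaps, both in how you book-keep the commutator terms.

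First, your displayed sum runs over $1\le j\le k$, so there is no term $|z|^{k+\alp}\nb^{k+1}u\,\omg$. The highest velocity derivative that occurs is $\nb^k u$, paired with $\nb\omg$, and it only requires $[\nb u]_{\mathring{C}^{k-1,\alp}}$, i.e.\ Lemma \ref{lem:gradu_bound} at level $k-1$. For $\alp<1$, invoking Lemma \ref{lem:gradu_bound} at level $k$ is merely superfluous. For $\alp=1$, however, it sends you after an obstacle that is not there, and the proposed remedy cannot work. A bound $[\nb u]_{\mathring{C}^{k,1}}\lesssim\|\omg\|_{\mathring{C}^{k,1/2}}\log(\cdots\|\omg\|_{\mathring{C}^{k,1}})$ would force $\nb u\in\mathring{C}^{k,1}$ for every $\omg\in\mathring{C}^{k,1}$. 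This already fails for $\omg$ supported in a small ball away from $\rd\Omg_{\bt}$: there the problem is locally the whole-plane one, and Lipschitz $\nb^k\omg$ only gives log-Lipschitz $\nb^{k+1}u$. In the proof of Lemma \ref{lem:gradu_bound} with $\alp=1$, this shows up as the factor $|z-z'|\log(1/|z-z'|)$ from $I_2$, and no choice of splitting scale removes it. So the $\alp=1$ case, as written, rests on a false step. The correct argument is the one you quote and then set aside: $[\nb u]_{\mathring{C}^{k-1,1}}\lesssim\|\nb u\|_{\mathring{C}^{k}}\lesssim\|\omg\|_{\mathring{C}^{k,1/2}}$ by Lemma \ref{lem:vel_Lip}, with $\|\omg_t\|_{\mathring{C}^{k,1/2}}$ propagated by the case $\alp=1/2$. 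That is what the paper does, and it is why $C_1$ depends on $\|\omg_0\|_{\mathring{C}^{k,1/2}}$.

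Second, your inequality $\frac{d}{dt}\|\omg_t\|_{\mathring{C}^{k,\alp}}\lesssim(\|\nb u_t\|_{L^{\infty}}+\|\omg_t\|_{\mathring{C}^{k-1,\alp}})\|\omg_t\|_{\mathring{C}^{k,\alp}}$ does not give the stated bound. The quantity $\|\omg_t\|_{\mathring{C}^{k-1,\alp}}$ is itself double exponential (for $k=1$ this is Theorem \ref{thm:ring_C_0,a}), so Gr\"onwall exponentiates its time integral and produces a triple-exponential bound. To reach $\exp(C_1\exp(C_2\|\omg_0\|_{L^{\infty}}t))$, the top seminorm $[\omg]_{\mathring{C}^{k,\alp}}$ may appear only with a coefficient $\lesssim\|\nb u_t\|_{L^{\infty}}$, coming from the $j=1$ term, the weight commutator and the Hölder denominator. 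That coefficient has a merely exponential time integral by Lemma \ref{lem:gradu_infty}. Everything else must be treated as a source built from quantities already controlled, and this is possible:
\begin{itemize}
\item For $2\le j\le k$, both $\nb^j u$ (weighted level $j-1\le k-1$) and $\nb^{k+1-j}\omg$ (order $\le k-1$) are lower order.
\item The cross term $[\nb u]_{\mathring{C}^{0,\alp}}\||z|^k\nb^k\omg\|_{L^{\infty}}$ becomes a source once $\||z|^k\nb^k\omg_t\|_{L^{\infty}}$ has been bounded first, through its own linear ODE with coefficient $\|\nb u_t\|_{L^{\infty}}$.
\end{itemize}
Duhamel then gives a double exponential. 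This is exactly the order of estimates in the paper: for $k=1$, first $\||\cdot|\nb\omg_t\|_{L^{\infty}}$, then $[\omg_t]_{\mathring{C}^{1,\alp}}$ with source $[\nb u_t]_{\mathring{C}^{0,\alp}}\||\cdot|\nb\omg_t\|_{L^{\infty}}$.

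Finally, justifying the computation by mollifying $\omg_0$ presupposes sufficiently regular solutions for the mollified data, which is essentially what is being proved. The paper instead upgrades the given weak solution directly. It differentiates $\Phi_t$ via difference quotients and the linearized ODE $\rd_t A=\nb u_t(\Phi_t)\cdot A$, using that $\nb u_t$ is locally $C^{0,\alp}$ away from the corner.
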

\begin{proof}
    We omit the proof of the estimate for the cases $k>1$ because the proof idea is the same as the case $k=1$.

    \noindent\textit{Step 1.} We first prove the desired a priori estimate of $\|\omg_t\|_{\mathring{C}^{1,\alp}}$. If $\nb \omg_t$ is well-defined, then the vorticity form of the two-dimensional Euler equations \eqref{eq:Euler_vorticity} implies
    \begin{equation}\label{eq:grad_omg}
        \frac{d}{dt}\nabla \omg_t(\Phi_t(z)) = -((\nabla u_t)(\nabla\omg_t))(\Phi_t(z)).
    \end{equation}
    Therefore,
    \begin{equation*}
        \frac{d}{dt}\||\cdot|\nabla\omg_t(\Phi_t(\cdot))\|_{L^{\infty}} \le \|\nabla u_t(\Phi_t(\cdot))\|_{L^{\infty}}\||\cdot|\nabla \omg_t(\Phi_t(\cdot))\|_{L^{\infty}}
    \end{equation*}
    if $\||\cdot|\nb\omg_t\|_{L^{\infty}}$ is finite. Since $\omg_0\in \mathring{C}^{0,\alp}$, by $\|\nb u\|_{L^{\infty}} \lesssim \|\omg\|_{\mathring{C}^{0,\alp}}$, 
    \begin{equation*}
        \||\cdot| \nb \omg_t\|_{L^{\infty}} \le \||\cdot| \nb \omg_0\|_{L^{\infty}} \exp\left(C_1 \exp(C_2 \|\omg_0\|_{L^{\infty}}t)\right),
    \end{equation*}
    where $C_1 = C_1(\alp, \bt, \|\omg_0\|_{\mathring{C}^{0,\alp}})$ and $C_2 =  C_2(\alp, \bt)$. Now we obtain an estimate of $[\omg_t]_{\mathring{C}^{1,\alp}}$. By \eqref{eq:grad_omg},
    \begin{align*}
        &\frac{d}{dt}\frac{\left||z|^{1+\alp}\nb \omg_t(\Phi_t(z)) - |z'|^{1+\alp}\nb\omg_t(\Phi_t(z'))\right|}{\left|\Phi_t(z) - \Phi_t(z')\right|^{\alp}} \\ &\ \le \left[|\cdot| (\nb u_t)(\nb\omg_t)\right]_{\mathring{C}^{0,\alp}} + \alp \|\nb u_t\|_{L^{\infty}} \frac{\left||z|^{1+\alp}\nb \omg_t(\Phi_t(z)) - |z'|^{1+\alp}\nb\omg_t(\Phi_t(z'))\right|}{\left|\Phi_t(z) - \Phi_t(z')\right|^{\alp}}.
    \end{align*}
    Since $\left[|\cdot| (\nb u_t)(\nb\omg_t)\right]_{\mathring{C}^{0,\alp}} \le \|\nb u_t\|_{L^{\infty}} \left[ \omg_t\right]_{\mathring{C}^{1,\alp}} + \left[\nb u_t\right]_{\mathring{C}^{0,\alp}}  \||\cdot| \nb \omg_t\|_{L^{\infty}}$,
    \begin{align*}
        \frac{d}{dt}[ \omg_t]_{\mathring{C}^{1,\alp}} \le \left[\nb u_t\right]_{\mathring{C}^{0,\alp}} \||\cdot|\nb \omg_t\|_{L^{\infty}} + (1+\alp) \|\nb u_t\|_{L^{\infty}} [\omg_t]_{\mathring{C}^{1,\alp}}.
    \end{align*}

    If $\alp\neq 1$, Lemma \ref{lem:gradu_bound} implies that $\left[\nb u_t\right]_{\mathring{C}^{0,\alp}} \lesssim \|\omg_t\|_{\mathring{C}^{0,\alp}} \lesssim \exp(C\exp(Ct))$. Therefore, Gr\"onwall's inequality implies
    \begin{equation*}
        \begin{split}
            & [\omg_t]_{\mathring{C}^{1,\alp}}\\ &\  \le [\omg_0]_{\mathring{C}^{1,\alp}} \exp \left( \int_0^t (1+\alp) \|\nb u_s\|_{L^{\infty}}ds \right) + \int_0^t [\nb u_s]_{\mathring{C}^{0,\alp}} \||\cdot| \nb \omg_s\|_{L^{\infty}} \exp\left( \int_s^t  (1+\alp) \|\nb u_r\|_{L^{\infty}} dr \right) ds \\ &\ \le \|\omg_0\|_{\mathring{C}^{1,\alp}} \exp\left( C_1 \exp(C_2 \|\omg_0\|_{L^{\infty}}t) \right),
        \end{split}
    \end{equation*}
    where $C_1 = C_1(\alp, \bt, \|\omg_0\|_{\mathring{C}^{0,\alp}})$ and $C_2 = C_2 (\alp, \bt)$ are constants. If $\alp=1$, then we use $\left[\nb u_t\right]_{\mathring{C}^{0,1}} \lesssim \|\omg_t\|_{\mathring{C}^{1,1/2}}$ and the case $\alp=1/2$ of Theorem \ref{thm:ring_C_1,a}. This completes the proof of the a priori estimate of $\|\omg_t\|_{\mathring{C}^{1,\alp}}$.

    \medskip
    \noindent\textit{Step 2.} Now we prove the existence part. Theorem \ref{thm:ring_C_0,a} implies that the solution $\omg$ exists and $\omg_t\in \mathring{C}^{0,\alp}$. From the proof of the a priori estimate, it is enough to prove that $\nb \omg_t$ exists and $\||\cdot|\nb \omg_t\|_{L^{\infty}}$ is finite.

    We first prove that $\nb \omg_t$ exists. Let $t>0$, $z\in\Omg_{\bt}$, $h\in\mathbb{R}\setminus\{0\}$, and $D^h_i$ be the difference quotient
    \begin{equation}\label{eq:diff_quotient}
        D^h_i f(z) := \frac{f(z+h e_i) - f(z)}{h},
    \end{equation}
    where $f$ is a complex function, $i\in\{1,2\}$, $e_1:=1$, and $e_2:=i$. Since $\rd_t \Phi_t = u\circ \Phi_t$, Taylor's theorem yields the formula
    \begin{equation*}
            D^h_i \Phi_t(z)= e_i + \int_0^t \int_0^1 \nb u_s\left(\Phi_s(z) + h\tht D^h_i \Phi_s(z)  \right) \cdot D_i^h \Phi_s(z) d\tht ds,
    \end{equation*}
    where $\nb f \cdot g := g\rd_z f + \bar{g}\rd_{\bar{z}}f$. By \eqref{eq:Phi_Lip_2}, there exists $\dlt>0$ such that $\{\Phi_s(z): 0<s<t\}\cap B_{\dlt}=\emptyset$. Since $\nb u\in L^{\infty}([0,t]; C^{0,\alp}(\Omg_{\bt}\cap B_{\dlt}^c))$, we obtain
    \begin{equation*}
        \int_0^t |\nb u_s\left(\Phi_s(z) + h\tht D^h_i \Phi_s(z)  \right) - \nb u_s\left(\Phi_s(z)\right)|ds \lesssim t\tht^{\alp} \|\nb u\|_{L^{\infty} C^{0,\alp}} \sup_{0\le s\le t}|\Phi_s(z+h e_i) - \Phi_s(z)|^{\alp} \rightarrow 0
    \end{equation*}
    as $h\rightarrow 0$. We note that the term $|\Phi_s(z+h e_i) - \Phi_s(z)|$ is bounded by \eqref{eq:Phi_Lip}.
    
    Let $A_i(t,z)$ be the unique solution of the ODE
    \begin{equation}\label{eq:ODE2}
        \rd_t A_i(t,z) = \nb u_t\left(\Phi_t(z)\right)\cdot A_i(t,z), \quad A_i(0,z) = e_i.
    \end{equation}
    Then, the difference of $D^h_i \Phi_t(z)$ and $A_i(t,z)$ satisfies the following estimate
    \begin{equation*}
        \begin{split}
            |D^h_i \Phi_t(z) - A_i(t,z)| & \le \int_0^t \sup_{0\le \tht\le 1}\left|\nb u_s\left(\Phi_s(z) + h\tht D^h_i \Phi_s(z)  \right)\right| |D^h_i \Phi_s(z) - A_i(s,z)| ds\\ &\qquad +\int_0^t \sup_{0\le \tht\le 1}\left|\nb u_s\left(\Phi_s(z) + h\tht D^h_i \Phi_s(z)  \right) - \nb u_s\left(\Phi_s(z)\right)\right| |A_i(s,z)| ds.
        \end{split}
    \end{equation*}
    Since $\sup_{0<s<t}|A_i(s,z)|$ is finite, we obtain $|D^h_i \Phi_t(z) - A_i(t,z)|\rightarrow 0$ as $h\rightarrow 0$ by Gr\"onwall's inequality. Therefore, $\nb \Phi_t$ exists and $\rd_{e_i} \Phi_t(z) = A_i(t,z)$ for each $z$. Since $\nb u_t\in C^{0,\alp}$ locally, the flow map $\Phi$ satisfies $\Phi_t\in C^{1,\alp}$ locally. Moreover, since $\nb\cdot u=0$, Liouville's formula and the inverse function theorem imply that $\Phi_t^{-1}\in C^{1,\alp}$ locally in $\Omg_{\bt}$.

    The same difference quotient argument on $D^h_i \omg_t$ yields that $\rd_{e_i} \omg_t(z)$ exists, and
    \begin{equation*}
        \rd_{e_i}\omg_t(z) = \nb \omg_0(\Phi_t^{-1}(z))\cdot \rd_{e_i}\Phi_t^{-1}(z).
    \end{equation*}
    By \eqref{eq:Phi_Lip_2} and \eqref{eq:ODE2}, there exists a constant $C_t$ such that
    \begin{equation*}
        \||\cdot| \nb \omg_t\|_{L^{\infty}(\Omg_{\bt})} \lesssim C_t \||\cdot| \nb \omg_0\|_{L^{\infty}(\Omg_{\bt})}.
    \end{equation*}
    This completes the proof of existence.
\end{proof}

\section{Applications to H\"older regularity}\label{Holder-wellposed2}

In this section, we prove Theorems \ref{thm:LWPHolder2} and \ref{thm:ill-posed}. We first summarize the H\"older estimates of $\nb u$ and how they are used for our well-posedness results (Theorem \ref{thm:LWPHolder2}).
\begin{itemize}
    \item In Subsection \ref{LWPHolder}, the seminorm $[\nb u]_{C^{k,\alp}}$ is bounded by $[\nb u]_{C^{k,\alp}}\lesssim \|\omg\|_{C^{k,\alp}}$ if $0<\alp<\min\{\pi/\bt -k - 2, 1\}$ and $\omg\in C^{0,\alp}(\overline{\Omg_{\bt}})$. We show the $C^{k+1,\alp}$ well-posedness with this estimate under $k+\alp<\pi/\bt - 2$ assumption.
    \item If $k + \alp = \pi/\bt - 2$, then the corner causes the estimate $[\nb u]_{C^{k,\alp}}\lesssim \|\omg\|_{C^{k,\alp}}$ to fail. Instead, in Subsection \ref{borderline}, the seminorm $[\nb u]_{C^{k,\alp}}$ is bounded by
    \begin{equation*}
        [\nb u]_{C^{k,\alp}}\lesssim_{k,\bt} \|\omg\|_{C^{k,\alp}} + \|\omg\|_{C^{k}}^{k+3-\pi/\bt} \|\omg\|_{C^{k,1}}^{\pi/\bt-k-2}.
    \end{equation*}
    Although this inequality is weaker than the previous one, it is sufficient for the proof of the $C^{k+1,\alp}$ well-posedness when $k+ \alp = \pi/\bt - 2$.
\end{itemize}

We next introduce how the singularity of $u$ at the origin occurs in our instantaneous blow-up results (Theorem \ref{thm:ill-posed}). The velocity can be decomposed into the most singular term at the origin and the smooth remainder. The behavior of the singularity differs depending on whether $\pi/\bt$ is an integer.
\begin{itemize}
    \item If $\pi/\bt$ is not an integer, the coefficient of the singular term is determined by the values of the vorticity throughout $\Omg_{\bt}$.
    \item If $\pi/\bt$ is an integer, the coefficient is determined by $\nb^{p-2} \omg(0)$, and values of $\omg$ on $\Omg_{\bt}\setminus\{0\}$ do not affect it.
\end{itemize}
In Subsection \ref{ill-posed2}, we construct initial data for which the singular term does not vanish, and the solution exhibits an instantaneous blow-up.

\subsection{H\"older regularity below the critical threshold}\label{LWPHolder}

In this subsection, we prove Theorem \ref{thm:LWPHolder2} under the condition $k+\alp<\pi/\bt - 1$. We note that the assumption $\mathring{C}^{0,\tld{\alp}}$ is needed to apply the estimate for $\|\nb u\|_{L^{\infty}}$.

\begin{theorem}[Theorem \ref{thm:LWPHolder2} in the case $k=0$]
    Suppose that $0<\bt<\pi/2$, $0 < \alp \le 1$, $0<\tld{\alp}<1$, and $\omg_0\in C^{0,\alp}\cap \mathring{C}^{0,\tld{\alp}}(\overline{\Omg_{\bt}})$. Then, for any $T>0$, there exists a unique solution
    \begin{align*}
        \omg \in L^{\infty}([0,T); C^{0,\alp}(\overline{\Omg_{\bt}}))
    \end{align*}
    to the Euler equations with initial data $\omg_0$. The norm $\|\omg(t,\cdot)\|_{C^{0,\alp}}$ is bounded by
    \begin{equation}\label{eq:Holder_cap}
        \frac{c\|\omg_t\|_{C^{0,\alp}}}{\|\omg_0\|_{L^{\infty}}} \le  \left(\frac{c\|\omg_0\|_{{C^{0,\alp}}\cap \mathring{C}^{0,\tld{\alp}}}}{\|\omg_0\|_{L^{\infty}}}\right)^{  \exp\left(C \|\omg_0\|_{L^{\infty}} t\right)}
    \end{equation}
    in $\Omg_{\bt}$, where $c>3$ and $C=C(\alp, \tld{\alp},\bt)$.
\end{theorem}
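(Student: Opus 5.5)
The plan is to reproduce the proof of Theorem \ref{thm:ring_C_0,a}, with the unweighted H\"older seminorm in place of the weighted one. The weighted assumption enters only through the Lipschitz bound on the velocity. Since $\omg_0\in L^{\infty}$, Theorem \ref{thm:weak-wellposed} already provides a unique global solution triple $(\omg_t,u_t,\Phi_t)$, so uniqueness is automatic. What remains is to show that $\omg_t\in C^{0,\alp}$, with the bound \eqref{eq:Holder_cap}.

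\emph{Step 1: the weighted norm stays controlled.} Because $\omg_0\in\mathring{C}^{0,\tld{\alp}}$, Theorem \ref{thm:ring_C_0,a} gives the double-exponential bound on $\|\omg_t\|_{\mathring{C}^{0,\tld{\alp}}}$. Lemma \ref{lem:gradu_infty}, applied with exponent $\tld{\alp}$, together with the conservation $\|\omg_t\|_{L^\infty}=\|\omg_0\|_{L^\infty}$, then yields
\begin{equation*}
\|\nb u_t\|_{L^{\infty}}\lesssim \|\omg_0\|_{L^{\infty}}\log\frac{c\|\omg_t\|_{\mathring{C}^{0,\tld{\alp}}}}{\|\omg_0\|_{L^{\infty}}}\le C\|\omg_0\|_{L^\infty}\,e^{C\|\omg_0\|_{L^\infty}t}\log\frac{c\|\omg_0\|_{\mathring{C}^{0,\tld{\alp}}}}{\|\omg_0\|_{L^{\infty}}}.
\end{equation*}
In particular, $\int_0^t\|\nb u_s\|_{L^\infty}ds$ is finite for every $t$, and it is bounded by $e^{C\|\omg_0\|_{L^\infty}t}\log(c\|\omg_0\|_{\mathring{C}^{0,\tld{\alp}}}/\|\omg_0\|_{L^\infty})$ up to constants.

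\emph{Step 2: transport of the H\"older seminorm.} The flow map $\Phi_t$ is bi-Lipschitz by \eqref{eq:Phi_Lip} and \eqref{eq:Phi_Lip_2}. Using $\omg_t=\omg_0\circ\Phi_t^{-1}$, we get
\begin{equation*}
|\omg_t(z)-\omg_t(z')|\le[\omg_0]_{C^{0,\alp}}|\Phi_t^{-1}(z)-\Phi_t^{-1}(z')|^{\alp}\le[\omg_0]_{C^{0,\alp}}|z-z'|^\alp\exp\Big(\alp\int_0^t\|\nb u_s\|_{L^\infty}ds\Big).
\end{equation*}
Inserting the bound from Step 1 and taking logarithms gives
\begin{equation*}
\log\frac{c\|\omg_t\|_{C^{0,\alp}}}{\|\omg_0\|_{L^\infty}}\le \log\frac{c\|\omg_0\|_{C^{0,\alp}}}{\|\omg_0\|_{L^\infty}}+\alp\int_0^t\|\nb u_s\|_{L^\infty}\,ds\le e^{C\|\omg_0\|_{L^\infty}t}\log\frac{c\|\omg_0\|_{C^{0,\alp}\cap\mathring{C}^{0,\tld{\alp}}}}{\|\omg_0\|_{L^\infty}}.
\end{equation*}
This is \eqref{eq:Holder_cap}. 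Here the constant $C=C(\alp,\tld{\alp},\bt)$ absorbs the constants from Theorem \ref{thm:ring_C_0,a} and Lemma \ref{lem:gradu_infty}, and we use $c>3$ so that every logarithm is at least $1$. No Gr\"onwall argument in $\|\omg_t\|_{C^{0,\alp}}$ is needed: the Lipschitz bound on $u_t$ comes entirely from the weighted norm, so the estimate is linear in $[\omg_0]_{C^{0,\alp}}$.

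\emph{Main point to check.} No Schauder-type estimate for $\nb u$ in $C^{0,\alp}$ is needed here. That estimate is exactly what fails at the corner, and we avoid it because $\omg$ is only transported and the argument only requires $\nb u\in L^\infty$. The remaining care is technical. Lemma \ref{lem:gradu_infty} must be applied at each time to $\omg_t\in\mathring{C}^{0,\tld{\alp}}$, which Step 1 justifies. The flow must be a measure-preserving bijection of $\Omg_\bt$, so that $\omg_t=\omg_0\circ\Phi_t^{-1}$ holds everywhere rather than only almost everywhere; this follows from the Lipschitz bound and Lemma \ref{1.1}. Finally, $L^\infty$ in time means sup over $[0,T)$, which holds because the bound is monotone in $t$.
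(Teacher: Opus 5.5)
Your proposal is correct and follows essentially the paper's argument: transport the H\"older seminorm along the bi-Lipschitz flow, and control $\int_0^t\|\nb u_s\|_{L^\infty}ds$ through Lemma \ref{lem:gradu_infty} with exponent $\tld{\alp}$ together with the double-exponential bound of Theorem \ref{thm:ring_C_0,a}. The only cosmetic difference is how the estimate is closed: the paper runs a Gr\"onwall argument on the combined norm $\|\omg_t\|_{C^{0,\alp}\cap\mathring{C}^{0,\tld{\alp}}}$, while you substitute the weighted bound directly and absorb the resulting constant $A$ via $1+A(e^{Ct}-1)\le e^{\max\{1,A\}Ct}$, which is equally valid.
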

\begin{proof}
	As in the proof of Theorem \ref{thm:GWPHolder}, we obtain
	\begin{equation*}
		\begin{split}
			|\omg_t(z) - \omg_t(z')| & \le |z-z'|^{\alp} \|\omg_0\|_{C^{0,\alp}} \exp\left(\alp \int_0^t \|\nb u_s \|_{L^{\infty}} ds \right) \\ & \le |z-z'|^{\alp} \|\omg_0\|_{C^{0,\alp}} \exp\left(C\alp \int_0^t \|\omg_0\|_{L^{\infty}} \log \frac{c\|\omg_s\|_{\mathring{C}^{0,\tld{\alp}}}}{\|\omg_0\|_{L^{\infty}}}  ds \right),
		\end{split}
	\end{equation*}
	where $C=C(\tld{\alp}, \bt)$ is a constant. The last inequality follows from Lemma \ref{lem:gradu_infty}. Therefore,
	\begin{equation*}
		\|\omg_t\|_{C^{0,\alp}}\le \|\omg_0\|_{C^{0,\alp}} \exp\left(C\alp \int_0^t \|\omg_0\|_{L^{\infty}} \log \frac{c\|\omg_s\|_{\mathring{C}^{0,\tld{\alp}}}}{\|\omg_0\|_{L^{\infty}}}  ds \right).
	\end{equation*}
	Note that the integral in the right hand side is well-defined and finite by Theorem \ref{thm:GWPHolder}. This inequality and Theorem \ref{thm:ring_C_0,a} yield
	\begin{equation*}
		\frac{\|\omg_t\|_{C^{0,\alp} \cap \mathring{C}^{0,\tld{\alp}}}}{\|\omg_0\|_{L^{\infty}}} \le \frac{\|\omg_0\|_{C^{0,\alp} \cap \mathring{C}^{0,\tld{\alp}}}}{\|\omg_0\|_{L^{\infty}}} \exp\left(C\max\{\alp,\tld{\alp}\} \int_0^t \|\omg_0\|_{L^{\infty}} \log \frac{c \|\omg_s\|_{C^{0,\alp} \cap \mathring{C}^{0,\tld{\alp}}}}{\|\omg_0\|_{L^{\infty}}} ds \right).
	\end{equation*}
	Then, Gr\"onwall's lemma implies that
	\begin{equation*}
		\frac{c\|\omg_t\|_{C^{0,\alp} \cap \mathring{C}^{0,\tld{\alp}}}}{\|\omg_0\|_{L^{\infty}}} \le \exp\left( \log\frac{c\|\omg_0\|_{C^{0,\alp} \cap \mathring{C}^{0,\tld{\alp}}}}{\|\omg_0\|_{L^{\infty}}} \exp (C\max\{\alp,\tld{\alp}\}\|\omg_0\|_{L^{\infty}}t )\right),
	\end{equation*}
    and in particular,
    \begin{equation*}
        \frac{c\|\omg_t\|_{C^{0,\alp}}}{\|\omg_0\|_{L^{\infty}}} \le  \left(\frac{c\|\omg_0\|_{C^{0,\alp} \cap \mathring{C}^{0,\tld{\alp}}}}{\|\omg_0\|_{L^{\infty}}}\right)^{  \exp\left(C\max\{\alp,\tld{\alp}\} \|\omg_0\|_{L^{\infty}} t\right)}.
    \end{equation*}
    This completes the proof.
\end{proof}

We now begin to prove the case $k\ge 1$ of Theorem \ref{thm:LWPHolder2}. We first prove the $C^k$ analogue of Lemma \ref{lem:vel_Lip}.

\begin{lemma}\label{lem:gradu_higher1}
    Let $k\in\mathbb{N}$. Suppose that $0<\bt<\pi/(k+2)$, $0<\alp<1$, $\omg\in C^{k,\alp}(\overline{\Omg_{\bt}})$. Then,
    \begin{equation*}
        \|\nb^{k+1} u\|_{L^{\infty}} \lesssim_{k,\alp,\bt} \|\omg\|_{C^{k,\alp}}.
    \end{equation*}
\end{lemma}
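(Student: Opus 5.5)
We follow the structure of Lemma \ref{lem:vel_Lip}. By \eqref{eq:gradu_pv}, Lemma \ref{lem:gradu_pv} and the identity $4\rd_z\rd_{\bar z}\psi=\omg$, it suffices to bound $\rd_{\bar z}^{k+2}\psi(z)$ uniformly in $z\in\Omg_{\bt}$ by $\|\omg\|_{C^{k,\alp}}$. The other mixed derivatives $\rd_z^j\rd_{\bar z}^{k+2-j}\psi$ with $j\ge1$ are derivatives of $\omg$ of order $\le k$ and are trivially bounded.

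Unlike Lemma \ref{lem:vel_Lip}, $C^{k,\alp}$ is not scale-invariant, so we cannot normalize $|z|=1$. We therefore subtract the Taylor polynomial at the origin rather than at $z$. We write $\omg=T_k\omg+(\omg-T_k\omg)$, where $T_k\omg=T_{k,0}\omg$ has degree $\le k$. Since $\bt<\pi/(k+2)$, Lemma \ref{lem:IntGradKer} applies. Its first identity kills the part of $T_k\omg$ of degree $<k$ (these are polynomials of order at most $k-1$ against $\rd_{\bar z}^{k+2}$). Its second identity gives that each degree-$k$ monomial $w^n\bar w^m$ contributes a constant $C(\bt,n,m)$. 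Hence the $T_k\omg$ part is bounded by $C\|\omg\|_{C^k}$. Equivalently, this follows from Lemma \ref{lem:Gbeta_nm}: the stream function generated by $T_k\omg$ is a polynomial of degree $k+2$.

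It remains to bound $p.v.\int \rd_{\bar z}^{k+2}G_{\bt}(z,w)\, g(w)\,dA(w)$, where $g:=\omg-T_k\omg$ satisfies $|g(w)|\lesssim \|\omg\|_{C^{k,\alp}}|w|^{k+\alp}$ and $g\in C^{k,\alp}$ with $\nb^j g(0)=0$ for $j\le k$. Fix $z$ with $|z|=r$ and split $\Omg_{\bt}$ into $A=\{|z-w|\le 2r\}$ and $B=\{|z-w|>2r\}$.

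In $B$ we have $|w|\gtrsim r$, and the far-field bound $|\rd_{\bar z}^{k+2}G_{\bt}|\lesssim r^{p-k-2}|w|^{-p}$ from Lemma \ref{lem:ker_bound} gives
\[
\int_B r^{p-k-2}|w|^{-p}|w|^{k+\alp}\,dA(w)\lesssim r^{\alp}.
\]
This converges precisely because $p>k+2+\alp$. If $k+2+\alp\ge p$, we shrink $\alp$, which is harmless since $C^{k,\alp}\subset C^{k,\alp'}$ for $\alp'<\alp$ locally. However, the global bound $|g|\lesssim|w|^{k+\alp}$ is bad for large $r$. We therefore treat large and small $r$ separately. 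For $r\ge1$ we do not subtract $T_k\omg$ in the far region and use only $|\omg|\le\|\omg\|_{L^\infty}$, which gives $r^{p-k-2}\int_{|w|\gtrsim r}|w|^{-p}\lesssim r^{-k}\le1$. The polynomial pieces of $T_k\omg$ over $B$ are then each controlled directly by $r^{p-k-2}\int_{|w|>r}|w|^{j-p}\lesssim r^{j-k}$ for $j\le k$, which is fine for $r\ge 1$. For $r<1$ we keep the subtracted form, so that the $B$-contribution is $\lesssim r^\alp\|\omg\|_{C^{k,\alp}}\le \|\omg\|_{C^{k,\alp}}$.

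In $A$ we subtract $T_{k,z}\omg$ instead. The difference $\omg-T_{k,z}\omg$ is $O(\|\omg\|_{C^{k,\alp}}|z-w|^{k+\alp})$, which against the local kernel $|z-w|^{-(k+2)}$ integrates to $\lesssim r^{\alp}$. The remaining term, the integral over $A$ of $\rd_{\bar z}^{k+2}G_{\bt}(z,w)T_{k,z}\omg(w)$, is handled by Lemma \ref{lem:HalfGradKer} and the remark after it. Expanding $T_{k,z}\omg$ in powers $(z-w)^a(\overline{z-w})^b$ with $a+b=j$ and $m=k+2-j$, this term is bounded by
\[
\sum_{j\le k}\|\nb^j\omg\|_{L^\infty}\, r^{-(k-j)}.
\]
This is acceptable for $r\ge1$. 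For $r<1$ it blows up, so there we apply this Taylor step to $g$, whose Taylor coefficients at $z$ satisfy $|\nb^j g(z)|\lesssim \|\omg\|_{C^{k,\alp}}r^{k+\alp-j}$. The bound then becomes $\sum_j r^{k+\alp-j}r^{-(k-j)}=r^\alp$.

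The main obstacle is keeping the two decompositions compatible near $r\sim1$ and verifying that the constant from Lemma \ref{lem:IntGradKer} exactly accounts for the $T_k\omg$ part over all of $\Omg_{\bt}$, including its principal value at $w=z$. We handle this by defining everything as principal values via Lemma \ref{lem:gradu_pv}.
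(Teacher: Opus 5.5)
\textbf{Gap: the case $|z|\ge 1$ is not closed.} Your argument for $|z|<1$ is sound. It expands at the corner, splits at the scale $|z|$, and feeds $|\nb^j g(z)|\lesssim\|\omg\|_{C^{k,\alp}}|z|^{k+\alp-j}$ into Lemma \ref{lem:HalfGradKer}. The trouble is $r=|z|\ge1$. In the near region $A=\{|z-w|\le 2r\}$ you bound the Taylor remainder by
\[
\left|\int_A \rd_{\bar z}^{k+2}G_{\bt}(z,w)\{\omg(w)-T_{k,z}\omg(w)\}\,dA(w)\right|\lesssim[\omg]_{C^{k,\alp}}\int_A|z-w|^{\alp-2}\,dA(w)\sim r^{\alp}[\omg]_{C^{k,\alp}}.
\]
For $r\ge1$ you only check the polynomial part $\int_A\rd_{\bar z}^{k+2}G_{\bt}\,T_{k,z}\omg$, and this remainder is never revisited. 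What you have actually shown is $|\nb^{k+1}u(z)|\lesssim(1+|z|^{\alp})\|\omg\|_{C^{k,\alp}}$, which is not an $L^{\infty}$ bound on the unbounded sector. This is exactly the non-scale-invariance you flagged: for large $|z|$ the natural scale of the $C^{k,\alp}$ norm is $1$, not $|z|$.

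The fix is not cosmetic. If you cut $A$ at $|z-w|=1$ and keep $T_{k,z}\omg$ on $\{1<|z-w|\le 2r\}$, the absolute bound $\|\omg\|_{C^k}|z-w|^{k}$ against $|z-w|^{-(k+2)}$ still gives $\log(2r)$. Far from $z$ you may only estimate $\omg-T_{k-1,z}\omg$ absolutely. The homogeneous degree-$k$ piece $P_{k,z}\omg:=T_{k,z}\omg-T_{k-1,z}\omg$ must be handled by cancellation.

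\textbf{The missing idea, which is the paper's.} Because $p=\pi/\bt>k+2$, Lemma \ref{lem:ker_bound} gives $|\rd_{\bar z}^{k+2}G_{\bt}(z,w)|\lesssim|z-w|^{-(k+2)}$ for \emph{all} $z,w\in\Omg_{\bt}$. At this order the corner is invisible, so neither an expansion at the origin nor a splitting at scale $|z|$ is needed. The paper proceeds as follows.
\begin{itemize}
\item It subtracts $T_{k-1,z}\omg$, the degree-$(k-1)$ Taylor polynomial at $z$ itself, over the whole sector. This costs nothing by the first identity of Lemma \ref{lem:IntGradKer} at level $k-1$.
\item It splits at a radius $R$ independent of $|z|$.
\item On $\{|z-w|\le R\}$ it writes $\omg-T_{k-1,z}\omg=(\omg-T_{k,z}\omg)+P_{k,z}\omg$. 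The first term gives $R^{\alp}[\omg]_{C^{k,\alp}}$, and the second a Lemma \ref{lem:HalfGradKer}-type bound $\lesssim\|\omg\|_{C^k}$.
\item On $\{|z-w|>R\}$ it uses $|\omg(w)-T_{k-1,z}\omg(w)|\lesssim\|\omg\|_{C^{k-1}}|z-w|^{k-1}$. The integrand is then $\lesssim|z-w|^{-3}$, which integrates to $R^{-1}$. The paper optimizes $R$, but $R=1$ already gives the lemma.
\end{itemize}

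You could keep your corner argument for $|z|<1$ and graft this onto $|z|\ge1$, where you need not subtract $T_k\omg$ at all:
\begin{itemize}
\item subtract only $T_{k-1,z}\omg$ on $\{1<|z-w|\le 2|z|\}$;
\item bound $\int_A\rd_{\bar z}^{k+2}G_{\bt}\,T_{k-1,z}\omg$ by Lemma \ref{lem:HalfGradKer}, each degree-$j$ piece giving $|z|^{j-k}\le 1$;
\item treat $P_{k,z}\omg$ on $\{|z-w|\le1\}$ by the same half-circle cancellation, at a radius not tied to $|z|$.
\end{itemize}
The paper's single argument, however, covers all $z$ at once and makes your expansion at the origin unnecessary.

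A minor point: $\rd_z^{k+2}\psi$ is not a derivative of $\omg$. It equals $\overline{\rd_{\bar z}^{k+2}\psi}$ because $\psi$ is real.
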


\begin{proof}
    By Lemma \ref{lem:IntGradKer}, it is enough to obtain
    \begin{equation*}
        \left|\int_{\Omg_{\bt}} \rd_{\bar{z}}^{k+2} G_{\bt} (z,w) \{\omg(w)- T_{k-1,z} \omg(w)\} dA(w)\right|\lesssim \|\omg\|_{C^{k,\alp}}.
    \end{equation*}
    Let $R>0$, and divide $\Omg_{\bt}$ into two regions:  $A:=\{w\in\Omg_{\bt}: |z-w| \le R\}$, $B:=\{w\in\Omg_{\bt}: |z-w|> R\}$. In the region $A$,
    \begin{equation*}
        \left|\int_A \rd_{\bar{z}}^{k+2} G_{\bt}(z,w) \{\omg(w) - T_{k,z}\omg(w)\} dA(w)\right| \lesssim \int_A \frac{|z-w|^{k+\alp}[\omg]_{C^{k,\alp}}}{|z-w|^{k+2}}dA(w) \lesssim \frac{\|\omg\|_{C^{k,\alp}}}{\alp} R^{\alp}.
    \end{equation*}
    In the region $B$, since $p>k+2$, we can use a simple bound
    \begin{align*}
        \left|\int_B \rd_{\bar{z}}^{k+2} G_{\bt} (z,w) \{\omg(w)-T_{k-1,z}\omg(w)\} dA(w)\right| \lesssim \int_B \frac{|z-w|^{k-1}\|\omg\|_{C^{k-1}}}{|z-w|^{k+3}}  dA(w) \lesssim \frac{\|\omg\|_{C^{k-1}}}{R}.
    \end{align*}
    Choose $R$ to balance the preceding bounds as
    \begin{equation*}
        R := \left(\frac{\|\omg\|_{C^{k-1}}}{\|\omg\|_{C^{k,\alp}}}\right)^{1/(1+\alp)}.
    \end{equation*}
    The remainder term has a uniform bound
    \begin{equation*}
        \left|\int_A \rd_{\bar{z}}^{k+2} G_{\bt}(z,w) \{T_{k,z}\omg(w) - T_{k-1,z}\omg(w)\}dA(w) \right| \le C_p \|\omg\|_{C^k}
    \end{equation*}
    by Lemma \ref{lem:HalfGradKer}. In conclusion,
    \begin{equation*}
        \left|\int_{\Omg_{\bt}} \rd_{\bar{z}}^{k+2} G_{\bt} (z,w) \{\omg(w)- T_{k-1,z} \omg(w)\} dA(w)\right| \lesssim \frac{\|\omg\|_{C^{k,\alp}}}{\alp} R^{\alp} + \frac{\|\omg\|_{C^{k-1}}}{R} + \|\omg\|_{C^k} \lesssim \|\omg\|_{C^{k,\alp}}.
    \end{equation*}
\end{proof}

We note that the proof of Lemma \ref{lem:gradu_higher1} requires $k\ge 1$. We next prove the $C^{k,\alp}$ analogue of Lemma \ref{lem:gradu_bound}.

\begin{lemma}\label{lem:gradu_bound2}
    Let $k\in\mathbb{N}_0$. Suppose that $0<\bt<\pi/(k+2)$, $0<\alp < \min\{1,\pi/\bt - k-2\}$, $0<\tld{\alp}<1$, $\omg\in C^{k,\alp}\cap \mathring{C}^{0,\tld{\alp}}(\overline{\Omg_{\bt}})$. Then, $\nb u\in C^{k,\alp}$ and
    \begin{equation*}
        [\nb u]_{C^{k,\alp}} \lesssim_{k,\alp,\bt} \|\omg\|_{C^{k,\alp}}.
    \end{equation*}
\end{lemma}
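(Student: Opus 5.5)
We will follow the scheme of Lemma \ref{lem:gradu_bound}, replacing the scale-invariant normalization $|z|=1$ by a direct estimate at an arbitrary pair $z,z'\in\Omg_{\bt}$. The key new ingredient is Lemma \ref{lem:IntGradKer}. Since $\bt<\pi/(k+2)$, it gives
\begin{equation*}
p.v.\int_{\Omg_{\bt}}\rd_{\bar z}^{k+2}G_{\bt}(z,w)\,T_{k,z}\omg(w)\,dA(w)=\sum_{m+n=k}c_{m,n}\,\rd_z^m\rd_{\bar z}^n\omg(z),
\end{equation*}
with constants $c_{m,n}=c_{m,n}(\bt)$, because the lower-order Taylor terms contribute zero. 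By \eqref{eq:gradu_pv}, the $k$-th derivatives of $\nb u$ are therefore, up to a term which is $C^{0,\alp}$ with seminorm $\lesssim[\omg]_{C^{k,\alp}}$,
\begin{equation*}
F(z):=\int_{\Omg_{\bt}}\rd_{\bar z}^{k+2}G_{\bt}(z,w)\{\omg(w)-T_{k,z}\omg(w)\}\,dA(w).
\end{equation*}
This integral converges absolutely near $w=z$ because $\alp>0$. At infinity, $|\omg-T_{k,z}\omg|\lesssim\|\omg\|_{C^k}(1+|z-w|^k)$ while the kernel decays like $|z|^{p-k-2}|w|^{-p}$ by Lemma \ref{lem:ker_bound}, so $p>k+2$ is exactly what makes the tail integrable. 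The assumption $\omg\in\mathring{C}^{0,\tld{\alp}}$ is used only qualitatively, through Lemma \ref{lem:gradu_infty} and Lemma \ref{lem:gradu_pv}, to justify that $u$ is Lipschitz and that the principal value representation holds.

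Fix $z,z'$ and set $d:=|z-z'|$. We may assume $d\le c_0$ for a small constant $c_0$; otherwise the bound follows from $|F|\lesssim\|\omg\|_{C^{k,\alp}}$, which is proved as in Lemma \ref{lem:gradu_higher1}. We split $\Omg_{\bt}$ into the near region $K=\{|w-z|\le2d\}$ and its complement.

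On $K$, both terms (for $z$ and for $z'$) are bounded by $\int_K|z-w|^{\alp-2}\,dA(w)\,[\omg]_{C^{k,\alp}}\lesssim d^{\alp}[\omg]_{C^{k,\alp}}$.

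On $K^c$, we write the difference as $I_1+I_2$ exactly as in Lemma \ref{lem:gradu_bound}:
\begin{itemize}
\item $I_1$ involves $T_{k,z}\omg-T_{k,z'}\omg$, whose coefficients obey $|\cdot|\lesssim d^{k+\alp-(m+n)}$ by \eqref{eq:Taylor_diff}. Integrated against $\rd_{\bar z}^{k+2}G_{\bt}(z,\cdot)$ over $K^c$, the degree-$j$ monomials with $j<k$ give $\int_{|z-w|>2d}d^{k+\alp-j}|z-w|^{j-k-2}\,dA(w)\lesssim d^{\alp}$. For the top-degree part we use that the full-domain integral of the kernel against a degree-$k$ polynomial is a constant (Lemma \ref{lem:IntGradKer}). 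Hence the $K^c$ piece equals that constant minus the $K$ piece, and the $K$ piece is bounded, by Lemma \ref{lem:HalfGradKer} type cancellation, uniformly in $z$.
\item $I_2$ uses the mean value bound $|\rd_{\bar z}^{k+2}G_{\bt}(z,w)-\rd_{\bar z}^{k+2}G_{\bt}(z',w)|\lesssim d\,|z-w|^{-k-3}$ wherever this kernel estimate is valid. Combined with $|\omg-T_{k,z}\omg|\lesssim\min(|z-w|^{k+\alp},\,\|\omg\|_{C^k}(1+|z-w|^{k}))$, this gives $d\int_{2d}^{1}r^{\alp-2}\,dr\lesssim d^{\alp}$ (using $\alp<1$). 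The far part contributes $d$ times a convergent tail.
\end{itemize}

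\emph{The main obstacle} is that $z$ need not be far from the corner relative to $d$. The estimates of Lemma \ref{lem:ker_bound} hold for $|w|\le2|z|$ and $|w|\ge2|z|$ separately, and the derivative bound $d\,|z-w|^{-k-3}$ fails once $|w|\gtrsim|z|$ becomes comparable to $d$. Worse, in the regime $|w|\ge2|z|$ the kernel behaves like $|z|^{p-k-2}|w|^{-p}$, whose $z$-derivative is $|z|^{p-k-3}|w|^{-p}$. This is Hölder of order $p-k-2$ in $z$ near $0$, but only that: it is precisely the singular homogeneous term responsible for the threshold. We therefore treat two cases. If $|z|\le 4d$ (so also $|z'|\lesssim d$), we estimate $F(z)$ and $F(z')$ separately by $d^{\alp}\|\omg\|_{C^{k,\alp}}$. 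To do this, split at $|w|\sim|z|$ and use the homogeneous bound, which yields $|z|^{p-k-2}\int_{|w|>|z|}|w|^{-p}|w|^{k+\alp}\,dA(w)\sim|z|^{\alp}$ provided $\alp<p-k-2$; this is exactly where the hypothesis $\alp<\pi/\bt-k-2$ enters. If $|z|>4d$, the kernel difference in the region $|w|\ge2|z|$ is bounded by $d\,|z|^{p-k-3}|w|^{-p}$, and integrating against $|w|^{k+\alp}$ gives $d\,|z|^{\alp-1}\le d^{\alp}$. In the region $|w|\le2|z|$ the previous argument applies.
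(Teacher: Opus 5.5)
Your proof is, in substance, the paper's. You reduce via Lemma~\ref{lem:IntGradKer} to the H\"older difference of $F$, rerun the $K$/$I_1$/$I_2$ scheme of Lemma~\ref{lem:gradu_bound}, and replace the far-field bound by $d\,|z|^{p-k-3}\int_{|w|\ge 2|z|}|w|^{k+\alp-p}\,dA(w)\lesssim d\,|z|^{\alp-1}\le d^{\alp}$. That is exactly \eqref{eq:bound_I_3}, and it is where $\alp<\pi/\bt-k-2$ is used.

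Two of your deviations improve on the paper's sketch:
\begin{itemize}
\item The paper restricts to $|z-z'|<\bt|z|/10$ and defers the rest to Lemma~\ref{lem:gradu_bound}. There the complementary case is handled by \eqref{eq:z_diff_large}, which has no $C^{k,\alp}$ analogue. Your bound $|F(z)|\lesssim |z|^{\alp}[\omg]_{C^{k,\alp}}$, obtained by splitting at $|w|\sim|z|$, is precisely what handles $|z|\lesssim|z-z'|$.
\item You treat the top-degree part of $I_1$ as the global constant of Lemma~\ref{lem:IntGradKer} minus a near-ball piece, rather than using \eqref{eq:halfGradKer2}. This avoids any smallness condition on $|z-z'|/|z|$ beyond $|z|>4d$.
\end{itemize}

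One step fails as written: the preliminary reduction to $d\le c_0$. It relies on $\sup|F|\lesssim\|\omg\|_{C^{k,\alp}}$ from Lemma~\ref{lem:gradu_higher1}, which needs $k\ge 1$. For $k=0$ such a bound is unavailable. Lemma~\ref{lem:gradu_infty} controls $\|\nb u\|_{L^\infty}$ only through the logarithm of the $\mathring{C}^{0,\tld{\alp}}$ norm, on which the constant here may not depend. The paper also notes that $C^{0,\alp}$ norms alone cannot control it. Your $I_2$ step leans on the same reduction, since ``$d$ times a convergent tail'' is $\lesssim d^{\alp}$ only when $d\lesssim 1$.

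Both problems disappear if you use the global Taylor bound $|\omg(w)-T_{k,z}\omg(w)|\lesssim[\omg]_{C^{k,\alp}}|z-w|^{k+\alp}$ throughout. It is valid because $\Omg_{\bt}$ is convex, and it replaces your minimum with $\|\omg\|_{C^k}(1+|z-w|^k)$. Then $I_2$ on $\{|z-w|>2d,\ |w|\le 2|z|\}$ is at most $d\int_{2d}^{\infty}r^{\alp-2}\,dr\lesssim d^{\alp}$ for every $d$. All your other estimates are already uniform in $d$. The proof then closes for all $z,z'$, and in fact gives $[\nb u]_{C^{k,\alp}}\lesssim[\omg]_{C^{k,\alp}}$, as scaling predicts.

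A minor point inherited from the paper: the identity behind $I_1+I_2$ is pairing-sensitive. If $I_1$ carries the kernel at $z$, then $I_2$ must carry $\omg-T_{k,z'}\omg$, not $\omg-T_{k,z}\omg$. This is harmless, since $|z'-w|\sim|z-w|$ on $K^c$.
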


\begin{remark}
    Unlike Lemma \ref{lem:gradu_bound}, we need an additional assumption $k+\alp<\pi/\bt - 2$. As we see in the proof, this is due to the corner singularity of the kernel.
\end{remark}

\begin{proof}
    Since $\omg\in \mathring{C}^{0,\tld{\alp}}$, Lemma \ref{lem:gradu_infty} yields the boundedness of $\|\nb u\|_{L^{\infty}}$. We follow the proof of Step 1 in Lemma \ref{lem:gradu_bound}, but we cannot assume $|z|=1$.
    
    We only note the difference from the proof of Lemma \ref{lem:gradu_bound}. Let $z'\in\Omg_{\bt}$ satisfy $|z-z'| <\bt|z|/10$ and $|z'|\le |z|$. By Lemma \ref{lem:IntGradKer}, it is enough to obtain
    \begin{equation*}
        \left|\int_{\Omg_{\bt}} \left(\rd_{\bar{z}}^{k+2} G_{\bt} (z,w) \{\omg(w)-T_{k,z}\omg(w)\} -\rd_{\bar{z}}^{k+2} G_{\bt} (z',w) \{\omg(w)-T_{k,z'}\omg(w)\}\right) dA(w)\right| \lesssim \|\omg\|_{C^{k,\alp}}|z-z'|^{\alp}.
    \end{equation*}
    Let $C:=\{w\in\Omg_{\bt}: |z-w|> 2|z|\}$. In the region $C$, instead of the estimate \eqref{eq:bound_I_2}, we use
    \begin{equation}\label{eq:bound_I_3}
        \begin{split}
            &\left|\int_C (\rd_{\bar{z}}^{k+2} G_{\bt} (z,w) - \rd_{\bar{z}}^{k+2} G_{\bt} (z',w)) (\omg(w)- T_{k,z}\omg(w)) dA(w)\right| \\ &\ \lesssim |z-z'| \int_C \frac{|z|^{p-(k+3)}}{|w|^p} |z-w|^{k+\alp}\|\omg\|_{C^{k,\alp}} dA(w) \lesssim \|\omg\|_{C^{k,\alp}}|z-z'|^{\alp}.
        \end{split}
    \end{equation}
    We note that the condition $k+\alp<\pi/\bt-2$ is required in \eqref{eq:bound_I_3}. The other parts of the proof are the same as those in the proof of Lemma \ref{lem:gradu_bound}.
\end{proof}

With the help of Lemma \ref{lem:gradu_bound2}, we can prove the $C^{k,\alp}$ well-posedness result under the condition $k+\alp \neq \pi/\bt - 1$.

\begin{theorem}[Theorem \ref{thm:LWPHolder2} in the case $k+\alp \neq \pi/\bt - 1$]\label{thm:LWPHolder2_1}
    Suppose that $0<\bt<\pi/2$, $k\in\mathbb{N}$, $0 < \alp \le 1$ with
    \begin{equation}
         k+\alp < \pi/\bt - 1,
    \end{equation}
    $0<\tld{\alp}<1$, and $\omg_0\in C^{k,\alp}\cap \mathring{C}^{0,\tld{\alp}}(\overline{\Omg_{\bt}})$. Then, for any $T>0$, there exists a unique solution
    \begin{align*}
        \omg \in L^{\infty}([0,T); C^{k,\alp}(\overline{\Omg_{\bt}}))
    \end{align*}
    to the Euler equations with initial data $\omg_0$. The norm $\|\omg(t,\cdot)\|_{C^{k,\alp}}$ is bounded by
    \begin{equation*}
        \|\omg_t\|_{C^{k,\alp}} \le \|\omg_0\|_{C^{k,\alp}} \exp\left( C_1 \exp(C_2 \|\omg_0\|_{L^{\infty}}t) \right),
    \end{equation*}
    where $C_2  = C_2(\alp, \tld{\alp}, \bt)$, and
    \begin{equation*}
       C_1 = \left\{
            \begin{aligned}
                & C_1(\alp, \tld{\alp}, \bt, \|\omg_0\|_{C^{k-1,\alp}}, \|\omg_0\|_{\mathring{C}^{0,\tld{\alp}}}),\qquad\qquad\quad\ \alp\neq 1,\\
                & C_1(\tld{\alp}, \bt, \|\omg_0\|_{C^{k-1,1}}, \|\omg_0\|_{C^{k,1/2}}, \|\omg_0\|_{\mathring{C}^{0,\tld{\alp}}}), \qquad \alp=1.
            \end{aligned}
        \right.
    \end{equation*}
\end{theorem}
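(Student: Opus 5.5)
The plan is to mirror the proof of Theorem \ref{thm:ring_C_1,a}, replacing the weighted estimates by their unweighted $C^{k,\alp}$ counterparts. I argue by induction on $k$, with the case $k=0$ (the preceding theorem) as the base. For fixed $k$, I would first establish the a priori estimate and only afterwards justify existence of the derivatives. Uniqueness is automatic from Theorem \ref{thm:weak-wellposed}. Throughout, Theorem \ref{thm:ring_C_0,a} controls $\|\omg_t\|_{\mathring{C}^{0,\tld{\alp}}}$ double exponentially. Lemma \ref{lem:gradu_infty} then gives $\int_0^t\|\nb u_s\|_{L^\infty}ds\lesssim \exp(C\|\omg_0\|_{L^\infty}t)$, which is the source of all double-exponential bounds.

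\textbf{A priori estimate.} Differentiating \eqref{eq:Euler_vorticity} $k$ times along the flow gives
\begin{equation*}
\frac{d}{dt}\nb^k\omg_t(\Phi_t(z)) = -\sum_{j=1}^{k}\binom{k}{j}(\nb^j u_t\,\nb^{k+1-j}\omg_t)(\Phi_t(z)).
\end{equation*}
The term $j=1$ is handled by Gr\"onwall with $\|\nb u\|_{L^\infty}$. For $j\ge 2$ one needs $\|\nb^{j}u\|_{L^\infty}$ with $j\le k$. Lemma \ref{lem:gradu_higher1}, applied with regularity index $j-1\le k-1$, bounds it by $\|\omg\|_{C^{j-1,\alp'}}$ for any $\alp'<1$. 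This requires $\bt<\pi/(j+1)$, which holds since $j+1\le k+1<\pi/\bt$. These norms are controlled by the induction hypothesis. Hence $\|\nb^k\omg_t\|_{L^\infty}$ obeys a linear Gr\"onwall inequality with the claimed bound.

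For the H\"older seminorm, I proceed as in Step 1 of Theorem \ref{thm:ring_C_1,a}. This gives
\begin{equation*}
\frac{d}{dt}[\omg_t]_{C^{k,\alp}}\le (k+\alp)\|\nb u_t\|_{L^\infty}[\omg_t]_{C^{k,\alp}}+\sum_{j\ge2}\Big([\nb^j u_t]_{C^{0,\alp}}\|\nb^{k+1-j}\omg_t\|_{L^\infty}+\|\nb^j u_t\|_{L^\infty}[\nb^{k+1-j}\omg_t]_{C^{0,\alp}}\Big)+[\nb u_t]_{C^{0,\alp}}\|\nb^k\omg_t\|_{L^\infty}.
\end{equation*}
The crucial inputs are the bounds $[\nb u]_{C^{k-1,\alp}}\lesssim\|\omg\|_{C^{k-1,\alp}}$, and similarly for lower orders. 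These come from Lemma \ref{lem:gradu_bound2} with index $k-1$, which requires $k-1+\alp<\pi/\bt-2$, exactly our hypothesis $k+\alp<\pi/\bt-1$, and also $\alp<1$. The right-hand side is then linear in $[\omg_t]_{C^{k,\alp}}$, with coefficients double-exponential in time by induction, and Gr\"onwall closes. For $\alp=1$, I would replace $[\nb u]_{C^{k-1,1}}$ by $\|\nb^{k}u\|_{L^\infty}$. Lemma \ref{lem:gradu_higher1} bounds this by $\|\omg\|_{C^{k,1/2}}$, which is legitimate because $k+1/2<\pi/\bt-1$. I would first run the argument at $\alp=1/2$, exactly as in the paper's weighted case; this explains the dependence of $C_1$ on $\|\omg_0\|_{C^{k,1/2}}$.

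\textbf{Existence of derivatives.} Here I repeat Step 2 of Theorem \ref{thm:ring_C_1,a}. Difference quotients of $\Phi_t$ converge to the solution of the linearized ODE \eqref{eq:ODE2}. Iterating, this shows $\Phi_t,\Phi_t^{-1}\in C^{k,\alp}$, now globally rather than only locally. The reason is that $\nb u\in C^{k-1,\alp}$ up to the corner by Lemma \ref{lem:gradu_bound2}, and $\nb u$ is bounded by Lemma \ref{lem:gradu_infty}. Then $\omg_t=\omg_0\circ\Phi_t^{-1}$ belongs to $C^{k,\alp}$ and the a priori computation is justified.

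\textbf{Main obstacle.} The step I expect to be delicate is the bookkeeping of the lower-order terms. Each intermediate $[\nb^j u]_{C^{0,\alp}}$ must be bounded using only admissible instances of Lemmas \ref{lem:gradu_bound2} and \ref{lem:gradu_higher1}, whose angle restrictions must be verified for every $j$. In particular, the top-order term $\nb^{k}u$ must not need regularity at the forbidden level $k+\alp\ge\pi/\bt-1$. I would check this index by index, using the induction hypothesis at $C^{j,\alp}$ for $j<k$.
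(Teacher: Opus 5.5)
Your proposal is correct and follows the paper's route. The paper uses Lagrangian differentiation of $\nb^k\omg$ plus Gr\"onwall, with $\|\nb u\|_{L^{\infty}}$ from Lemma \ref{lem:gradu_infty}, $[\nb u]_{C^{k-1,\alp}}$ from Lemma \ref{lem:gradu_bound2}, the case $\alp=1$ handled via Lemma \ref{lem:gradu_higher1} at exponent $1/2$, and existence from Step 2 of Theorem \ref{thm:ring_C_1,a}. The paper writes out only $k=1$, and your induction on $k$ makes the omitted general case explicit.

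There is one index slip. For $\alp=1$, the quantity dominating $[\nb u]_{C^{k-1,1}}$ is $\|\nb^{k+1}u\|_{L^{\infty}}$, not $\|\nb^{k}u\|_{L^{\infty}}$. Lemma \ref{lem:gradu_higher1} bounds it by $\|\omg\|_{C^{k,1/2}}$ because $\bt<\pi/(k+2)$, which follows from $k+1<\pi/\bt-1$.
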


\begin{proof}
    One can refer to Theorem \ref{thm:ring_C_1,a} for the proof of existence. We only prove the case $k=1$ of the a priori estimate. Suppose that $\nb \omg_t$ is well-defined. Since
    \begin{equation*}
        \frac{d}{dt}\|\nabla\omg_t(\Phi_t(\cdot))\|_{L^{\infty}} \le \|\nabla u_t(\Phi_t(\cdot))\|_{L^{\infty}}\|\nabla \omg_t(\Phi_t(\cdot))\|_{L^{\infty}}
    \end{equation*}
    and $\omg_0\in \mathring{C}^{0,\tld{\alp}}$,
    \begin{equation*}
        \|\nb \omg_t\|_{L^{\infty}} \le \|\nb \omg_0\|_{L^{\infty}} \exp(C_1\exp(C_2\|\omg_0\|_{L^{\infty}}t)),
    \end{equation*}
    where $C_1 = C_1(\tld{\alp}, \bt, \|\omg_0\|_{\mathring{C}^{0,\tld{\alp}}})$ and $C_2 =  C_2(\tld{\alp}, \bt)$.

    Since $\left[(\nb u_t)(\nb\omg_t)\right]_{C^{0,\alp}} \le \|\nb u_t\|_{L^{\infty}} \left[\nb \omg_t\right]_{C^{0,\alp}} + \left[\nb u_t\right]_{C^{0,\alp}}  \|\nb \omg_t\|_{L^{\infty}}$,
    \begin{align*}
        \frac{d}{dt}\|\nb \omg_t\|_{C^{0,\alp}} \le \left[\nb u_t\right]_{C^{0,\alp}} \|\nb \omg_t\|_{L^{\infty}} + (1+\alp) \|\nb u_t\|_{L^{\infty}} \|\nb\omg_t\|_{C^{0,\alp}}.
    \end{align*}
    If $\alp\neq 1$, \eqref{eq:Holder_cap} and Lemma \ref{lem:gradu_bound2} yield $\left[\nb u_t\right]_{C^{0,\alp}} \lesssim \|\omg_t\|_{C^{0,\alp}} \lesssim \exp(C\exp(Ct))$. Therefore, Gr\"onwall's inequality implies
    \begin{equation*}
        \begin{split}
            & [\nb \omg_t]_{C^{0,\alp}}\\ &\  \le [\nb \omg_0]_{C^{0,\alp}} \exp \left( \int_0^t (1+\alp) \|\nb u_s\|_{L^{\infty}}ds \right) + \int_0^t [\nb u_s]_{C^{0,\alp}} \|\nb \omg_s\|_{L^{\infty}} \exp\left( \int_s^t  (1+\alp) \|\nb u_r\|_{L^{\infty}} dr \right) ds \\ &\ \le \|\omg_0\|_{C^{1,\alp}} \exp\left( C_1 \exp(C_2 \|\omg_0\|_{L^{\infty}}t) \right),
        \end{split}
    \end{equation*}
    where $C_1 = C_1(\alp, \bt, \|\omg_0\|_{C^{0,\alp}\cap \mathring{C}^{0,\tld{\alp}}})$ and $C_2 = C_2 (\alp, \tld{\alp}, \bt)$ are constants. If $\alp=1$, then we use $[\nb u]_{C^{0,1}}\lesssim \|\omg_t\|_{C^{1,1/2}}$ and the case $\alp=1/2$ of Lemma \ref{lem:gradu_higher1}.
\end{proof}

\subsection{The well-posedness threshold}\label{borderline}

In this subsection, we assume that $\pi/\bt$ is not an integer, and prove the case $k+\alp = \pi/\bt - 1$ of Theorem \ref{thm:LWPHolder2} to complete the proof. If $k+\alp = \pi/\bt - 1$, then we cannot expect
\begin{equation*}
    [\nb u]_{C^{k-1,\alp}} \lesssim \|\omg\|_{C^{k-1,\alp}}
\end{equation*}
because the integral in \eqref{eq:bound_I_3} is no longer well-defined.

\begin{example}
    Suppose that $\bt = 2\pi/7$, $k=2$, and $\alp=1/2$. Let $\psi$ be the stream function defined by
    \begin{equation*}
        \psi(re^{i\tht}) :=  r^{7/2} \log (r) \sin (7\tht/2) \eta( re^{i\tht}),
    \end{equation*}
    where $\eta\in C_c^{\infty}(\overline{\Omg_{\bt}})$ is a cut-off function such that $\eta \equiv 1$ if $|z|<1$, and $\eta\equiv 0$ if $|z|>2$. Then,
    \begin{equation*}
        \omg (re^{i\tht}) = \Dlt \psi(re^{i\tht}) = 7 r^{3/2} \sin (7\tht/2)
    \end{equation*}
    in $\Omg_{\bt}\cap B_1$. Therefore $\omg\in C^{1,1/2}(\overline{\Omg_{\bt}})$ with compact support. However, $\psi\notin C^{3,1/2}$ and $\nb u$ also satisfies $\nb u = \nb \nb^{\perp} \psi \notin C^{1,1/2}$.
\end{example}

Instead, we prove an alternative inequality under an additional assumption $\omg\in C^{0,1}$. We first derive the most singular term of the derivatives of $G_{\bt}(\cdot,w)$ at $0$.

\begin{lemma}\label{lem:Green_decompose}
    Let $1\le k < \pi/\bt $ be an integer, and $w\in\Omg_{\bt}$. For $|z|<|w|$, the Green's function $G_{\bt}$ satisfies
    \begin{equation*}
        \rd_{\bar{z}}^k G_{\bt} (z,w) = \sum_{n=1}^{\infty} c_{k,n}(w) \bar{z}^{np-k},
    \end{equation*}
    where the constant $c_{k,n}(w)$ is defined by
    \begin{equation*}
        c_{k,n} (w) := \frac{np(np-1)\cdots (np-k+1)}{4\pi n} \left(w^{-np} - \bar{w}^{-np}\right).
    \end{equation*}
\end{lemma}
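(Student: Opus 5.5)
The plan is to expand the explicit formula
\[
G_{\bt}(z,w) = \frac{1}{2\pi}\log\frac{|z^p - w^p|}{|z^p-\bar{w}^p|}
\]
in a power series in $\bar z$ (and $z$) for $|z|<|w|$, and then differentiate term by term in $\bar z$.

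Write $\log|a| = \frac12(\log a + \log\bar a)$. Then
\[
2\pi G_{\bt}(z,w) = \tfrac12\Big(\log(z^p-w^p) + \log(\bar z^p - \bar w^p) - \log(z^p - \bar w^p) - \log(\bar z^p - w^p)\Big),
\]
with the branches chosen consistently. Since $|z|<|w|$, we have $|z^p/w^p|<1$, so we can factor out $w^p$ (respectively $\bar w^p$) and use $\log(1-\zeta) = -\sum_{n\ge1}\zeta^n/n$. The constant terms $\log(-w^p)$ and related terms do not depend on $z$, and the terms holomorphic in $z$ are killed by $\rd_{\bar z}$. Hence the only surviving $\bar z$-dependent part is
\[
\frac{1}{4\pi}\Big(\log(1-\bar z^p\bar w^{-p}) - \log(1-\bar z^p w^{-p})\Big) = \frac{1}{4\pi}\sum_{n\ge1}\frac{\bar z^{np}}{n}\big(w^{-np}-\bar w^{-np}\big),
\]
up to a $z$-independent constant. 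The sign needs checking: the term $-\log(1-\bar z^p\bar w^{-p})\cdot(-1/n)$ contributes $-\bar w^{-np}$ and the term $+\log(1-\bar z^pw^{-p})$ contributes $+w^{-np}$, which matches $c_{k,n}$.

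We then apply $\rd_{\bar z}^k$ term by term, using $\rd_{\bar z}^k \bar z^{np} = np(np-1)\cdots(np-k+1)\bar z^{np-k}$. Here $\bar z^{np}$ is defined on $\Omg_{\bt}$ via the convention $z^s=r^se^{is\tht}$, and $\bar z^{np}$ means $\overline{z^{np}}$, which is antiholomorphic, so the derivative formula holds. This gives exactly $c_{k,n}(w)\bar z^{np-k}$.

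The remaining justification is convergence. On compact subsets of $\{|z|<|w|\}$ the series converges absolutely and uniformly, together with all its $\bar z$-derivatives, because the general term is bounded by $n^{k-1}(|z|/|w|)^{np}|w|^{-k}$ up to constants. Uniform convergence of the differentiated series justifies the termwise differentiation.

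The only delicate step is the branch bookkeeping. It must be checked that $\log(\bar z^p-\bar w^p)$ etc. can be written as a constant plus $\log(1-\cdot)$ with principal branches, or alternatively the computation can be sidestepped by working with real parts: $\log|1-\zeta|=\Re\log(1-\zeta)$ with the principal branch is valid for $|\zeta|<1$, and the $\bar z$-derivative of the real part is computed directly. The condition $k<p$ is not needed for the expansion itself; it ensures that every exponent $np-k$ is positive, so no term is singular at $0$.
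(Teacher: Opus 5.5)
Your proposal is correct and follows the paper's proof: expand $\log|1-\zeta|=\Re\log(1-\zeta)=-\Re\sum_{n\ge1}\zeta^n/n$ with $\zeta=z^p/w^p$ and $\zeta=z^p/\bar w^p$, then differentiate term by term using locally uniform convergence on $\{z\in\Omg_{\bt}:|z|<|w|\}$. Your real-part fallback is exactly the paper's computation and removes the branch bookkeeping.

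Two harmless slips. Your sign-check sentence attaches the minus sign to the wrong logarithm, though the displayed series is right. The termwise bound should read $n^{k-1}(|z|/|w|)^{np-k}|w|^{-k}$, which is fine on compact subsets of $\Omg_{\bt}$.
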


\begin{proof}
    Since $-\log(1-\zt) = \sum_{n=1}^{\infty} \zt^n/n$ for all $|\zt|<1$,  the power series representation of $G_{\bt}(\cdot,w)$ is
    \begin{equation*}
        \begin{split}
            G_{\bt}(z,w) & = -\frac{1}{2\pi}\sum_{n=1}^{\infty} \frac{1}{n}
            \left(\Re\left(\frac{z^{np}}{w^{np}}\right) - \Re\left(\frac{z^{np}}{\bar{w}^{np}}\right)\right) \\ & = \frac{1}{4\pi}\sum_{n=1}^{\infty} \frac{w^{-np} - \bar{w}^{-np}}{n} (\bar{z}^{np} - z^{np})
        \end{split}
    \end{equation*}
    if $|z|<|w|$. This series converges uniformly on every compact subset of $\{z\in\Omg_{\bt}: |z|<|w|\}$. Therefore,
    \begin{equation*}
        \rd_{\bar{z}}^k G_{\bt}(z,w) = \frac{1}{4\pi}\sum_{n=1}^{\infty} \rd_{\bar{z}}^k \left(\frac{w^{-np} - \bar{w}^{-np}}{n} (\bar{z}^{np} - z^{np})\right) = \sum_{n=1}^{\infty} c_{k,n}(w) \bar{z}^{np-k}.
    \end{equation*}
\end{proof}

In this subsection, we define $c_{0,n}(w)$ for $n\in\mathbb{N}$, and $\mathcal{R}_{\bt}(z,w)$ by
\begin{equation*}
    c_{0,n}(w) := \frac{w^{-np}-\bar{w}^{-np}}{4\pi n},\quad \mathcal{R}_{\bt}(z,w) := G_{\bt} (z,w) - c_{0,1}(w) (\bar{z}^{p} - z^{p}).
\end{equation*}

\begin{lemma}\label{lem:borderline}
    Let $k\in\mathbb{N}_0$. Suppose that $\pi/(k+3) < \bt < \pi/(k+2)$, 
    \begin{equation*}
        \alp := \pi/\bt - k - 2,
    \end{equation*}
    $0<\tld{\alp}<1$, and $\omg\in C^{k,1}\cap \mathring{C}^{0,\tld{\alp}}(\overline{\Omg_{\bt}})$. Then, $\nb u\in C^{k,\alp}$ and
    \begin{equation*}
        [\nb u]_{C^{k,\alp}} \lesssim_{k, \bt} \|\omg\|_{C^{k,\alp}} + \|\omg\|_{C^k}^{k+3-\pi/\bt} \|\omg\|_{C^{k,1}}^{\pi/\bt-k-2}.
    \end{equation*}
\end{lemma}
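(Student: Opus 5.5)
We follow the proof of Lemma \ref{lem:gradu_bound2} step by step. The region near $z$ (the ball $K$ of radius $2|z-z'|$) and the intermediate region $B$ are treated exactly as there: the terms $I_1$ (via \eqref{eq:Taylor_diff} and Lemma \ref{lem:HalfGradKer}) and $I_2$ (via the complex mean value inequality) are bounded by $\|\omg\|_{C^{k,\alp}}|z-z'|^{\alp}$. Those estimates only use $\alp<1$, which holds because $\bt>\pi/(k+3)$. Lemma \ref{lem:IntGradKer} is still available, since $\bt<\pi/(k+2)$, to subtract the Taylor polynomial $T_{k,z}\omg$. The only place where the argument breaks down is the far region $C=\{w: |z-w|>2|z|\}$. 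There the bound \eqref{eq:bound_I_3} leads to $\int_C |w|^{-p}|w|^{k+\alp}\,dA(w)$ with $p-k-\alp=2$, which diverges logarithmically. So all the new work is in $C$.

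In $C$ we have $|z|<|w|/2$, and we split the kernel using Lemma \ref{lem:Green_decompose}:
\begin{equation*}
\rd_{\bar z}^{k+2}G_{\bt}(z,w)=c_{k+2,1}(w)\,\bar z^{p-k-2}+\rd_{\bar z}^{k+2}\mathcal{R}_{\bt}(z,w).
\end{equation*}
The remainder satisfies $|\rd_{\bar z}^{k+3}\mathcal{R}_{\bt}(z,w)|\lesssim |z|^{2p-k-3}|w|^{-2p}$, taken from the $n\ge 2$ terms of the series. The difference of the remainder at $z$ and at $z'$, integrated against $|w|^{k+\alp}\|\omg\|_{C^{k,\alp}}$, therefore gives a convergent integral, $\int_{2|z|}^\infty \rho^{-p-1}\,d\rho$. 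We get $|z-z'|\,|z|^{2p-k-3}|z|^{k+\alp+2-2p}=|z-z'|\,|z|^{\alp-1}\le|z-z'|^{\alp}$, which is the "smooth remainder" bound mentioned in the introduction.

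The leading term factors as $(\bar z^{\alp}-\bar z'^{\alp})\int_C c_{k+2,1}(w)\,(\omg(w)-T_{k,z}\omg(w))\,dA(w)$, up to the slight mismatch between the regions $C$ for $z$ and for $z'$. That mismatch lies in an annulus where $|w|\sim|z|$ and is handled like $B$. Since $|\bar z^{\alp}-\bar z'^{\alp}|\le|z-z'|^{\alp}$, it remains to bound
\begin{equation*}
J(z):=\int_{|w|>2|z|}|w|^{-p}\,|\omg(w)-T_{k,z}\omg(w)|\,dA(w)
\end{equation*}
uniformly in $z$. This is the main obstacle, and it is where we interpolate. Introduce a scale $R\ge 2|z|$. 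On $2|z|<|w|<R$, use the Taylor bound with the Lipschitz top derivative, $|\omg(w)-T_{k,z}\omg(w)|\lesssim\|\omg\|_{C^{k,1}}|w|^{k+1}$. This contributes $\|\omg\|_{C^{k,1}}\int^R\rho^{k+2-p}\,d\rho\lesssim\|\omg\|_{C^{k,1}}R^{1-\alp}$, using $k+2-p=-\alp>-1$. On $|w|>R$, use $|\omg(w)-T_{k,z}\omg(w)|\lesssim\|\omg\|_{C^k}|w|^{k}$ (Taylor at order $k-1$ plus the $k$-th term). This contributes $\|\omg\|_{C^k}R^{-\alp}$. If the optimal $R=\|\omg\|_{C^k}/\|\omg\|_{C^{k,1}}$ is smaller than $2|z|$, we take $R=2|z|$ and still get a bound by $\|\omg\|_{C^k}|z|^{-\alp}$. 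Optimizing gives $J\lesssim\|\omg\|_{C^k}^{1-\alp}\|\omg\|_{C^{k,1}}^{\alp}$, with $1-\alp=k+3-\pi/\bt$ and $\alp=\pi/\bt-k-2$. This is exactly the extra term in the statement.

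One technical point must be checked: the Taylor polynomial is centered at $z$ while $|w|\gg|z|$, so the polynomial bounds should be written as powers of $|w-z|\sim|w|$. Lemma \ref{lem:IntGradKer} ensures that subtracting $T_{k,z}\omg$ does not change the full integral. Finally, the cases $|z-z'|\ge\bt|z|/10$ are reduced to the near-diagonal case exactly as in Lemma \ref{lem:gradu_bound2}, using boundedness of $\nb^{k+1}u$. Here Lemma \ref{lem:gradu_higher1} applies for $k\ge1$, and Lemma \ref{lem:gradu_infty} together with $\omg\in\mathring C^{0,\tld\alp}$ applies for $k=0$.
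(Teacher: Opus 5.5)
Your proposal is correct and is essentially the paper's proof. In the far region $C$ you split $\rd_{\bar z}^{k+2}G_{\bt}$ via Lemma \ref{lem:Green_decompose} into $c_{k+2,1}(w)\bar z^{\alp}$ plus $\rd_{\bar z}^{k+2}\mathcal{R}_{\bt}$, absorb the remainder using its $|w|^{-2p}$ decay, and control the leading term by interpolating between the $C^{k,1}$ Taylor bound for $|w|<R$ and the $C^{k}$ bound for $|w|>R$ at scale $R=\|\omg\|_{C^k}/\|\omg\|_{C^{k,1}}$, with the same case split $|z|\ge R$ versus $|z|<R$ as the paper.

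One small point, on which the paper is equally terse, concerns the case $|z-z'|\ge \bt|z|/10$ with $|z|$ small. There, boundedness of $\nb^{k+1}u$ alone cannot produce the factor $|z-z'|^{\alp}$. Instead, chain the near-diagonal estimate along a geometric sequence of points toward the origin to get $|\nb^{k+1}u(z)-\nb^{k+1}u(0)|\lesssim|z|^{\alp}$. This settles that case, and for $k=0$ it also avoids any dependence on $\|\omg\|_{\mathring{C}^{0,\tld{\alp}}}$ through Lemma \ref{lem:gradu_infty}.
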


\begin{proof}
    We cannot follow the inequality \eqref{eq:bound_I_3} in the proof of Lemma \ref{lem:gradu_bound2}, because the integral
    \begin{equation*}
        \int_C \frac{|z|^{p-2}}{|w|^p} |z-w|^{k+\alp-1} dA(w)
    \end{equation*}
    requires $k+\alp < p - 1$ for integrability.

    Let $z\in\Omg_{\bt}$, and $|z-z'|<\bt|z|/10$. We define the set $C:= \{w\in\Omg_{\bt}: |z-w|>2|z|\}$ as in Lemma \ref{lem:gradu_bound2}, and denote $w=: \rho e^{i\phi}$ in polar coordinates with $0<\phi<\bt$. If $z$ satisfies $|z| \ge R := \|\omg\|_{C^k} / \|\omg\|_{C^{k,1}}$, then
    \begin{equation*}
        \begin{split}
            &\left|\int_C c_{k+2,1}(w)(\bar{z}^{p-k-2} - \bar{z}'^{p-k-2}) (\omg(w)- T_{k,z}\omg(w)) dA(w)\right| \\ &\ \lesssim \frac{p(p-1)\cdots (p-k-1)}{8\pi} |\bar{z}^{\alp} - \bar{z}'^{\alp}| \int_{3|z|/2}^{\infty} \int_0^{\bt} \frac{\sin(p\phi)}{\rho^p} |z-\rho e^{i\phi}|^k \|\omg\|_{C^k} \rho d\phi d\rho,\qquad w=:\rho e^{i\phi}, \\ &\ \lesssim \frac{\|\omg\|_{C^k}}{|z|^{p-k-2}}|z-z'|^{\alp} \lesssim \|\omg\|_{C^k}^{k+3-p} \|\omg\|_{C^{k,1}}^{p-k-2}|z-z'|^{\alp}.
        \end{split}
    \end{equation*}
    The inequality $|z^{\alp} - z'^{\alp}| \lesssim_{\alp} |z-z'|^{\alp}$ follows from Lemma \ref{lem:z_s_diff}. 
    
    We now suppose that $|z|\le R$ and follow the above inequality. In this case, we need to split the range of $\rho$ into $\rho<3R/2$ and $\rho>3R/2$. Then, the assumption $\omg\in C^{k,1}$ yields
    \begin{equation*}
        \begin{split}
            &\int_{3|z|/2}^{3R/2}\int_0^{\bt} \frac{\sin(p\phi)}{\rho^p} |z-\rho e^{i\phi}|^{k+1} \|\omg\|_{C^{k,1}} \rho d\phi d\rho + \int_{3R/2}^{\infty}\int_0^{\bt} \frac{\sin(p\phi)}{\rho^p} |z-\rho e^{i\phi}|^k \|\omg\|_{C^k} \rho d\phi d\rho  \\ &\ \lesssim  \|\omg\|_{C^{k,1}} (R^{k+3-p} - |z|^{k+3-p}) + \|\omg\|_{C^k} R^{k+2-p} \lesssim \|\omg\|_{C^k}^{k+3-p} \|\omg\|_{C^{k,1}}^{p-k-2}.
        \end{split}
    \end{equation*}

    We now estimate the remainder. In the region $C$, $|\rd_{\bar{z}}^{k+2} \mathcal{R}_{\bt}(z,w)| \lesssim |z|^{2p-k-2}/|w|^{2p}$. Therefore,
    \begin{equation*}
        \begin{split}
            & \left|\int_C (\rd_{\bar{z}}^{k+2} \mathcal{R}_{\bt}(z,w) - \rd_{\bar{z}}^{k+2} \mathcal{R}_{\bt}(z',w)) (\omg(w) - T_{k,z}\omg(w)) dA(w)\right| \\ &\ \lesssim |z-z'|\int_C \frac{|z|^{2p-k-3}}{|w|^{2p}}|z-w|^{k+\alp}\|\omg\|_{C^{k,\alp}} dA(w) \lesssim \|\omg\|_{C^{k,\alp}}|z-z'|^{\alp}.
        \end{split}
    \end{equation*}
    These three inequalities imply that
    \begin{equation}\label{eq:C}
        \left|\int_C (\rd_{\bar{z}}^{k+2} G_{\bt}(z,w) - \rd_{\bar{z}}^{k+2} G_{\bt}(z',w)) (\omg(w) - T_{k,z}\omg(w)) dA(w)\right| \lesssim \left(\|\omg\|_{C^{k,\alp}} + \|\omg\|_{C^k}^{k+3-p} \|\omg\|_{C^{k,1}}^{p-k-2}\right)|z-z'|^{\alp}.
    \end{equation}
    We replace \eqref{eq:bound_I_3} with the inequality above to finish the proof. One can repeat the other parts of the proof of Lemma \ref{lem:gradu_bound2} directly.
\end{proof}

We note the proof of $|z^{\alp} - z'^{\alp}| \lesssim_{\alp} |z-z'|^{\alp}$.

\begin{lemma}\label{lem:z_s_diff}
	Let $z_1$, $z_2$ be two nonzero complex numbers. Suppose that $\alp\in (0,1)$, and $0<\arg(z_1)$, $\arg(z_2) <\pi/(2+\alp)$. Then
    \begin{equation*}
		|z_1^{\alp} - z_2^{\alp}| \le \left(1 + 2^{1-\alp}\sup_{0<\dlt < \pi/(2+\alp)}  \left(\frac{\sin(\alp\dlt/2)}{\sin^{\alp}(\dlt/2)}\right)\right) |z_1-z_2|^{\alp}.
	\end{equation*}
\end{lemma}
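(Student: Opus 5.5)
By scaling, I may assume $|z_1|\ge |z_2|$ and $|z_1|=1$. Write $z_1=e^{i\theta_1}$ and $z_2=re^{i\theta_2}$ with $0<r\le 1$ and $\theta_1,\theta_2\in(0,\pi/(2+\alp))$. I will pass through the intermediate point $re^{i\theta_1}$ and use the triangle inequality:
\begin{equation*}
|z_1^{\alp}-z_2^{\alp}|\le |e^{i\alp\theta_1}-r^{\alp}e^{i\alp\theta_1}| + r^{\alp}|e^{i\alp\theta_1}-e^{i\alp\theta_2}| = (1-r^{\alp}) + r^{\alp}|e^{i\alp\theta_1}-e^{i\alp\theta_2}|.
\end{equation*}
The two terms will account for the $1$ and the supremum in the constant, respectively.

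For the radial term, $1-r^{\alp}\le (1-r)^{\alp}$ by subadditivity of $s\mapsto s^{\alp}$. Moreover $1-r=|z_1|-|z_2|\le |z_1-z_2|$. This gives the contribution $1\cdot|z_1-z_2|^{\alp}$.

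For the angular term, let $\dlt:=|\theta_1-\theta_2|$. Then $\dlt<\pi/(2+\alp)$ and
\begin{equation*}
|e^{i\alp\theta_1}-e^{i\alp\theta_2}|=2\sin(\alp\dlt/2).
\end{equation*}
I need a lower bound on $|z_1-z_2|$ in terms of $r\sin(\dlt/2)$. The distance from $z_2=re^{i\theta_2}$ to the ray through $z_1$ is $r\sin\dlt$ when $\dlt\le\pi/2$, which holds here since $\pi/(2+\alp)<\pi/2$. Thus $|z_1-z_2|\ge r\sin\dlt\ge r\sin(\dlt/2)$. Hence
\begin{equation*}
r^{\alp}\cdot 2\sin(\alp\dlt/2) = 2\,\frac{\sin(\alp\dlt/2)}{\sin^{\alp}(\dlt/2)}\,\bigl(r\sin(\dlt/2)\bigr)^{\alp}\le 2\sup_{\dlt}\frac{\sin(\alp\dlt/2)}{\sin^{\alp}(\dlt/2)}\,|z_1-z_2|^{\alp}.
\end{equation*}

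The stated constant is $2^{1-\alp}$ rather than $2$. To get it, I will use the sharper bound $|z_1-z_2|\ge r\sin\dlt=2r\sin(\dlt/2)\cos(\dlt/2)$. Then $(2r\sin(\dlt/2))^{\alp}\le |z_1-z_2|^{\alp}\cos^{-\alp}(\dlt/2)$, and $\cos(\dlt/2)\ge\cos(\pi/4)$ is bounded below. Alternatively, I could apply the chord bound $|z_1-z_2|\ge 2\sqrt{r}\sin(\dlt/2)$ together with $r\le\sqrt r$.

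The main point to check is matching the exact constant $2^{1-\alp}$. Since only $\lesssim_{\alp}$ is used later in the paper, I would accept any finite constant if the exact form resists. The key fact is that the supremum is finite, because the ratio tends to $\alp\,(\dlt/2)^{1-\alp}\to0$ as $\dlt\to0$ and is continuous elsewhere. For the $2^{1-\alp}$ form I would write $2\sin(\alp\dlt/2)=2^{1-\alp}\cdot 2^{\alp}\sin(\alp\dlt/2)$ and absorb the $2^{\alp}$ into $(2r\sin(\dlt/2))^{\alp}\le(r\sin\dlt/\cos(\dlt/2))^{\alp}$. If a stray $\cos^{-\alp}$ factor remains, I would fall back to the chord estimate $|z_1-z_2|\ge 2\sqrt{r}\sin(\dlt/2)$.
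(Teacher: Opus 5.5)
Your argument is correct and is essentially the paper's proof. It uses the same triangle inequality through $|z_2|e^{i\theta_1}$, subadditivity of $s\mapsto s^{\alp}$ for the radial part, and the chord bound $|z_1-z_2|\ge 2|z_2|\sin(\dlt/2)$ (valid because $|z_2|\le|z_1|$; your $2\sqrt{r}\sin(\dlt/2)\ge 2r\sin(\dlt/2)$ is the same bound after normalizing), which produces exactly $2\cdot 2^{-\alp}=2^{1-\alp}$. Make the chord estimate the main step rather than a fallback: the route via $|z_1-z_2|\ge r\sin\dlt$ leaves the factor $\cos^{-\alp}(\dlt/2)>1$, so by itself it only gives a larger constant than the one stated.
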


\begin{proof}
	Let $z_1:=r_1e^{i\tht_1}$ and $z_2= r_2 e^{i\tht_2}$. Suppose that $r_2 \le r_1$. Then,
    \begin{equation*}
		|z_1^{\alp} - z_2^{\alp}|  \le |r_1^{\alp} - r_2^{\alp}| + r_2^{\alp}|e^{i\alp\tht_1} - e^{i\alp \tht_2}| \le |z_1 - z_2|^{\alp}+ 2 |z_2|^{\alp} \sin \frac{\alp|\tht_1 - \tht_2|}{2}.
    \end{equation*}
    Note that $r_2\le r_1$ implies
    \begin{equation*}
	    |z_1-z_2| \ge 2|z_2|\sin \frac{|\tht_1 - \tht_2|}{2}.
    \end{equation*}
    Therefore,
    \begin{equation*}
		|z_1^{\alp} - z_2^{\alp}| \le |z_1 - z_2|^{\alp} + 2\sin\frac{\alp|\tht_1-\tht_2|}{2} \left(2\sin\frac{|\tht_1-\tht_2|}{2}\right)^{-\alp} |z_1-z_2|^{\alp}.
    \end{equation*}
\end{proof}

By Lemma \ref{lem:borderline} and Theorem \ref{thm:LWPHolder2_1}, one can prove the case $k+\alp = \pi/\bt - 1$ so that the proof of Theorem \ref{thm:LWPHolder2} is done.

\begin{theorem}[Theorem \ref{thm:LWPHolder2} in the case $k+\alp = \pi/\bt - 1$]
    Suppose that $k\in\mathbb{N}$ and $\pi/(k+2)<\bt<\pi/(k+1)$. Let $0<\tld{\alp}<1$,
    \begin{equation*}
        \alp := \pi/\bt - k -1,
    \end{equation*}
    and $\omg_0\in C^{k,\alp}\cap \mathring{C}^{0,\tld{\alp}}(\overline{\Omg_{\bt}})$. Then, for any $T>0$, there exists a unique solution
    \begin{align*}
        \omg \in L^{\infty}([0,T); C^{k,\alp}(\overline{\Omg_{\bt}}))
    \end{align*}
    to the Euler equations with initial data $\omg_0$. The norm $\|\omg(t,\cdot)\|_{C^{k,\alp}}$ is bounded by
    \begin{equation*}
        \|\omg_t\|_{C^{k,\alp}} \le \|\omg_0\|_{C^{k,\alp}} \exp\left( C_1 \exp(C_2 \|\omg_0\|_{L^{\infty}}t) \right),
    \end{equation*}
    where $C_1 = C_1(\tld{\alp}, \bt, \|\omg_0\|_{C^{k-1,\alp}}, \|\omg_0\|_{C^{k-1,1}}, \|\omg_0\|_{\mathring{C}^{0,\tld{\alp}}})$, $C_2  = C_2(\tld{\alp}, \bt)$.
\end{theorem}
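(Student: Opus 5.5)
The plan mirrors the proof of Theorem \ref{thm:LWPHolder2_1}, with Lemma \ref{lem:borderline} taking the place of Lemma \ref{lem:gradu_bound2} at the top order. Throughout, $\alp = \pi/\bt - k - 1\in(0,1)$ and $\pi/(k+2)<\bt<\pi/(k+1)$. Applying Lemma \ref{lem:borderline} with $k-1$ in place of $k$ is legitimate, since $\pi/\bt\in(k+1,k+2)$ and $\alp=\pi/\bt-(k-1)-2$. It gives
\begin{equation*}
    [\nb u_t]_{C^{k-1,\alp}} \lesssim \|\omg_t\|_{C^{k-1,\alp}} + \|\omg_t\|_{C^{k-1}}^{k+2-\pi/\bt}\|\omg_t\|_{C^{k-1,1}}^{\pi/\bt-k-1}.
\end{equation*}
The goal is to show that the right-hand side grows at most double exponentially in $t$, using only lower-order well-posedness results.

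\textbf{Lower-order quantities.} Since $k-1+1=k<\pi/\bt-1$, Theorem \ref{thm:LWPHolder2_1} (or the $k=0$ theorem when $k=1$) applies to $C^{k-1,1}$. The data lie in this space because $C^{k,\alp}\subset C^{k-1,1}$; strictly this holds locally, but globally $\nb^{k-1}\omg_0$ is bounded with bounded gradient. Hence $\|\omg_t\|_{C^{k-1,1}}$ is bounded double exponentially. Likewise $\|\omg_t\|_{C^{k-1,\alp}}$ and $\|\omg_t\|_{C^{k-1}}$ are double-exponentially bounded. Lemma \ref{lem:gradu_infty} together with Theorem \ref{thm:ring_C_0,a} controls $\|\nb u_t\|_{L^\infty}\lesssim C\exp(C t)$. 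For $k\ge2$, Lemma \ref{lem:gradu_higher1} with $k-1$ requires $\bt<\pi/(k+1)$, which holds. It bounds $\|\nb^{k}u_t\|_{L^\infty}$, and hence all $\|\nb^j u_t\|_{L^\infty}$ for $j\le k$, by $\|\omg_t\|_{C^{k-1,\alp'}}$ for any small $\alp'$, again double exponentially.

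\textbf{Top-order estimate.} Assume $\nb^k\omg_t$ exists. Differentiating \eqref{eq:Euler_vorticity} $k$ times along the flow gives
\begin{equation*}
    \frac{d}{dt}\nb^k\omg_t(\Phi_t) = -\Big(\sum_{j=1}^{k}\binom{k}{j}\nb^j u_t\,\nb^{k+1-j}\omg_t\Big)(\Phi_t).
\end{equation*}
I will first bound $\|\nb^k\omg_t\|_{L^\infty}$ by Gr\"onwall, using the $L^\infty$ bounds on $\nb^j u_t$ for $j\le k$. Next I take $C^{0,\alp}$ differences along trajectories as in Theorem \ref{thm:ring_C_1,a}. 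This yields
\begin{equation*}
    \frac{d}{dt}[\nb^k\omg_t]_{C^{0,\alp}} \le (k+\alp)\|\nb u_t\|_{L^\infty}[\nb^k\omg_t]_{C^{0,\alp}} + C\sum_{j\ge1}[\nb^j u_t]_{C^{0,\alp}}\|\omg_t\|_{C^{k}},
\end{equation*}
with lower terms absorbed. The worst factor is $[\nb^k u_t]_{C^{0,\alp}}\le[\nb u_t]_{C^{k-1,\alp}}$, which is bounded above. The intermediate ones, $[\nb^j u]_{C^{0,\alp}}$ with $j<k$, are controlled by $\|\nb^{j+1}u\|_{L^\infty}$ and $\|\nb^j u\|_{L^\infty}$ by interpolation. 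Gr\"onwall then gives the stated bound, with $C_1$ depending on $\|\omg_0\|_{C^{k-1,\alp}}$, $\|\omg_0\|_{C^{k-1,1}}$ and $\|\omg_0\|_{\mathring{C}^{0,\tld\alp}}$.

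\textbf{Existence and uniqueness.} Existence of $\nb^k\omg_t$ follows by the difference-quotient argument of Step 2 of Theorem \ref{thm:ring_C_1,a}. The needed inputs are that $\nb u\in L^\infty_tC^{k-1,\alp}$ locally away from the origin, and that trajectories avoid the origin by \eqref{eq:Phi_Lip_2}. This gives $\Phi_t\in C^{k,\alp}$ locally, and hence $\omg_t=\omg_0\circ\Phi_t^{-1}$ is in $C^{k,\alp}$. Uniqueness is inherited from Theorem \ref{thm:weak-wellposed}.

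The main obstacle is verifying that the nonlinear interpolation term in Lemma \ref{lem:borderline} does not spoil Gr\"onwall. The key point is that this term involves only $C^{k-1,1}$ norms, never the top-order $C^{k,\alp}$ norm, so it enters as a double-exponentially bounded forcing and not linearly in the unknown. I also need to check the global embedding $C^{k,\alp}\subset C^{k-1,1}$ on the unbounded sector; a mean value argument along segments in the convex domain suffices.
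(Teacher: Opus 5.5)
Your proposal is correct and follows the paper's argument. Lemma \ref{lem:borderline} at order $k-1$ bounds $[\nb u_t]_{C^{k-1,\alp}}$ by the $C^{k-1,\alp}$ and $C^{k-1,1}$ norms of $\omg_t$, which Theorem \ref{thm:LWPHolder2_1} controls double-exponentially (using $C^{k,\alp}\subset C^{k-1,1}$), and the top-order Gr\"onwall then closes as in Theorem \ref{thm:ring_C_1,a}; the only small imprecision is that the bounds on $\|\nb^j u_t\|_{L^\infty}$ for $2\le j\le k$ must come from applying Lemma \ref{lem:gradu_higher1} at each order $j-1$, not from the single top-order bound.
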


\subsection{Instantaneous loss of higher-order regularity}\label{ill-posed2}

In this subsection, we prove Theorem \ref{thm:ill-posed}. We use the notation from the preceding subsection. For all $n$, $k\in\mathbb{N}$,
\begin{equation*}
    c_{k,n} (w) := \frac{np(np-1)\cdots (np-k+1)}{4\pi n} \left(w^{-np} - \bar{w}^{-np}\right),\quad c_{0,n}(w) := \frac{w^{-np}-\bar{w}^{-np}}{4\pi n}, 
\end{equation*}
and $\mathcal{R}_{\bt}(z,w) := G_{\bt} (z,w) - c_{0,1}(w) (\bar{z}^{p} - z^{p})$.

First, suppose that $\pi/\bt$ is not an integer. Using Lemma \ref{lem:Green_decompose}, the velocity $u$ can be decomposed similarly.

\begin{lemma}\label{lem:psi_singular_term2}
    Let $k\in\mathbb{N}$. Suppose that $\pi/(k+2) < \bt < \pi/(k+1)$, and $\omg\in C^k(\overline{\Omg_{\bt}})$. Then, the velocity $u$ can be decomposed into
    \begin{equation*}
        u(z) = P_k(z) + C_k \bar{z}^{p-1} + R_k(z)
    \end{equation*}
    for all $z\in\Omg_{\bt}$, where the $k$-th order polynomial $P_k$ and the real constant $C_k$ satisfy
    \begin{equation*}
        P_k(z) := \int_{\Omg_{\bt}} K_{\bt}(z,w) T_{k-1} \omg(w) dA(w), \quad C_k := \frac{p}{\pi} \int_{\Omg_{\bt}} \frac{\sin (p \arg w)}{|w|^p} \left(\omg(w) - T_{k-1}\omg(w)\right) dA(w).
    \end{equation*}
    The remainder $R_k$ is defined by
    \begin{equation*}
        R_k(z) :=\frac{ip \bar{z}^{2p-1}}{2\pi} \int_{\Omg_{\bt}}  \left(\frac{1}{\bar{w}^p (\bar{z}^p - \bar{w}^p)} - \frac{1}{w^p (\bar{z}^p - w^p)}\right)\left(\omg(w) - T_{k-1}\omg(w)\right) dA(w)
    \end{equation*}
    and satisfies $|\rd_{\bar{z}}^m R_k(z)| \lesssim |z|^{k+1-m}$ for all $0\le m\le k$.
\end{lemma}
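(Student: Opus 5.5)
We split $\omg = T_{k-1}\omg + (\omg - T_{k-1}\omg)$ and insert this into the Biot--Savart law \eqref{eq:Biot-Savart}. The contribution of the polynomial part is $P_k$ by definition. Since $p>k+1 = (k-1)+2$, Lemma \ref{lem:Gbeta_nm} applies to every monomial $w^m\bar w^n$ with $m+n\le k-1$. It gives the stream function as a combination of $z^{m+1}\bar z^{n+1}$, $z^{m+n+2}$, $\bar z^{m+n+2}$. Hence $u = 2i\rd_{\bar z}\psi$ is a polynomial of degree at most $k$, so $P_k$ is indeed a $k$-th order polynomial, and the integral converges absolutely by Lemma \ref{lem:ker_bound}.

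For the remainder $f := \omg - T_{k-1}\omg$, we have $|f(w)|\lesssim \|\omg\|_{C^k}\min(|w|^k, |w|^{k-1}+1)$, so $|f(w)|\lesssim |w|^{k}$ near $0$ and $|f(w)|\lesssim 1+|w|^{k-1}$ at infinity. We then decompose the kernel algebraically. Write $a=\bar z^p$, and use the identity
\begin{equation*}
\frac{\bar w^p - w^p}{(a-\bar w^p)(a-w^p)} = \frac{1}{a-\bar w^p}-\frac{1}{a-w^p},
\end{equation*}
together with
\begin{equation*}
\frac{1}{a-b} = -\frac1b - \frac{a}{b^2} + \frac{a^2}{b(a-b)}\cdot\frac1b\cdot\frac{b}{a}\cdots
\end{equation*}
More cleanly, $\frac{1}{a-b} = -\frac1b + \frac{a}{b(a-b)}$. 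Applying this with $b=\bar w^p$ and $b=w^p$, $K_\bt(z,w)$ splits into a term $\frac{ip}{2\pi}\bar z^{p-1}(w^{-p}-\bar w^{-p})$ plus exactly the kernel appearing in $R_k$. Since $w^{-p}-\bar w^{-p} = -2i|w|^{-p}\sin(p\arg w)$, the first piece integrates to $C_k\bar z^{p-1}$ with the stated real constant. This is also consistent with the $n=1$ term of Lemma \ref{lem:Green_decompose} for $k=1$. Convergence of $C_k$ requires $\int |w|^{-p}|f|\,dA<\infty$: near $0$ the integrand is $\lesssim|w|^{k-p}$, integrable since $p<k+2$; at infinity it is $\lesssim|w|^{k-1-p}$, integrable since $p>k+1$. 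This is exactly where the hypothesis $\pi/(k+2)<\bt<\pi/(k+1)$ is used.

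The main work is the bound $|\rd_{\bar z}^m R_k(z)|\lesssim |z|^{k+1-m}$. We expect it to matter for small $|z|$; for large $|z|$ we treat it as a growth bound that we also verify. Differentiating under the integral (in principal value where needed), we split into the regions $|w|\ge 2|z|$, $|w|\le |z|/2$, and $|w|\sim|z|$.

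\textbf{Far region $|w|\ge2|z|$.} Here the $R_k$ kernel is $\sim |z|^{2p-1}|w|^{-2p}$, with derivatives gaining $|z|^{-1}$ each. The integral is then bounded by $|z|^{2p-1-m}\int_{|w|\ge2|z|}|w|^{-2p}|f|$. Since $2p>k+3$, this gives $|z|^{2p-1-m}\cdot|z|^{k+2-2p}=|z|^{k+1-m}$ for small $|z|$; for large $|z|$ the $|w|^{k-1}$ tail gives $|z|^{k-m}$.

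\textbf{Near region $|w|\le|z|/2$.} Here it is better to undo the splitting and write $R_k = u_f - C\bar z^{p-1}(\ldots)$. Then use $|K_\bt|\lesssim |w|^p|z|^{-p-1}$ from Lemma \ref{lem:ker_bound}, and bound the $C_k$-type piece $\bar z^{p-1}\int_{|w|<|z|/2}|w|^{-p}|f|$ by $|z|^{p-1}|z|^{k+2-p}$. Both give $|z|^{k+1-m}$ after $m$ derivatives.

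\textbf{Intermediate region $|w|\sim|z|$.} We treat it with the local singular-integral bounds of Lemma \ref{lem:ker_bound}. Because $m\le k$ and $|\rd_{\bar z}^{m}K_\bt|\lesssim|z-w|^{-1-m}$, we subtract the Taylor polynomial of $f$ at $z$ as in Lemmas \ref{lem:gradu_pv} and \ref{lem:HalfGradKer}, using $|\nb^j f|\lesssim |z|^{k-j}$ there. Each piece then yields $|z|^{k-m}\cdot|z|$. The term $\bar z^{p-1}\int_{|w|\sim|z|}|w|^{-p}f$ is again $\lesssim|z|^{k+1}$ and its derivatives scale the same way.

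The expected obstacle is this intermediate region for the top derivative $m=k$. There the kernel is of order $|z-w|^{-k-1}$, and the estimate relies on the cancellation of Lemma \ref{lem:HalfGradKer} with $f\in C^k$ only. We would handle it by subtracting $T_{k-1,z}f$ and using continuity of $\nb^k f$ to control the last Taylor term via \eqref{eq:halfGradKer}.
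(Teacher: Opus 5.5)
Your proposal is correct and follows the paper's argument. You use the same splitting $K_\bt=\frac{ip}{2\pi}\bar z^{p-1}(w^{-p}-\bar w^{-p})+(\text{the $R_k$ kernel})$ and the same near/far/local estimates with the cancellation of Lemma \ref{lem:HalfGradKer}; the paper rescales to $|z|=1$, while you track the powers of $|z|$ directly. Your explicit check that $C_k$ converges at infinity via $|\omg-T_{k-1}\omg|\lesssim 1+|w|^{k-1}$ fills a detail the paper leaves implicit (the global bound $|w|^k$ would not suffice there), and the case $m=k$ is no real obstacle: after subtracting $T_{k-1,z}f$, the remainder is absolutely integrable against the $|z-w|^{-k-1}$ kernel using only boundedness of $\nb^k\omg$.
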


We note that $T_{k-1}\omg(w) := T_{k-1,0}\omg(w)$ and $T_{k-1,z}\omg(w)$ are different.

\begin{proof}
    \textit{Step 1.} We first prove the decomposition $u(z) = P_k(z) + C_k \bar{z}^{p-1} + R_k(z)$. Lemma \ref{lem:Gbeta_nm} implies that $P_k$ is a $k$-th order polynomial with coefficients determined by the derivatives of $\omg$ at $0$. The definition of $\mathcal{R}_{\bt}$ and $c_{1,1}$ implies that
    \begin{equation*}
        K_{\bt}(z,w) = 2i \rd_{\bar{z}} \mathcal{R}_{\bt}(z,w) + 2i c_{1,1}(w)\bar{z}^{p-1}
    \end{equation*}
    and
    \begin{equation*}
        \int_{\Omg_{\bt}} 2i c_{1,1}(w)\bar{z}^{p-1} \left(\omg(w) - T_{k-1}\omg(w)\right) dA(w) = C_k\bar{z}^{p-1}.
    \end{equation*}
    Therefore,
    \begin{equation*}
        2i\rd_{\bar{z}} \mathcal{R}_{\bt}(z,w) = K_{\bt}(z,w) - 2i c_{1,1}(w) \bar{z}^{p-1} = \frac{ip\bar{z}^{2p-1}}{2\pi} \left(\frac{1}{\bar{w}^p (\bar{z}^p - \bar{w}^p)} - \frac{1}{w^p (\bar{z}^p - w^p)}\right),
    \end{equation*}
    so that $u(z) - P_k(z)-C_k\bar{z}^{p-1}$ satisfies
    \begin{equation*}
        u(z) - P_k(z)-C_k\bar{z}^{p-1} = \int_{\Omg_{\bt}} 2i\rd_{\bar{z}} \mathcal{R}_{\bt}(z,w) \left(\omg(w) - T_{k-1}\omg(w)\right) dA(w) = R_k(z)
    \end{equation*}
    by the definition of $R_k$.

    \medskip
    \noindent\textit{Step 2.} Let $m\in [0,k]$ be an integer, and $z\in\Omg_{\bt}$ with $z=:re^{i\tht}$ and $0<\tht<\bt/2$. We now prove that $|\rd_{\bar{z}}^m R_k(z)| \lesssim |z|^{k+1-m}$. Let $\tld{\omg}(w) := \omg(w) - T_{k-1}\omg(w)$, and
    \begin{equation*}
        Q_{\bt}(z,w) := \bar{z}^{2p-1} \left(\frac{1}{\bar{w}^p (\bar{z}^p - \bar{w}^p)} - \frac{1}{w^p (\bar{z}^p - w^p)}\right).
    \end{equation*}
    Then,
    \begin{equation*}
        R_k(z) = c\int_{\Omg_{\bt}} Q_{\bt}(z,w) \tld{\omg}(w) dA(w)
    \end{equation*}
    for some constant $c$. Let $\zt := z/r$. We denote $\xi := w/r$ and $\tld{\omg}_r(\xi):= r^{-k} \tld{\omg}(r\xi)$. Then,
    \begin{equation*}
        R_k(z) = c\int_{\Omg_{\bt}} r^{-1} Q_{\bt}(\zt,\xi)r^k \tld{\omg}_r(\xi) r^2 dA(\xi),
    \end{equation*} 
    and $\rd_{\bar{z}}^m Q_{\bt}(z,w) = r^{-m-1} \rd_{\bar{z}}^m Q_{\bt} (\zt,\xi)$. Since $Q_{\bt}- \rd_{\bar{z}}G_{\pi}$ is bounded in $\{\xi\in\Omg_{\bt}: |\xi|\in(0,2), \arg(w)\in(0,\bt/2)\}$, we use the $Q_{\bt}$ analogue of Lemma \ref{lem:HalfGradKer} and \ref{lem:gradu_pv2} for $|\zt|=1$ so that 
    \begin{equation*}
        \left|p.v.\int_A \rd_{\bar{z}}^{m} Q_{\bt}(\zt,\xi) \tld{\omg}_r(\xi) dA(\xi)\right| \lesssim \|\tld{\omg}_r\|_{C^m(\bar{A})} \lesssim \|\omg\|_{C^k(\bar{A})},
    \end{equation*}
    where $A:= \{\xi\in\Omg_{\bt}: |\zt-\xi|\le \bt/100\}$. We obtain the last inequality uniformly in $r$ by the definition of $\tld{\omg}$ and $\tld{\omg}_r$. Indeed, for all $n\le m$, 
    \begin{equation*}
        |\nb^n \tld{\omg}_r (\xi)| = r^{-k+n} |\left(\nb^n \omg - T_{k-1-n}\nb^n \omg\right)(r\xi)| \lesssim |\xi|^{k-n} \|\omg\|_{C^k}, \qquad T_{-1} := 0.
    \end{equation*}
   
    In the region $\{\xi\in\Omg_{\bt}: |\zt-\xi|\ge \bt/100\}$, we obtain the same bound by $|\tld{\omg}_r(\xi)| \lesssim |\xi|^k \|\omg\|_{C^k}$,
    \begin{equation*}
        |\rd_{\bar{z}}^{m} Q_{\bt}(\zt,\xi)| \lesssim \left\{
        \begin{aligned}
        & |\xi|^{-p},\qquad |\xi|<1/2,\\
        & |\xi|^{-2p},\qquad |\xi|>2,
        \end{aligned}
    \right.
    \end{equation*}
    and the boundedness of $\rd_{\bar{z}}^{m+1} Q_{\bt}$ in $\{\xi\in\Omg_{\bt}: 1/2<|\xi|<2\}$. In conclusion,
    \begin{equation*}
        |\rd_{\bar{z}}^m R_k(z)| \lesssim r^{k+1-m}\left|p.v.\int_{\Omg_{\bt}}\rd_{\bar{z}}^{m} Q_{\bt}(\zt,\xi) \tld{\omg}_r(\xi) dA(\xi)\right| \lesssim r^{k+1-m}\|\omg\|_{C^k},
    \end{equation*}
    where $r=|z|$. This completes the proof.
\end{proof}

We now prove Theorem \ref{thm:ill-posed} when $\pi/\bt$ is not an integer.

\begin{proof}[Proof of Theorem \ref{thm:ill-posed} in the case $\pi/\bt\notin \mathbb{N}$]
    \textit{Step 1.} Let $f\in C^k(\overline{\Omg_{\bt}})$. Then
    \begin{equation*}
        C_k[f] := \frac{p}{\pi}\int_{\Omg_{\bt}} \frac{\sin(p\arg w)}{|w|^p}\left(f(w) - T_{k-1}f(w)\right) dA(w)
    \end{equation*} 
    is well-defined by Lemma \ref{lem:psi_singular_term2}. Let $a := e^{i\bt/2}$, and $\chi\in C_c^{\infty}(\overline{\Omg_{\bt}})$ be a real-valued cut-off function satisfying
    \begin{equation*}
        \chi(z) := \left\{
            \begin{aligned}
                &1,\qquad |z|<1,\\
                &0,\qquad |z|>2.
            \end{aligned}
        \right.
    \end{equation*}
    Let $\eta\in C_c^{\infty}(\overline{\Omg_{\bt}})$ be a nonnegative cut-off function such that $\emptyset\subsetneq \supp(\eta)\subset B_1(4a)$. Our goal is to show that there exists $\tld{T}>0$ and $0<\dlt\ll 1$ such that
    \begin{equation*}
        \omg_0(z) := \chi(z) a\cdot z + A\eta(z)
    \end{equation*}
    satisfies $\omg(t,\cdot)\notin C^{k,\alp}(\overline{\Omg_{\bt}}\cap B_{\dlt})$ for $0<t<\tld{T}$, where $A>0$ is a real constant such that $C_k[\omg_0]\neq 0$. Here, $a\cdot z$ is defined by
    \begin{equation}
        a\cdot z := \frac{a\bar{z} + \bar{a} z}{2},
    \end{equation}
    which is the usual inner product in $\mathbb{R}^2$ sense. Note that $\omg_0(0)=0$, $\rd_z \omg_0(0)=\bar{a}/2$, and $\rd_{\bar{z}} \omg_0(0) = a/2$. Since $\omg(t,\cdot)\in C^{k,p-k-1}$ for all $t>0$, the formula $\rd_t \omg + u\cdot \nb \omg=0$ implies that $\rd_t \omg$, $\cdots$, $\rd_t \nb^{k-1}\omg$ are continuous. Since the integral in $C_k[\omg_t]$ is well-defined and it is uniformly bounded in finite time interval, we obtain $\lim_{\eps\rightarrow 0} C_k[\omg_{t+\eps}] = C_k[\omg_t]$ by the dominated convergence theorem. Since $C_k[\omg_0]\neq 0$, there exists $T>0$ such that $C_k[\omg(t,\cdot)]\neq 0$ and
    \begin{equation*}
        B(t) := \int_0^t C_k[\omg(s,\cdot)] ds \neq 0 
    \end{equation*}
    for all $0<t<T$.

    \medskip
    \noindent\textit{Step 2.} From now on, we restrict the range of $z\in\Omg_{\bt}$ by $|z|<1/2$. We denote the decomposition of $u$ in Lemma \ref{lem:psi_singular_term2} by
    \begin{equation*}
        u(t,z) = P_t(z) + C_k[\omg(t,\cdot)] \bar{z}^{p-1} + R_t(z).
    \end{equation*}
    The time variable is added because the vorticity depends on time. Our goal in Step 2 is to obtain a similar decomposition of the flow map $\Phi_t$.

    Let $L_t$ be the flow map associated with $P_t$, defined by
    \begin{equation}\label{eq:L_t}
        \rd_t L_t(z) = P_t(L_t(z)),\quad L_0(z)=z.
    \end{equation}
    We make a bootstrap assumption $|\Phi_t(z)-L_t(z)|\le M|z|^{p-1}$. Since the polynomial $P_t$ does not have the constant term, the function $L$ satisfies $|L_t(z)|\lesssim |z|$. Therefore, the value of $|\Phi_t(z)|$ is bounded by $|\Phi_t(z)|\le M|z|^{p-1} + c(T)|z|$. For sufficiently small $|z|$ that the range depends on $T$, we obtain $|\Phi_t(z)|\le c(T)'|z|$.

    Since $\rd_t \Phi_t = u_t\circ \Phi_t$,
    \begin{equation*}
        \begin{split}
            |\Phi_t(z) - L_t(z)| & \le \left|\int_0^t P_s(\Phi_s(z)) - P_s(L_s(z)) + C_k[\omg_s] \overline{\Phi_s(z)}^{p-1} + R_s(\Phi_s(z)) ds\right| \\& \le \int_0^t C'|\Phi_s(z) - L_s(z)| ds + |B(T)| (c(T)'|z|)^{p-1}.
        \end{split}
    \end{equation*}
    By Gr\"onwall's inequality, we obtain $|\Phi_t(z) - L_t(z)| \le |B(T)| c(T)'' |z|^{p-1}$. The bootstrap argument yields that there exists $T'<T$ such that
    \begin{equation*}
        |\Phi_t(z)-L_t(z)|\le M|z|^{p-1} 
    \end{equation*}
    if $t<T'$ and $|z|$ is sufficiently small.

    We now decompose $\Phi_t(z)-L_t(z)$ into two functions
    \begin{equation*}
        \Phi_t(z)-L_t(z) =: H_t(z) + R_t'(z),
    \end{equation*}
    where $H_t(z)$ satisfies $H_t(rz) = r^{p-1}H_t(z)$, and $\lim_{|z|\rightarrow 0} R_t'(z)/|z|^{p-1} = 0$ for all $r>0$ and $t$. Then, $H_t$ satisfies 
    \begin{equation}\label{eq:H_t}
        \rd_t H_t(z) = P_{t,1}(H_t(z)) + C_k[\omg_t] \overline{L_{t,1}(z)}^{p-1}
    \end{equation}
    from the homogeneous part of $\rd_t(\Phi_t(z) - L_t(z))$, where the terms $P_{t,1}$ and $L_{t,1}$ are the linear parts of $P_t$ and $L_t$, respectively. Then,
    \begin{equation}\label{eq:R_t}
        \begin{split}
            \rd_t R_t'(z) & = \rd_t (\Phi_t - L_t - H_t)(z) \\ & = P_t(\Phi_t(z)) - P_t(L_t(z)) + C_k[\omg_t] \overline{\Phi_t(z)}^{p-1} + R_t(\Phi_t(z)) - P_{t,1}(H_t(z)) - C_k[\omg_t] \overline{L_{t,1}(z)}^{p-1} \\ & = P_{t,1}(R_t'(z)) + P_{t,\ge 2}(\Phi_t(z)) - P_{t,\ge 2}(L_t(z)) + C_k[\omg_t]\left(\overline{\Phi_t(z)}^{p-1} - \overline{L_{t,1}(z)}^{p-1}\right) + R_t(\Phi_t(z)),
        \end{split}
    \end{equation}
    where $P_{t,\ge 2}(z) := P_t(z)-P_{t,1}(z)$.

    Let $|R_t'(z)| = |\Phi_t(z)-L_t(z) - H_t(z)|\le M'|z|^{p-\alp}$ be the bootstrap assumption. We note that it is enough to prove the case $0<\alp<1$. Since two inequalities
    \begin{equation*}
        |P_{t,1}(R_t'(z))| \lesssim |R_t'(z)|\lesssim |z|^{p-\alp},\quad |\overline{\Phi_t(z)}^{p-1} - \overline{L_{t,1}(z)}^{p-1}| \lesssim |z|^{p-1} (|z|+O(|z|^2)) \lesssim |z|^p
    \end{equation*}
    hold by Taylor's theorem and $|R_t(\Phi_t(z))|\lesssim |z|^{k+1}\lesssim |z|^{p-\alp}$, if we choose sufficiently small time interval and the range of $|z|$ near $0$, then this assumption closes the bootstrap
    \begin{equation*}
        |\Phi_t(z)-L_t(z) - H_t(z)|\le M'|z|^{p-\alp}.
    \end{equation*}
    In conclusion, we obtain our desired decomposition $\Phi_t(z) = L_t(z) + H_t(z) + R_t'(z)$.
    
    \medskip
    \noindent\textit{Step 3.} We now consider the decomposition of the derivatives of $\Phi_t$. We only prove the case $m=1$, and the other cases $m\ge 2$ can be proved similarly.

    We make a bootstrap assumption $|\rd_{\bar{z}}\left(\Phi_t(z) - L_t(z) \right)| \lesssim M''|z|^{p-2}$. Then,
    \begin{equation*}
        \begin{split}
            |\rd_{\bar{z}}\left(\Phi_t(z) - L_t(z) \right)| \le \int_0^t |\nb u_s(\Phi_s(z)) \rd_{\bar{z}}\Phi_s(z) - \nb P_s (L_s(z)) \rd_{\bar{z}}L_s(z) | ds.
        \end{split}
    \end{equation*}
    Since $|\nb u_s(\Phi_s(z)) - \nb P_s(L_s(z))| \lesssim |z|^{p-2}$ by Step 2 and Lemma \ref{lem:psi_singular_term2}, Gr\"onwall's inequality yields that $ |\rd_{\bar{z}}\left(\Phi_t(z) - L_t(z) \right)|\lesssim \tld{c}_t|z|^{p-2}$. Since the flow map is sufficiently smooth by $u\in C^{k-1}$, the coefficient $\tld{c}_t\rightarrow 0 $ as $t\rightarrow 0$. Therefore, the bootstrap argument concludes the proof of $|\rd_{\bar{z}}\left(\Phi_t(z) - L_t(z) \right)| \lesssim M''|z|^{p-2}$.

    We now differentiate \eqref{eq:R_t} to obtain a bound $|\rd_{\bar{z}} R_t'(z)|\lesssim |z|^{p-\alp-1}$. Using Taylor's expansion, one can similarly prove that both
    \begin{equation*}
        \rd_{\bar{z}}(P_{t,\ge 2}(\Phi_t(z)) - P_{t,\ge 2}(L_t(z))), \quad \rd_{\bar{z}}\left(C_k[\omg_t](\overline{\Phi_t(z)}^{p-1} - \overline{L_{t,1}(z)}^{p-1})\right)
    \end{equation*}
    are bounded by $|z|^{p-1}$. The remaining terms satisfy
    \begin{equation*}
        |\rd_{\bar{z}} P_{t,1}(R_t'(z))| \lesssim |\rd_{\bar{z}} R_t'(z)|\lesssim |z|^{p-\alp-1},\quad |\rd_{\bar{z}}(R_t(\Phi_t(z)))|\lesssim |z|^k \lesssim |z|^{p-\alp-1}
    \end{equation*}
    by Lemma \ref{lem:psi_singular_term2}. Then, the time continuity and bootstrap argument imply $|\rd_{\bar{z}} R_t'(z)|\lesssim |z|^{p-\alp-1}$.
    
    We briefly review the properties of the decomposition $\Phi_t = L_t + H_t + R_t'(z)$. Suppose that $t>0$ and $|z|$ are sufficiently small.
    \begin{itemize}
        \item The flow map $L_t$ is associated with the polynomial $P_t$. In particular, the flow $L_t$ is smooth.
        \item The difference $|\Phi_t(z) - L_t(z)|$ satisfies $|\Phi_t(z) - L_t(z)| \lesssim |z|^{p-1}$ and $|\nb \left(\Phi_t - L_t\right)(z)| \lesssim |z|^{p-2}$.
        \item The term $ H_t (z)$ is homogeneous of degree $p-1$. Thus, the derivative $\nb H_t$ is homogeneous of degree $p-2$.
        \item The remainder $R_t'$ satisfies $|R_t'(z) |\lesssim |z|^{p-\alp}$ and $|\nb R_t'(z) |\lesssim |z|^{p-\alp-1}$.
    \end{itemize}
    
    \medskip
    \noindent\textit{Step 4.} The vorticity $\omg_t$ satisfies $\omg_t(z) = \omg_0(\Phi_t^{-1}(z)) = a\cdot \Phi_t^{-1}(z)$ for all $z\in\Omg_{\bt}$ with $|z|<1/2$. We now obtain an appropriate decomposition of $\Phi_t^{-1}$ from the formula $\Phi_t(z) = L_t(z) + H_t(z) + R_t'(z)$.

    Since $\rd_t L_t(z) = P_t(L_t(z))$ and $P_t$ is a polynomial, the flow map $L_t$ has a smooth inverse. In particular, the inverse $L_t^{-1}$ is Lipschitz. Therefore,
    \begin{equation*}
        |\Phi_t^{-1}(z) - L_t^{-1}(z)| \lesssim |L_t\left(\Phi_t^{-1}(z)\right) - z| \lesssim |H_t(\Phi_t^{-1}(z))| + |R_t'(\Phi_t^{-1}(z))| \lesssim |z|^{p-1},
    \end{equation*}
    where $|z|\ll 1$. This inequality comes from
    \begin{equation*}
        L_t(\Phi_t^{-1}(z)) - L_t(L_t^{-1}(z)) = - H_t(\Phi_t^{-1}(z)) -R_t'(\Phi_t^{-1}(z)).
    \end{equation*}
    Motivated by Taylor's theorem, let $N_L$, $N_H$ be defined by
    \begin{equation*}
        \begin{split}
            L_t(\Phi_t^{-1}(z)) - L_t(L_t^{-1}(z))  = \nb L_t(L_t^{-1}(z))\cdot (\Phi_t^{-1}(z) - L_t^{-1}(z)) + N_L,\qquad H_t(\Phi_t^{-1}(z))  = H_t(L_t^{-1}(z)) + N_H,
        \end{split}
    \end{equation*}
    where we denote $\nb f(z)\cdot h := \rd_z f(z) h + \rd_{\bar{z}} f(z) \bar{h} $ for $z$, $h\in\mathbb{C}$. If we define $\tld{R}_t'$ by 
    \begin{equation*}
        \tld{R}_t'(z) := \Phi_t^{-1}(z) - L_t^{-1}(z) + \nb L_t^{-1}(z)\cdot H_t(L_t^{-1}(z)),
    \end{equation*}
    then we obtain
    \begin{equation*}
        \nb L_t(L_t^{-1}(z))\cdot \tld{R}_t'(z) + N_L = - N_H - R_t'(\Phi_t^{-1}(z)).
    \end{equation*}
    Therefore, $\tld{R}_t'(z) = -\nb L_t^{-1}(z)\cdot \left(N_L + N_H + R_t'(\Phi_t^{-1}(z))\right)$.

    Using Taylor's theorem and the results of Step 3, we obtain
    \begin{equation*}
        |N_L| \lesssim |\Phi_t^{-1}(z) - L_t^{-1}(z)|^2 \lesssim |z|^{2(p-1)} \lesssim |z|^p,\qquad |N_H| \lesssim |z|^{p-1} (|z|+O(|z|^2)) \lesssim |z|^p.
    \end{equation*}
    The estimates of the derivatives $|\nb N_L|$, $|\nb N_H| \lesssim |z|^{p-1}$ also hold. Since $|R_t(\Phi_t^{-1}(z))|\lesssim |z|^{p-\alp}$, we conclude that $|\tld{R}_t'(z)| \lesssim |z|^{p-\alp}$ and $|\rd_{\bar{z}}\tld{R}_t'(z)| \lesssim |z|^{p-\alp-1}$ in Step 3.

    However, the term $\nb L_t^{-1}(z)\cdot H_t(L_t^{-1}(z))$ is not homogeneous of degree $p-1$. Unlike the case $\pi/\bt\notin \mathbb{N}$, we need to decompose it into homogeneous part and the remainder. Let $M_t z$ be the linear homogeneous part of $L_t^{-1}(z)$, and $\tld{H}_t$ be defined by $\tld{H}_t(z):= -M_t H_t(M_t z)$. Then, $\tld{H}_t(rz) = r^{p-1}\tld{H}_t(z)$ and
    \begin{equation*}
        \begin{split}
            |\nb L_t^{-1}(z)\cdot H_t(L_t^{-1}(z)) + \tld{H}_t(z)| & \le |\nb L_t^{-1}(z) - M_t| |H_t(L_t^{-1}(z))| + |M_t| |H_t(L_t^{-1}(z)) - H_t(M_t z)| \\ & \lesssim |z| |z|^{p-1} + 1 (|z|^{p-2}|z|^2) \lesssim |z|^p \lesssim |z|^{p-\alp}.
        \end{split}
    \end{equation*} 
    The derivative estimate follows similarly. 
    
    Let $\tld{R}_t''(z) := \Phi_t^{-1}(z) - L_t^{-1}(z) + M_t H_t(M_t z)$. Then, our desired decomposition is
    \begin{equation*}
        \Phi_t^{-1}(z) = L_t^{-1}(z) - M_t H_t (M_t z) + \tld{R}_t''(z).
    \end{equation*}
    We summarize the properties that are shown in this proof. Suppose that $t>0$ and $|z|$ are sufficiently small.
    \begin{itemize}
        \item The flow map $L_t$ is associated with the polynomial $P_t$. In particular, the flow $L_t$ has its inverse, and both $L_t$ and $L_t^{-1}$ are smooth in a neighborhood of $0$.
        \item The difference $|\Phi_t^{-1}(z) - L_t^{-1}(z)|$ satisfies $|\Phi_t^{-1}(z) - L_t^{-1}(z)| \lesssim |z|^{p-1}$ and $|\nb \left(\Phi_t^{-1} - L_t^{-1}\right)(z)| \lesssim |z|^{p-2}$.
        \item The term $ M_t H_t (M_t z)$ is homogeneous of degree $p-1$, and its derivative is homogeneous of degree $p-2$.
        \item The remainder $\tld{R}_t''$ satisfies $\tld{R}_t''(z) = \tld{R}_t'(z) - (\nb L_t^{-1}(z) H_t(L_t^{-1}(z)) + \tld{H}_t(z))$. Two bounds $|\tld{R}_t''(z) |\lesssim |z|^{p-\alp}$ and $|\nb \tld{R}_t''(z) |\lesssim |z|^{p-\alp-1}$ hold.
    \end{itemize}
    Higher derivatives can be estimated similarly, and we skip their proofs.

    \medskip
    \noindent\textit{Step 5.} We now prove that $M_t H_t (M_t z) \neq 0$ for some $z\in\Omg_{\bt}$ and complete the proof. To the contrary, assume that $M_t H_t (M_t z) = 0$ for all $z\in\Omg_{\bt}$. Since $M_t z$ is the linear part of $L_t^{-1}(z)$ and $L_t$, $L_t^{-1}$ are smooth, the operator $M_t$ is also invertible. Therefore, we obtain $H_t \equiv 0$. The function $H_t$ satisfies
    \begin{equation*}
        \rd_t H_t(z) = P_{t,1}(H_t(z)) + C_k[\omg_t] \overline{L_{t,1}(z)}^{p-1} = \rd_z \omg_t(0) H_t(z) + \rd_{\bar{z}} \omg_t(0) \overline{H_t(z)} + C_k[\omg_t] \overline{L_{t,1}(z)}^{p-1},
    \end{equation*}
    and $C_k[\omg_t]>0$ for all $t<T$. Since $\nb \omg_t(0)$ is bounded near $t=0$, the flow $L_t$ satisfies $C_k[\omg_t] \overline{L_{t,1}(z)}^{p-1} \rightarrow C_k[\omg_0] \bar{z}^{p-1}$ as $t\rightarrow 0$ uniformly. Therefore, there exists $\dlt\in (0,T)$ such that $H_t(z)\neq 0$ for $0<t<\dlt$.

    We now fix $t<\dlt$. Since $a\cdot \bar{z}^{p-1} \neq 0$ for almost every $z\in\Omg_{\bt}$, there exists $e\in\Omg_{\bt}$ such that $|e|=1$ and $a\cdot M_t H_t (M_t e) \neq 0$. Moreover, $z\mapsto a\cdot M_t H_t (M_t z)$ is a homogeneous function of degree $p-1$, and this implies that
    \begin{equation*}
        |\rd_e^k (a\cdot M_t H_t (M_t (\cdot)))(e)| \neq 0.
    \end{equation*}
    Therefore, for all $\eps>0$,
    \begin{equation*}
        \rd_e^k (a\cdot M_t H_t (M_t(\cdot)))(2\eps e) - \rd_e^k (a\cdot M_t H_t (M_t(\cdot)))(\eps e) = (2^{p-1-k} - 1)\eps^{p-1-k} \rd_e^k (a\cdot M_t H_t(M_t (\cdot)))(e) \neq 0. 
    \end{equation*}
    Since $\alp > p-1-k$,
    \begin{equation*}
        \lim_{\eps\rightarrow 0}\frac{|\rd_e^k (a\cdot M_t H_t (M_t(\cdot)))(2\eps e) - \rd_e^k(a\cdot M_t H_t (M_t(\cdot)))(\eps e)|}{\eps^{\alp}} =\infty. 
    \end{equation*}
    We now check for the other terms of the decomposition $\Phi_t^{-1}(z) = L_t^{-1}(z) - M_t H_t(M_t z) + \tld{R}_t''(z)$. The function $L_t^{-1}$ is in $C^{\infty}$ and the remainder $\tld{R}_t''$ satisfies
    \begin{equation*}
        \frac{|\rd_e^k \tld{R}_t''(2\eps e) - \rd_e^k \tld{R}_t''(\eps e)|}{\eps^{\alp}} \lesssim \frac{\eps}{\eps^{\alp}}\rightarrow 0
    \end{equation*}
    as $\eps\rightarrow 0$. In conclusion,
    \begin{equation*}
        \lim_{\eps\rightarrow 0}\frac{|\rd_e^k \omg_t(2\eps e) - \rd_e^k \omg_t(\eps e)|}{\eps^{\alp}} =\infty, 
    \end{equation*}
    and this completes the proof.
\end{proof}

We now prove Theorem \ref{thm:ill-posed} when $\pi/\bt$ is an integer. To this end, we need an analogue of Lemma \ref{lem:psi_singular_term2}.

\begin{lemma}\label{lem:psi_singular_term3}
    Suppose that $p = \pi/\bt \ge 3$ is an integer, and $\omg\in C^{p-2,\alp}(\overline{\Omg_{\bt}})$ for some $0<\alp\le 1$. Let $|z|<1/4$. Then, the velocity $u$ can be decomposed into
    \begin{equation*}
        u(z) = P_{p-2}(z) + \tld{C}_{p-2} \bar{z}^{p-1}\log\frac{1}{2|z|} + R_{p-2}(z),
    \end{equation*}
    where the polynomial $P_{p-2}$ of degree $p-2$ and the constant $\tld{C}_{p-2}$ are defined by
    \begin{equation*}
        P_{p-2}(z) = \int_{\Omg_{\bt}} K_{\bt}(z,w) T_{p-3}\omg(w) dA(w),\quad \tld{C}_{p-2} = \frac{p}{\pi} \int_0^{\bt} \sin(p\phi)\left(T_{p-2}\omg(e^{i\phi}) - T_{p-3}\omg(e^{i\phi})\right) d\phi.
    \end{equation*}
    The remainder $R_{p-2}$ satisfies $|\rd_{\bar{z}}^m R_{p-2} (z)|\lesssim |z|^{p-1-m}\|\omg\|_{C^{p-2,\alp}}$ for all $0\le m\le p-2$.
\end{lemma}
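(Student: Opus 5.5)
We follow the pattern of Lemma \ref{lem:psi_singular_term2}, now with $k=p-2$. The new feature is that the integral defining the coefficient of $\bar z^{p-1}$ diverges logarithmically at the origin. Write $\tld{\omg} := \omg - T_{p-3}\omg$ and split
\begin{equation*}
    u(z) = \int_{\Omg_{\bt}} K_{\bt}(z,w) T_{p-3}\omg(w)\, dA(w) + \int_{\Omg_{\bt}} K_{\bt}(z,w)\tld{\omg}(w)\, dA(w).
\end{equation*}
Since $p>(p-3)+2$, the first integral converges absolutely. By Lemma \ref{lem:Gbeta_nm}, applied monomial by monomial and followed by $2i\rd_{\bar z}$, it is a polynomial of degree $p-2$; this is $P_{p-2}$. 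For the second integral we split the $w$-domain at $|w| = 2|z|$ and treat the two pieces separately.

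On the far region $\{|w|>2|z|\}$ we use the expansion of Lemma \ref{lem:Green_decompose} in the form $K_{\bt}(z,w) = 2ic_{1,1}(w)\bar z^{p-1} + 2i\rd_{\bar z}\mathcal{R}_{\bt}(z,w)$. Since $|\rd_{\bar z}\mathcal{R}_{\bt}(z,w)| \lesssim |z|^{2p-1}/|w|^{2p}$ there, the remainder part contributes $O(|z|^{p-1})$ after integrating against $|\tld{\omg}|\lesssim 1$. For the singular part, we write $\tld{\omg} = (T_{p-2}\omg - T_{p-3}\omg) + (\omg - T_{p-2}\omg)$. The first piece is homogeneous of degree $p-2$ and equals $|w|^{p-2}$ times its trace on the unit circle. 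Integrating $2ic_{1,1}(w)$ against it over $2|z|<|w|<1/2$ gives the radial integral $\int_{2|z|}^{1/2}\rho^{-1}\,d\rho = \log\frac{1}{4|z|}$ multiplied by the angular integral defining $\tld C_{p-2}$. Replacing $\log\frac{1}{4|z|}$ by $\log\frac{1}{2|z|}$ changes this by a constant multiple of $\bar z^{p-1}$. The second piece, and the region $|w|>1/2$, give $\bar z^{p-1}$ times integrals that converge absolutely uniformly in $z$, because $|\omg - T_{p-2}\omg| \lesssim |w|^{p-2+\alp}$ near $0$. We therefore absorb all of these into $R_{p-2}$, since $|\bar z^{p-1}| \lesssim |z|^{p-1}$.

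On the near region $\{|w|\le 2|z|\}$ we rescale exactly as in Step 2 of Lemma \ref{lem:psi_singular_term2}. Set $\xi = w/|z|$ and $\tld{\omg}_r(\xi) = r^{-(p-2)}\tld{\omg}(r\xi)$ with $r = |z|$; then $|\nb^n\tld{\omg}_r(\xi)|\lesssim |\xi|^{p-2-n}\|\omg\|_{C^{p-2}}$. The contribution becomes $r^{p-1}$ times a bounded integral. Near $\xi = z/r$ it is bounded by the principal-value estimates of Lemmas \ref{lem:HalfGradKer} and \ref{lem:gradu_pv2}, and away from that point the kernel bounds of Lemma \ref{lem:ker_bound} suffice. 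Differentiating in $\bar z$ commutes with this scaling, which gives $|\rd_{\bar z}^m R_{p-2}(z)|\lesssim |z|^{p-1-m}\|\omg\|_{C^{p-2,\alp}}$. The only subtlety is that the cut $|w|=2|z|$ moves with $z$. Differentiating the cut produces boundary terms, and these have the same homogeneity: the kernel on $|w|=2|z|$ has size $|z|^{-1}$ and $\tld{\omg}$ has size $|z|^{p-2}$ there. To avoid handling boundary terms at all, we can instead use a smooth radial cutoff $\chi(|w|/|z|)$.

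The main obstacle is justifying the derivative bounds for the logarithmic part and for the cut-off pieces uniformly up to order $m = p-2$. In particular, $\rd_{\bar z}^m$ of $\bar z^{p-1}\log\frac{1}{2|z|}$ is itself part of the claimed structure, so the remainder must be defined precisely so that no stray $\bar z^{p-1}\log$ terms leak into it. We will handle this by checking that every term placed in $R_{p-2}$ is either $\bar z^{p-1}$ times a bounded smooth function of $z$, or a function of the form $r^{p-1}F(z/r, r)$ with $F$ and all its derivatives bounded.
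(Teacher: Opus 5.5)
Your decomposition is essentially the paper's. The near field $|w|\le 2|z|$ is handled by the same rescaling $\tld{\omg}_r(\xi)=r^{-(p-2)}\tld{\omg}(r\xi)$ and the principal-value lemmas. The logarithm comes from pairing the leading kernel term $2ic_{1,1}(w)\bar z^{p-1}=\frac{p}{\pi}\bar z^{p-1}\rho^{-p}\sin(p\phi)$ with the homogeneous degree-$(p-2)$ piece $T_{p-2}\omg-T_{p-3}\omg$ on an annulus $2|z|<|w|<c$. Your differences are harmless: outer radius $1/2$ instead of $1$, the kernel expansion used on all of $|w|>2|z|$, and a smooth cutoff $\chi(|w|/|z|)$. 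The cutoff actually takes care of the boundary terms from the moving cut, which the paper passes over.

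One estimate, however, is wrong as written. Integrating $|\rd_{\bar z}\mathcal{R}_{\bt}(z,w)|\lesssim |z|^{2p-1}|w|^{-2p}$ against $|\tld{\omg}|\lesssim 1$ over $|w|>2|z|$ gives only $|z|^{2p-1}\int_{2|z|}^{1}\rho^{1-2p}\,d\rho\sim |z|$. This is weaker than the required $|z|^{p-1}$, since $p\ge 3$. Moreover, $\tld{\omg}$ is not bounded at infinity when $p\ge 4$, because $T_{p-3}\omg$ grows like $|w|^{p-3}$. You need two bounds: the Taylor bound $|\tld{\omg}(w)|\lesssim |w|^{p-2}\|\omg\|_{C^{p-2}}$ for $|w|\le 1$, which you already use in the near field, and $|\tld{\omg}(w)|\lesssim |w|^{p-3}\|\omg\|_{C^{p-3}}$ for $|w|\ge 1$. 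Then the $m$-th $\bar z$-derivative of this piece is bounded by $|z|^{2p-1-m}\bigl(\int_{2|z|}^{1}\rho^{-p-1}\,d\rho+\int_1^\infty\rho^{-p-2}\,d\rho\bigr)\lesssim |z|^{p-1-m}$. This is exactly how the paper treats its $Q_{\bt}$ term.

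Your closing criterion is also not literally met. The factor $F(|z|)=\int_{2|z|<|w|<1/2}2ic_{1,1}(w)(\omg-T_{p-2}\omg)(w)\,dA(w)$ multiplying $\bar z^{p-1}$ is bounded, but its derivatives are only $O(|z|^{\alp-j})$, not bounded. What you actually need, and what does hold, is the scaled bound $|\nb^j F|\lesssim |z|^{-j}$ for $j\le p-2$. That suffices for $|\rd_{\bar z}^m(\bar z^{p-1}F)|\lesssim |z|^{p-1-m}$.
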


\begin{proof}
    \noindent\textit{Step 1.} We first obtain the term $\tld{C}_{p-2} \bar{z}^{p-1}\log\frac{1}{2|z|}$. Let $w =: \rho e^{i\phi}$ with $0\le\phi \le\bt$. As in the proof of Lemmas \ref{lem:borderline} and \ref{lem:psi_singular_term2}, we use the decomposition
    \begin{equation}\label{eq:K_beta_decompose}
        K_{\bt}(z,w) = \frac{p}{\pi} \bar{z}^{p-1} \frac{\sin(p\phi)}{\rho^p} + Q_{\bt}(z,w),\quad |Q_{\bt}(z,w)|\lesssim \frac{|z|^{2p-1}}{\rho^{2p}}.
    \end{equation}
    Then, the principal contribution of $K_{\bt}$ in $\{w\in\Omg_{\bt}: 2|z|<|w|<1\}$ is
    \begin{equation*}
        \begin{split}
            & \int_{\{w\in\Omg_{\bt}: 2|z|<|w|<1\}} \left(K_{\bt}(z,w) - Q_{\bt}(z,w)\right) \{T_{p-2}\omg(w) - T_{p-3}\omg(w)\}dA(w) \\ &\ = \frac{p}{\pi}\bar{z}^{p-1} \int_0^{\bt} \sin(p\phi)\{T_{p-2}\omg(e^{i\phi}) - T_{p-3}\omg(e^{i\phi}) \} d\phi \int_{2|z|}^1 \frac{d\rho}{\rho} = \tld{C}_{p-2}\bar{z}^{p-1} \log\frac{1}{2|z|}.
        \end{split}
    \end{equation*}
    The constant $\tld{C}_{p-2}$ depends on $p$ and $\nb^{p-2}\omg(0)$.

    \medskip
    \noindent\textit{Step 2.} By the definition of $R_{p-2}$, it satisfies
    \begin{equation*}
        \begin{split}
            R_{p-2}(z) & = \int_{\Omg_{\bt}\setminus\{2|z|<|w|<1\}} K_{\bt}(z,w)(\omg(w) - T_{p-3}\omg(w)) dA(w) \\ &\quad + \int_{\Omg_{\bt}\cap\{2|z|<|w|<1\}} \left(K_{\bt}(z,w) (\omg(w) - T_{p-2}\omg(w)) + Q_{\bt}(z,w) (T_{p-2}\omg(w) - T_{p-3}\omg(w))\right) dA(w).
        \end{split}
    \end{equation*}
    We follow the proof of Step 2 of Lemma \ref{lem:psi_singular_term2}: For example, if we define $r:=|z|$, $\tld{\omg}(w) := \omg(w) - T_{p-3}\omg(w)$, and $\tld{\omg}_r (w) := r^{-(p-2)}\tld{\omg}(rw)$, then we obtain
    \begin{equation*}
        \left|p.v.\int_{\Omg_{\bt}\cap\{|w|<2|z|\}} \rd_{\bar{z}}^{m+1} G_{\bt}(z,w)(\omg(w) - T_{p-3}\omg(w)) dA(w)\right| \lesssim |z|^{p-1-m} \|\tld{\omg}_r\|_{C^{p-2}} \lesssim |z|^{p-1-m} \|\omg\|_{C^{p-2}}
    \end{equation*}
    for each $m\le p-2$. The other terms do not need Cauchy principal value. In the region $\{w\in\Omg_{\bt}: |w|>1\}$ and $\{w\in\Omg_{\bt}: 2|z|<|w|<1\}$,
    \begin{equation*}
        \begin{split}
            & \left| \int_{\Omg_{\bt}\cap\{|w|>1\}} \rd_{\bar{z}}^{m+1} G_{\bt}(z,w)(\omg(w) - T_{p-3}\omg(w)) dA(w) \right| \\ &\  \lesssim \int_1^{\infty} \frac{|z|^{p-(m+1)}}{\rho^p} \rho^{p-3} \|\omg\|_{C^{p-3}} \rho d\rho \lesssim |z|^{p-1-m} \|\omg\|_{C^{p-3}},\qquad \rho := |w|,
        \end{split}
    \end{equation*}
    and
    \begin{equation*}
        \begin{split}
            & \left| \int_{\Omg_{\bt}\cap\{2|z|<|w|<1\}} \rd_{\bar{z}}^{m+1} G_{\bt}(z,w)(\omg(w) - T_{p-2}\omg(w)) dA(w) \right| \\ &\ \lesssim \int_{2|z|}^1 \frac{|z|^{p-(m+1)}}{\rho^p} \rho^{p-2+\alp} \|\omg\|_{C^{p-2,\alp}} \rho d\rho \lesssim |z|^{p-1-m} \|\omg\|_{C^{p-2,\alp}}.
        \end{split}
    \end{equation*}
    The term including $Q_{\bt}$ satisfies
    \begin{equation*}
        \begin{split}
            & \left| \int_{\Omg_{\bt}\cap\{2|z|<|w|<1\}}  \rd_{\bar{z}}^{m} Q_{\bt}(z,w) (T_{p-2}\omg(w) - T_{p-3}\omg(w))dA(w) \right| \\ &\ \lesssim \int_{2|z|}^1 \frac{|z|^{2p-(m+1)}}{\rho^{2p}} \rho^{p-2} \|\omg\|_{C^{p-2}} \rho d\rho \lesssim |z|^{p-1-m} \|\omg\|_{C^{p-2}}.
        \end{split}
    \end{equation*}
    Therefore, we conclude that $|\rd_{\bar{z}}^m R_{p-2} (z)|\lesssim |z|^{p-1-m}\|\omg\|_{C^{p-2,\alp}}$.
\end{proof}

We note the proof idea of Theorem \ref{thm:ill-posed} when $\pi/\bt$ is an integer. The proof is similar to that for the case $\pi/\bt\notin \mathbb{N}$.

\begin{proof}[Proof idea of Theorem \ref{thm:ill-posed} in the case $\pi/\bt\in \mathbb{N}$]
    We note where we need to modify the proof of the case $\pi/\bt\notin \mathbb{N}$. 
    
    \medskip
    \noindent\textit{Step 1.}
    For $f\in C^{k,\alp}$, let $C_p[f]$ be defined by
    \begin{equation*}
        C_p[f] := \frac{p}{\pi} \int_0^{\bt} \sin(p\phi)\left(T_{p-2} f(e^{i\phi}) - T_{p-3} f(e^{i\phi})\right) d\phi,
    \end{equation*}
    which comes from Lemma \ref{lem:psi_singular_term3}. Let $a:= e^{i\bt/2}$, $\chi$ be defined as in the proof of the case $\pi/\bt\notin \mathbb{N}$. Our goal is to show that there exists $\tld{T}>0$ and $0<\dlt\ll 1$ such that
    \begin{equation*}
        \omg_0(z) := \chi(z) a\cdot (z + z^{p-2}) 
    \end{equation*}
    satisfies $\omg(t,\cdot)\notin C^{p-2,1}(\overline{\Omg_{\bt}}\cap B_{\dlt})$ for $0<t<\tld{T}$. Since $\omg_t\in C^{p-2,1/2}$ for all $t$, we obtain $\rd_t \omg$, $\cdots$, $\rd_t \nb^{p-2} \omg$ are continuous so that $C_p[\omg_t]$ is continuous in time. Since
    \begin{equation*}
        \Re\left(\frac{p\bar{a}}{\pi} \int_0^{\bt} \sin(p\phi) (e^{i\phi})^{p-2} d\phi \right)\neq 0,
    \end{equation*}
    the initial vorticity $\omg_0$ satisfies $C_p[\omg_0]\neq 0$. Therefore, the continuity of $C_p[\omg_t]$ yields that there exists $T>0$ such that $C_p[\omg_t]\neq 0$, and
    \begin{equation*}
        B(t) := \int_0^t C_p[\omg(s,\cdot)] ds \neq 0
    \end{equation*}
    for all $0<t<T$.
    
    \medskip
    \noindent\textit{Step 2.} We now follow the proof of the case $\pi/\bt \notin \mathbb{N}$. Then, a similar decomposition of $\Phi_t$
    \begin{equation*}
        \Phi_t(z) = L_t(z) + H_t(z) + R_t'(z)
    \end{equation*}
    follows from the bootstrap argument, where $|H_t(z)| = C_t |z|^{p-1}\log(1/|z|)$ for some constant $C_t\ge 0$, and $|R_t'(z)|\lesssim |z|^{p-1}$. More precisely, the terms $H_t$ and $R_t'$ satisfy
    \begin{equation*}
        \begin{split}
            \rd_t H_t(z) & = P_{t,1}(H_t(z)) + C_p[\omg_t] \overline{L_{t,1}(z)}^{p-1}\log \frac{1}{|z|}, \\ \rd_t R_t'(z) & = P_{t,1} (R_t'(z)) + P_{t,\ge 2}(\Phi_t(z)) - P_{t,\ge 2}(L_t(z)) \\ & \qquad + C_p[\omg_t] \left(\overline{\Phi_t(z)}^{p-1}\log\frac{1}{|\Phi_t(z)|} - \overline{L_{t,1}(z)}^{p-1}\log\frac{1}{|z|}\right) + R_t(\Phi_t(z)),
        \end{split}
    \end{equation*}
    which are analogous to \eqref{eq:L_t} and \eqref{eq:H_t}, respectively. Using bootstrap argument, the terms $H_t$ and $R_t'$ satisfy 
    \begin{equation*}
        |H_t(z)| = C_t |z|^{p-1}\log\frac{1}{|z|},\quad |R_t'(z)| \lesssim |z|^{p-1},\quad |\nb R_t'(z)| \lesssim |z|^{p-2}.
    \end{equation*}
    
    \medskip
    \noindent\textit{Step 3.}
    Unlike the case $\pi/\bt \notin \mathbb{N}$, although $|H_t(z)| = C_t |z|^{p-1}\log\frac{1}{|z|}$, the derivative $\nb H_t(z)$ does not satisfy $|\nb H_t(z)| = C_t |z|^{p-2}\log\frac{1}{|z|}$. Instead, the term $\nb H_t(z)$ is decomposed into $\nb H_t(z) =: H_t'(z) + H_t''(z)$, where $|H_t'(z)| = C_t' |z|^{p-2}\log\frac{1}{|z|}$ and $|H_t''(z)| = C_t'' |z|^{p-2}$.
    We note that the most singular term in $\nb \Phi_t^{-1}$ is $H_t'$. Therefore, our desired decomposition of $\nb \Phi_t$ is 
    \begin{equation*}
        \nb \Phi_t(z) = \nb L_t(z) + H_t'(z) + \left( H_t''(z) + \nb R_t'(z)\right),
    \end{equation*}
    where $H_t''+R_t'$ is the smooth remainder such that $|H_t''(z) + \nb R_t'(z)| \lesssim |z|^{p-2}$.

    \medskip
    \noindent\textit{Step 4.} The decomposition of $\Phi_t^{-1}$ is also similarly obtained by
    \begin{equation*}
        \Phi_t^{-1}(z) = L_t^{-1}(z) - M_t H_t' (M_t z) + \tld{R}_t''(z),
    \end{equation*}
    where $|M_t H_t' (M_t z)|$ is proportional to $|z|^{p-1}\log(1/|z|)$, and $|\tld{R}_t''(z)|\lesssim |z|^{p-1}$. As in Step 3, the derivative $\nb (M_t H_t' (M_t (\cdot)))(z)$ can be decomposed into two terms; one has size $|z|^{p-2}\log(1/|z|)$ and the other is bounded by $|z|^{p-2}$.
    
    \medskip
    \noindent\textit{Step 5.}
    Fix $t\ll 1$. One can follow Step 5 of the proof of the case $\pi/\bt \notin \mathbb{N}$. Since $C_p[\omg_0]\neq 0$, there exists $e\in\Omg_{\bt}$ such that $|e|<1/2$ and $\tld{H}_t^{(p-2)}(e)\neq 0$, where $\tld{H}_t^{(p-2)} (z)$ is the most singular term with scale $|\tld{H}_t^{(p-2)} (z)|=\tld{C}_t |z|\log(1/|z|)$ in the decomposition of $\rd_e^{p-2} (a\cdot M_t H_t'(M_t (\cdot)))(z)$. Then,
    \begin{equation}\label{eq:p-2_singular}
        \frac{|\tld{H}_t^{(p-2)}(2\eps e) - \tld{H}_t^{(p-2)}(\eps e)| }{\eps} \gtrsim \frac{\eps \log (1/\eps)}{\eps}
    \end{equation}
    for sufficiently small $\eps>0$. The other terms in the decomposition of $\rd_e^{p-2} \Phi_t^{-1}(z)$ are Lipschitz in a neighborhood of the origin. Since $p-2\in\mathbb{N}$ and  $\omg_t(z) = a\cdot (\Phi_t^{-1}(z) + (\Phi_t^{-1}(z))^{p-2})$ for $|z|<1/2$, the inequality \eqref{eq:p-2_singular} is the only contribution to make
    \begin{equation*}
        \lim_{\eps\rightarrow 0}\frac{|\rd_e^{p-2} \omg_t(2\eps e) - \rd_e^{p-2}  \omg_t(\eps e)| }{\eps} = \infty.
    \end{equation*}
    Therefore, $\omg_t\notin C^{p-2,1}$ for all sufficiently small positive times.
\end{proof}

\section{Long-time behavior of scale-invariant data}\label{Long-time}

If the initial data $\omg_0$ defined in $\Omg_{\bt}$ is scale-invariant, then the scale-invariance property is preserved for all time. In other words, $u$ satisfies $u(rz) = ru(z)$ for all $r>0$ and $z\in\Omg_{\bt}$. Since both $u$ and $\omg$ are determined by their values on $\Omg_{\bt}\cap \{|z|=1\}$, an analogous 1D system for the Euler equations can be deduced. For $0<\tht<\bt$, let $g(t,\tht):=\omg(t,e^{i\tht})$, and $G(t,\theta)$ be defined by
\begin{align*}
    G(t,\tht) := \frac{1}{2}u(t,e^{i\tht})\cdot ie^{i\tht}.
\end{align*}
They are introduced in $\mathbb{R}^2$ by \cite{EJS}, and it is well-known that $g$ and $G$ satisfy
\begin{align*}
    \partial_t g + 2G\partial_{\tht} g = 0, \quad g=4G+\partial_{\tht\tht}G.
\end{align*}

We first discuss some properties induced by $g=4G+\partial_{\tht\tht}G$. Since the domain is $\Omg_{\bt}$ and the particles can move only radially along the boundaries, $G$ has zero boundary conditions. Since $g=4G+\partial_{\tht\tht}G$ with $G(t,0)=G(t,\bt)=0$, one can deduce the formula of the kernel
\begin{align*}
    K(\tht,\tht') = \begin{cases}
        -\frac{1}{2\sin(2\bt)} \sin(2\tht)\sin(2\bt-2\tht'),\qquad \tht\le\tht', \\ -\frac{1}{2\sin(2\bt)} \sin(2\bt-2\tht)\sin(2\tht'),\qquad \tht\ge\tht'.
    \end{cases}
\end{align*}
This $K$ is defined by the fundamental solution $4K(\tht,\tht')+\rd_{\tht\tht}K(\tht,\tht')=\dlt(\tht-\tht')$ with boundary conditions $K(0,\tht')=K(\bt,\tht')=0$.

From now on, we focus on the case $g\ge 0$. Since $K(\tht,\tht')<0$ for all $\tht$, $\tht'\in (0,\bt)$, if $g \ge 0$, then it is clear that $G\le 0$, and $\partial_{\tht\tht}G = g-4G\ge 0$. We next discuss properties induced by $\partial_t g + 2G\partial_{\tht} g = 0$. Since this equation is similar to a transport equation, it is natural to introduce the flow map generated by $2G$. Let $\chi$ be the flow map defined by
\begin{equation}\label{eq:def_chi}
    \frac{d}{dt}\chi(t,\tht) = 2G(t,\chi(t,\tht)), \quad \chi(0,\tht)=\tht,\qquad t>0,\ \tht\in (0,\bt).
\end{equation}
Then, $g$ satisfies $g(t,\chi(t,\tht)) = g(0,\tht)$ for all $t>0$ and $0<\tht<\bt$. We begin the proof of Theorem \ref{thm:long} with a long-time behavior of $\chi$.

\begin{lemma}\label{lem:long1}
    Suppose that $g_0$ is nonnegative in $(0,\bt)$. Let $\chi(t,\tht)$ be the flow map by \eqref{eq:def_chi}. If $\chi$ satisfies $\lim_{t\rightarrow{}\infty}\chi(t,\tht_0)>0$ for some $\tht_0\in (0,\bt)$, then $\lim_{t\rightarrow{\infty}} G(t,\cdot)=0$ uniformly.
\end{lemma}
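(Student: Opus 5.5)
The plan is to combine three ingredients: the monotonicity of the one–dimensional flow $\chi$, the convexity of $G(t,\cdot)$, and a uniform-in-time Lipschitz bound for $G$ in $t$.

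First, since $g\ge 0$ is transported, $g(t,\cdot)\ge 0$ for all $t$. We already noted that this gives $G\le 0$ and $\rd_{\tht\tht}G = g-4G\ge 0$. By \eqref{eq:def_chi}, $t\mapsto\chi(t,\tht_0)$ is therefore nonincreasing, so $\tht_\infty:=\lim_{t\to\infty}\chi(t,\tht_0)$ exists, and by hypothesis $\tht_\infty>0$. Integrating \eqref{eq:def_chi} in time gives
\begin{equation*}
\int_0^\infty 2|G(t,\chi(t,\tht_0))|\,dt = \tht_0-\tht_\infty<\infty .
\end{equation*}
Along the whole trajectory we have $\chi(t,\tht_0)\in[\tht_\infty,\tht_0]$, which is a compact subinterval of $(0,\bt)$.

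Second, we use convexity to pass from this single trajectory to the whole interval. Put $m(t):=\max_{[0,\bt]}|G(t,\cdot)|$, and let $\tht^*$ be a point where the minimum $-m(t)$ is attained. Since $G(t,\cdot)$ is convex with $G(t,0)=G(t,\bt)=0$, it lies below the two chords joining $(0,0)$, $(\tht^*,-m)$ and $(\tht^*,-m)$, $(\bt,0)$. This gives
\begin{equation*}
|G(t,\tht)|\ge \frac{m(t)}{\bt}\min(\tht,\bt-\tht),\qquad \tht\in(0,\bt).
\end{equation*}
Applying this at $\tht=\chi(t,\tht_0)$, we get $m(t)\le C(\tht_\infty,\tht_0,\bt)\,|G(t,\chi(t,\tht_0))|$, and hence $\int_0^\infty m(t)\,dt<\infty$.

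Third, we show that $m$ is uniformly Lipschitz in $t$; a nonnegative, integrable, uniformly Lipschitz function tends to $0$, which gives the claim. From the kernel representation, $|G|\le C_\bt\|g_0\|_{L^\infty}$, because $\|g(t)\|_{L^\infty}=\|g_0\|_{L^\infty}$. The relation $\rd_{\tht\tht}G=g-4G$ together with $G(0)=G(\bt)=0$ then bounds $\|\rd_\tht G\|_{L^\infty}\le C\|g_0\|_{L^\infty}$. For the time derivative we write
\begin{equation*}
\rd_t G(t,\tht)=\int_0^\bt K(\tht,\tht')\rd_t g\,d\tht'
\end{equation*}
and use $-2G\rd_\tht g = -2\rd_\tht(Gg)+2(\rd_\tht G)g$. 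Integrating the first term by parts in $\tht'$ moves the derivative onto $K$. This produces no boundary terms since $G$ vanishes at $0$ and $\bt$, and $\rd_{\tht'}K$ is bounded. The result is
\begin{equation*}
|\rd_t G|\le C_\bt\big(\|G\|_\infty\|g\|_\infty+\|\rd_\tht G\|_\infty\|g\|_\infty\big)\le C\|g_0\|_{L^\infty}^2 .
\end{equation*}
Consequently $|m(t)-m(s)|\le \sup_\tht|G(t,\tht)-G(s,\tht)|\le C|t-s|$. Then $m(t)\to0$, which is exactly the statement that $G(t,\cdot)\to 0$ uniformly.

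The main obstacle is making the time-derivative bound rigorous when $g$ is only $L^\infty$, so that $\rd_\tht g$ has no pointwise meaning. I would carry out the computation in the weak (distributional) formulation of $\rd_t g+2G\rd_\tht g=0$, which is inherited from the weak solution of Theorem \ref{thm:weak-wellposed} via scale invariance. Testing against $K(\tht,\cdot)$, after the integration by parts, yields the Lipschitz-in-time estimate for $G(\cdot,\tht)$ directly in integrated form. Alternatively, one can approximate $g_0$ by smooth nonnegative data and pass to the limit, using the uniformity of all the constants above in $\|g_0\|_{L^\infty}$.
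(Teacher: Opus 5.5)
Your argument is correct. Its skeleton is the same as the paper's: monotonicity of $t\mapsto\chi(t,\tht_0)$, plus convexity of $G(t,\cdot)$ with zero boundary values. The differences are in how the two key steps are carried out, and in one of them you supply something the paper leaves out.

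\textbf{Time limit along the trajectory.} The paper passes directly from the monotonicity of $\chi$ and the finiteness of $[0,\bt]$ to $G(t,\chi(t,\tht_0))\to 0$. As you correctly observe, this alone only gives $\int_0^\infty 2|G(t,\chi(t,\tht_0))|\,dt=\tht_0-\tht_\infty<\infty$. Passing from integrability to a limit requires some equicontinuity in time. Your third step, the uniform bound on $\rd_t G$, is exactly what closes this. You obtain it by testing the weak angular transport equation against the Lipschitz function $K(\tht,\cdot)$, which vanishes at $0$ and $\bt$.

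\textbf{From one trajectory to the whole interval.} The paper traps $\rd_\tht G(t,\chi(t,\tht_0))$ between two chord slopes, deduces that it tends to $0$, and then uses the tangent line at $\chi(t,\tht_0)$. Your chord inequality $|G(t,\tht)|\ge \frac{m(t)}{\bt}\min(\tht,\bt-\tht)$ avoids derivatives altogether. It yields the quantitative comparison $m(t)\le C\,|G(t,\chi(t,\tht_0))|$, hence $m\in L^1(0,\infty)$, so the Lipschitz-in-time bound can be applied to $m$ itself rather than only along the trajectory.

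\textbf{The weak formulation you need is available.} Derive the two-dimensional weak form from the Lagrangian definition of a solution triple. Test it with $\rho(|z|)\psi(\arg z)$, and use that scale invariance and incompressibility give radial velocity $-r\,\rd_\tht G$ and angular velocity $2rG$. This yields $\int_0^\bt (g_t-g_0)\psi\,d\tht=\int_0^t\int_0^\bt g_s\,\rd_\tht(2G_s\psi)\,d\tht\,ds$. Taking $\psi=K(\tht,\cdot)$ gives your estimate directly in integrated form.

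You also use $g_0\in L^\infty$. This is not restated in the lemma, but it is part of the standing setting inherited from Theorem \ref{thm:weak-wellposed}.
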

\begin{proof}
    Since $G(t,0)=G(t,\bt)=0$ and $\partial_{\tht\tht}G>0$, we obtain
    \begin{equation}\label{eq:convex}
        \frac{G(t,\chi(t,\tht_0))-0}{\chi(t,\tht_0)-0} \le \partial_{\tht}G(t,\chi(t,\tht_0)) \le \frac{0-G(t,\chi(t,\tht_0))}{\bt-\chi(t,\tht_0)}.
    \end{equation}
    We note that $\chi(\cdot,\tht)$ is a monotonically decreasing function for each $\tht\in(0,\bt)$. Since the length of $[0,\bt]$ is finite, the monotonicity implies $\lim_{t\rightarrow{}\infty} G(t,\chi(t,\tht_0))=0$. Therefore, \eqref{eq:convex} yields $\lim_{t\rightarrow{}\infty}\partial_{\tht}G(t,\chi(t,\tht_0))=0$, and these two limits with the convexity of $G(t,\cdot)$ complete the proof.
\end{proof}
\begin{lemma}\label{lem:long2}
    Suppose that $g_0$ is nonnegative and bounded in $(0,\bt)$. Let $\chi(t,\tht)$ be the flow map by \eqref{eq:def_chi}. If $\lim_{t\rightarrow{}\infty}\chi(t,\tht_0)>0$ for some $\tht_0\in (0,\bt)$, then $\lim_{t\rightarrow{\infty}} \|g(t,\cdot)\|_{L^1(0,\bt)}=0$.
\end{lemma}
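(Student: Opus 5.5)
The plan is to write the $L^1$ norm of $g$ through the elliptic relation $g=4G+\rd_{\tht\tht}G$ and then use Lemma \ref{lem:long1}, which gives $G(t,\cdot)\to 0$ uniformly, together with convexity of $G$.

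\textbf{Step 1: reduce to boundary slopes.} Since $g(t,\chi(t,\tht))=g_0(\tht)$, we have $g(t,\cdot)\ge 0$ and $\|g(t,\cdot)\|_{L^\infty}\le \|g_0\|_{L^\infty}=:M$ for all $t$. Nonnegativity gives $\|g(t,\cdot)\|_{L^1(0,\bt)}=\int_0^\bt g(t,\tht)\,d\tht$. Integrating $g=4G+\rd_{\tht\tht}G$ over $(0,\bt)$ yields
\begin{equation*}
\|g(t,\cdot)\|_{L^1(0,\bt)} = 4\int_0^\bt G(t,\tht)\,d\tht + \rd_\tht G(t,\bt)-\rd_\tht G(t,0).
\end{equation*}
By Lemma \ref{lem:long1}, the first term tends to $0$. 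It therefore suffices to show that both boundary slopes $\rd_\tht G(t,0)$ and $\rd_\tht G(t,\bt)$ tend to $0$. Here $G\le 0$ and $G$ vanishes at both endpoints, so $\rd_\tht G(t,0)\le 0\le \rd_\tht G(t,\bt)$.

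\textbf{Step 2: interior slopes vanish.} Fix $\dlt\in(0,\bt/4)$ and let $x\in[\dlt,\bt-\dlt]$. Since $G(t,\cdot)$ is convex ($\rd_{\tht\tht}G=g-4G\ge 0$), the derivative at $x$ lies between the chord slopes to the endpoints:
\begin{equation*}
\frac{G(t,x)}{x}\le \rd_\tht G(t,x)\le \frac{-G(t,x)}{\bt-x}.
\end{equation*}
Hence $|\rd_\tht G(t,x)|\le \|G(t,\cdot)\|_{L^\infty}/\dlt$.

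\textbf{Step 3: transfer to the endpoints.} This is the key step; here the boundedness of $g_0$ is needed to control the curvature near the endpoints. We write
\begin{equation*}
\rd_\tht G(t,0)=\rd_\tht G(t,\dlt)-\int_0^\dlt \rd_{\tht\tht}G(t,\tht)\,d\tht .
\end{equation*}
Using $0\le \rd_{\tht\tht}G=g-4G\le M+4\|G(t,\cdot)\|_{L^\infty}$, we get
\begin{equation*}
|\rd_\tht G(t,0)|\le \frac{\|G(t,\cdot)\|_{L^\infty}}{\dlt}+\dlt\bigl(M+4\|G(t,\cdot)\|_{L^\infty}\bigr).
\end{equation*}
The same argument on $[\bt-\dlt,\bt]$ gives the identical bound for $\rd_\tht G(t,\bt)$.

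\textbf{Step 4: conclude.} Letting $t\to\infty$ and using Lemma \ref{lem:long1} gives $\limsup_{t\to\infty}|\rd_\tht G(t,0)|\le \dlt M$, and likewise at $\bt$. Since $\dlt>0$ is arbitrary, both slopes tend to $0$. Combined with Step 1, this gives $\|g(t,\cdot)\|_{L^1(0,\bt)}\to 0$.

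One point to verify is that $G(t,\cdot)$ is regular enough for the integration by parts and the fundamental theorem of calculus to be valid. This holds because $G(t,\cdot)=\int_0^\bt K(\cdot,\tht')g(t,\tht')\,d\tht'$ with $g(t,\cdot)\in L^\infty$, so $G(t,\cdot)\in W^{2,\infty}(0,\bt)$.
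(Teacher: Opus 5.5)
Your proposal is correct and follows essentially the same route as the paper. Both arguments combine the uniform convergence of $G$ from Lemma \ref{lem:long1}, convexity to control $\rd_\tht G$, and the uniform bound on $\rd_{\tht\tht}G=g-4G$ (from boundedness of $g_0$) to force the boundary slopes to zero, then conclude via $\int_0^\bt g = 4\int_0^\bt G + \rd_\tht G(t,\bt)-\rd_\tht G(t,0)$. The only difference is that the paper controls $\rd_\tht G$ through $\|\rd_\tht G(t,\cdot)\|_{L^1}\le 2\eps$ instead of your pointwise chord-slope bound on $[\dlt,\bt-\dlt]$, and your Step 3 spells out the endpoint transfer that the paper leaves terse.
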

\begin{proof}
    Lemma \ref{lem:long1} implies that $\lim_{t\rightarrow{\infty}} G(t,\cdot)=0$ uniformly. Let $\epsilon>0$. Then there exists $T>0$ such that $G(t,\cdot)>-\epsilon$ for all $t>T$. Since $G(t,\cdot)$ is convex, the zero boundary values yield $\|\rd_{\tht} G(t,\cdot)\|_{L^1} \le 2\eps$.
    
    Since $g(t,\chi(t,\tht)) = g_0(\tht)$, the function $g$ is uniformly bounded in $t$. The uniform boundedness of $g$ and the uniform convergence of $G$ to 0 imply that $\partial_{\tht\tht}G(t,\cdot) = g-4G$ is also uniformly bounded for large $t$. Using this with $\|\rd_{\tht} G(t,\cdot)\|_{L^1} \le 2\eps$, we obtain that
    \begin{equation*}
        \lim_{t\rightarrow\infty} |\rd_{\tht} G(t,\bt) - \rd_{\tht} G(t,0)| = 0.
    \end{equation*}
    
    Therefore, the $L^1$ norm of $\rd_{\tht\tht} G(t,\cdot)$ satisfies
    \begin{equation*}
        \|\rd_{\tht\tht} G(t,\cdot)\|_{L^1} = \int_0^{\bt} \rd_{\tht\tht} G(t,\tht)d\tht = \rd_{\tht} G(t,\bt) - \rd_{\tht} G(t,0) \rightarrow 0
    \end{equation*}
    as $t\rightarrow \infty$. Since $\lim_{t\rightarrow{\infty}} G(t,\cdot)=0$ uniformly, we conclude that $\lim_{t\rightarrow\infty} \|g(t,\cdot)\|_{L^1} =0$.
\end{proof}

Now we prove Theorem \ref{thm:long}, which is equivalent to Theorem \ref{thm:Long-Scale}.

\begin{theorem}\label{thm:long}
    Suppose that $g_0$ is left continuous at $\bt$, nonnegative and bounded. Let $\lim_{\tht\rightarrow{\bt^-}}g_0(\tht)=:g_0(\bt)\ge 0$. Then,
    \begin{align*}
        \lim_{t\rightarrow{\infty}} \|g(t,\cdot)-g_0(\bt)\|_{L^1([0,\bt])}=0.
    \end{align*}
\end{theorem}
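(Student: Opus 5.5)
The plan is to work entirely with the one-dimensional system $\partial_t g+2G\partial_\tht g=0$, $g=4G+\partial_{\tht\tht}G$, $G(t,0)=G(t,\bt)=0$, and the flow $\chi$ of \eqref{eq:def_chi}. I would split into two cases according to whether some particle stays away from $0$, using Lemma \ref{lem:long2} in one case and a direct transport argument in the other.

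\emph{Preliminary facts about $\chi$.} Since $g$ stays bounded by $\|g_0\|_{L^\infty}$, $\partial_{\tht\tht}G=g-4G$ is bounded, so $G(t,\cdot)$ is $C^{1,1}$ uniformly on bounded time intervals. Hence \eqref{eq:def_chi} has a unique solution. Each $\chi(t,\cdot)$ is an increasing homeomorphism of $[0,\bt]$ fixing both endpoints, because $G$ vanishes there. Since $G\le 0$, the map $t\mapsto\chi(t,\tht)$ is nonincreasing and bounded below by $0$. Therefore the limit $\chi_\infty(\tht):=\lim_{t\to\infty}\chi(t,\tht)\in[0,\tht]$ exists, and it is nondecreasing in $\tht$. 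Finally, $g(t,\chi(t,\tht))=g_0(\tht)$.

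\emph{Case 1: $\chi_\infty(\tht_0)>0$ for some $\tht_0\in(0,\bt)$.} Lemma \ref{lem:long2} gives $\|g(t,\cdot)\|_{L^1(0,\bt)}\to 0$, so it remains to show $g_0(\bt)=0$. Suppose instead $g_0(\bt)>0$. Left continuity gives $\eta>0$ with $g_0\ge g_0(\bt)/2$ on $(\bt-\eta,\bt)$. The image of this interval under $\chi(t,\cdot)$ is $(\chi(t,\bt-\eta),\bt)$, whose length is at least $\eta$ because $\chi$ is nonincreasing in time. On this image $g(t,\cdot)\ge g_0(\bt)/2$, so $\|g(t,\cdot)\|_{L^1}\ge \eta\, g_0(\bt)/2$ for all $t$. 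This contradicts Lemma \ref{lem:long2}, so $g_0(\bt)=0$ and the conclusion holds.

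\emph{Case 2: $\chi_\infty\equiv 0$ on $(0,\bt)$.} Fix $\eps>0$ and choose $\tht_1<\bt$ such that $|g_0-g_0(\bt)|<\eps$ on $(\tht_1,\bt)$. For every $t$, the interval $(\chi(t,\tht_1),\bt)$ is the image of $(\tht_1,\bt)$, so $|g(t,\cdot)-g_0(\bt)|<\eps$ there. On the complementary interval $(0,\chi(t,\tht_1))$ we have the crude bound $|g-g_0(\bt)|\le 2\|g_0\|_{L^\infty}$. This gives
\begin{equation*}
\|g(t,\cdot)-g_0(\bt)\|_{L^1(0,\bt)}\le \eps\bt+2\|g_0\|_{L^\infty}\,\chi(t,\tht_1).
\end{equation*}
The second term tends to $0$ because $\chi_\infty(\tht_1)=0$, so the $\limsup$ is at most $\eps\bt$. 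Letting $\eps\to 0$ finishes the proof.

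The main subtlety is in Case 1. There Lemma \ref{lem:long2} only yields decay to $0$, so one has to rule out $g_0(\bt)>0$. The key observation is that particles only move toward $\tht=0$, so the region near $\bt$ carrying $g_0\approx g_0(\bt)$ never shrinks. The rest is bookkeeping of the flow properties listed above, in particular that endpoints are fixed and the maps are monotone.
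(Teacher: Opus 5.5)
Your proof is correct and follows the paper's argument essentially verbatim. It uses the same case split on whether $\lim_{t\to\infty}\chi(t,\theta)$ vanishes for every $\theta$ (your Cases 1 and 2 are the paper's second and first cases), the same combination of Lemma~\ref{lem:long2} with the time-monotonicity of $\chi$ to force $g_0(\beta)=0$, and the same transport argument near $\theta=\beta$. The only cosmetic difference is that in the case $\chi_\infty\equiv 0$ you write the explicit bound $\varepsilon\beta+2\|g_0\|_{L^\infty}\chi(t,\theta_1)$, where the paper invokes convergence in measure plus uniform integrability.
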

\begin{proof}
    Let $\epsilon>0$. There exists $\dlt>0$ such that $\bt-\dlt <\tht<\bt$ implies $|g_0(\tht)-g_0(\bt)|<\epsilon$.
    
    We first suppose that $\lim_{t\rightarrow{\infty}}\chi(t,\tht)=0$ for all $\tht\in(0,\bt)$. Then, there exists $T=T(\epsilon)$ such that if $t>T$, then $\chi(t,\bt-\dlt)<\epsilon$. Therefore, we obtain
    \begin{align*}
        |g(t,\tht) - g_0(\bt)| < \epsilon,\qquad t>T,\ \tht\in (\eps,\bt).
    \end{align*}
    That is, $g(t,\cdot)$ converges in measure to $g_0(\bt)$. By uniform integrability, $g(t,\cdot)$ converges to $g_0(\bt)$ in $L^1$.
    
    We now suppose that $\lim_{t\rightarrow{}\infty}\chi(t,\tht_0)>0$ for some $\tht_0\in(0,\bt)$. By Lemma \ref{lem:long2}, $\lim_{t\rightarrow{\infty}} \|g(t,\cdot)\|_{L^1(0,\bt)}=0$. Therefore, it is enough to show that $g_0(\bt)=0$. To the contrary, assume $g_0(\bt)>0$. Since $G<0$, the transport property gives
    \begin{align*}
        |g(t,\tht) - g_0(\bt)| < \epsilon,\qquad t>0,\ \tht\in (\bt-\dlt,\bt).
    \end{align*}
    If we take $\epsilon\in (0,g_0(\bt))$, then $\|g(t,\cdot)\|_{L^1(0,\bt)} \ge \dlt (g_0(\bt) - \epsilon)$ for all $t>0$. This yields
    \begin{align*}
        \lim_{t\rightarrow\infty}\|g(t,\cdot)\|_{L^1(0,\bt)} \ge \dlt (g_0(\bt) - \epsilon) > 0,
    \end{align*}
    which contradicts $\lim_{t\rightarrow\infty}\|g(t,\cdot)\|_{L^1(0,\bt)} = 0$.
\end{proof}

\appendix

\section{Appendix}\label{Appendix}
This section introduces some classical calculus-based results. We first note the proof idea of some basic properties of $G_{\bt}$ stated in Section \ref{Preliminary}. We next prove the boundedness and log-Lipschitz property of $u$ under the assumption $\omg\in L^{\infty}(\Omg_{\bt})$.

\subsection{Basic properties of Green's function}

We first derive the formula of $G_{\bt}$, which holds for all $0<\bt<2\pi$.

\begin{lemma}\label{thm:G_beta}
    Suppose that $0<\bt<2\pi$. The Dirichlet Green's function $G_{\bt}$ on $\Omg_{\bt}$ satisfies
    \begin{align*}
        G_{\bt}(z,w) = \frac{1}{2\pi}\log \frac{\left|z^{\pi/\bt} - w^{\pi/\bt}\right|}{\left|z^{\pi/\bt} - \bar{w}^{\pi/\bt}\right|}
    \end{align*}
    for all $z$, $w\in\Omg_{\bt}$ ($z\neq w$). Here, the logarithm has the principal branch.
\end{lemma}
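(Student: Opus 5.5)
We prove Lemma \ref{thm:G_beta} by conformal mapping. The map $F(z):=z^{p}$, $p=\pi/\bt$, defined with the branch fixed in the Notation section ($z^s = r^s e^{is\tht}$ for $0<\tht<\bt$), sends $\Omg_{\bt}$ bijectively and conformally onto the upper half-plane $\mathbb{H}$. Indeed, $re^{i\tht}\mapsto r^{p}e^{ip\tht}$ maps the angle range $(0,\bt)$ onto $(0,\pi)$ and $(0,\infty)$ onto itself. The map extends continuously to $\overline{\Omg_{\bt}}\setminus\{0\}$ and sends both boundary rays into $\mathbb{R}$. This holds for any $0<\bt<2\pi$.

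The Dirichlet Green's function of $\mathbb{H}$, with the paper's sign convention $\Dlt G=\dlt$, is $G_{\pi}(\zeta,\eta)=\Gmm(\zeta-\eta)-\Gmm(\zeta-\bar\eta)$, by the method of images. We then set $G_{\bt}(z,w):=G_{\pi}(F(z),F(w))$, which is exactly the claimed formula, since $\overline{F(w)}=\bar w^{p}$ for our branch. It remains to check the three conditions in \eqref{eq:Green}.

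\textbf{Harmonicity and singularity.} For $z\neq w$, the function is $\log$ of the modulus of the holomorphic, nonvanishing function $(z^p-w^p)/(z^p-\bar w^p)$, hence harmonic in $z$. Note that $z^p\neq \bar w^p$ because $z^p\in\mathbb{H}$. Near $z=w$, injectivity and conformality give $z^p-w^p=F'(w)(z-w)h(z)$ with $h$ holomorphic and $h(w)=1$. Hence $G_{\bt}(z,w)=\Gmm(z-w)+(\text{harmonic function of } z)$, which gives $\Dlt_z G_{\bt}=\dlt_0(z-w)$ in the distributional sense.

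\textbf{Boundary conditions.} For $z$ on either boundary ray, $z^p$ is real, so $|z^p-w^p|=|z^p-\bar w^p|$ and $G_{\bt}=0$. At $z=0$ the same equality holds, since both moduli equal $|w^p|$. As $|z|\to\infty$ we have $|z^p|\to\infty$, so the ratio of moduli tends to $1$ and $G_{\bt}\to0$.

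\textbf{Uniqueness.} Uniqueness follows from the weak maximum principle, as in the remark after \eqref{eq:Green}. The difference of two solutions is harmonic, vanishes on the boundary and at infinity, and is bounded near the corner. The corner is a single point, hence removable for bounded harmonic functions, and the Phragmén–Lindelöf/maximum principle gives that the difference is identically $0$.

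The only step needing any care is the branch bookkeeping: $\bar w^{p}$ must be interpreted as the conjugate of $w^p$ (equivalently, $|\bar w|^p e^{-ip\arg w}$), not as the principal-branch power of $\bar w$. With that convention the formula is the one stated.
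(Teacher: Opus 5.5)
Your proof is correct, but it takes a different route from the paper's. The paper derives the formula by separation of variables. It expands $G_{\bt}$ in the angular modes $\sin(m\pi\tht/\bt)\sin(m\pi\tht_0/\bt)$, and uses the distributional identity $\sum_{m\in\mathbb{Z}}e^{im\pi(\tht-\tht_0)/\bt}=2\bt\,\dlt_0(\tht-\tht_0)$ to reduce to radial ODEs for $g_m(r,r_0)$ with a jump condition at $r=r_0$. It solves these with $r^{\pm m\pi/\bt}$ and sums the series using $-\log(1-\zt)=\sum_m\zt^m/m$. You instead pull back the image-method Green's function of $\mathbb{H}$ by the conformal map $z\mapsto z^{p}$ and verify the conditions of \eqref{eq:Green} directly.

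Your argument is a verification rather than a derivation from an ansatz. It therefore avoids justifying the modal ansatz, the convergence of the series, and the passage from formal conditions on the $g_m$ to actual properties of $G_{\bt}$. It is also uniform in $0<\bt<2\pi$. Your branch remark genuinely sharpens the statement: for $\bt>\pi$, reading $z^{\pi/\bt}$ and $\bar w^{\pi/\bt}$ with the principal branch would not give the Green's function. The paper's series produces $\overline{w^{p}}$ only implicitly, through the term $e^{i\pi(\tht+\tht_0)/\bt}$.

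In your uniqueness step, boundedness at the vertex is not automatic. The function $r^{-p}\sin(p\tht)$ is harmonic and vanishes on both rays and at infinity. So you are using precisely the condition $G_{\bt}=0$ at $0\in\rd\Omg_{\bt}$, after which the plain maximum principle on $\Omg_{\bt}\cap B_R$ suffices.

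What the paper's route buys is the explicit expansion $G_{\bt}(z,w)=\frac{1}{4\pi}\sum_n\frac{1}{n}(w^{-np}-\bar w^{-np})(\bar z^{np}-z^{np})$ for $|z|<|w|$. Its exponents $np$ drive the corner analysis of Section \ref{Holder-wellposed2}. Since Lemma \ref{lem:Green_decompose} re-derives that expansion from the closed form anyway, nothing downstream depends on which proof is used.
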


The existence of a solution of \eqref{eq:Green} and the formula of $G_{\bt}$ are classical results, and we note the proof of Lemma \ref{thm:G_beta}. The decomposition of the formula of $G_{\bt}$ is used in Section \ref{Holder-wellposed}. 

\begin{proof}
    The Green's function $G_{\bt}$ vanishes on $\partial\Omg_{\bt}$. Using separation of variables, let us denote
    \begin{equation*}
        \begin{split}
            G_{\bt}(z,w) = \sum_{m=1}^{\infty} g_m(r,r_0)\sin \left(m\pi\frac{\tht}{\beta}\right) \sin \left(m\pi\frac{\tht_0}{\beta}\right) = \sum_{m=1}^{\infty} \frac{g_{m}(r,r_0)}{2}\left( \cos\left( m\pi\frac{\tht-\tht_0}{\beta} \right) - \cos\left(m\pi\frac{\tht+\tht_0}{\beta} \right)\right),
        \end{split}
    \end{equation*}
    where $z=re^{i\tht}$, $w=r_0e^{i\tht_0}$, and $g_m$ $(m\in\mathbb{N})$ will soon be determined. Since $-2\bt < \tht-\tht_0 < 2\bt$,
    \begin{equation*}
        1+\sum_{m=1}^{\infty} 2\cos\left( m\pi\frac{\tht-\tht_0}{\beta} \right) = \sum_{m\in\mathbb{Z}} e^{\frac{im\pi}{\beta} (\tht - \tht_0)} = 2\pi \cdot\dlt_0\left( \frac{\pi}{\beta}(\tht-\tht_0)\right) = 2\beta \cdot \dlt_0(\tht-\tht_0)
    \end{equation*}
    in the distribution sense. If we find $g_m$ such that
    \begin{equation*}
        \frac{1}{r}\frac{\partial}{\partial r}\left( r\frac{\partial g_m}{\partial r} \right) - \frac{1}{r^2}\left(\frac{m^2\pi^2}{\beta^2} g_m\right) = \frac{2}{r\beta}\dlt_0(r-r_0)
    \end{equation*}
    for each $m\in\mathbb{N}$, then $\Dlt_z G_{\bt}(z,w) = \dlt_0(r-r_0)\dlt_0(\tht-\tht_0)/r = \dlt_0(z-w)$. The homogeneous solutions of the above equation are $r^{\pm m\pi/\beta}$. Since $g_m$ is continuous, $\sum_{m=1}^{\infty} g_m(0,r_0) = 0$, $\lim_{r\rightarrow\infty}\sum_{m=1}^{\infty} g_m(r,r_0) = 0$, and the jump condition
    \begin{equation*}
        \left[ r\frac{\rd g_m}{\rd r} \right]_{r_0-\epsilon}^{r_0+\epsilon} = \frac{2}{\beta}
    \end{equation*}
    should be satisfied, the function $g_m$ satisfies
    \begin{equation*}
        \begin{split}
            g_m(r,r_0) = \begin{cases} -\frac{1}{m\pi}\left(\frac{r}{r_0}\right)^\frac{m\pi}{\beta}, &\qquad 0 < r \le r_0,\\
            -\frac{1}{m\pi}\left(\frac{r_0}{r}\right)^\frac{m\pi}{\beta}, &\qquad r \ge r_0.
            \end{cases} 
        \end{split}
    \end{equation*}
    Therefore, if $r<r_0$, then
    \begin{equation*}
        \begin{split}
            G_{\bt}(z,w) & = \sum_{m=1}^{\infty} -\frac{1}{2m\pi} \left(\frac{r}{r_0}\right)^{\frac{m\pi}{\beta}} \left(\cos\left(m\pi\frac{\tht-\tht_0}{\beta}\right) - \cos\left(m\pi\frac{\tht+\tht_0}{\beta}\right)\right)\\ & = \Re\ \frac{1}{2\pi}\left(\log\left(1 - \left(\frac{r}{r_0}\right)^{\frac{\pi}{\beta}}e^{i\pi\frac{\tht-\tht_0}{\beta}}\right) - \log\left(1 - \left(\frac{r}{r_0}\right)^{\frac{\pi}{\beta}}e^{i\pi\frac{\tht+\tht_0}{\beta}}\right)\right) = \frac{1}{2\pi} \log\frac{|z^{\frac{\pi}{\beta}} - w^{\frac{\pi}{\beta}}|}{|z^{\frac{\pi}{\beta}} - \overline{w}^{\frac{\pi}{\beta}}|}.
        \end{split}
    \end{equation*}
    Since the equation $-\log(1-z) = \sum_{m=1}^{\infty} z^m/m$ holds for all $|z|\le 1$ with $z\neq 1$, the same formula is obtained for all $z$, $w\in \Omg_{\bt}$ with $z\neq w$.
\end{proof}

\begin{lemma}\label{lem:ker_bound}
    Suppose that $0<\bt<\pi$, and $z$, $w\in\Omg_{\bt}$. Let $k\in\mathbb{N}$.
    \begin{itemize}
        \item If $|w|\le 2|z|$, then
            \begin{equation}\label{eq:z-w_bound}
                \left|\rd_{\bar{z}}^k G_{\bt}(z,w)\right| \le \frac{C_1}{|z-w|^k}
            \end{equation}
            holds for some constant $C_1 =C_1(k,\bt)$. If $\pi/\bt > k+2$, then \eqref{eq:z-w_bound} holds for all $z$, $w\in\Omg_{\bt}$.
        \item If $|w|\ge 2|z|$, the inequality
            \begin{equation}\label{eq:z,w_bound}
                \left|\rd_{\bar{z}}^k G_{\bt}(z,w)\right| \le C_2\frac{|z|^{\pi/\bt - k}}{|w|^{\pi/\bt}}
            \end{equation}
            holds for some constant $C_2=C_2(k,\bt)$.
    \end{itemize}
\end{lemma}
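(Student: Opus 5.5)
The plan is to work directly from the explicit formula of Lemma \ref{thm:G_beta}, written as $2\pi G_{\bt}(z,w) = \log|Z-W| - \log|Z-\bar W|$ with $Z:=z^p$, $W:=w^p$ and $p=\pi/\bt$. Since $\log|\zeta|=\Re\log\zeta$ and $Z$ is holomorphic in $z$, the derivative $\rd_{\bar z}$ only acts on the antiholomorphic part $\frac12\log(\bar Z-\bar W)$. Consequently
\begin{equation*}
    2\pi\,\rd_{\bar{z}}^k G_{\bt}(z,w) = \frac12\,\rd_{\bar z}^k\Big(\log(\bar z^p-\bar w^p) - \log(\bar z^p - w^p)\Big).
\end{equation*}
Both bounds will follow from this identity. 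We may use the scaling $G_{\bt}(\lambda z,\lambda w)=G_{\bt}(z,w)$, which gives $\rd_{\bar z}^k G_{\bt}(\lambda z,\lambda w)=\lambda^{-k}\rd_{\bar z}^kG_{\bt}(z,w)$. Both claimed bounds are invariant under this scaling, so it suffices to take $|w|=1$.

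\emph{Far region $|w|\ge 2|z|$.} Here $|\bar z^p/\bar w^p|\le 2^{-p}<1$, and the series expansion from the proof of Lemma \ref{lem:Green_decompose} applies:
\begin{equation*}
    \rd_{\bar z}^kG_{\bt}=\sum_{n\ge1}c_{k,n}(w)\bar z^{np-k}, \qquad |c_{k,n}(w)|\lesssim (np)^k|w|^{-np}/n.
\end{equation*}
The $n=1$ term is of size $|z|^{p-k}|w|^{-p}$. The tail is bounded by
\begin{equation*}
    |z|^{p-k}|w|^{-p}\sum_n n^{k-1}p^k 2^{-(n-1)p},
\end{equation*}
which is a convergent geometric-type sum. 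This gives \eqref{eq:z,w_bound}. If $k>p$, negative powers of $\bar z$ appear, but the estimate is unaffected.

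\emph{Near region $|w|\le 2|z|$.} After normalizing, $1/2\le|z|$. I split into two cases.

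\emph{Case $|z|\le 4$.} Then $z$ and $w$ lie in a compact annular sector. Write
\begin{equation*}
    \log(\bar z^p-\bar w^p) = \log(\bar z-\bar w) + \log h(\bar z,\bar w), \qquad h=\frac{\bar z^p-\bar w^p}{\bar z-\bar w}.
\end{equation*}
The function $h$ is smooth and nonvanishing there. The reason is that $\zeta\mapsto\zeta^p$ is injective on the closed sector, since $p\bt=\pi<2\pi$, and has nonzero derivative away from $0$. Thus $\rd_{\bar z}^k\log(\bar z-\bar w)=c(\bar z-\bar w)^{-k}$ supplies the main term, and all derivatives of $\log h$ are bounded. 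For the second piece $\log(\bar z^p-w^p)$, use $|z^p-\bar w^p|\gtrsim|z-\bar w|\ge|z-w|$. The first inequality holds because $z^p$ lies in the upper half plane and $\bar w^p$ in the lower. Each $\rd_{\bar z}$ then costs at most $|z-\bar w|^{-1}$, with bounded factors from derivatives of $\bar z^p$. Finally, bounded terms are dominated by $C|z-w|^{-k}$ because $|z-w|\le 3|z|\lesssim 1$.

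\emph{Case $|z|>4$.} Here $|w|\le|z|/4$, so this is the mirror of the far region with the roles reversed. Expand $\log(\bar z^p-\bar w^p)=p\log\bar z+\log(1-\bar w^p/\bar z^p)$, and do the same for the other term. The $p\log\bar z$ contributions cancel, and each remaining term is a power series in $\bar z^{-np}$. Its $k$-th derivative is $\lesssim|w|^p|z|^{-p-k}\le|z|^{-k}\sim|z-w|^{-k}$.

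\emph{Global statement for $p>k+2$.} In the region $|w|\ge2|z|$ we have $|z-w|\le\frac32|w|$. Hence
\begin{equation*}
    |z|^{p-k}|w|^{-p}\le |w|^{-k}2^{-(p-k)}\lesssim|z-w|^{-k},
\end{equation*}
so \eqref{eq:z-w_bound} holds everywhere. This step needs only $p\ge k$; the stated hypothesis suffices.

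The main obstacle is the near-diagonal, near-boundary regime in the case $|z|\le 4$. There one must check that the reflected term $\bar w^p$ never produces a singularity worse than $|z-w|^{-1}$ per derivative, including when $z$ and $w$ are both near a boundary ray or near the corner. I would handle it through the half-plane separation inequality $|Z-\bar W|\ge\max(\Im Z,\Im W)$ together with $\Im Z\gtrsim|z|^{p-1}\operatorname{dist}(z,\rd\Omg_{\bt})$.
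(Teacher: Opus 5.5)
\textbf{There is a genuine gap, though it is easy to close.} Most of your argument is sound. The scaling reduction to $|w|=1$ is correct. So is the series bound in the region $|w|\ge 2|z|$, which handles all $k$ at once rather than splitting into $k\ge p$ and $k<p$ as the paper's sketch does. The case $|z|>4$ is fine, and you correctly observe that the global bound only needs $p\ge k$.

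The gap is in your treatment of the reflected term $\log(\bar z^p-w^p)$ in the compact case $1/2\le|z|\le 4$. You claim $|z^p-\bar w^p|\gtrsim|z-\bar w|$ because $z^p$ and $\bar w^p$ lie in opposite half-planes, but this inequality is false. Take $z=w=e^{i(\bt-\eps)}$. Then $z^p=-e^{-ip\eps}$ and $\bar w^p=-e^{ip\eps}$, so $|z^p-\bar w^p|=2\sin(p\eps)\to 0$, while $|z-\bar w|=2\sin(\bt-\eps)\to 2\sin\bt>0$. Near the ray $\arg=\bt$, both $z^p$ and $\bar w^p$ approach the negative real axis. The conjugation $w\mapsto\bar w$ is the reflection across only one of the two boundary rays.

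Your fallback in the last paragraph does not repair this either. The bound $|Z-\bar W|\ge\max(\Im Z,\Im W)\gtrsim\max\{\mathrm{dist}(z,\rd\Omg_\bt),\mathrm{dist}(w,\rd\Omg_\bt)\}$ controls $|z-w|$ only when $|z-w|$ is at most comparable to the distance to the boundary. It gives nothing for, say, $z=2+i\eps$ and $w=1+i\eps$, where $|z-w|=1$ but both distances are about $\eps$.

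\textbf{The fix.} Since $Z=z^p$ and $W=w^p$ both lie in the upper half-plane,
\[
|Z-\bar W|^2=(\Re Z-\Re W)^2+(\Im Z+\Im W)^2\ge |Z-W|^2 .
\]
Lemma \ref{lem:exp_p} then gives
\[
|\bar z^p-w^p|=|Z-\bar W|\ge|Z-W|\gtrsim_p\max\{|z|,|w|\}^{p-1}|z-w| .
\]
This is exactly the use of Lemma \ref{lem:exp_p} that the paper's sketch points to; it also makes your factorization through $h$ optional, since the same lemma bounds $|\bar z^p-\bar w^p|$ from below. With this lower bound, Fa\`a di Bruno bounds the $k$-th $\bar z$-derivative of $\log(\bar z^p-w^p)$ by $\sum_{m=1}^k|z-w|^{-m}\lesssim|z-w|^{-k}$ on your compact piece, and the rest of your proof goes through.
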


\begin{remark}
    Since $x\mapsto \log|1-x|$ is Lipschitz on $|x|<1/2$, the inequality \eqref{eq:z,w_bound} also holds when $k=0$.
\end{remark}
Lemma \ref{lem:ker_bound} follows from the formula of $\rd_{\bar{z}}^k G_{\bt}$. We omit the proof, and just note the idea of the proof. The inequality \eqref{eq:z-w_bound} is proved by Lemma \ref{lem:exp_p}, and the second inequality \eqref{eq:z,w_bound} is a direct consequence of \eqref{eq:z-w_bound} when $k \ge \pi/\bt $. If $k<\pi/\bt$, then calculating derivatives of \eqref{eq:G_deri_deri} implies \eqref{eq:z,w_bound}.

\begin{lemma}\label{lem:exp_p}
    Let $p>1$, and $z$, $w\in\Omg_{\bt}$. Then,
    \begin{align*}
        |z^p-w^p|\gtrsim_p (\max\{|z|,|w|\})^{p-1} |z-w|.
    \end{align*}
\end{lemma}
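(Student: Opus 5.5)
The plan is to treat $|z^p - w^p|$ as the modulus of a difference of the holomorphic map $F(\zeta)=\zeta^p$ on $\Omega_\beta$ and bound it from below in two regimes. Write $z=re^{i\theta}$, $w=\rho e^{i\phi}$ with $0<\theta,\phi<\beta$. By symmetry we may assume $\rho\le r$, so $\max\{|z|,|w|\}=r$. Dividing by $r^p$ and using the scaling $z\mapsto z/r$, it suffices to prove
\begin{equation*}
|1\cdot e^{ip\theta} - s^p e^{ip\phi}| \gtrsim_p |e^{i\theta} - s e^{i\phi}|, \qquad s:=\rho/r\in(0,1].
\end{equation*}

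Case 1 is $s\le 1/2$. Here the right-hand side is at most $2$. For the left-hand side, $|e^{ip\theta}-s^pe^{ip\phi}|\ge 1-s^p\ge 1-2^{-p}$, which is a positive constant depending only on $p$. This gives the bound with constant $(1-2^{-p})/2$.

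Case 2 is $1/2<s\le 1$, where both points lie in the annular sector $\{1/2\le|\zeta|\le1\}\cap\overline{\Omega_\beta}$. The key input is that $\zeta\mapsto\zeta^p$ is injective on $\Omega_\beta$ exactly when $p\beta\le 2\pi$, i.e.\ $\beta\le 2\pi/p=2\beta$. That always holds, since $p\beta=\pi$. More precisely, $\zeta^p$ maps $\Omega_\beta$ bijectively onto the upper half-plane $\mathbb{H}$.

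To get a quantitative bound, use polar form. Writing $|A-B|^2=(a-b)^2+4ab\sin^2(\psi/2)$ for $A=ae^{i\psi_1}$, $B=be^{i\psi_2}$ with $\psi=\psi_1-\psi_2$, we have
\begin{equation*}
|e^{ip\theta}-s^pe^{ip\phi}|^2=(1-s^p)^2+4s^p\sin^2\!\big(p(\theta-\phi)/2\big).
\end{equation*}
Now compare each term with its counterpart in $|e^{i\theta}-se^{i\phi}|^2=(1-s)^2+4s\sin^2((\theta-\phi)/2)$.

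For the radial terms, $1-s^p\ge 1-s$ because $p>1$ and $s\le1$.

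For the angular terms, note that $|p(\theta-\phi)/2|<p\beta/2=\pi/2$. On $[0,\pi/2]$ we have $\sin x\ge 2x/\pi$, so
\begin{equation*}
\sin\big(p|\theta-\phi|/2\big)\ge \tfrac{2}{\pi}\cdot\tfrac{p|\theta-\phi|}{2}\ge \tfrac{2}{\pi}\sin\big(|\theta-\phi|/2\big).
\end{equation*}
Since $s^p\ge 2^{-p}\gtrsim_p s$, the angular term is also controlled up to a constant depending on $p$. Summing the two comparisons gives the claim.

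Whether $p\beta=\pi$ is used is the only subtle point. The statement is written with $p$ general, but the ambient convention $p=\pi/\beta$ is what guarantees the angle bound $|p(\theta-\phi)|<\pi$. If $p$ were truly arbitrary with $p\beta>2\pi$, injectivity fails and the claim is false, so I would state explicitly that $p=\pi/\beta$ as in the Notation.
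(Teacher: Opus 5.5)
Your proof is correct, and your Case 1 is the paper's ($|w|\le|z|/2$, with $|z^p-w^p|\ge(1-2^{-p})|z|^p$ and $|z-w|\le 2|z|$). In the comparable-modulus case you take a different route.

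\textbf{The paper's route.} It writes $\frac{z^p-w^p}{z-w}=\int_0^1 p\,(w+t(z-w))^{p-1}\,dt$. It then observes that the integrand lies in a sector of opening $(p-1)\beta=\pi-\beta<\pi$ and has modulus $\gtrsim_p|z|^{p-1}$. Hence the convex hull of these values, and so the integral, stays at distance $\gtrsim_p|z|^{p-1}$ from the origin.

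\textbf{Your route.} After rescaling, you apply the exact identity $|ae^{i\psi_1}-be^{i\psi_2}|^2=(a-b)^2+4ab\sin^2\frac{\psi_1-\psi_2}{2}$ to both sides. You then compare the radial parts ($1-s^p\ge 1-s$) and the angular parts ($\sin x\ge 2x/\pi$ on $[0,\pi/2]$) separately. Both arguments use $p\beta=\pi$ in the same essential way: you via $p|\theta-\phi|/2<\pi/2$, the paper via the opening of the image sector of $\zeta^{p-1}$. So your remark that the lemma tacitly relies on the convention $p=\pi/\beta$ applies equally to the paper's proof.

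\textbf{What each buys.} Your computation never has to locate the segment $[w,z]$. The paper's convex-hull step implicitly places $w+t(z-w)$ in the annulus $\{|z|/2\le|\zeta|\le|z|\}$. In general one only has $|w+t(z-w)|\ge\frac{|z|}{2}\cos\frac{\beta}{2}$; this is harmless but changes the constant. Your argument is exact and gives an explicit $p$-dependent constant directly. The paper's route is the standard ``derivative valued in a sector'' argument. It would carry over to other conformal maps for which no clean polar formula is available.
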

\begin{proof}
    Without loss of generality, suppose that $|w|\le |z|$. If $|w| \le |z|/2$, then
    \begin{align*}
        \frac{|z^p-w^p|}{|z-w|} \ge \frac{|z|^p/2}{2|z|} = \frac{1}{4}|z|^{p-1}.
    \end{align*}
    Now we assume $|z|/2 \le |w| \le |z|$. The distance between the origin and the convex hull of $\{w^{p-1}: w\in\Omg_{\bt},\ |z|/2\le |w| \le |z|\}$ is $|z|^{p-1}\sin(\pi/2p)/2^{p-1}$. Therefore,
    \begin{equation*}
        \left|\frac{z^p-w^p}{z-w}\right| =\left|\int_0^1 p\left(w + t(z-w)\right)^{p-1}dt\right| \ge \frac{p}{2^{p-1}}|z|^{p-1}\sin\left(\frac{\pi}{2p}\right). 
    \end{equation*}
    In summary, $|z^p-w^p|\ge C_p |z|^{p-1} |z-w|$, where $0<C_p<1/4$ and $\lim_{p\rightarrow\infty} C_p=0$.
\end{proof}

\subsection{Velocity estimates for bounded vorticity}

Lemmas \ref{1.1} and \ref{lem:u_diff} deal with the properties of $u$ under the assumption $\omg\in L^{\infty}(\Omg_{\bt})$. Recall that in $\Omg_{\bt}$ with $0<\bt<\pi/2$, the velocity $u$ is defined by
\begin{equation*}
    u(z) :=  \int_{\Omg_{\bt}} \frac{ip}{2\pi}\frac{\bar{z}^{p-1} (\bar{w}^p - w^p)}{(\bar{z}^p-\bar{w}^p)(\bar{z}^p-w^p)} \omg(w) dA(w).
\end{equation*}
\begin{lemma} \label{1.1}
    Suppose that $0<\bt<\pi/2$, $\omg\in L^{\infty}(\Omg_{\bt})$, and $z\in\Omg_{\bt}$. Then $u$ satisfies $\abs{u(z)} \lesssim_{\bt} \nrm{\omg}_{L^{\infty}} \abs{z}$.
\end{lemma}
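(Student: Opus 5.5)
The plan is to bound $|u(z)|$ directly from the explicit Biot--Savart integral by scaling out $|z|$ and splitting $\Omg_{\bt}$ into near and far regions. Throughout, $p=\pi/\bt>2$.

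\emph{Reduction to $|z|=1$.} Set $r:=|z|$ and substitute $w=r\xi$. The kernel satisfies
\begin{equation*}
K_{\bt}(r\zt, r\xi)=r^{-1}K_{\bt}(\zt,\xi),
\end{equation*}
since the numerator has degree $2p-1$ and the denominator has degree $2p$. Combined with $dA(w)=r^2\,dA(\xi)$, this gives $|u(z)|\le r\,\|\omg\|_{L^\infty}\int_{\Omg_{\bt}}|K_{\bt}(\zt,\xi)|\,dA(\xi)$, where $\zt=z/r$. It therefore suffices to show that $\int_{\Omg_{\bt}}|K_{\bt}(\zt,\xi)|\,dA(\xi)\le C_{\bt}$ uniformly over $|\zt|=1$.

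\emph{Near region, $|\xi|\le 2$.} Here I use the factorization of $K_{\bt}$. Write $\bar\xi^p-\xi^p=(\bar\zt^p-\xi^p)-(\bar\zt^p-\bar\xi^p)$, so that
\begin{equation*}
K_{\bt}(\zt,\xi)=c_p\,\bar\zt^{p-1}\Bigl(\frac{1}{\bar\zt^p-\bar\xi^p}-\frac{1}{\bar\zt^p-\xi^p}\Bigr).
\end{equation*}
Lemma \ref{lem:exp_p} gives $|\zt^p-\xi^p|\gtrsim_p|\zt-\xi|$ when $|\zt|=1$. For the reflected term, $|\bar\zt^p-\xi^p|=|\zt^p-\bar\xi^p|$, and $\bar\xi^p$ is the image of $\bar\xi$ under the conformal map $z\mapsto z^p$. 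Applying Lemma \ref{lem:exp_p} in the doubled sector, which has angle $2\bt<\pi$ after a rotation so that the power map is still well defined, should give $|\zt^p-\bar\xi^p|\gtrsim|\zt-\bar\xi|\ge|\zt-\xi|$. Alternatively, this is the bound $|\rd_{\bar z}G_{\bt}|\lesssim|z-w|^{-1}$ from Lemma \ref{lem:ker_bound} with $k=1$ and $|w|\le 2|z|$. Either way $|K_{\bt}(\zt,\xi)|\lesssim|\zt-\xi|^{-1}$, which is integrable on $B_3(\zt)$, so the near region contributes $O(1)$.

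\emph{Far region, $|\xi|\ge 2$.} By \eqref{eq:z,w_bound} with $k=1$, $|K_{\bt}(\zt,\xi)|\lesssim|\xi|^{-p}$. Since $p>2$, $\int_{|\xi|\ge2}|\xi|^{-p}\,dA\lesssim\int_2^\infty\rho^{1-p}\,d\rho<\infty$. This step is exactly where $\bt<\pi/2$ enters.

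\emph{Main obstacle.} The only delicate point is the uniform lower bound on $|\bar\zt^p-\xi^p|$ when $\zt$ is near the boundary of the sector, i.e.\ the reflected singularity. Lemma \ref{lem:ker_bound}, which the paper already assumes, handles this case, and failing that the reflection argument above does. Combining the near and far bounds gives $|u(z)|\lesssim_{\bt}\|\omg\|_{L^\infty}|z|$.
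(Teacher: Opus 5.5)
Your proof is correct and is essentially the paper's. You use the same split at $|w|=2|z|$ (the rescaling to $|z|=1$ is cosmetic), the same partial-fraction form of $K_{\bt}$ with Lemma \ref{lem:exp_p} in the near region, and the $|z|^{p-1}|w|^{-p}$ decay with $p>2$ in the far region. Citing Lemma \ref{lem:ker_bound} with $k=1$ for both pointwise bounds is legitimate.

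Your first justification of the reflected term is wrong, however, and should be dropped. The claim $|\zt^p-\bar\xi^p|\gtrsim|\zt-\bar\xi|$ fails near the ray $\arg=\bt$. For $\zt=\xi=e^{i(\bt-\eps)}$ one has $|\zt^p-\bar\xi^p|=2\sin(p\eps)\to 0$, while $|\zt-\bar\xi|\to 2\sin\bt$. The reason is that $z\mapsto z^p$ maps the doubled sector $\{|\arg z|<\bt\}$ onto $\mathbb{C}\setminus(-\infty,0]$, sending the rays $\arg z=\pm\bt$ to the two edges of the slit. So Lemma \ref{lem:exp_p} does not extend to the doubled sector.

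The correct one-line argument, which is the one the paper uses, is the reflection inequality in the upper half-plane. Since $\zt^p,\xi^p\in\mathbb{H}$, you have $|\bar\zt^p-\xi^p|=|\zt^p-\overline{\xi^p}|\ge|\zt^p-\xi^p|\gtrsim|\zt-\xi|$, where the last step is Lemma \ref{lem:exp_p}.
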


\begin{proof}
    Let $z\in\Omg_{\bt}$, and
    \begin{equation*} A := \{w\in\Omg_{\bt}: |w|< 2|z|\}, \quad B := \{w\in\Omg_{\bt}: |w|> 2|z|\}.
    \end{equation*}
    Then, since $p>2$,
    \begin{align*}
        \left|\int_{B}\frac{\bar{z}^{p-1} (\bar{w}^p - w^p)}{(\bar{z}^p-\bar{w}^p)(\bar{z}^p-w^p)} \omg(w) dA(w)\right|  & \le C\|\omg\|_{L^{\infty}}|z|^{p-1}  \int_{2|z|}^{\infty} \frac{r^p}{r^{2p}} rdr \le \frac{C}{p-2}\|\omg\|_{L^{\infty}}|z|.
    \end{align*}
    In the region $A$,
    \begin{align*}
        \left|\int_{A}\frac{\bar{z}^{p-1} (\bar{w}^p - w^p)}{(\bar{z}^p-\bar{w}^p)(\bar{z}^p-w^p)} \omg(w) dA(w)\right| & \le \|\omg\|_{L^{\infty}}|z|^{p-1} \int_{A} \left( \frac{1}{|\bar{z}^p-\bar{w}^p|} + \frac{1}{|\bar{z}^p-w^p|} \right) dA(w) \\ & \le 2 \|\omg\|_{L^{\infty}}|z|^{p-1} \int_{A} \frac{1}{|\bar{z}^p-\bar{w}^p|}dA(w) \lesssim_p \|\omg\|_{L^{\infty}}|z|.
    \end{align*}
    The last inequality comes from Lemma \ref{lem:exp_p}. This completes the proof.
\end{proof}

\begin{lemma} \label{lem:u_diff}
    Suppose that $0<\bt<\pi/2$, $\omg\in L^{\infty}(\Omg_{\bt})$ and $z_1$, $z_2\in\Omg_{\bt}$ $(z_1\neq z_2)$. Then $u$ satisfies
    \begin{equation*}
        \abs{u(z_1)-u(z_2)} \lesssim_{\bt} \nrm{\omg}_{L^{\infty}} \abs{z_1-z_2}\log\left(\frac{6\max\{\abs{z_1},\abs{z_2}\}}{\abs{z_1-z_2}}\right).
    \end{equation*}
\end{lemma}

\begin{proof}
    Without loss of generality, let $|z_1|\ge |z_2|$. Divide $\Omg_{\bt}$ into three subsets: $A  := \{ w\in\Omg_{\bt} : \abs{z_1-w}< 2\abs{z_1-z_2} \}$, $C  := \{ w\in\Omg_{\bt} : \abs{w}> 4\abs{z_1}\}$, $B  := \Omg_{\bt}\setminus(A\cup C)$.
    
    In the region $C$, $\abs{w} > 4\abs{z_1}$ and $\abs{w} > 4\abs{z_2}$ for all $w\in C$. Lemma \ref{lem:ker_bound} and the complex mean value inequality yield
    \begin{align*}
    \left|\int_{C} \{K_{\bt}(z_1,w) - K_{\bt}(z_2,w)\} \omg(w) dA(w)\right| & \lesssim \|\omg\|_{L^{\infty}} |z_1-z_2|\int_{C} \frac{\abs{z_1}^{p-2}}{\abs{w}^p} dA(w) \\ & \le \|\omg\|_{L^{\infty}}\abs{z_1-z_2}|z_1|^{p-2}\int_{4|z_1|}^{\infty}\frac{1}{r^p} r dr \lesssim \nrm{\omg}_{L^{\infty}}\abs{z_1-z_2}.
    \end{align*}
    In the region $A$,
    \begin{align*}
        \left|\int_A \{K_{\bt}(z_1,w) - K_{\bt}(z_2,w)\} \omg(w) dA(w)\right| & \lesssim \|\omg\|_{L^{\infty}} \int_A \left(\frac{|z_1|^{p-1}}{|z_1^p-w^p|} + \frac{|z_2|^{p-1}}{|z_2^p-w^p|}\right)dA(w) \\ & \lesssim \|\omg\|_{L^{\infty}} \int_{\{w\in\Omg_{\bt}: |z_1-w|<3|z_1-z_2|\}} \frac{dA(w)}{|z_1-w|} \lesssim \|\omg\|_{L^{\infty}}|z_1-z_2|.
    \end{align*}
    The second inequality follows from Lemma \ref{lem:exp_p}. For the last inequality, note that $A \subset (\{w:|z_1-z_2|/2 < |z_1-w|< 2|z_1-z_2|\}\cup \{w: |z_1-z_2|/2 < |z_2-w| < 3|z_1-z_2|\})$. Finally, in the region $B$, we obtain
    \begin{align*}
        \quad \left|\int_B \{K_{\bt}(z_1,w) - K_{\bt}(z_2,w)\} \omg(w) dA(w)\right| & \lesssim \|\omg\|_{L^{\infty}}  \int_B \frac{|z_1-z_2|}{|z_1-w||z_2-w|} dA(w) \\ & \lesssim \|\omg\|_{L^{\infty}}  \int_B \frac{|z_1-z_2|}{|z_1-w|^2}dA(w) \lesssim \|\omg\|_{L^{\infty}} |z_1-z_2| \log\frac{2|z_1|}{|z_1-z_2|}.
    \end{align*}
\end{proof}

\bibliographystyle{amsplain}
\bibliography{well-posedness_threshold}

@article{AN2,
  author    = {Agrawal, S. and Nahmod, A. R.},
  title     = {Uniqueness of the {2D} {E}uler equation on a corner domain with non-constant vorticity around the corner},
  journal   = {Nonlinearity},
  volume    = {35},
  number    = {6},
  pages     = {2767--2808},
  year      = {2022}
}

@article{AN1,
  author    = {Agrawal, S. and Nahmod, A. R.},
  title     = {Uniqueness of the {2D} {E}uler equation on rough domains},
  journal   = {J. Eur. Math. Soc.},
  year      = {2025},
  note      = {Published online first}
}

@article{BPT,
  author    = {Bardos, C. and Di Plinio, F. and Temam, R.},
  title     = {The {E}uler equations in planar nonsmooth convex domains},
  journal   = {J. Mathematical Analysis and Applications},
  volume    = {407},
  number    = {1},
  pages     = {69--89},
  year      = {2014}
}

@article{BKM,
  author    = {Beale, J. T. and Kato, T. and Majda, A.},
  title     = {Remarks on the breakdown of smooth solutions for the {$3$}-{D} {E}uler equations},
  journal   = {Comm. Math. Phys.},
  volume    = {94},
  number    = {1},
  pages     = {61--66},
  year      = {1984}
}

@article{BMP,
  author    = {Benedetto, D. and Marchioro, C. and Pulvirenti, M.},
  title     = {On the {E}uler flow in {${\bf R}^2$}},
  journal   = {Arch. Rational Mech. Anal.},
  volume    = {123},
  number    = {4},
  pages     = {377--386},
  year      = {1993}
}

@book{BK1,
  author    = {Borsuk, M. and Kondratiev, V.},
  title     = {Elliptic boundary value problems of second order in piecewise smooth domains},
  series    = {Vol. 69},
  publisher = {Elsevier},
  year      = {2006}
}

@article{BL2,
  author    = {Bourgain, J. and Li, D.},
  title     = {Strong illposedness of the incompressible {E}uler equation in integer {$C^m$} spaces},
  journal   = {Geom. Funct. Anal.},
  volume    = {25},
  number    = {1},
  pages     = {1--86},
  year      = {2015}
}

@article{CK,
  author    = {Cozzi, E. and Kelliher, J. P.},
  title     = {Well-posedness of the {2d} {E}uler equations when velocity grows at infinity},
  journal   = {arXiv preprint arXiv:1709.07422},
  year      = {2017}
}

@article{co2,
  author    = {Di Plinio, F. and Temam, R.},
  title     = {Grisvard's shift theorem near {$L^\infty$} and {Y}udovich theory on polygonal domains},
  journal   = {SIAM J. Math. Anal.},
  volume    = {47},
  number    = {1},
  pages     = {159--178},
  year      = {2015}
}

@article{EJB,
  title     ={Finite-time singularity formation for strong solutions to the Boussinesq system},
  author    ={Elgindi, T. M. and Jeong, I.-J.},
  journal   ={Ann. PDE},
  volume    ={6},
  number    ={1},
  pages     ={5},
  year      ={2020},
  publisher ={Springer}
}

@article{EJS,
  author    = {Elgindi, T. M. and Jeong, I.-J.},
  title     = {Symmetries and critical phenomena in fluids},
  journal   = {Comm. Pure. Applied Math.},
  volume    = {73},
  number    = {2},
  pages     = {257--316},
  year      = {2020}
}

@article{EMS,
  author    = {Elgindi, T. M. and Murray, R. M. and Said, A. R.},
  title     = {On the long-time behavior of scale-invariant solutions to the {2d} {E}uler equation and applications},
  journal   = {Les Annales de l'ENS},
  volume    = {35},
  pages     = {943--970},
  year      = {2025}
}

@book{GT,
  title     ={Elliptic partial differential equations of second order},
  author    ={Gilbarg, D. and Trudinger, N. S.},
  series    ={Classics in Mathematics},
  volume    ={224},
  year      ={2001},
  publisher ={Springer}
}

@book{G1,
  author    = {Grisvard, P.},
  title     = {Elliptic problems in nonsmooth domains},
  series    = {Monographs and Studies in Mathematics},
  volume    = {24},
  publisher = {Pitman (Advanced Publishing Program)},
  address   = {Boston, MA},
  year      = {1985}
}

@article{HZ2,
  author    = {Han, Z. and Zlato{\v{s}}, A.},
  title     = {Euler Equations on General Planar Domains},
  journal   = {Ann. PDE},
  volume    = {7},
  number    = {20},
  year      = {2021}
}

@article{HZ1,
  author    = {Han, Z. and Zlato{\v{s}}, A.},
  title     = {Uniqueness of positive vorticity solutions to the {2d} {E}uler equations on singular domains},
  journal   = {Arch. Ration. Mech. Anal.},
  volume    = {247},
  number    = {84},
  year      = {2023}
}

@article{IMY2,
  author    = {Itoh, T. and Miura, H. and Yoneda, T.},
  title     = {The growth of the vorticity gradient for the two-dimensional {E}uler flows on domains with corners},
  journal   = {arXiv preprint arXiv:1602.00815},
  year      = {2016}
}

@article{IMY1,
  author    = {Itoh, T. and Miura, H. and Yoneda, T.},
  title     = {Remark on {S}ingle {E}xponential {B}ound of the {V}orticity {G}radient for the {T}wo-{D}imensional {E}uler {F}low {A}round a {C}orner},
  journal   = {J. Math. Fluid Mech.},
  volume    = {18},
  number    = {3},
  pages     = {531--537},
  year      = {2016}
}

@article{KS,
  author    = {Kiselev, A. and {\v{S}}ver{\'a}k, V.},
  title     = {Small scale creation for solutions of the incompressible two-dimensional {E}uler equation},
  journal   = {Ann. of Math. (2)},
  volume    = {180},
  number    = {3},
  pages     = {1205--1220},
  year      = {2014}
}

@book{KMR,
  author    = {Kozlov, V. and Maz'ya, V. G. and Rossmann, J.},
  title     = {Spectral problems associated with corner singularities of solutions to elliptic equations},
  series    = {No. 85},
  publisher = {American Mathematical Soc.},
  year      = {2001}
}

@article{co3,
  author    = {Lacave, C.},
  title     = {Uniqueness for two-dimensional incompressible ideal flow on singular domains},
  journal   = {SIAM J. Math. Anal.},
  volume    = {47},
  number    = {2},
  pages     = {1615--1664},
  year      = {2015}
}

@article{co1,
  author    = {Lacave, C. and Miot, E. and Wang, C.},
  title     = {Uniqueness for the two-dimensional {E}uler equations on domains with corners},
  journal   = {Indiana Univ. Math. J.},
  volume    = {63},
  number    = {6},
  pages     = {1725--1756},
  year      = {2014}
}

@book{MB,
  author    = {Majda, A. J. and Bertozzi, A. L.},
  title     = {Vorticity and incompressible flow},
  series    = {Cambridge Texts in Applied Mathematics},
  volume    = {27},
  publisher = {Cambridge University Press},
  address   = {Cambridge},
  year      = {2002}
}

@book{MP,
  author    = {Marchioro, C. and Pulvirenti, M.},
  title     = {Mathematical theory of incompressible nonviscous fluids},
  series    = {Applied Mathematical Sciences},
  volume    = {96},
  publisher = {Springer-Verlag},
  address   = {New York},
  year      = {1994}
}

@article{S1,
  author    = {Serfati, P.},
  title     = {Solutions {$C^\infty$} en temps, {$n$}-{$\log$} {L}ipschitz born\'ees en espace et \'equation d'{E}uler},
  journal   = {C. R. Acad. Sci. Paris S\'er. I Math.},
  volume    = {320},
  number    = {5},
  pages     = {555--558},
  year      = {1995}
}

@article{Y1,
  author    = {Yudovich, V. I.},
  title     = {Non-stationary flows of an ideal incompressible fluid},
  journal   = {Z. Vycisl. Mat. i Mat. Fiz.},
  volume    = {3},
  pages     = {1032--1066},
  year      = {1963}
}

\end{document}